\newtheorem{theorem}{Theorem}[section]
\newtheorem{proposition}[theorem]{Proposition}
\newtheorem{lemma}[theorem]{Lemma}
\newtheorem{corollary}[theorem]{Corollary}
\theoremstyle{definition}
\newtheorem{definition}[theorem]{Definition}
\theoremstyle{remark}
\newtheorem{remark}[theorem]{Remark}
\numberwithin{equation}{section}
\newcommand{\QQ}{\mathbb{Q}}
\newcommand{\RR}{\mathbb{R}}
\newcommand{\ZZ}{\mathbb{Z}}
\newcommand{\NN}{\mathbb{N}}
\newcommand{\QQQ}{\mathrm{Q}}
\newcommand{\hQQQ}{\widehat{\mathrm{Q}}}
\newcommand{\rQQQ}{\widehat{\mathrm{Q}}}
\newcommand{\HHH}{\mathrm{H}}
\newcommand{\CCC}{\mathrm{C}}
\newcommand{\ZZZ}{\mathrm{Z}}
\newcommand{\BBB}{\mathrm{B}}
\DeclareMathOperator{\cl}{\mathrm{cl}}
\DeclareMathOperator{\scl}{\mathrm{scl}}
\DeclareMathOperator{\fl}{\mathrm{fill}}
\DeclareMathOperator{\Ker}{\mathrm{Ker}}
\DeclareMathOperator{\Coker}{\mathrm{Coker}}
\newcommand{\Gg}{G}
\newcommand{\Ng}{N}
\newcommand{\Gam}{\Gamma}
\newcommand{\CG}{[\Gg,\Gg]}
\newcommand{\CGN}{[\Gg,\Ng]}
\newcommand{\clG}{\cl_{\Gg}}
\newcommand{\sclG}{\scl_{\Gg}}
\newcommand{\clGN}{\cl_{\Gg,\Ng}}
\newcommand{\sclGN}{\scl_{\Gg,\Ng}}
\newcommand{\xl}{x}
\newcommand{\yl}{y}
\newcommand{\zl}{z}
\newcommand{\ul}{u}
\newcommand{\vl}{v}
\newcommand{\gl}{g}
\newcommand{\hl}{h}
\newcommand{\gbr}{\check{g}}
\newcommand{\xbr}{\check{x}}
\newcommand{\ff}{f}
\newcommand{\hf}{h}
\newcommand{\nuf}{\nu}
\newcommand{\muf}{\mu}
\newcommand{\psf}{\psi}
\newcommand{\phf}{\phi}
\newcommand{\phfh}{\hat{\phi}}
\newcommand{\psfh}{\hat{\psi}}
\newcommand{\mufh}{\hat{\mu}}
\newcommand{\QG}{\QQQ(\Gg)}
\newcommand{\QhG}{\widehat{\QQQ}(\Gg)}
\newcommand{\QhN}{\widehat{\QQQ}(\Ng)}
\newcommand{\QNG}{\QQQ(\Ng)^{\Gg}}
\newcommand{\HG}{\HHH^1(\Gg)}
\newcommand{\HGR}{\HHH^1(\Gg;\RR)}
\newcommand{\HNG}{\HHH^1(\Ng)^{\Gg}}
\newcommand{\HNRG}{\HHH^1(\Ng;\RR)^{\Gg}}
\newcommand{\QGN}{\hQQQ_{\Ng}(\Gg)}
\newcommand{\WW}{\mathrm{W}}
\newcommand{\WGN}{\WW(\Gg,\Ng)}
\newcommand{\DD}{D}
\newcommand{\DDG}{D_{\Gg}}
\newcommand{\DDN}{D_{\Ng}}
\newcommand{\genus}{g}
\renewcommand{\Im}{\mathrm{Im}}
\newcommand{\Homeo}{\mathrm{Homeo}}
\newcommand{\tHomeo}{\mathrm{H}\widetilde{\mathrm{omeo}}}
\newcommand{\EH}{\mathrm{EH}}
\newcommand{\SL}{\mathrm{SL}}
\newcommand{\rot}{\mathrm{rot}}
\newcommand{\trot}{\widetilde{\rot}}
\newcommand{\qm}{\mu}
\newcommand{\siph}{\varphi}
\newcommand{\coch}{u}
\newcommand{\CR}{\mathcal{C}_{\mathbb{R}}}
\newcommand{\CQ}{\mathcal{C}_{\mathbb{Q}}}
\newcommand{\CZ}{\mathcal{C}_{\mathbb{Z}}}
\newcommand{\CA}{\mathcal{C}_A}
\title{Invariant quasimorphisms and generalized mixed Bavard duality}
\author[M. Kawasaki]{Morimichi Kawasaki}
\address[Morimichi Kawasaki]{Department of Mathematics, Faculty of Science, Hokkaido University, North 10, West 8, Kita-ku, Sapporo, Hokkaido 060-0810, Japan.}
\email{kawasaki@math.sci.hokudai.ac.jp}
\author[M. Kimura]{Mitsuaki Kimura}
\address[Mitsuaki Kimura]{Department of Mathematics, Osaka Dental University
 8-1 Kuzuha-hanazono-cho, Hirakata, Osaka 573-1121 Japan}
\email{kimura-m@cc.osaka-dent.ac.jp}
\author[S. Maruyama]{Shuhei Maruyama}
\address[Shuhei Maruyama]{ School of Mathematics and Physics, College of Science and Engineering, Kanazawa University, Kakuma-machi, Kanazawa, Ishikawa, 920-1192, Japan}
\email{smaruyama@se.kanazawa-u.ac.jp}
\author[T. Matsushita]{Takahiro Matsushita}
\address[Takahiro Matsushita]{Department of Mathematical Sciences, Shinshu University, Matsumoto, Nagano 390-8621, Japan}
\email{matsushita@shinshu-u.ac.jp}
\author[M. Mimura]{Masato Mimura}
\address[Masato Mimura]{Mathematical Institute, Tohoku University, 6-3, Aramaki Aza-Aoba, Aoba-ku, Sendai 980-8578, Japan}
\email{m.masato.mimura.m@tohoku.ac.jp}
\begin{document}

\begin{abstract}
This article provides an  expository account of the celebrated duality theorem of Bavard and three its strengthenings. The Bavard duality theorem connects scl (stable commutator length) and quasimorphisms on a group. Calegari extended the framework from a group element to a chain on the group, and established the generalized Bavard duality. Kawasaki, Kimura, Matsushita and Mimura studied the setting of a pair of a group and its normal subgroup, and obtained the mixed Bavard duality. The first half of the present article is devoted to an introduction to these three Bavard dualities. In the latter half, we present a new strengthening, the generalized mixed Bavard duality, and provide a self-contained proof of it. This third strengthening recovers all of the Bavard dualities treated in the first half; thus, we supply complete proofs of these four Bavard dualities in a unified manner. In addition, we state several results on the space $\mathrm{W}(G,N)$ of non-extendable quasimorphisms, which is related to the comparison problem between scl and mixed scl via the mixed Bavard duality.
\end{abstract}

\maketitle

\section{Introduction}
\label{section=Intro}
The main subject of this article is the Bavard duality theorem (Theorem~\ref{theorem=Bavard}) and  three its strengthenings (Theorem~\ref{theorem=generalized_Bavard}, Theorem~\ref{theorem=mixed_Bavard} and Theorem~\ref{theorem=generalized_mixed_Bavard}). We visualize the deductions between these four theorems in Figure~\ref{fig=visual}. The first half of this article provides an  expository account of the three Bavard dualities: the original one (Theorem~\ref{theorem=Bavard}), the generalized one (Theorem~\ref{theorem=generalized_Bavard}) and the mixed one (Theorem~\ref{theorem=mixed_Bavard}). The latter half is devoted to the self-contained proof of the generalized mixed Bavard duality theorem (Theorem~\ref{theorem=generalized_mixed_Bavard}), which is a new result presented in this article. Since Theorem~\ref{theorem=generalized_mixed_Bavard} recovers the other three Bavard dualities, we provide complete proofs of the four Bavard dualities in a unified manner.

\begin{figure}[h]
  \centering
    \begin{tikzpicture}[auto]
    \node[draw, align=center] (01) at (0, 2) {\underline{original (Theorem~\ref{theorem=Bavard}, \cite{Bavard}):} \\ for $\hl \in \CG$};
    \node[draw, align=center] (11) at (8.5, 2) {\underline{generalized  (Theorem~\ref{theorem=generalized_Bavard}, \cite{Calegari}):} \\ for $c \in \BBB_1(\Gg;\mathbb{R})$};
    \node[draw, align=center] (00) at (0, 0) {\underline{mixed  (Theorem~\ref{theorem=mixed_Bavard}, \cite{KKMMM1}):} \\ for $\yl \in \CGN$};
    \node[draw, align=center] (10) at (8.5, 0) {\underline{generalized mixed (Theorem~\ref{theorem=generalized_mixed_Bavard}):} \\ for $c \in \CR(\Gg,\Ng)$ };
    \draw[<-] (01) to node {$\Ng=\Gg$} (00);
    \draw[<-] (01) to node {$c=\hl$ (Lemma~\ref{lem=domainG})} (11);
    \draw[<-] (11) to node {$\Ng=\Gg$  (Remark~\ref{rem=BBB_C})} (10);
    \draw[<-] (00) to node {$c=\yl$ (Lemma~\ref{lemma=domain})} (10);
    \end{tikzpicture}
    \caption{the Bavard duality and three its strengthenings}  \label{fig=visual}
\end{figure}

First, we exhibit the statement of the original Bavard duality. For a group $\Gg$, the \emph{stable commutator length} $\scl$ of an element $\hl$ in the commutator subgroup $[\Gg,\Gg]$ is defined as the limit of the ratio of the commutator length ($\cl$) of $\hl^n$ over $n$ as $n\to \infty$ (Definition~\ref{defn=scl}). We refer the reader to Calegari's book \cite{Calegari} for various applications of $\cl$ and $\scl$ in geometric topology and geometric group theory. Surprisingly, computations of $\scl$ may be considerably easier than those of $\cl$; for instance, Calegari \cite{Calegari1} provides an algorithm to compute $\scl_{F}(\hl)$ for every $\hl\in [F,F]$ where $F$ is a free group. This is mainly attributed to the celebrated \emph{duality theorem of Bavard} \cite{Bavard}, which describes the value of $\scl$ in terms of homogeneous quasimorphisms on the group. Here, a real-valued function on a group is called a \emph{quasimorphism} if it satisfies the equality of being a group homomorphism up to uniformly bounded additive error; see Definition~\ref{defn=qm} for the definition of homogeneous quasimorphisms. From this point of view, the space $\HGR$ can be regarded as the space of genuine homomorphisms from $\Gg$ to $\RR$. By setting $\QG$ as the $\RR$-linear space of homogeneous quasimorphisms on a group $\Gg$, we can state the Bavard duality theorem as follows.

\begin{theorem}[Bavard duality theorem, \cite{Bavard}]\label{theorem=Bavard}
Let $\Gg$  be a group. Then, for every $\hl\in \CG$, we have
\[
\sclG(\hl)=\sup_{\phf\in \QG\setminus \HGR}\frac{|\phf(\hl)|}{2\DD(\phf)}.
\]
\end{theorem}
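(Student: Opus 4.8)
The plan is to prove the two inequalities separately, as is standard for Bavard-type dualities.

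\medskip

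\noindent\textbf{The easy direction ($\geq$).}
First I would establish $\sclG(\hl)\geq \sup_{\phf}\frac{|\phf(\hl)|}{2\DD(\phf)}$. Fix a homogeneous quasimorphism $\phf\in\QG$ with $\DD(\phf)>0$. The key observation is that for any product of $k$ commutators $\hl^n = \prod_{i=1}^{k}[a_i,b_i]$, the defining inequality of a quasimorphism, together with homogeneity (which forces $\phf$ to vanish on... well, to be conjugation-invariant and to satisfy $\phf([a,b])\leq \DD(\phf)$ for a single commutator), yields $|\phf(\hl^n)| \leq (2k-1)\DD(\phf)$; here one uses that $\phf$ is homogeneous so $\phf(\hl^n) = n\phf(\hl)$, and a telescoping estimate on the product of commutators. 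Rearranging gives $\cl_\Gg(\hl^n)\geq \frac{n|\phf(\hl)|}{2\DD(\phf)} + \frac12$, and dividing by $n$ and letting $n\to\infty$ gives $\sclG(\hl)\geq \frac{|\phf(\hl)|}{2\DD(\phf)}$. Taking the supremum over all such $\phf$ finishes this direction. This direction is essentially formal and I expect no obstacle.

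\medskip

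\noindent\textbf{The hard direction ($\leq$).}
The substance is the reverse inequality: given $\hl\in\CG$, I must produce, for any $\epsilon>0$, a homogeneous quasimorphism $\phf$ with $\frac{|\phf(\hl)|}{2\DD(\phf)}\geq \sclG(\hl)-\epsilon$. The standard route is via the Hahn--Banach theorem. Consider the real vector space $\BGR$ of (suitably normalized) real $1$-boundaries, equipped with the \emph{filling norm} (the infimum of $\ell^1$-norms of $2$-chains bounding a given $1$-boundary), and recall that $\scl$ extends to a (semi)norm on this space that equals, up to the factor involving $\chim$ of surfaces, the stable commutator length. On the one-dimensional subspace spanned by $\hl$ (viewed as a $1$-boundary), the linear functional sending $\hl\mapsto \sclG(\hl)$ is dominated by the $\scl$-seminorm. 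By Hahn--Banach I extend it to a functional on all of $\BGR$ bounded by the $\scl$-norm; pulling this functional back appropriately and using the isometric identification (Bavard's key lemma) between the dual of $(\BGR,\scl)$ and $\QG/\HGR$ equipped with the defect (semi)norm $\DD$, I obtain the desired quasimorphism. Concretely the identification goes: a homogeneous quasimorphism $\phf$ gives rise to the bounded $2$-cocycle $\delta\phf$ via $(\delta\phf)(g,h)=\phf(g)+\phf(h)-\phf(gh)$, and pairing a $2$-chain filling $\hl$ against $\delta\phf$ recovers $\phf(\hl)$ up to controlled error; conversely a functional on $\BGR$ bounded in filling norm is represented by a bounded $2$-cocycle, which after correcting by a coboundary and using $\HHH^2_b$ vs.\ $\QG/\HGR$ comes from a homogeneous quasimorphism.

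\medskip

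\noindent\textbf{Main obstacle.}
The crux — and the step I would spend the most care on — is the precise bookkeeping of constants in the identification between $\QG/\HGR$ with the defect norm and the dual space of $(\BGR, \fl)$, and verifying that the Hahn--Banach extension, which a priori is merely a bounded functional, is genuinely represented by (the coboundary of) a \emph{homogeneous} quasimorphism rather than just a bounded $2$-cocycle. This requires the identification $\QG/\HGR \cong \EH\HHH^2_b(\Gg;\RR)$ (the space of exact bounded classes) and the fact that a bounded $2$-cocycle that is exact over $\RR$ and pairs trivially with $2$-cycles comes from a quasimorphism; one must also check that the defect $\DD(\phf)$ is comparable to the norm of the associated functional, with the factor of $2$ appearing exactly as in the statement. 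The geometric reformulation of $\scl$ via surface genus (so that $2\cl - 1 \leftrightarrow -\chim$) is what supplies that factor of $2$, and tracking it through the duality is the delicate part.
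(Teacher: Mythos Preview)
Your proposal is correct and follows essentially the same route as the paper (specialized to $N=G$): the easy direction is exactly Corollary~\ref{cor=sclabove}, and the hard direction goes via Hahn--Banach on the filling norm $\|\cdot\|'$ on $\BBB_1'(\Gg;\RR)$, whose continuous dual is identified with $\hQQQ(\Gg)/\HGR$ (Proposition~\ref{proposition=isometry}). The one point to sharpen in your bookkeeping is that the factor of $2$ does not come solely from $-\chi^-\leftrightarrow 2\cl-1$: in the paper's accounting one first proves $4\,\scl_{\Gg}=\fl_{\Gg}$ (Theorem~\ref{theorem=4scl=fill}), so Hahn--Banach produces a not-necessarily-homogeneous quasimorphism $\phfh$ with $|\phfh(\hl)|/\DD(\phfh)$ close to $4\,\scl_{\Gg}(\hl)$, and then homogenization together with $\DD(\phfh_{\mathrm h})\leq 2\DD(\phfh)$ (Corollary~\ref{cor=defect2bai}) converts the $4$ into the $2$.
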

Here, if $\QG\setminus \HGR=\emptyset$, then we regard the right-hand side of the equality above as $0$; we adopt conventions similar to this in the three strengthenings (Theorem~\ref{theorem=generalized_Bavard}, Theorem~\ref{theorem=mixed_Bavard} and Theorem~\ref{theorem=generalized_mixed_Bavard}) of Theorem~\ref{theorem=Bavard}.

In \cite{Calegari1, Calegari2}, Calegari introduced and developed theory of $\scl$ of chains on a group. Indeed, this plays a key role to the aforementioned algorithm in \cite{Calegari1}. The strengthening of the Bavard duality theorem to chains is called the \emph{generalized Bavard duality theorem}, whose statement goes follows. In Section~\ref{sec=gBavard}, we will present the definitions of the notions appearing in Theorem~\ref{theorem=generalized_Bavard} for the case where $c$ is an integral chain (more precisely, if $c\in \BBB_1(\Gg;\ZZ)$), and there we will state the duality theorem as Theorem~\ref{thm=gBavardZ}.
\begin{theorem}[generalized Bavard duality theorem, \cite{Calegari}]\label{theorem=generalized_Bavard}
Let $\Gg$  be a group. Then, for every chain $c\in \BBB_1(\Gg;\RR)$, we have
\[
\sclG(c)=\sup_{\phf\in \QG\setminus \HGR}\frac{|\phf(c)|}{2\DD(\phf)}.
\]
\end{theorem}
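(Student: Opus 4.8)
The plan is to deduce the generalized Bavard duality from the original one (Theorem~\ref{theorem=Bavard}) by a reduction argument, following Calegari. The key observation is that scl of a chain, although formally defined for an element of $\BBB_1(\Gg;\RR)$, can be reduced to scl of a single group element after passing to an auxiliary group. More precisely, given a rational chain $c = \sum_i r_i g_i$ with $\sum_i r_i g_i \in \BBB_1(\Gg;\QQ)$, one clears denominators and, for each $g_i$ appearing with (integer) multiplicity $n_i$, one replaces $g_i$ by $n_i$ conjugates $t_{i,1} g_i t_{i,1}^{-1}, \ldots$ inside a larger group $\Gg \ast F$ where $F$ is free on the $t_{i,j}$; since the chain is a boundary, the resulting product of conjugates lies in the commutator subgroup, and Calegari's computation shows $\sclG(c)$ equals $\scl$ of that product in $\Gg \ast F$ (up to the appropriate normalization). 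The general real case then follows by continuity/homogeneity of both sides in $c$, together with $\QQ$-linearity, since $\scl$ of chains is known to be continuous (indeed Lipschitz) on $\BBB_1(\Gg;\RR)$ with respect to the $\ell^1$-norm, and the right-hand side, being a supremum of continuous linear functionals, is lower semicontinuous and homogeneous.

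Concretely, I would proceed as follows. First, I would give the precise definitions (as the excerpt promises to do in Section~\ref{sec=gBavard}): $\scl$ of an integral chain $c \in \BBB_1(\Gg;\ZZ)$ via the filling norm / admissible surfaces, the evaluation $\phf(c) = \sum_i r_i \phf(g_i)$ of a homogeneous quasimorphism on a chain, and the extension to $\QQ$- and $\RR$-coefficients by homogeneity and continuity. Second, I would prove the easy inequality $\sclG(c) \geq \sup_\phf \frac{|\phf(c)|}{2\DD(\phf)}$ directly: for any admissible surface $S$ bounding $nc$ and any $\phf \in \QG$, a standard counting argument (the same one used in the proof of Theorem~\ref{theorem=Bavard}) bounds $|\phf(c)|$ by $\frac{2\DD(\phf)}{n}$ times $-\chi(S)$ plus a correction that vanishes in the limit, giving $|\phf(c)| \leq 2\DD(\phf)\,\sclG(c)$. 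Third, for the reverse inequality, I would set up the reduction to a single element in $\Gg \ast F$ described above, verify that admissible surfaces for $c$ correspond to admissible surfaces for the element (this is where one must be careful about how conjugating letters $t_{i,j}$ interact with the Euler characteristic bookkeeping), apply Theorem~\ref{theorem=Bavard} in $\Gg \ast F$, and finally check that homogeneous quasimorphisms on $\Gg \ast F$ restrict/correspond to homogeneous quasimorphisms on $\Gg$ in a way that matches the defect and the value on $c$ — using that quasimorphisms on a free factor extend and that the extra free generators can be sent to $0$ without increasing the defect. Fourth, I would pass from $\QQ$ to $\RR$ coefficients by the continuity argument.

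The main obstacle I anticipate is the bookkeeping in the reduction step: one must show that the infimum of $-\chi(S)/2n$ over admissible surfaces for the chain $c$ genuinely equals the corresponding infimum for the single element $\prod_{i,j} t_{i,j} g_i t_{i,j}^{-1}$ in $\Gg \ast F$, and in particular that introducing the free letters $t_{i,j}$ neither creates cheaper fillings nor obstructs existing ones. The clean way to handle this is to work with the filling norm on $B_1^H(\Gg;\RR)$ (the space of boundaries) and recognize $\scl$ of a chain as the filling norm of its image, then use that the filling norm is insensitive to free products in the relevant sense; alternatively one argues combinatorially that any admissible surface in $\Gg \ast F$ for the conjugated product can be surgered, at no cost to $-\chi$, into one whose boundary maps to $nc$ in $\Gg$. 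A secondary subtlety is justifying the duality pairing extension to real chains: here one invokes that $\QG$ separates chains up to the subspace on which $\scl$ vanishes, and that both sides are continuous, so equality on the dense $\QQ$-subspace suffices.
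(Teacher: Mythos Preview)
Your route is genuinely different from the paper's. The paper does not reduce the chain case to the single-element case; it proves the more general mixed statement (Theorem~\ref{theorem=generalized_mixed_Bavard}) directly and then recovers Theorem~\ref{theorem=generalized_Bavard} by setting $N=G$ (Remark~\ref{rem=BBB_C}). Concretely, the paper defines a filling norm $\fl_{G,N}$ on $\CQ(G,N)$, proves $4\,\scl_{G,N}=\fl_{G,N}$ by Euler-characteristic bookkeeping on $(G,N)$-simplicial surfaces (Theorem~\ref{theorem=4scl=fill}), identifies $(\QGN/\HHH^1_{N}(G;\RR),\DD''_{G,N})$ isometrically with the continuous dual of $(\BBB_1'(G,N;\RR),\|\cdot\|')$ and invokes Hahn--Banach (Corollary~\ref{cor=HB}), and finally homogenizes at the cost of a factor $2$ in the defect (Corollary~\ref{cor=defect2bai}). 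Your free-product reduction is appealing because it recycles Theorem~\ref{theorem=Bavard} as a black box; but the reduction lemma you need --- the $m$-term analogue of Proposition~\ref{prop=chain2}, with correction term $+(m-1)/2$ --- is itself proved (in \cite{Calegari}) via exactly the admissible-surface calculus that drives the direct argument, so you relocate rather than avoid that work.

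There is a genuine gap in your final step. The passage from $\QQ$- to $\RR$-coefficients by $\ell^1$-continuity fails: a nontrivial homogeneous quasimorphism $\phf$ is unbounded on $G$, so the linear functional $c\mapsto\phf(c)$ is \emph{not} $\ell^1$-continuous on $\CCC_1(G;\RR)$, and $\sclG$ is not $\ell^1$-Lipschitz on $\BBB_1(G;\RR)$ either --- for instance, in a free group the chain consisting of the single element $[a,b]^n$ has $\ell^1$-norm $1$ but $\sclG([a,b]^n)=n/2$. Hence neither ``$\scl$ is Lipschitz'' nor ``the right-hand side is a supremum of continuous linear functionals'' is available to you. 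The paper sidesteps this entirely: it \emph{defines} $\scl_{G,N}$ on $\CR(G,N)$ by the right-hand side of the duality formula (Definition~\ref{definition=scl_real}) and then checks that this agrees with the combinatorial definition on $\CQ(G,N)$ (Theorem~\ref{generalized mixed bavard rational}). If you insist on a density argument, it must be run in the seminorm induced by $\sclG$ itself (where $\CQ$ is dense in $\CR$ essentially by construction), not in the $\ell^1$-norm.
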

This is the first strengthening of the Bavard duality in the present article: when the chain $c$ is of the form $c=\hl$ with $\hl\in \CG$, Theorem~\ref{theorem=generalized_Bavard} recovers Theorem~\ref{theorem=Bavard}.

In what follows, we discuss the second strengthening of the Bavard duality in the present article. In \cite{KKMM1}, Kawasaki, Kimura, Matsushita and Mimura proved the \emph{mixed Bavard duality theorem} (previously obtained by \cite{KK} with an extra assumption that $\CGN=\Ng$); we state it here as Theorem~\ref{theorem=mixed_Bavard}. Here, `mixed' means that we treat a pair $(\Gg,\Ng)$ of a group $\Gg$ and its normal subgroup $\Ng$. In this setting, the notion of the \emph{stable mixed commutator length} $\scl_{\Gg,\Ng}$ (mixed $\scl$) on the mixed commutator subgroup $[\Gg,\Ng]$ is defined (Definition~\ref{defn=mixedscl}). The counterpart of $\QG$ in the mixed Bavard duality is $\QNG$, which is the $\RR$-linear space of \emph{$\Gg$-invariant} homogeneous quasimorphisms on $\Ng$. Here, we discuss the $\Gg$-invariance under the $\Gg$-action on functions on $\Ng$ via the $\Gg$-action by conjugation; see Definition~\ref{defn=invqm} for more details.
\begin{theorem}[mixed Bavard duality theorem, \cite{KKMM1}]\label{theorem=mixed_Bavard}
Let $\Gg$ be a group and $\Ng$ its normal subgroup. Then, for every $\yl\in \CGN$, we have
\[
\sclGN(\yl)=\sup_{\muf\in \QNG\setminus \HNRG}\frac{|\muf(\yl)|}{2\DD(\muf)}.
\]
\end{theorem}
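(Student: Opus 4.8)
The plan is to deduce this from the yet-to-be-stated generalized mixed Bavard duality (Theorem~\ref{theorem=generalized_mixed_Bavard}) specialized to a single group element, exactly along the arrow labelled $c=\yl$ (Lemma~\ref{lemma=domain}) in Figure~\ref{fig=visual}. Concretely, given $\yl \in \CGN$, one regards $\yl$ as an element of the chain space $\CR(\Gg,\Ng)$, checks via Lemma~\ref{lemma=domain} that $\sclGN(\yl)$ computed as a chain agrees with the element-wise definition of mixed $\scl$, and observes that for a homogeneous $\Gg$-invariant quasimorphism $\muf$ the pairing $\muf(\yl)$ as a chain coincides with the value $\muf(\yl)$ as a function on $\Ng$. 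Then Theorem~\ref{theorem=generalized_mixed_Bavard} applied to $c = \yl$ yields the displayed formula verbatim, including the convention about the empty supremum. This is the route the paper advertises, so the ``proof'' at this point in the exposition is essentially the verification that the element case is a special case of the chain case.

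First I would recall the relevant definitions so that the specialization is transparent: the mixed commutator subgroup $\CGN$, the mixed commutator length $\clGN$ counting the minimal number of factors $[\gl,\nl]$ with $\gl\in\Gg$, $\nl\in\Ng$ needed to express an element, and $\sclGN(\yl)=\lim_{n\to\infty}\clGN(\yl^n)/n$. On the dual side I would recall that $\QNG$ consists of homogeneous quasimorphisms $\muf\colon\Ng\to\RR$ with $\muf(\gl\nl\gl^{-1})=\muf(\nl)$ for all $\gl\in\Gg$, $\nl\in\Ng$, that $\HNRG$ is the subspace of $\Gg$-invariant genuine homomorphisms, and that $\DD(\muf)$ is the defect. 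The point to make explicit is that a $\Gg$-invariant homogeneous quasimorphism on $\Ng$ extends (after restricting the pairing) to a linear functional on $\CR(\Gg,\Ng)$, and that the generalized mixed framework is set up precisely so that its pairing with the chain $\yl$ recovers $\muf(\yl)$.

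The one genuine content point, which I would isolate as a short lemma or cite directly, is that $\sclGN$ of the \emph{element} $\yl$ equals $\sclGN$ of the \emph{chain} $\yl$: the chain-level stable mixed commutator length is defined through a filling-norm/boundary formalism, and one must check it does not undercount relative to the naive element-wise commutator-length definition. This is the analogue, in the mixed setting, of the classical fact (used in deducing Theorem~\ref{theorem=Bavard} from Theorem~\ref{theorem=generalized_Bavard} via Lemma~\ref{lem=domainG}) that $\scl$ of a chain supported on a single conjugacy class agrees with $\scl$ of that element up to the usual normalization; no new idea beyond the chain-level definitions in Section~\ref{sec=gBavard} and the mixed analogue in the latter half is needed. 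I expect this agreement-of-definitions step to be the only place requiring care; everything else is formal substitution $c=\yl$ into Theorem~\ref{theorem=generalized_mixed_Bavard} together with the matching of the empty-supremum conventions, and the proof concludes immediately once Lemma~\ref{lemma=domain} is in hand.
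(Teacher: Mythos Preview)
Your proposal is correct and is exactly the route the paper takes: Theorem~\ref{theorem=mixed_Bavard} is obtained from Theorem~\ref{theorem=generalized_mixed_Bavard} by setting $c=\yl$, with Lemma~\ref{lemma=domain} guaranteeing that $\yl\in\CGN$ lies in $\CZ(\Gg,\Ng)\subset\CR(\Gg,\Ng)$. The agreement of the chain and element definitions of $\sclGN(\yl)$ is immediate from Definitions~\ref{definition=commutator_lengths_for_integral_chains} and~\ref{defn=scl_chain} in the case $m=1$ (no conjugations are involved, so $\clGN$ of the one-term chain $\yl^n$ is just $\clGN(\yl^n)$), and the pairing $\muf(c)$ from Definition~\ref{defn=mufR} with $c=\yl$ is literally $\muf(\yl)$; so the ``one genuine content point'' you flag is in fact a triviality rather than something requiring a separate lemma.
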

This is the second strengthening of the Bavard duality in this article. Indeed, the `mixed' setting for $\Ng=\Gg$ is exactly the setting for a single group $\Gg$. In particular, when $\Ng=\Gg$, Theorem~\ref{theorem=mixed_Bavard} recovers Theorem~\ref{theorem=Bavard}.

The third (and last) strengthening of the Bavard duality in this article is the \emph{generalized mixed Bavard duality theorem}; as is mentioned at the beginning of this introduction, this is a new result. In Section~\ref{sec=gmBavard}, we will present the definitions of the notions for chains appearing in Theorem~\ref{theorem=generalized_mixed_Bavard}.
\begin{theorem}[generalized mixed Bavard duality theorem]\label{theorem=generalized_mixed_Bavard}
Let $\Gg$ be a group and $\Ng$ its normal subgroup. For every chain $c \in \CR(\Gg,\Ng)$, we have
\[
\sclGN(c)=\sup_{\muf\in \QNG\setminus \HNRG}\frac{|\muf(c)|}{2\DD(\muf)}.
\]
\end{theorem}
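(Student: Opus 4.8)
The plan is to prove the two inequalities of the stated equality separately; write $R(c):=\sup_{\muf\in\QNG\setminus\HNRG}\frac{|\muf(c)|}{2\DD(\muf)}$ for the right-hand side. The inequality $\sclGN(c)\ge R(c)$ is the soft direction, and it is precisely where the hypothesis of $\Gg$-invariance is used. Fix $\muf\in\QNG$. The basic estimate is that for $\gl\in\Gg$ and $n\in\Ng$,
\[
|\muf([\gl,n])|=|\muf(\gl n\gl^{-1}\cdot n^{-1})|\le|\muf(\gl n\gl^{-1})+\muf(n^{-1})|+\DD(\muf)=\DD(\muf),
\]
since $\muf(\gl n\gl^{-1})=\muf(n)$ by $\Gg$-invariance and $\muf(n^{-1})=-\muf(n)$ by homogeneity. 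Summing the defect inequality over a product of $k$ mixed commutators yields a bound of the form $(2k-1)\DD(\muf)$; extending linearly over the expressions and limiting procedure defining $\sclGN(c)$ (first over multiples of $c$, then dividing and passing to the limit) gives $|\muf(c)|\le2\DD(\muf)\sclGN(c)$. For $\muf\notin\HNRG$ one has $\DD(\muf)>0$, so this rearranges to $\frac{|\muf(c)|}{2\DD(\muf)}\le\sclGN(c)$, and taking the supremum gives $R(c)\le\sclGN(c)$.

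For the reverse inequality $\sclGN(c)\le R(c)$ we route through the homological description of mixed $\scl$ developed in Section~\ref{sec=gmBavard}. Let $\mathrm{fill}_{\Gg,\Ng}$ denote the mixed filling pseudo-norm on $\CR(\Gg,\Ng)$: the infimum of $\ell^1$-norms of $2$-chains in the relative bar complex associated to the pair $\GN$ whose boundary represents $c$ after passing to the quotient by the subspace spanned by the conjugation relations $\gl x\gl^{-1}-x$ ($\gl\in\Gg$), the degenerate simplices, and the homogenization relations $x^m-m\,x$ ($x\in\Ng$, $m\in\ZZ$). The geometric heart of the argument is the identity that $\sclGN(c)$ equals a fixed positive multiple of $\mathrm{fill}_{\Gg,\Ng}(c)$, normalized so that the constant $2$ appears in the final duality. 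One inequality converts a decomposition of a multiple of $c$ into mixed commutators into an explicit filling $2$-chain of controlled $\ell^1$-mass; the converse straightens an arbitrary filling $2$-chain into an admissible surface whose boundary wraps $c$ and compares $-\chi^{-}$ of that surface with the $\ell^1$-mass via a simplicial-approximation (rationality) argument, reducing general $\RR$-chains to $\QQ$-chains and then to integral ones.

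Next comes the functional-analytic step. By construction $\mathrm{fill}_{\Gg,\Ng}$ is a quotient seminorm of the $\ell^1$-norm along the boundary map, so Hahn--Banach realizes $\mathrm{fill}_{\Gg,\Ng}(c)$ as $\sup|f(c)|$ over linear functionals $f$ on $1$-chains that annihilate the conjugation, degeneracy and homogenization relations and whose coboundary $\delta f$ has sup-norm at most $1$. Unwinding these conditions, the restriction of such an $f$ to $\Ng$ is exactly a homogeneous (invariance under $x^m-m\,x$) quasimorphism on $\Ng$ that is $\Gg$-invariant (invariance under $\gl x\gl^{-1}-x$) and has $\DD(f)\le1$, while $f(c)$ differs from $\mathrm{fill}_{\Gg,\Ng}(c)$ by at most a prescribed $\varepsilon$. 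Hence $\muf:=f|_{\Ng}\in\QNG$ satisfies $\frac{|\muf(c)|}{2\DD(\muf)}\ge\sclGN(c)-\varepsilon$; if $\DD(\muf)=0$ the same construction forces $\mathrm{fill}_{\Gg,\Ng}(c)=0$, hence $\sclGN(c)=0$, and the inequality is trivial. Letting $\varepsilon\to0$ finishes the proof. Finally, specializing $\Ng=\Gg$ (via Remark~\ref{rem=BBB_C}) recovers Theorem~\ref{theorem=generalized_Bavard}, and specializing $c=\yl\in\CGN$ (via Lemma~\ref{lemma=domain}) recovers Theorem~\ref{theorem=mixed_Bavard}; each of these in turn recovers Theorem~\ref{theorem=Bavard}.

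The main obstacle is the geometric identity together with carrying the $\Gg$-equivariance correctly through the homological machinery: one must set up the relative bar complex and choose the reducing relations so that (i) the filling pseudo-norm genuinely computes mixed $\scl$ — in particular the surface-straightening and rationality arguments must be performed for chains over $\Ng$ rather than for single elements, and the passage between $\RR$-, $\QQ$- and $\ZZ$-coefficients must be checked on both sides, including the case $\CGN\ne\Ng$ where $c$ must be recorded relative to $\Ng/\CGN$ — and (ii) the functionals produced by Hahn--Banach are forced simultaneously to be $\Gg$-invariant and homogeneous with the correct normalization of the defect. Once the right complex is in place, the remaining steps — the soft inequality, the Hahn--Banach application itself, and the identification of the resulting functionals with elements of $\QNG$ — are routine.
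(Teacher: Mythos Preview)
Your overall strategy---recast $\sclGN$ as a filling seminorm and invoke Hahn--Banach---matches the paper's, but your implementation diverges at a point that leaves a real gap around the constant~$2$. The paper does \emph{not} quotient $1$-chains by the homogenization relations $x^m-mx$ and conjugation relations $\gl x\gl^{-1}-x$ up front. It works with the unreduced norm $\|\cdot\|'$ on $\BBB'_1(\Gg,\Ng;\RR)$ (Definition~\ref{defn=BBB'}), whose continuous dual is the space $\QGN/\HHH^1_{\Ng}(\Gg;\RR)$ of \emph{$\Ng$-quasimorphisms on $\Gg$} under the defect $\DD''_{\Gg,\Ng}$ (Proposition~\ref{proposition=isometry}). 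The filling norm is the limit $\fl_{\Gg,\Ng}(c)=\lim_n\|x_1^n+\cdots-\xbr_1^n-\cdots\|'/n$ over powers, and the geometric identity actually proved is $4\,\sclGN=\fl_{\Gg,\Ng}$ (Theorem~\ref{theorem=4scl=fill}). Hahn--Banach (Corollary~\ref{cor=HB}) then produces an $\Ng$-quasimorphism $\psf$ on $\Gg$, not an element of $\QNG$; only at the very end does one restrict to $\Ng$ and homogenize, and it is precisely the homogenization bound $\DD(\hat\phi_{\mathrm h})\le 2\DD(\hat\phi)$ of Corollary~\ref{cor=defect2bai} that converts the~$4$ into the~$2$.

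In your route, by quotienting out $x^m-mx$ you force the Hahn--Banach functional to be homogeneous from the start, so the homogenization step---and with it the source of the factor~$2$---is no longer available. You therefore need your quotient filling seminorm to equal $2\,\sclGN$ rather than $4\,\sclGN$, and you dismiss this as a ``normalization.'' It is not a free choice: the surface/straightening Euler-characteristic bookkeeping you sketch naturally lands on~$4$, and the assertion that passing to the homogenized quotient exactly halves the filling norm is equivalent, via your own duality step, to the very inequality $\sclGN(c)\le\sup_\muf|\muf(c)|/(2\DD(\muf))$ you are trying to prove. Without an independent argument for that halving your proof is circular at this point. The clean fix is the paper's: dualize the unreduced $\|\cdot\|'$ to land in $\Ng$-quasimorphisms first, then restrict and homogenize.

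A smaller point: in the paper the substantive theorem is the rational case (Theorem~\ref{generalized mixed bavard rational}); for $c\in\CR(\Gg,\Ng)$ the quantity $\sclGN(c)$ is \emph{defined} by the duality formula (Definition~\ref{definition=scl_real}), so there is no separate approximation of real chains by rational ones to carry out.
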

When $\Ng=\Gg$, Theorem~\ref{theorem=generalized_mixed_Bavard} recovers Theorem~\ref{theorem=generalized_Bavard} (by Remark~\ref{rem=BBB_C} in Section~\ref{sec=gmBavard}); when the chain $c$ is of the form $c=\yl$ with $\yl\in \CGN$, Theorem~\ref{theorem=generalized_mixed_Bavard} recovers Theorem~\ref{theorem=mixed_Bavard} (by Lemma~\ref{lemma=domain}).

In the remaining part of this article, we study the $\RR$-linear space $\WGN$ defined as \eqref{eq=W} below. Let $\Gg$ be a group and $\Ng$ its normal subgroup. By Theorem~\ref{theorem=Bavard},  $\scl_{\Gg}$ may be described in terms of the quotient vector space $\QG/\HGR$ (Remark~\ref{rem=quotient}). By Theorem~\ref{theorem=mixed_Bavard}, for $\scl_{\Gg,\Ng}$ the counterpart of this space is $\QNG/\HNRG$ (Remark~\ref{rem=quotientGN}). The inclusion map $i\colon \Ng \hookrightarrow \Gg$ induces an $\RR$-linear map $\overline{i^{\ast}}\colon \QG/\HGR\to \QNG/\HNRG$. Then, $\WGN$ is defined as
\begin{equation}\label{eq=W}
\WGN=\Coker \Big(\overline{i^{\ast}}\colon \QG/\HGR\to \QNG/\HNRG \Big).
\end{equation}
The map $i$ induces an $\RR$-linear map $i^{\ast}\colon \QG\to \QNG$; $\phf\mapsto \phf|_{\Ng}$. In terms of $i^{\ast}$, \eqref{eq=W} is rewritten as
\[
\WGN=\QNG/ \big(\HNRG +i^{\ast}\QG \big);
\]
this space was introduced in \cite{KKMMM1} without symbol, and the symbol $\WGN$ was given in \cite{KKMMMsurvey}.
The space $\WGN$ is related to the comparison problem between $\scl_{\Gg}$ and $\scl_{\Gg,\Ng}$ on $[\Gg,\Ng]$ via the mixed Bavard duality (Theorem~\ref{theorem=mixed_Bavard}). For instance, in \cite{KKMMM1} the authors of the present article proved that if $\WGN=0$ and if $\Gam=\Gg/\Ng$ is amenable, then
\[
\scl_{\Gg,\Ng}(\yl)\leq 2\scl_{\Gg}(\yl)
\]
holds for every $\yl\in \CGN$ (we note that $\scl_{\Gg,\Ng}(\yl)\geq \scl_{\Gg}(\yl)$ follows from the definition; hence, $\scl_{\Gg}$ and $\scl_{\Gg,\Ng}$ are bi-Lipschitzly equivalent on $[\Gg,\Ng]$ in this setting). We will see this in Subsection~\ref{subsec=Wscl} (Theorem \ref{thm=comparison_amenable}). We refer the reader to \cite[Section~8]{KKMMMsurvey} and \cite{KKMMMcg} for further directions on the comparison problem.

\

\noindent
\textbf{Organization of the present article}
In Section~\ref{sec=qmBavard}, we present the definitions of quasimorphisms and scl; there, we prove the easy direction (Corollary~\ref{cor=sclabove}) of the Bavard duality theorem. In Section~\ref{sec=gBavard}, we treat the generalized Bavard duality theorem, restricted to integral chains. In Section~\ref{sec=invqm}, we discuss basic properties of invariant quasimorphisms and mixed scl, and state the mixed Bavard duality theorem. In Section~\ref{sec=gmBavard}, we provide a self-contained proof of the generalized mixed Bavard duality theorem (Theorem~\ref{theorem=generalized_mixed_Bavard}), which is a new result. As Figure~\ref{fig=visual} illustrates, this proof for certain special cases may be regarded as  proofs of  Theorem~\ref{theorem=Bavard}, Theorem~\ref{theorem=generalized_Bavard} and Theorem~\ref{theorem=mixed_Bavard}, respectively. In Section~\ref{sec=further}, we study further properties of mixed scl for chains. In Section~\ref{sec=W}, we first discuss some relation between the space $\WGN$ and Bavard dualities (Theorem~\ref{thm=comparison_amenable} and Proposition~\ref{prop=bilip_criterion}); then, we present basic properties of this space, such as dimension computations and natural isomorphisms. In Section~\ref{sec=exactW}, we obtain exact sequences related to $\WGN$.

\

\noindent
\textbf{Notation and conventions}

For a group $\Gg$, the group unit of $\Gg$ is written as $1_{\Gg}$. Our convention of group commutators is: $[a,b]=aba^{-1}b^{-1}$.
For $s,t\in \RR$ and for $D\in \mathbb{R}_{\geq 0}$, we write $s\sim_D t$ if $|s-t|\leq D$. Let $\mathbb{N}=\{1,2,3,\ldots\}$ be the set of positive integers.  In this article, unless otherwise stated, a surface means a compact orientable 2-dimensional manifold; we do not assume that a surface is connected. This convention for surfaces is convenient specially for our discussions in Section~\ref{sec=gmBavard} and Section~\ref{sec=further}.  We equip a group with the discrete topology (when we discuss group cohomology and other concepts).

\section{Quasimorphisms and the Bavard duality}\label{sec=qmBavard}

In this section, we review the definitions of quasimorphisms and scl, and we state the original Bavard duality theorem. We refer the reader to \cite{Calegari} as a treatise on quasimorphisms and scl.

\subsection{Quasimorphisms}\label{subsec=qm}

\begin{definition}[quasimorphisms]\label{defn=qm}
Let $\Gg$ be a group.
\begin{enumerate}[(1)]
  \item A map $\phfh\colon \Gg\to \RR$ is called a \emph{quasimorphism} on $\Gg$ if there exists $D\in \RR_{\geq 0}$ such that for every $\gl_1,\gl_2\in \Gg$
\[
\phfh(\gl_1\gl_2)\sim_D \phfh(\gl_1)+\phfh(\gl_2)
\]
holds. The minimum of such $D$ is called the \emph{defect} $\DD(\phfh)$ of $\phfh$. In other words,
\[
\DD(\phfh)=\sup\{|\phfh(\gl_1\gl_2)-\phfh(\gl_1)-\phfh(\gl_2)| \;|\; \gl_1,\gl_2\in \Gg\}.
\]
We write  $\QhG$ for the $\RR$-linear space of quasimorphisms on $\Gg$.
\item A map $\phf\colon \Gg\to \RR$ is said to be \emph{homogeneous} if its restriction to every cyclic subgroup is a homomorphism, that is, $\phf(\gl^n)=n\phf(\gl)$ holds for every $\gl\in \Gg$ and every $n\in \ZZ$.
\item We write  $\QG$ for the $\RR$-linear space of homogeneous quasimorphisms on $\Gg$.
\end{enumerate}
\end{definition}

Lemma~\ref{lem=homoge1} below  allows us to focus on \emph{homogeneous} quasimorphisms in the study of quasimorphisms. For a real-valued function $\ff\colon X\to \RR$ on a set $X$, the \emph{$\ell^{\infty}$-norm} of $\ff$ is defined by $\|\ff\|_{\infty}=\sup\{|\ff(x)|\;|\;x\in X\}$. We write $\ell^{\infty}(X;\RR)$ for the real-valued $\ell^{\infty}$-space on $X$:
\[
\ell^{\infty}(X;\RR)=\{\ff\colon X\to \RR\; |\; \|\ff\|_{\infty}<\infty\}.
\]

We recall Fekete's lemma on subadditive sequences, which plays a key role to the homogenization process for quasimorphisms (Lemma~\ref{lem=homoge1} and Lemma~\ref{lem=homoge2}).

\begin{lemma}[Fekete's lemma]\label{lem=Fekete}
Let $(a_n)_{n\in \mathbb{N}}$ be a real sequence that is subadditive, meaning that $a_{m+n}\leq a_m+a_n$ holds for every $m,n\in \mathbb{N}$. Then the limit $\lim\limits_{n\to \infty}\dfrac{a_n}{n}$ exists in $[-\infty,\infty)$, and it equals $\inf\limits_{n\in \mathbb{N}}\dfrac{a_n}{n}$.

In particular, if $(a_n)_{n\in \mathbb{N}}$ is a non-negative real sequence that is subadditive, then the limit $\lim\limits_{n\to \infty}\dfrac{a_n}{n}$ exists in $[0,\infty)$, and it equals $\inf\limits_{n\in \mathbb{N}}\dfrac{a_n}{n}$.
\end{lemma}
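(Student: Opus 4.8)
The plan is to run the classical two-sided estimate. Set $L:=\inf_{n\in\NN}(a_n/n)$. This infimum makes sense and satisfies $L\le a_1/1<\infty$, so $L\in[-\infty,\infty)$, matching the range in the statement. Since $a_n/n\ge L$ for every $n\in\NN$ by definition of the infimum, one immediately has $\liminf_{n\to\infty}(a_n/n)\ge L$. Hence the entire content of the lemma is the reverse bound $\limsup_{n\to\infty}(a_n/n)\le L$, and to get it I would prove the seemingly weaker statement
\[
\limsup_{n\to\infty}\frac{a_n}{n}\le\frac{a_m}{m}\qquad\text{for each fixed }m\in\NN,
\]
and then take the infimum over $m$ on the right-hand side. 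Note this last step also handles the case $L=-\infty$: if the quantities $a_m/m$ are unbounded below, so is the $\limsup$, forcing the limit to be $-\infty=L$.

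To prove the displayed inequality, I would fix $m\in\NN$ and, for each integer $n>m$, perform division with remainder in the form $n=q_nm+r_n$ with $q_n\ge1$ an integer and $r_n\in\{1,2,\dots,m\}$ (using the remainder range $\{1,\dots,m\}$ rather than $\{0,\dots,m-1\}$ guarantees $q_n\ge1$). Iterating the subadditivity hypothesis gives first $a_{q_nm}\le q_n a_m$ and then $a_n=a_{q_nm+r_n}\le a_{q_nm}+a_{r_n}\le q_n a_m+a_{r_n}$, hence
\[
\frac{a_n}{n}\le\frac{q_n}{n}\,a_m+\frac{a_{r_n}}{n}.
\]
From $q_nm+1\le n\le q_nm+m$ one gets $q_n/n\to 1/m$ as $n\to\infty$; and since $r_n\in\{1,\dots,m\}$ the numbers $a_{r_n}$ take only finitely many values, so $a_{r_n}/n\to0$. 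Letting $n\to\infty$ in the displayed inequality yields $\limsup_{n\to\infty}(a_n/n)\le a_m/m$, as wanted. Combining with the first paragraph, $\lim_{n\to\infty}(a_n/n)$ exists and equals $L$.

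For the ``in particular'' assertion, if moreover $a_n\ge0$ for all $n$, then each $a_n/n\ge0$, so $L=\inf_n(a_n/n)\ge0$; since the limit equals $L$, it lies in $[0,\infty)$. There is no real obstacle here --- Fekete's lemma is elementary --- and the only point demanding a little care is the bookkeeping of the remainder: arranging that the quotient $q_n$ is at least $1$ (or, equivalently, treating small $n$ separately), and checking that taking the infimum over $m$ of the bounds $a_m/m$ correctly yields $-\infty$ in the degenerate case. I would spell these out but they are routine.
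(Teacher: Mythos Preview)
Your argument is correct and is the standard proof of Fekete's lemma. The paper itself does not supply a proof of this lemma; it merely recalls the statement as a well-known fact and uses it later, so there is nothing to compare.
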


\begin{lemma}[homogenization of quasimorphisms]\label{lem=homoge1}
Let $\Gg$ be a group. Then, for every $\phfh\in \QhG$, there exists a unique element $\phfh_{\mathrm{h}}$ in $\QG$ such that $\|\phfh-\phfh_{\mathrm{h}}\|_{\infty}<\infty$.
\end{lemma}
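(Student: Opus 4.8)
The plan is to prove Lemma~\ref{lem=homoge1} in two stages: first construct $\phfh_{\mathrm h}$ via a limit (the homogenization), then verify uniqueness. For the construction, given $\phfh\in\QhG$ with defect $D=\DD(\phfh)$, I would fix $\gl\in\Gg$ and consider the sequence $a_n=\phfh(\gl^n)$. An easy induction using the quasimorphism inequality shows $|\phfh(\gl^{m+n})-\phfh(\gl^m)-\phfh(\gl^n)|\le D$, so $(a_n+D)_{n\in\NN}$ is subadditive and $(-a_n+D)_{n\in\NN}$ is subadditive as well; hence by Fekete's lemma (Lemma~\ref{lem=Fekete}) the limit $\lim_{n\to\infty}\phfh(\gl^n)/n$ exists in $\RR$. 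Define $\phfh_{\mathrm h}(\gl)$ to be this limit. A telescoping estimate gives the crucial bound $|\phfh(\gl^n)-n\phfh(\gl)|\le (n-1)D$, and dividing by $n$ and letting $n\to\infty$ yields $|\phfh_{\mathrm h}(\gl)-\phfh(\gl)|\le D$, so in particular $\|\phfh-\phfh_{\mathrm h}\|_\infty\le D<\infty$.

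Next I would check that $\phfh_{\mathrm h}$ is a homogeneous quasimorphism. Homogeneity: for $k\in\NN$, $\phfh_{\mathrm h}(\gl^k)=\lim_n \phfh(\gl^{kn})/n = k\lim_n \phfh(\gl^{kn})/(kn)=k\phfh_{\mathrm h}(\gl)$; the cases $k=0$ and negative $k$ follow from $\phfh(1_\Gg)$ being bounded (so $\phfh_{\mathrm h}(1_\Gg)=0$) and from $|\phfh(\gl^n)+\phfh(\gl^{-n})|\le |\phfh(1_\Gg)|+D$, which forces $\phfh_{\mathrm h}(\gl^{-1})=-\phfh_{\mathrm h}(\gl)$. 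Quasimorphism property: since $\|\phfh-\phfh_{\mathrm h}\|_\infty\le D$, for any $\gl_1,\gl_2$ we get
\[
|\phfh_{\mathrm h}(\gl_1\gl_2)-\phfh_{\mathrm h}(\gl_1)-\phfh_{\mathrm h}(\gl_2)|\le |\phfh(\gl_1\gl_2)-\phfh(\gl_1)-\phfh(\gl_2)|+3D\le 4D,
\]
so $\phfh_{\mathrm h}\in\QhG$ and, being homogeneous, $\phfh_{\mathrm h}\in\QG$. (One can sharpen the defect bound later, but $4D$ suffices for existence.)

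For uniqueness, suppose $\psi_1,\psi_2\in\QG$ both satisfy $\|\phfh-\psi_i\|_\infty<\infty$. Then $\psi:=\psi_1-\psi_2\in\QG$ is a homogeneous quasimorphism with $\|\psi\|_\infty<\infty$. A bounded homogeneous quasimorphism is zero: for any $\gl$ and $n\in\NN$, $|n\psi(\gl)|=|\psi(\gl^n)|\le\|\psi\|_\infty$, so letting $n\to\infty$ gives $\psi(\gl)=0$. Hence $\psi_1=\psi_2$, proving uniqueness.

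I do not anticipate a serious obstacle here; the only mild subtlety is making the telescoping estimate $|\phfh(\gl^n)-n\phfh(\gl)|\le(n-1)D$ cleanly (one writes $\phfh(\gl^n)-n\phfh(\gl)=\sum_{j=1}^{n-1}\bigl(\phfh(\gl^{j+1})-\phfh(\gl^j)-\phfh(\gl)\bigr)$ and bounds each summand by $D$) and ensuring the limit is treated as a genuine real number rather than merely an element of $[-\infty,\infty)$ — which is guaranteed because $(a_n+D)$ being subadditive and bounded below by $-(n-1)D-|a_1|$ shows the $\inf a_n/n$ from Fekete is finite. Everything else is routine.
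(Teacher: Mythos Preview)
Your proof is correct and follows essentially the same approach as the paper's: both apply Fekete's lemma to the sequences $(\phfh(\gl^n)+D)_n$ and $(-\phfh(\gl^n)+D)_n$ to get a real-valued limit, use the telescoping bound $|\phfh(\gl^n)-n\phfh(\gl)|\le(n-1)D$ to obtain $\|\phfh-\phfh_{\mathrm h}\|_\infty\le D$, and verify homogeneity for negative powers via $\phfh(\gl^m)+\phfh(\gl^{-m})\sim_D\phfh(1_\Gg)$. Your treatment is slightly more explicit than the paper's in two places---you spell out the defect bound $\DD(\phfh_{\mathrm h})\le 4D$ (which the paper mentions only as a remark after the proof) and the uniqueness argument (which the paper calls ``straightforward'')---but there is no substantive difference in method.
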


\begin{lemma}\label{lem=homoge2}
Let $\Gg$ be a group, and let $\phfh\in \QhG$. Let $\phfh_{\mathrm{h}}$ be the homogenization of $\phfh$. Then the following hold.
\begin{enumerate}
  \item[\textup{(1)}] For every $\gl\in \Gg$, $\phfh_{\mathrm{h}}(\gl)=\lim\limits_{n\to \infty}\dfrac{\phfh(\gl^n)}{n}$.
  \item[\textup{(2)}] $\|\phfh-\phfh_{\mathrm{h}}\|_{\infty}\leq \DD(\phfh)$.
\end{enumerate}
\end{lemma}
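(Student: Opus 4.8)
The plan is to deduce both statements from a single elementary telescoping estimate: for every $\gl \in \Gg$ and every $n \in \NN$,
\[
\bigl| \phfh(\gl^n) - n\,\phfh(\gl) \bigr| \le (n-1)\,\DD(\phfh).
\]
I would prove this by induction on $n$; the case $n = 1$ is trivial, and the inductive step follows by applying the defect inequality $\phfh(\gl^n \gl) \sim_{\DD(\phfh)} \phfh(\gl^n) + \phfh(\gl)$ together with the induction hypothesis and the triangle inequality. (Alternatively one could invoke Fekete's lemma (Lemma~\ref{lem=Fekete}) applied to the subadditive sequence $n \mapsto \phfh(\gl^n) + \DD(\phfh)$ to see that $\lim_{n\to\infty} \phfh(\gl^n)/n$ exists in $\RR$; but the telescoping estimate is what yields the quantitative bound needed in part~(2), so I would record it directly.)

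For part~(1), fix $\gl \in \Gg$ and set $C := \|\phfh - \phfh_{\mathrm{h}}\|_{\infty}$, which is finite by Lemma~\ref{lem=homoge1}. Since $\phfh_{\mathrm{h}}$ is homogeneous, $\phfh_{\mathrm{h}}(\gl^n) = n\,\phfh_{\mathrm{h}}(\gl)$, and hence
\[
\left| \frac{\phfh(\gl^n)}{n} - \phfh_{\mathrm{h}}(\gl) \right| = \frac{\bigl|\phfh(\gl^n) - \phfh_{\mathrm{h}}(\gl^n)\bigr|}{n} \le \frac{C}{n}.
\]
Letting $n \to \infty$ gives $\phfh_{\mathrm{h}}(\gl) = \lim_{n\to\infty} \phfh(\gl^n)/n$.

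For part~(2), combining the telescoping estimate with the identity $\phfh_{\mathrm{h}}(\gl^n) = n\,\phfh_{\mathrm{h}}(\gl)$ and with $\bigl|\phfh(\gl^n) - \phfh_{\mathrm{h}}(\gl^n)\bigr| \le C$ yields, by the triangle inequality,
\[
n\,\bigl|\phfh(\gl) - \phfh_{\mathrm{h}}(\gl)\bigr| \le \bigl|n\,\phfh(\gl) - \phfh(\gl^n)\bigr| + \bigl|\phfh(\gl^n) - \phfh_{\mathrm{h}}(\gl^n)\bigr| \le (n-1)\,\DD(\phfh) + C
\]
for every $n \in \NN$. Dividing by $n$ and letting $n \to \infty$ gives $\bigl|\phfh(\gl) - \phfh_{\mathrm{h}}(\gl)\bigr| \le \DD(\phfh)$; taking the supremum over $\gl \in \Gg$ proves $\|\phfh - \phfh_{\mathrm{h}}\|_{\infty} \le \DD(\phfh)$.

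I do not expect any genuine obstacle: the argument is a routine chain of estimates. The only points deserving attention are the induction establishing the telescoping estimate and the observation that part~(2) relies only on the homogeneity of $\phfh_{\mathrm{h}}$ and the finiteness of $C$ (both supplied by Lemma~\ref{lem=homoge1}) rather than on part~(1); presenting~(1) first is nonetheless natural, since it makes the formula $\phfh_{\mathrm{h}}(\gl) = \lim_n \phfh(\gl^n)/n$ available as the guiding picture for the estimate in~(2).
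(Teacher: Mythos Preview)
Your proof is correct and rests on the same core estimate as the paper, namely $\phfh(\gl^n)\sim_{(n-1)\DD(\phfh)} n\phfh(\gl)$. The only difference is organizational: the paper proves Lemmas~\ref{lem=homoge1} and~\ref{lem=homoge2} simultaneously by \emph{defining} $\phfh_{\mathrm{h}}(\gl)=\lim_n \phfh(\gl^n)/n$ (so that (1) holds by construction) and then verifying homogeneity and the bound, whereas you take the existence and uniqueness of $\phfh_{\mathrm{h}}$ from Lemma~\ref{lem=homoge1} as given and derive the limit formula from its characterizing properties; both routes are standard and equally valid.
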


\begin{proof}[Proofs of Lemma~\textup{\ref{lem=homoge1}} and Lemma~\textup{\ref{lem=homoge2}}]
Let $\phfh\in \QhG$. Take an arbitrary $\gl\in \Gg$. Since the sequence $(\phfh(\gl^n)+\DD(\phfh))_n$ is subadditive, Fekete's lemma (Lemma~\ref{lem=Fekete}) implies that the limit $\lim\limits_{n\to \infty}\frac{\phfh(\gl^n)}{n}$ exists in $[-\infty,\infty)$. By applying Lemma~\ref{lem=Fekete}  to another sequence $(-\phfh(\gl^n)+\DD(\phfh))_n$, we conclude that $\lim\limits_{n\to \infty}\frac{\phfh(\gl^n)}{n}$ actually belongs to $\RR$. Hence $\phfh_{\mathrm{h}}\colon \Gg\to \RR$ can be defined as in the manner in Lemma~\ref{lem=homoge2}~(1). In what follows, we show that $\phfh_{\mathrm{h}}$ is homogeneous. Let $\gl\in \Gg$ and $n\in \ZZ$. If $n\geq 0$, then we have $\phfh_{\mathrm{h}}(\gl^n)=n\phfh_{\mathrm{h}}(\gl)$ by construction. If $n=-1$, then for every $m\in \NN$, we have
\[
\phfh(1_{\Gg})=\phfh(\gl^{-m} \gl^{m})\sim_{\DD(\phfh)}\phfh(\gl^{-m})+\phfh(\gl^{m}),
\]
so that
\[
\phfh(\gl^{-m})\sim_{|\phfh(1_{\Gg})|+\DD(\phfh)} -\phfh(\gl^{m}).
\]
Hence, we have
\begin{equation}\label{eq=-1vai}
\phfh_{\mathrm{h}}(\gl^{-1})=-\phfh_{\mathrm{h}}(\gl).
\end{equation}
Finally, by construction, \eqref{eq=-1vai} implies that $\phfh_{\mathrm{h}}(\gl^n)=n\phfh_{\mathrm{h}}(\gl)$ if $n<0$. Therefore, $\phfh_{\mathrm{h}}$ is homogeneous. Since $\phfh(\gl^n)\sim_{(n-1)\DD(\phfh)}n\phfh(\gl)$ for every $\gl\in \Gg$ and every $n\in \NN$, we have $\|\phfh-\phfh_{\mathrm{h}}\|_{\infty}\leq \DD(\phfh)$.
It is straightforward to show the uniqueness of the homogenization of $\phfh$.  Thus, we have proved Lemma~\ref{lem=homoge1} and Lemma~\ref{lem=homoge2}.
\end{proof}

By Lemma~\ref{lem=homoge2}~(2), we have an inequality $\DD(\phfh_{\mathrm{h}})\leq 4\DD(\phfh)$. In Corollary~\ref{cor=defect2bai}, we will prove a stronger inequality $\DD(\phfh_{\mathrm{h}})\leq 2\DD(\phfh)$.

Lemma~\ref{lem=homoge1} yields the following corollary.

\begin{corollary}\label{cor=isom}
Let $\Gg$ be a group. Then the map $\QhG\to \QG$ sending $\phfh\in \QhG$ to its homogenization $\phfh_{\mathrm{h}}\in \QG$ induces an isomorphism between $\RR$-linear spaces:
\[
\QhG/\left(\HGR +\ell^{\infty}(\Gg;\RR)\right) \cong \QG/\HGR.
\]
\end{corollary}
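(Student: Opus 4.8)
The plan is to exhibit the claimed isomorphism as the map induced by homogenization via the first isomorphism theorem for $\RR$-linear spaces. First I would record that the homogenization assignment $h\colon \QhG\to\QG$, $\phfh\mapsto\phfh_{\mathrm{h}}$, is $\RR$-linear. For $\phfh,\psfh\in\QhG$ and $s,t\in\RR$, the function $s\phfh_{\mathrm{h}}+t\psfh_{\mathrm{h}}$ is again homogeneous (the identity $\phf(\gl^n)=n\phf(\gl)$ is preserved under $\RR$-linear combinations) and again a quasimorphism (as $\QhG$ is a linear space), and it lies within $\ell^{\infty}$-distance $\le |s|\,\|\phfh-\phfh_{\mathrm{h}}\|_{\infty}+|t|\,\|\psfh-\psfh_{\mathrm{h}}\|_{\infty}<\infty$ of $s\phfh+t\psfh$; so the uniqueness clause of Lemma~\ref{lem=homoge1} forces $s\phfh_{\mathrm{h}}+t\psfh_{\mathrm{h}}=(s\phfh+t\psfh)_{\mathrm{h}}$.

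Next I would compose $h$ with the quotient projection to obtain an $\RR$-linear map $\pi\colon\QhG\to\QG/\HGR$, and check that $\pi$ is surjective: since $\HGR\subseteq\QG\subseteq\QhG$ and every $\phf\in\QG$ equals its own homogenization (again by the uniqueness in Lemma~\ref{lem=homoge1}, as $\|\phf-\phf\|_{\infty}=0$), the map $h$ is already onto $\QG$, and composing with the surjection $\QG\to\QG/\HGR$ preserves surjectivity.

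The heart of the argument is the identification $\ker\pi=\HGR+\ell^{\infty}(\Gg;\RR)$, where both are viewed as subspaces of $\QhG$ — note $\HGR$ consists of quasimorphisms of defect $0$ and bounded functions have defect $\le 3\|{\cdot}\|_{\infty}<\infty$, so both genuinely sit inside $\QhG$. An element $\phfh$ lies in $\ker\pi$ precisely when $\phfh_{\mathrm{h}}\in\HGR$. If $\phfh=\hf+b$ with $\hf\in\HGR$ and $b\in\ell^{\infty}(\Gg;\RR)$, then $\hf_{\mathrm{h}}=\hf$ and $b_{\mathrm{h}}=0$ (by Lemma~\ref{lem=homoge2}~(1), $b_{\mathrm{h}}(\gl)=\lim_n b(\gl^n)/n=0$), so linearity of $h$ gives $\phfh_{\mathrm{h}}=\hf\in\HGR$; hence $\HGR+\ell^{\infty}(\Gg;\RR)\subseteq\ker\pi$. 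Conversely, if $\phfh_{\mathrm{h}}\in\HGR$, then the decomposition $\phfh=\phfh_{\mathrm{h}}+(\phfh-\phfh_{\mathrm{h}})$ writes $\phfh$ as a sum of an element of $\HGR$ and an element of $\ell^{\infty}(\Gg;\RR)$, the latter because $\|\phfh-\phfh_{\mathrm{h}}\|_{\infty}<\infty$ by Lemma~\ref{lem=homoge1}; thus $\ker\pi\subseteq\HGR+\ell^{\infty}(\Gg;\RR)$. The first isomorphism theorem then yields that $\pi$ descends to the claimed isomorphism $\QhG/(\HGR+\ell^{\infty}(\Gg;\RR))\xrightarrow{\ \cong\ }\QG/\HGR$, $[\phfh]\mapsto[\phfh_{\mathrm{h}}]$.

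I do not anticipate a real obstacle: the entire proof is a formal consequence of Lemma~\ref{lem=homoge1} and Lemma~\ref{lem=homoge2}. The only point demanding care is the bookkeeping that $\HGR$, $\ell^{\infty}(\Gg;\RR)$, and $\QG$ are all subspaces of $\QhG$ so that the sums and quotients are meaningful, and that $\RR$-linearity of $h$ is established (rather than tacitly assumed) before invoking it in the kernel computation.
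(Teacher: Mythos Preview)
Your proof is correct and is exactly the detailed verification that the paper leaves implicit: the paper's own proof simply notes that $\ell^{\infty}(\Gg;\RR)\cap\QG=0$ (equivalently, the uniqueness clause of Lemma~\ref{lem=homoge1} that you invoke) and declares the rest straightforward. You have spelled out that straightforward part via the first isomorphism theorem, which is precisely the intended route.
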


\begin{proof}
Note that $\ell^{\infty}(\Gg;\RR) \cap\QG=0$. Now, it is straightforward to see the isomorphism.
\end{proof}

\begin{lemma}\label{lem=defect}
Let $\Gg$ be a group, and let $\phf\in \QG$.
\begin{enumerate}
 \item[\textup{(1)}] For every $\gl_1,\gl_2\in \Gg$, we have $\phf(\gl_1\gl_2\gl_1^{-1})=\phf(\gl_2)$.
 \item[\textup{(2)}] For every $\gl_1,\gl_2\in \Gg$, we have $\phf([\gl_1,\gl_2])\sim_{\DD(\phf)}0$.
\end{enumerate}
\end{lemma}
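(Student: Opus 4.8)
The plan is to establish (1) first, since (2) follows from it by a one-line computation. For (1), I would fix $\gl_1,\gl_2\in\Gg$ and pass to powers, using homogeneity. The key identity is $(\gl_1\gl_2\gl_1^{-1})^n=\gl_1\gl_2^n\gl_1^{-1}$ for every $n\in\NN$. Applying the quasimorphism inequality (Definition~\ref{defn=qm}(1)) twice to the three-term product $\gl_1\cdot\gl_2^{n}\cdot\gl_1^{-1}$, and using $\phf(\gl_1^{-1})=-\phf(\gl_1)$ (homogeneity, Definition~\ref{defn=qm}(2)), I obtain
\[
\phf(\gl_1\gl_2^n\gl_1^{-1})\sim_{2\DD(\phf)}\phf(\gl_1)+\phf(\gl_2^n)+\phf(\gl_1^{-1})=\phf(\gl_2^n).
\]
Combining this with $\phf\big((\gl_1\gl_2\gl_1^{-1})^n\big)=n\,\phf(\gl_1\gl_2\gl_1^{-1})$ and $\phf(\gl_2^n)=n\,\phf(\gl_2)$ gives $|\,n\phf(\gl_1\gl_2\gl_1^{-1})-n\phf(\gl_2)\,|\leq 2\DD(\phf)$ for all $n\in\NN$. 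Dividing by $n$ and letting $n\to\infty$ forces $\phf(\gl_1\gl_2\gl_1^{-1})=\phf(\gl_2)$.

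For (2), I would write $[\gl_1,\gl_2]=(\gl_1\gl_2\gl_1^{-1})\cdot\gl_2^{-1}$ and apply the defining inequality to this two-term product:
\[
\phf([\gl_1,\gl_2])\sim_{\DD(\phf)}\phf(\gl_1\gl_2\gl_1^{-1})+\phf(\gl_2^{-1}).
\]
Part (1) gives $\phf(\gl_1\gl_2\gl_1^{-1})=\phf(\gl_2)$, and homogeneity gives $\phf(\gl_2^{-1})=-\phf(\gl_2)$, so the right-hand side vanishes; this is exactly the assertion $\phf([\gl_1,\gl_2])\sim_{\DD(\phf)}0$.

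I do not expect any genuine obstacle here: both parts are short manipulations of the defect inequality together with homogeneity. The only point demanding a little care is the bookkeeping of the additive error when iterating the quasimorphism inequality on the three-term product in (1), where one picks up $2\DD(\phf)$ rather than $\DD(\phf)$; this constant is then killed in the limit and does not affect the conclusion.
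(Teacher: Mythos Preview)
Your proof is correct and follows essentially the same route as the paper: for (1) you pass to $n$th powers via $(\gl_1\gl_2\gl_1^{-1})^n=\gl_1\gl_2^n\gl_1^{-1}$, apply the defect inequality twice to the three-term product, cancel $\phf(\gl_1)+\phf(\gl_1^{-1})$ by homogeneity, and let $n\to\infty$; for (2) you split $[\gl_1,\gl_2]=(\gl_1\gl_2\gl_1^{-1})\gl_2^{-1}$ and invoke (1). This matches the paper's argument line for line.
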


\begin{proof}
Let $n\in \NN$. Then, by the homogeneity of $\phf$, we have
\begin{align*}
n\phf(\gl_1\gl_2\gl_1^{-1})&=\phf(\gl_1\gl_2^n\gl_1^{-1})\\
&\sim_{2\DD(\phf)}\phf(\gl_1)+\phf(\gl_2^n)+\phf(\gl_1^{-1})=n\phf(\gl_2).
\end{align*}
Hence, $\phf(\gl_1\gl_2\gl_1^{-1})\sim_{2n^{-1}\DD(\phf)}\phf(\gl_2)$. By letting $n\to\infty$, we obtain item (1). For item (2), we deduce from item (1) that
\begin{align*}
\phf([\gl_1,\gl_2])&=\phf(\gl_1\gl_2\gl_1^{-1}\gl_2^{-1})\\
&\sim_{\DD(\phf)}\phf(\gl_1\gl_2\gl_1^{-1})+\phf(\gl_2^{-1})=\phf(\gl_2)-\phf(\gl_2)=0,
\end{align*}
as desired.
\end{proof}

The following result by Bavard states that the estimate in Lemma~\ref{lem=defect}~(2) is tight if $\gl_1$ and $\gl_2$ run over $\Gg$.

\begin{proposition}[{\cite{Bavard}}] \label{prop=commutator_bavard}
Let $\Gg$ be a group, and let $\phf \in \QQQ(\Gg)$. Then, we have
\[
\DD(\phf) = \sup_{\gl_1, \gl_2 \in \Gg} \big| \phf([\gl_1, \gl_2]) \big|.
\]
\end{proposition}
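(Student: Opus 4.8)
The inequality $\sup_{\gl_1,\gl_2\in\Gg}|\phf([\gl_1,\gl_2])|\le \DD(\phf)$ is exactly Lemma~\ref{lem=defect}~(2), so the work is entirely in the reverse inequality: given any $\varepsilon>0$, I must produce a commutator $[\gl_1,\gl_2]$ with $|\phf([\gl_1,\gl_2])|>\DD(\phf)-\varepsilon$. Write $C=\sup_{\gl_1,\gl_2}|\phf([\gl_1,\gl_2])|$; the plan is to start from a pair $\al,\be\in\Gg$ with $\phf(\al\be)-\phf(\al)-\phf(\be)$ close to $\pm\DD(\phf)$ (which exists by definition of the defect), and to manufacture out of $\al,\be$ a single explicit commutator whose $\phf$-value is, up to controllable error, this same quantity. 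The classical trick (going back to Bavard) is to consider, for a large integer $n$, elements built from the $n$-th powers $\al^{n},\be^{n}$ and the product $(\al\be)^{n}$, exploiting homogeneity $\phf(x^{n})=n\phf(x)$ to make the additive error $O(1/n)$ after dividing by $n$.

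Concretely, I would argue as follows. Fix $\al,\be$ and set $\delta=\phf(\al\be)-\phf(\al)-\phf(\be)$, so $|\delta|$ can be taken within $\varepsilon$ of $\DD(\phf)$. Consider the element
\[
w_n=\al^{n}\,\be^{n}\,(\be\al)^{-n}.
\]
Because $\al^{n}\be^{n}$ and $(\be\al)^{n}=\be(\al\be)^{n-1}\al$ differ by conjugation-type rearrangements, $w_n$ lies in $\CG$; in fact one checks directly that $w_n$ is a product of a bounded number (independent of $n$) of conjugates of $[\al,\be]^{\pm1}$, hence a single commutator after a standard manipulation — but for the present proposition I do not even need $w_n$ itself to be a commutator, only that $\phf$ applied to it is controlled. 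Using Lemma~\ref{lem=defect}~(1) (conjugation invariance) and the homogeneity of $\phf$, I compute
\[
\phf(w_n)=\phf(\al^n)+\phf(\be^n)-\phf((\be\al)^n)+O_{\DD(\phf)}(1)
= n\phf(\al)+n\phf(\be)-n\phf(\be\al)+O_{\DD(\phf)}(1),
\]
where the $O_{\DD(\phf)}(1)$ error is a bounded multiple of $\DD(\phf)$ coming from a fixed number of applications of the quasimorphism inequality. Since $\phf(\be\al)=\phf(\al\be)$ (again Lemma~\ref{lem=defect}~(1)), the bracketed term is $-n\delta$, so $\phf(w_n)=-n\delta+O_{\DD(\phf)}(1)$.

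To turn this into a statement about a genuine commutator, I would exhibit $w_n$ (or a minor variant of it) explicitly as a single commutator $[\gl_1^{(n)},\gl_2^{(n)}]$ — the cleanest route is to use the identity that in any group $a^{n}b^{n}(ba)^{-n}$ can be rewritten, via a telescoping conjugation argument, as a product $\prod_{i=1}^{n-1}c_i[\al,\be]^{\pm1}c_i^{-1}$, and then to invoke the standard fact that a product of $k$ conjugates of $[\al,\be]^{\pm1}$ is a commutator in the group generated by suitable elements; alternatively one works inside a free group on two generators mapping to $\al,\be$, where the combinatorics is transparent, and pushes forward. Applying Lemma~\ref{lem=defect}~(2) would then give $|\phf(w_n)|\le \DD(\phf)$ — which is the wrong direction — so in fact the correct normalization is the other way: I should divide by $n$ not $w_n$ but rather realize that $\phf$ of a \emph{single} commutator is what is bounded by $\DD(\phf)$, and arrange the construction so that after extracting the commutator the defect error genuinely stays $O(1)$ while the main term grows like $n$. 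The resolution, which is the heart of Bavard's argument, is to note that if $w_n=[\gl_1^{(n)},\gl_2^{(n)}]\cdot r_n$ where $r_n$ is a product of boundedly many commutators \emph{and} $\phf(r_n)$ is itself $O(\DD(\phf))$, then $|\phf([\gl_1^{(n)},\gl_2^{(n)}])|\ge |\phf(w_n)|-|\phf(r_n)|-\DD(\phf)\ge n|\delta|-O(\DD(\phf))$, which for a \emph{fixed} single commutator is absurd unless $C$ itself absorbs the growth; running this with $C$ in place of the naive bound and dividing by $n$ before taking $n\to\infty$ yields $C\ge|\delta|\ge \DD(\phf)-\varepsilon$, and letting $\varepsilon\to0$ finishes the proof.

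\textbf{Main obstacle.} The delicate point is the bookkeeping that makes the error term genuinely $O(\DD(\phf))$, independent of $n$: one must write $\al^{n}\be^{n}(\al\be)^{-n}$ (or the $(\be\al)$-variant) as a product of a \emph{bounded} number of conjugates of $[\al,\be]^{\pm1}$, not a number growing with $n$, and simultaneously ensure the chosen single commutator carries the full linear-in-$n$ contribution. This is precisely where homogeneity is essential — it converts $\phf(\al^n)$ to $n\phf(\al)$ exactly, with zero error — and where conjugation-invariance (Lemma~\ref{lem=defect}~(1)) is used to discard the conjugating elements for free. I expect the proof the authors give to package this via a clean algebraic identity realizing the relevant word as one commutator directly, thereby sidestepping the "product of conjugates" detour.
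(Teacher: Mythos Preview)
Your proposal has a genuine gap at its central step. You claim that $w_n=\al^n\be^n(\be\al)^{-n}$ (or its variant) is a product of a \emph{bounded} number of conjugates of $[\al,\be]^{\pm1}$, independent of $n$; this is false. In the free group on $\al,\be$ the commutator length of $\al^{2n}\be^{2n}(\al\be)^{-2n}$ grows linearly in $n$ --- indeed the paper's Lemma~\ref{lem=commutatorcal} shows it is \emph{at most} $n$, and the Bavard duality itself shows it is $\Theta(n)$. Your subsequent attempt to rescue the argument by extracting a single commutator $[\gl_1^{(n)},\gl_2^{(n)}]$ carrying the full $n|\delta|$ contribution therefore cannot succeed: no single commutator has $|\phf|$-value exceeding $\DD(\phf)$, so such a decomposition with bounded remainder simply does not exist. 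The contradiction you derive (``absurd unless $C$ itself absorbs the growth'') is a contradiction with your own hypotheses, not an inequality for $C$.

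The missing idea is pigeonhole. The paper's proof writes $\hl_n=(\gl_1\gl_2)^{-2n}\gl_1^{2n}\gl_2^{2n}$ as a product of exactly $n$ simple commutators $\hl_n^{(1)}\cdots\hl_n^{(n)}$ (this is Lemma~\ref{lem=commutatorcal}). Your computation $\phf(\hl_n)\approx 2n\delta$ is correct, and combined with $|\phf(\hl_n)-\sum_i\phf(\hl_n^{(i)})|\le (n-1)\DD(\phf)$ it gives $\bigl|\sum_i\phf(\hl_n^{(i)})\bigr|\gtrsim (n-1)\DD(\phf)-2n\varepsilon$. Now divide by $n$: at least one of the $n$ commutators $\hl_n^{(i)}$ satisfies $|\phf(\hl_n^{(i)})|\ge (1-\tfrac1n)\DD(\phf)-2\varepsilon$. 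Letting $n\to\infty$ and $\varepsilon\to0$ finishes. So the number of commutators is allowed to grow with $n$; what makes the argument work is that the total $\phf$-value also grows linearly in $n$, and averaging extracts one good commutator.
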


To prove Proposition~\ref{prop=commutator_bavard}, we use Lemma~\ref{lem=commutatorcal} below. For a group $\Gg$, we call an element in a group $\Gg$ of the form $[\gl_1,\gl_2](=\gl_1\gl_2\gl_1^{-1}\gl_2^{-1})$ for $\gl_1,\gl_2\in \Gg$ a \emph{simple commutator} in $\Gg$.

\begin{lemma}\label{lem=commutatorcal}
Let $\Gg$ be a group. Then, for every $n\in\NN$ and for every $\gl_1,\gl_2\in \Gg$, the element  $(\gl_1\gl_2)^{-2n}\gl_1^{2n}\gl_2^{2n}$ may be written as a product of $n$ simple commutators in $\Gg$.
\end{lemma}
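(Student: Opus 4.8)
The plan is to prove the identity
\[
(\gl_1\gl_2)^{-2n}\gl_1^{2n}\gl_2^{2n} = \prod_{j=1}^{n} [\text{something}]
\]
by an inductive/telescoping argument, peeling off one commutator at a time. First I would record the base case $n=1$: a direct computation gives
\[
(\gl_1\gl_2)^{-2}\gl_1^{2}\gl_2^{2} = \big[(\gl_1\gl_2)^{-1}\gl_1,\ (\gl_1\gl_2)^{-1}\gl_2^{-1}\big]
\]
or some similar explicit single commutator; one just multiplies out $aba^{-1}b^{-1}$ with $a=(\gl_1\gl_2)^{-1}\gl_1$ and $b=(\gl_1\gl_2)^{-1}\gl_2^{-1}$ and checks it collapses to $(\gl_1\gl_2)^{-2}\gl_1^2\gl_2^2$. (Here one uses the elementary fact that a conjugate of a product of $n$ simple commutators is again a product of $n$ simple commutators, which follows from $g[x,y]g^{-1}=[gxg^{-1},gyg^{-1}]$.)

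For the inductive step, set $w_n = (\gl_1\gl_2)^{-2n}\gl_1^{2n}\gl_2^{2n}$ and try to express $w_{n}$ in terms of $w_{n-1}$ (or $w_{n+1}$ in terms of $w_n$) modulo one simple commutator. The natural move is to write $\gl_1^{2n}\gl_2^{2n} = \gl_1^{2}\cdot(\gl_1^{2n-2}\gl_2^{2n-2})\cdot \gl_2^{2}$-type manipulations are too crude; instead I would insert the factorization $\gl_1^{2n}=\gl_1^{2}\gl_1^{2n-2}$, $\gl_2^{2n}=\gl_2^{2n-2}\gl_2^{2}$ and recognize that
\[
\gl_1^{2n}\gl_2^{2n} = \gl_1^{2}\big(\gl_1^{2n-2}\gl_2^{2n-2}\big)\gl_2^{2}
= \gl_1^{2}\big((\gl_1\gl_2)^{2n-2} w_{n-1}^{\,-1}\cdots\big)
\]
— more cleanly, use $w_{n-1}=(\gl_1\gl_2)^{-2(n-1)}\gl_1^{2(n-1)}\gl_2^{2(n-1)}$, so $\gl_1^{2(n-1)}\gl_2^{2(n-1)}=(\gl_1\gl_2)^{2(n-1)}w_{n-1}$, and substitute into the expansion of $\gl_1^{2n}\gl_2^{2n}$. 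After conjugating the $w_{n-1}$ factor to one side (costing nothing, by the conjugation remark above) and cancelling $(\gl_1\gl_2)$-powers, what remains beyond $w_{n-1}$ should be a single expression of the form $(\gl_1\gl_2)^{-2}\cdot(\text{conjugate of }\gl_1^2)\cdot(\text{conjugate of }\gl_2^2)$, which is a single simple commutator by essentially the same computation as the base case. Iterating $n$ times yields $n$ simple commutators.

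The main obstacle is bookkeeping: getting the conjugating elements exactly right so that the residual factor at each stage really is a \emph{single} commutator $[a,b]$ and not merely a product of two or three. I expect the cleanest route is not to fix the commutators abstractly but to write, from the start, the telescoping product
\[
(\gl_1\gl_2)^{-2n}\gl_1^{2n}\gl_2^{2n}
= \prod_{j=1}^{n} (\gl_1\gl_2)^{-2j}\,\gl_1^{2}\,(\gl_1\gl_2)^{2j-2}\cdot (\gl_1\gl_2)^{-(2j-2)}\,\gl_2^{2}\,(\gl_1\gl_2)^{2j-2}\cdot(\ldots)
\]
and then check each factor is a commutator; alternatively, push everything through the homomorphic image in the free group on $\gl_1,\gl_2$ and verify the identity there, where it is purely formal. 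Either way the statement is a finite word identity, so once the pattern is pinned down the verification is mechanical; I would allocate most of the write-up to stating the conjugation lemma and the $n=1$ computation cleanly, then present the induction compactly.
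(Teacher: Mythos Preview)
Your overall strategy (induction on $n$, with the conjugation lemma $g[x,y]g^{-1}=[gxg^{-1},gyg^{-1}]$ in hand) matches the paper's, and your base case is fine: indeed $\gl_1^{2}\gl_2^{2}(\gl_1\gl_2)^{-2}=\gl_1[\gl_1,\gl_2]\gl_1^{-1}$, so $(\gl_1\gl_2)^{-2}\gl_1^{2}\gl_2^{2}$ is a single commutator.

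There is, however, a genuine gap at the inductive step. Carrying out your substitution $\gl_1^{2(n-1)}\gl_2^{2(n-1)}=(\gl_1\gl_2)^{2(n-1)}w_{n-1}$ and moving $w_{n-1}$ to one side, the residual factor (up to conjugation) is
\[
(\gl_1\gl_2)^{-2}\,\gl_1^{2}\,\bigl((\gl_1\gl_2)^{n-1}\gl_2(\gl_1\gl_2)^{-(n-1)}\bigr)^{2},
\]
i.e.\ $(\gl_1\gl_2)^{-2}\gl_1^{2}h^{2}$ with $h$ a conjugate of $\gl_2$. You assert this is a single commutator ``by essentially the same computation as the base case,'' but the base-case identity uses that the leading factor is $(\gl_1\gl_2)^{-2}$, matching the $\gl_1,\gl_2$ that follow; here the leading factor is $(\gl_1\gl_2)^{-2}$, not $(\gl_1 h)^{-2}$, so the base-case computation does not apply. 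You correctly flag this as ``the main obstacle,'' but neither your telescoping display (which trails off) nor the free-group remark actually resolves it. As written, the argument does not establish that one commutator suffices at each step.

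For comparison, the paper closes this gap with an explicit identity,
\[
[\gl_1^{-2n}\gl_2^{-2n}\gl_1^{-1},\ \gl_2^{-1}\gl_1^{2n}]=\gl_1^{-2n}\gl_2^{-2n}\gl_1^{-1}\gl_2^{-1}\gl_1^{2n+1}\gl_2^{2n+1},
\]
and a small twist you are missing: in passing from $n$ to $n+1$ it relates $(\gl_1\gl_2)^{-2(n+1)}\gl_1^{2(n+1)}\gl_2^{2(n+1)}$ not to $w_n(\gl_1,\gl_2)$ but to a conjugate of $(\gl_2\gl_1)^{-2n}\gl_2^{2n}\gl_1^{2n}$ (the roles of $\gl_1,\gl_2$ swapped) times that one explicit commutator. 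This swap is exactly what makes the residual collapse to a \emph{single} commutator; without it, your decomposition leaves a factor whose commutator length you have not controlled.
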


\begin{proof}
Observe the following identity for every $n\in \ZZ$:
\begin{equation}\label{eq:n-comm}
[\gl_1^{-2n}\gl_2^{-2n}\gl_1^{-1},\gl_2^{-1}\gl_1^{2n}]=\gl_1^{-2n}\gl_2^{-2n}\gl_1^{-1}\gl_2^{-1}\gl_1^{2n+1}\gl_2^{2n+1}.
\end{equation}
We will show the assertion of the lemma by induction on $n$. For $n=1$, the assertion follows because  $(\gl_1\gl_2)^{-2}\gl_1^{2}\gl_2^{2}=\gl_2^{-1}[\gl_1^{-1},\gl_2^{-1}]\gl_2$. For induction step, assume the assertion for $n$, and we will prove the assertion for $n+1$. By \eqref{eq:n-comm}, we have
\begin{align*}
&(\gl_1\gl_2)^{-2n-2}\gl_1^{2n+2}\gl_2^{2n+2}\\
&=\gl_2^{-1}\left((\gl_2\gl_1)^{-2n-1}\gl_1^{2n+1}\gl_2^{2n+1}\right)\gl_2\\
&=\gl_2^{-1}\left((\gl_2\gl_1)^{-2n-1}\gl_2\gl_1\gl_2^{2n}\gl_1^{2n}[\gl_1^{-2n}\gl_2^{-2n}\gl_1^{-1},\gl_2^{-1}\gl_1^{2n}]\right)\gl_2\\
&=\gl_2^{-1}\left((\gl_2\gl_1)^{-2n}\gl_2^{2n}\gl_1^{2n}[\gl_1^{-2n}\gl_2^{-2n}\gl_1^{-1},\gl_2^{-1}\gl_1^{2n}]\right)\gl_2
\end{align*}
By induction hypothesis, $(\gl_2\gl_1)^{-2n}\gl_2^{2n}\gl_1^{2n}$ is a product of $n$ simple commutators. Then, we conclude that $\gl_2^{-1}\left((\gl_1\gl_2)^{-2n-2}\gl_1^{2n+2}\gl_2^{2n+2}\right)\gl_2$ is a product of $n+1$ simple commutators. Since $a_3[a_1,a_2]a_3^{-1}=[a_3a_1a_3^{-1},a_3a_2a_3^{-1}]$ for every $a_1,a_2,a_3\in \Gg$, this implies that $(\gl_1\gl_2)^{-2n-2}\gl_1^{2n+2}\gl_2^{2n+2}$ may be written as a product of $n+1$ simple commutators, as desired. This completes the proof.
\end{proof}

\begin{proof}[Proof of Proposition~\textup{\ref{prop=commutator_bavard}}]
Take an arbitrary $\varepsilon \in \RR_{>0}$ and take $\gl_1,\gl_2\in \Gg$ such that $|\phf(\gl_1)+\phf(\gl_2)-\phf(\gl_1\gl_2)|\geq \DD(\phf)-\varepsilon$. For $n\in \NN$, set $\hl_n=(\gl_1\gl_2)^{-2n}\gl_1^{2n}\gl_2^{2n}$. By Lemma~\ref{lem=commutatorcal}, $\hl_n$ may be expressed as $\hl_n=\hl_n^{(1)}\cdots \hl_n^{(n)}$, where for every $i\in \{1,\ldots,n\}$ the element $\hl_n^{(i)}$ is a simple commutator in $\Gg$. Then,
\[
\phf(\hl_n)\sim_{(n-1)\DD(\phf)}\phf(\hl_n^{(1)})+\cdots+\phf(\hl_n^{(n)}).
\]
We also  have
\begin{align*}
\phf(\hl_n)&\sim_{2\DD(\phf)}\phf(\gl_1^{2n})+\phf(\gl_2^{2n})+\phf((\gl_1\gl_2)^{-2n})\\
&=2n \Bigl(\phf(\gl_1)+\phf(\gl_2)-\phf(\gl_1\gl_2)\Bigr).
\end{align*}
Therefore, we conclude that
\[
\left|\sum_{i=1}^n\phf(\hl_n^{(i)})\right| \geq (n-1)\DD(\phf)-2n\varepsilon.
\]
In particular, there exists $i\in \{1,\ldots,n\}$ such that
\[
|\phf(\hl_n^{(i)})|\geq \left(1-\frac{1}{n}\right)\DD(\phf)-2\varepsilon.
\]
Since such $\hl_n^{(i)}$ is a simple commutator in $\Gg$, this implies that
\[
\sup_{\gl_1, \gl_2 \in \Gg} \big| \phf([\gl_1, \gl_2]) \big|\geq \left(1-\frac{1}{n}\right)\DD(\phf)-2\varepsilon.
\]
By letting $n\to \infty$ and $\varepsilon \searrow 0$, we obtain that
\[
\DD(\phf) \leq \sup_{\gl_1, \gl_2 \in \Gg} \big| \phf([\gl_1, \gl_2]) \big|.
\]
The converse inequality follows from Lemma~\ref{lem=defect}~(2). This completes our proof.
\end{proof}

\subsection{Ordinary and bounded cohomology of groups}\label{subsec=cohomology}

Here we briefly recall the definition of ordinary and bounded cohomology of groups with coefficients in an abelian group $A$ (with trivial $\Gg$-action; also equipped with a norm for defining bounded cohomology). We refer the reader to \cite{Monod, Frigerio} for details on bounded cohomology. Let $\Gg$ be a group. Let $n\in \mathbb{Z}$. Let $\CCC^n(\Gg;A)$ be the space of $A$-valued functions on
 the $n$-fold direct product $\Gg^{n}$ of $\Gg$
if $n\geq 0$, and set $\CCC^n(\Gg;A) = 0$ if $n < 0$.
Define $\delta \colon \CCC^n(\Gg;A) \to \CCC^{n+1}(\Gg;A)$ by
\begin{align*}
\delta c(g_1, \cdots, g_{n+1}) = c(g_2, \cdots, g_{n+1}) &+ \sum_{i=1}^n (-1)^i c(g_1, \cdots, g_{i} g_{i+1}, \cdots, g_{n+1}) \\
&+ (-1)^{n+1} c(g_1, \cdots, g_{n}).
\end{align*}
The \emph{$n$-th group cohomology} $\HHH^n(\Gg;A)$ of $\Gg$ with trivial $A$-coefficients is the $n$-th cohomology group of the cochain complex $(\CCC^{\ast}(\Gg;A), \delta)$. In particular, $\HHH^1(\Gg;A)$ can be naturally identified with  $\mathrm{Hom}(\Gg,A)$.

Now, assume that $A$ is equipped with a norm $\|\cdot\|$. For a set $S$, an $A$-valued function $\ff\colon S\to A$ is said to be \emph{bounded} if $\sup\{\|\ff(s)\|\;|\; s\in S\}$ is bounded. Let $\CCC^n_b(\Gg;A)$ be the space of bounded $A$-valued functions on
$\Gg^{n}$
if $n\geq 0$; $\CCC^n_b(\Gg;A)=0$ if $n<0$. Then $\CCC^{\ast}_b(\Gg;A)$ is a subcomplex of $\CCC^*(G;A)$. Define $\HHH^{\ast}_b(G;A)$ as the cohomology of $\CCC^{\ast}_b(G;A)$: this is called the \emph{bounded cohomology} of $\Gg$ with trivial $A$-coefficients. The map $c_{\Gg;A}^n \colon \HHH^{n}_b(\Gg;A) \to \HHH^{n}(\Gg;A)$ induced by the inclusion $\CCC^{n}_b(\Gg;A) \hookrightarrow \CCC^{n}(\Gg;A)$ is called the \emph{$n$-th comparison map}  with trivial $A$-coefficients. For bounded cohomology, the case where $(A,\|\cdot\|)=(\RR,|\cdot|)$ ($\RR$ equipped with the ordinary absolute value) is of special interest. In this case, we may abbreviate $c_{\Gg;\RR}^n$ as $c_{\Gg}^n$.

The following lemma describes a relation between quasimorphisms and bounded cohomology.

\begin{lemma}\label{lem=Q/H}
Let $\Gg$ be a group. Then, the coboundary map $\delta\colon \CCC^1(\Gg;\RR)\to \CCC^2(\Gg;\RR)$ induces an isomorphism between  the quotient space $\QG/\HGR$ and the kernel of $c^2_{\Gg;\RR}=c^2_{\Gg}\colon \HHH^2_b(\Gg;\RR)\to \HHH^2(\Gg;\RR)$.
\end{lemma}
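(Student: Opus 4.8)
The plan is to build the claimed isomorphism explicitly via the coboundary map $\delta$ and then identify its domain and codomain with the two sides of the asserted isomorphism. First I would observe that for a homogeneous quasimorphism $\phf\in \QG$, the coboundary $\delta\phf(\gl_1,\gl_2)=\phf(\gl_1)+\phf(\gl_2)-\phf(\gl_1\gl_2)$ is a \emph{bounded} $2$-cochain (its sup-norm is exactly $\DD(\phf)$ by definition of the defect), and it is automatically a cocycle since $\delta^2=0$; moreover $\delta\phf=\delta\psf$ for $\phf,\psf\in\QG$ forces $\phf-\psf$ to be a homomorphism, i.e.\ an element of $\HGR$. Hence $\delta$ descends to a well-defined injective linear map $\overline{\delta}\colon \QG/\HGR \to \ZZZ^2_b(\Gg;\RR)$, and composing with the quotient to $\HHH^2_b(\Gg;\RR)$ gives a map $\QG/\HGR\to \HHH^2_b(\Gg;\RR)$. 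The key point for this composite is that its image lies in $\Ker c^2_{\Gg}$: indeed $[\delta\phf]$ maps under $c^2_{\Gg}$ to the class of $\delta\phf$ in $\HHH^2(\Gg;\RR)$, which is zero because $\delta\phf=\delta(\phf)$ with $\phf\in\CCC^1(\Gg;\RR)$ an (unbounded) $1$-cochain, i.e.\ $\delta\phf$ is an ordinary coboundary.

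Next I would check surjectivity onto $\Ker c^2_{\Gg}$. Take a class in $\HHH^2_b(\Gg;\RR)$ represented by a bounded cocycle $\beta$ whose image in $\HHH^2(\Gg;\RR)$ vanishes; the latter means there is a (not necessarily bounded) $1$-cochain $\ff\colon \Gg\to\RR$ with $\delta\ff=\beta$. Since $\beta=\delta\ff$ is bounded, $\ff$ is a quasimorphism, i.e.\ $\ff\in\QhG$. By Lemma~\ref{lem=homoge1} its homogenization $\ff_{\mathrm h}\in\QG$ differs from $\ff$ by a bounded function $\ff-\ff_{\mathrm h}\in\ell^\infty(\Gg;\RR)$, so $\delta\ff_{\mathrm h}=\delta\ff - \delta(\ff-\ff_{\mathrm h})=\beta-\delta(\ff-\ff_{\mathrm h})$ differs from $\beta$ by the coboundary of a bounded cochain; hence $[\delta\ff_{\mathrm h}]=[\beta]$ in $\HHH^2_b(\Gg;\RR)$. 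Thus the image of $\ff_{\mathrm h}+\HGR$ under our map is exactly the given class, proving surjectivity. I should also remember to verify injectivity at the level of bounded cohomology (not just at the cocycle level): if $\phf\in\QG$ has $\delta\phf=\delta b$ for some bounded $1$-cochain $b$, then $\phf-b$ is an unbounded-allowed $1$-cochain with $\delta(\phf-b)=0$, i.e.\ $\phf-b\in\HGR$, so $\phf=b+(\phf-b)$ is bounded; but a bounded homogeneous quasimorphism is $0$, hence $\phf\in\HGR$ and the class was trivial.

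The main obstacle — really the only substantive point — is the surjectivity/injectivity bookkeeping: one must be careful that "represents the zero class in $\HHH^2$'' gives a genuine primitive $1$-cochain and that passing from that primitive to its homogenization does not change the bounded cohomology class, which is where Lemma~\ref{lem=homoge1} (existence of the homogenization differing by an $\ell^\infty$ function) enters crucially. Everything else is a routine diagram chase: $\delta\phf$ bounded $\iff$ $\phf$ a quasimorphism, $\delta\phf=0\iff\phf$ a homomorphism, and $\ell^\infty(\Gg;\RR)\cap\QG=0$ (already noted in the proof of Corollary~\ref{cor=isom}). Assembling these pieces yields the stated isomorphism $\QG/\HGR\cong\Ker\!\big(c^2_{\Gg}\colon \HHH^2_b(\Gg;\RR)\to\HHH^2(\Gg;\RR)\big)$ induced by $\delta$.
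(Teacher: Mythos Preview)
Your approach is essentially the paper's: the paper routes $\delta$ through $\QhG$ first (showing it induces $\QhG/(\HGR+\ell^\infty(\Gg;\RR))\cong\Ker c^2_{\Gg}$) and then invokes Corollary~\ref{cor=isom}, whereas you work directly with $\QG$ and push the homogenization into the surjectivity step. The content is the same, just reorganized.

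There is, however, a slip in your injectivity paragraph. From $\phf-b\in\HGR$ with $b$ bounded you \emph{cannot} conclude that ``$\phf=b+(\phf-b)$ is bounded'': a homomorphism plus a bounded function is not bounded in general. The correct deduction is the one you have the ingredients for but did not write: since $\phf\in\QG$ is homogeneous and $\phf-b\in\HGR$ is a homomorphism (hence homogeneous), their difference $b=\phf-(\phf-b)$ is a \emph{homogeneous} bounded function, so $b\in\ell^\infty(\Gg;\RR)\cap\QG=0$; therefore $\phf=\phf-b\in\HGR$. You already cite $\ell^\infty(\Gg;\RR)\cap\QG=0$ at the end, so this is only a rewiring of that last sentence.
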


\begin{proof}
For $\gl_1,\gl_2\in \Gg$, $\delta\phf(\gl_1,\gl_2)=\phf(\gl_2)-\phf(\gl_1\gl_2)+\phf(\gl_1)$. Hence, $\delta$ sends $\QhG$ to $\mathrm{Z}^2_b(\Gg;\RR)$. Here $\mathrm{Z}^2_b(\Gg;\RR)$ is the space of bounded $2$-cocycles with trivial real coefficients (the kernel of the coboundary map $\delta \colon \CCC^2_b(\Gg;\RR) \to \CCC^{3}_b(\Gg;\RR)$). Now we compose it with the canonical projection $\mathrm{Z}^2_b(\Gg;\RR)\to \HHH^2_b(\Gg;\RR)$ and obtain a map from $\QhG$ to $\HHH^2_b(\Gg;\RR)$. It is straightforward to show that the kernel of this map is $\HGR+\ell^{\infty}(\Gg;\RR)$ and that the image of this map is the kernel of $c^2_{\Gg}$. Hence, $\delta$ induces an isomorphism
\[
\QhG/\left(\HGR+\ell^{\infty}(\Gg;\RR)\right) \cong \Ker(c^2_{\Gg}).
\]
By Corollary~\ref{cor=isom}, we complete our proof.
\end{proof}

\subsection{$\mathrm{scl}_{G}$ and the Bavard duality}\label{subsec=scl}
For a group $\Gg$, the \emph{commutator subgroup} $\CG$ of $\Gg$ is the subgroup of $\Gg$ generated by the set of simple commutators $\{[\gl_1,\gl_2]\;|\;\gl_1,\gl_2\in \Gg\}$.
\begin{definition}[scl]\label{defn=scl}
Let $\Gg$ be a group.
\begin{enumerate}[(1)]
  \item The \emph{commutator length} $\clG\colon \CG\to \ZZ_{\geq 0}$ is defined to be the word length with respect to the set of simple commutators. That is, for every $\hl\in \CG$, $\clG(\hl)$ is the least number of simple commutators in $\Gg$ whose product is $\hl$. In particular, $\clG(1_{\Gg})=0$.
  \item The \emph{stable commutator length} $\sclG\colon \CG\to \RR_{\geq 0}$ is defined as
\begin{equation}\label{eq=sclGG}
\sclG(\hl)=\lim_{n\to \infty}\frac{\clG(\hl^n)}{n}
\end{equation}
for every $\hl\in \CG$.
\end{enumerate}
\end{definition}
We note that the limit in \eqref{eq=sclGG}  exists by Fekete's lemma (Lemma~\ref{lem=Fekete}). We also observe that a conjugate of a simple commutator is also a simple commutator. Indeed, for $\gl_1,\gl_2,\gl_3\in \Gg$, we have $\gl_3[\gl_1,\gl_2]\gl_3^{-1}=[\gl_3\gl_1\gl_3^{-1},\gl_3\gl_2\gl_3^{-1}]$. Hence, $\clG$ and $\sclG$ are both invariant under the $\Gg$-action on $[\Gg,\Gg]$ by conjugation (the $\Gg$-invariance of $\clG$ already appears in the proof of Lemma~\ref{lem=commutatorcal}).

\begin{lemma}\label{lem=cl}
Let $\Gg$ be a group. Then, for every $\hl\in \CG$ and for every $\phf\in \QG$, we have
\[
2\DD(\phf)\clG(\hl)\geq |\phf(\hl)|.
\]
\end{lemma}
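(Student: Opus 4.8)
The plan is to prove the inequality
\[
2\DD(\phf)\clG(\hl)\geq |\phf(\hl)|
\]
directly from the definitions, using the two facts about homogeneous quasimorphisms already established: Lemma~\ref{lem=defect}~(1) (conjugation-invariance) and Lemma~\ref{lem=defect}~(2) (homogeneous quasimorphisms are bounded on simple commutators by the defect). First I would write $\hl$ as a product of $m=\clG(\hl)$ simple commutators, say $\hl=c_1c_2\cdots c_m$ with each $c_i=[a_i,b_i]$. Applying the quasimorphism inequality $m-1$ times gives
\[
\bigl|\phf(\hl)-\textstyle\sum_{i=1}^m \phf(c_i)\bigr|\leq (m-1)\DD(\phf),
\]
and then Lemma~\ref{lem=defect}~(2) bounds each $|\phf(c_i)|$ by $\DD(\phf)$, yielding $|\phf(\hl)|\leq (2m-1)\DD(\phf)$. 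This is off by one from the claimed bound $2m\DD(\phf)$, but of course $2m-1\le 2m$, so it already suffices; still, the cleaner route that also prepares for the homogeneity trick used elsewhere is to pass to powers.

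The sharp argument I would actually carry out: for $n\in\NN$, the element $\hl^n$ is a product of $nm$ simple commutators, so by the same computation $|\phf(\hl^n)|\leq (2nm-1)\DD(\phf)$. Since $\phf$ is homogeneous, $\phf(\hl^n)=n\phf(\hl)$, hence
\[
n|\phf(\hl)|\leq (2nm-1)\DD(\phf)\leq 2nm\,\DD(\phf).
\]
Dividing by $n$ gives $|\phf(\hl)|\leq 2m\DD(\phf)=2\DD(\phf)\clG(\hl)$, as desired. (In fact letting $n\to\infty$ in $n|\phf(\hl)|\le (2nm-1)\DD(\phf)$ recovers the same bound; the point of taking powers is just to absorb the additive $-1$ cleanly rather than to improve the constant.)

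I do not anticipate a genuine obstacle here — the statement is the ``easy direction'' and every ingredient is already in place. The only mild subtlety is bookkeeping: making sure the number of applications of the quasimorphism defect inequality is exactly $(\text{number of factors})-1$, and that a conjugate of a simple commutator is again a simple commutator (already noted in the text after Definition~\ref{defn=scl}), which is what lets $\clG(\hl^n)\le n\clG(\hl)$ be used without fuss. If one wants to avoid the passage to powers entirely, the bound $|\phf(\hl)|\le(2m-1)\DD(\phf)\le 2m\DD(\phf)$ finishes immediately; I would present the power version since it is the template for the stable statement $2\DD(\phf)\sclG(\hl)\ge|\phf(\hl)|$ that presumably follows in Corollary~\ref{cor=sclabove}.
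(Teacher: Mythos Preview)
Your proposal is correct and takes essentially the same approach as the paper: write $\hl$ as a product of $m=\clG(\hl)$ simple commutators, apply the defect inequality $m-1$ times, then bound each $|\phf(c_i)|$ by $\DD(\phf)$ via Lemma~\ref{lem=defect}~(2), obtaining $|\phf(\hl)|\le(2m-1)\DD(\phf)\le 2m\DD(\phf)$. The paper uses this direct argument without passing to powers; the power trick you describe is exactly what the paper then uses separately in the proof of Corollary~\ref{cor=sclabove}.
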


\begin{proof}
Set $n=\clG(\hl)$. We may assume that $n\geq 1$; otherwise the desired inequality is clear. Express $\hl$ as $\hl=\hl_1\hl_2\cdots \hl_{n}$, where $\hl_1,\ldots,\hl_n$ are simple commutators in $\Gg$. Then, by Lemma~\ref{lem=defect}~(2) we have
\begin{align*}
\phf(\hl)&\sim_{(n-1)\DD(\phf)} \phf(\hl_1)+\phf(\hl_2)+\cdots+\phf(\hl_n)\\
&\sim_{n\DD(\phf)}0.
\end{align*}
Therefore we have $\phf(\hl)\sim_{(2n-1)\DD(\phf)}0$ if $n=\clG(\hl)\geq 1$. In particular, we have $|\phf(\hl)|\leq 2\DD(\phf)\clG(\hl)$.
\end{proof}

\begin{corollary}\label{cor=sclabove}
Let $\Gg$ be a group, and let $\hl\in \CG$. Then, for every $\phf\in \QG$,
\[
2\DD(\phf)\sclG(\hl)\geq |\phf(\hl)|
\]
holds.
\end{corollary}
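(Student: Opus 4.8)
The plan is to derive Corollary~\ref{cor=sclabove} from Lemma~\ref{lem=cl} by the standard stabilization trick. Fix $\hl\in\CG$ and $\phf\in\QG$. For each $n\in\NN$ the element $\hl^n$ again lies in $\CG$, so Lemma~\ref{lem=cl} applies to $\hl^n$ and gives
\[
2\DD(\phf)\clG(\hl^n)\geq |\phf(\hl^n)|.
\]
Now I would use homogeneity of $\phf$ to rewrite the right-hand side as $|\phf(\hl^n)|=n|\phf(\hl)|$, so that dividing through by $n$ yields
\[
2\DD(\phf)\cdot\frac{\clG(\hl^n)}{n}\geq |\phf(\hl)|
\]
for every $n\in\NN$.

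Finally I would let $n\to\infty$. By Definition~\ref{defn=scl}(2) the left-hand side converges to $2\DD(\phf)\sclG(\hl)$ (the limit defining $\sclG(\hl)$ exists by Fekete's lemma, as noted just after the definition), while the right-hand side is independent of $n$. Taking the limit therefore gives
\[
2\DD(\phf)\sclG(\hl)\geq |\phf(\hl)|,
\]
which is the claimed inequality.

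There is essentially no obstacle here: the corollary is the ``easy direction'' of the Bavard duality, and the only inputs are Lemma~\ref{lem=cl}, the homogeneity of $\phf$, and the existence of the limit defining $\sclG$. The one point to be slightly careful about is that Lemma~\ref{lem=cl} must be invoked at $\hl^n$ rather than at $\hl$, so that the factor of $n$ is genuinely produced on both sides before passing to the limit; everything else is routine.
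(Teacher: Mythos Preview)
Your proof is correct and follows essentially the same approach as the paper: apply Lemma~\ref{lem=cl} to $\hl^n$, use homogeneity to extract the factor $n$, divide by $n$, and pass to the limit using Definition~\ref{defn=scl}(2). The paper's proof is slightly terser but the argument is identical.
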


\begin{proof}
Let $n\in \NN$. Apply Lemma~\ref{lem=cl} to $\hl^n$. Then we have
\[
2\DD(\phf)\clG(\hl^n)\geq |\phf(\hl^n)|.
\]
Hence we obtain that
\[
2\DD(\phf)\cdot \frac{\clG(\hl^n)}{n}\geq |\phf(\hl)|.
\]
By letting $n\to \infty$, we have the desired inequality.
\end{proof}

The celebrated Bavard duality theorem states that the `converse' direction to the inequality in Corollary~\ref{cor=sclabove} holds if $\phf$ runs over $\QG\setminus \HGR$. We state the theorem, again from Theorem~\ref{theorem=Bavard}.

\begin{theorem}[Bavard duality theorem, \cite{Bavard}]\label{theorem=Bavardagain}
Let $\Gg$ be a group. Then for every $\hl\in \CG$, we have
\begin{equation}\label{eq=Bavard}
\sclG(\hl)=\sup_{\phf\in \QG\setminus \HGR}\frac{|\phf(\hl)|}{2\DD(\phf)}.
\end{equation}
\end{theorem}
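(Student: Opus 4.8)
By Corollary~\ref{cor=sclabove} the inequality ``$\ge$'' in \eqref{eq=Bavard} already holds, so the content of Theorem~\ref{theorem=Bavardagain} is the reverse inequality; writing $s=\sclG(\hl)$, I must produce, for each $\varepsilon>0$, some $\phf\in\QG$ with $\DD(\phf)=1$ and $\phf(\hl)\ge 2s-\varepsilon$. The plan is the classical Hahn--Banach route. Since $\clG$ and $\sclG$ interact badly with multiplication in $\Gg$, I would first move from $\CG$ to the real vector space $\mathcal B$ of group $1$-chains on $\Gg$ that are boundaries, taken modulo the homogenization relations $\gl^{n}\sim n\gl$ and $\hl\gl\hl^{-1}\sim\gl$ --- this is the $\Ng=\Gg$ case of the chain space $\CR(\Gg,\Ng)$ constructed in Section~\ref{sec=gmBavard}. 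On $\mathcal B$, $\sclG$ extends to a genuine seminorm with $\sclG(1\cdot\hl)=\sclG(\hl)$, so $1\cdot\hl\ne 0$ in $\mathcal B$ whenever $s>0$. The geometric input at this stage is the finiteness of this seminorm, with its exact normalization: a factorization of $\hl^{n}$ into $k$ simple commutators gives a genus-$k$, one-holed surface mapping to a $K(\Gg,1)$ with boundary wrapping $n$ times around $\hl$, and an efficient filling surface can be cut and compressed back to a short product of commutators --- Lemma~\ref{lem=commutatorcal} being exactly the algebra behind genus stabilization. I would cite \cite{Calegari} (or reprove in this language) for the upshot $\sclG=\tfrac14\fl$, with $\fl$ the filling seminorm.

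\textbf{Identifying the bounded functionals.} A homogeneous quasimorphism $\phf$ is conjugation invariant and satisfies $\phf(\gl^{n})=n\phf(\gl)$ (homogeneity and Lemma~\ref{lem=defect}(1)), hence extends linearly to $1$-chains and descends to a functional on $\mathcal B$ whose kernel is exactly $\HGR$ (as $\phf$ kills all $1$-boundaries iff $\delta\phf=0$). One direction of the needed duality is Corollary~\ref{cor=sclabove} promoted to chains, $|\phf(c)|\le 2\DD(\phf)\,\sclG(c)$, obtained by pairing the bounded $2$-cocycle $\delta\phf$ (of $\ell^{\infty}$-norm $\DD(\phf)$) against a nearly optimal filling surface. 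Conversely, given a functional $f$ on $\mathcal B$ with $|f|\le\sclG$, I would extend it arbitrarily to all $1$-chains and restrict to $\Gg$, getting $\phf_0\colon\Gg\to\RR$; since $\delta\phf_0(\gl_1,\gl_2)=\phf_0(\partial[\gl_1|\gl_2])=f(\partial[\gl_1|\gl_2])$ and the $2$-simplex $\partial[\gl_1|\gl_2]$ has bounded $\sclG$, $\phf_0$ is a quasimorphism, and its homogenization $\phf\in\QG$ (Lemma~\ref{lem=homoge1}) satisfies $\phf(\hl)=f(1\cdot\hl)$ and has defect controlled by $\|f\|$. Tracking the constants --- where Proposition~\ref{prop=commutator_bavard} (the defect is detected on single commutators) together with $\sclG([\gl_1,\gl_2])\le\tfrac12$ is essential --- yields precisely the factor $2$ of \eqref{eq=Bavard}. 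Equivalently, via Lemma~\ref{lem=Q/H}, this step says that $\Ker(c^{2}_{\Gg})\subseteq\HHH^{2}_b(\Gg;\RR)$ with the $\ell^{\infty}$-seminorm is the dual of $(\mathcal B,\sclG)$, with the universal constant built in.

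\textbf{Hahn--Banach and degenerate cases.} If $s=\sclG(\hl)=0$, then Corollary~\ref{cor=sclabove} forces $\phf(\hl)=0$ for all $\phf\in\QG$, so both sides of \eqref{eq=Bavard} vanish (in particular if $\QG=\HGR$, which by the previous step entails $\sclG\equiv 0$ on $\CG$). If $s>0$, then $1\cdot\hl$ is a nonzero vector of seminorm $s$ in $(\mathcal B,\sclG)$; passing to the associated normed quotient and applying the Hahn--Banach theorem gives a norm-one functional $f$ with $f(1\cdot\hl)=s$. By the previous step, $f$ is represented by $\phf\in\QG$ with $2\DD(\phf)=1$ and $\phf(\hl)=s$; then $\DD(\phf)\ne 0$, so $\phf\in\QG\setminus\HGR$ and $|\phf(\hl)|/(2\DD(\phf))=s=\sclG(\hl)$, which is the missing inequality.

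\textbf{Expected main obstacle.} The delicate part is the converse step above --- recognizing an abstract $\sclG$-bounded functional on chains as a genuine homogeneous quasimorphism \emph{with the sharp defect bound}, so that no slack is introduced by the duality. This is the only place that really uses the surface/commutator geometry (Lemma~\ref{lem=commutatorcal}, Proposition~\ref{prop=commutator_bavard}, and the normalization $\sclG=\tfrac14\fl$) rather than formal linear algebra, and it is precisely the ingredient that has to be generalized --- to $\Ng$-valued chains, $\Gg$-invariant quasimorphisms, and mixed $\scl$ --- for Theorem~\ref{theorem=generalized_mixed_Bavard}. In practice I would prove it once at that level of generality and then read off Theorem~\ref{theorem=Bavardagain} by setting $\Ng=\Gg$ and $c=\hl$, as in Figure~\ref{fig=visual}.
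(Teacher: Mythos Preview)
Your proposal is correct and follows essentially the same route as the paper: the paper also deduces Theorem~\ref{theorem=Bavardagain} as the special case $\Ng=\Gg$, $c=\hl$ of the generalized mixed Bavard duality (Theorem~\ref{generalized mixed bavard rational}), whose proof goes through the filling norm identity $4\scl_{\Gg,\Ng}=\fl_{\Gg,\Ng}$ (Theorem~\ref{theorem=4scl=fill}), the Hahn--Banach duality between $(\BBB'_1(\Gg,\Ng;\RR),\|\cdot\|')$ and $\rQQQ_{\Ng}(\Gg)/\HHH^1_{\Ng}(\Gg;\RR)$ (Proposition~\ref{proposition=isometry}, Corollary~\ref{cor=HB}), and the homogenization bound $\DD(\phfh_{\mathrm h})\le 2\DD(\phfh)$ (Corollary~\ref{cor=defect2bai}) to recover the sharp constant. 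The only minor difference in implementation is that the paper applies Hahn--Banach to the $\|\cdot\|'$-norm on $\BBB'_1$ and then passes to the limit under powers, rather than applying it directly to the $\scl$-seminorm on the homogenized chain space as you sketch; both routes handle the constant-tracking you flag as the main obstacle in the same way.
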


In the present article, we will prove the `\emph{generalized} and \emph{mixed}' version of this theorem.

\begin{remark}\label{rem=quotient}
In the setting of Theorem~\ref{theorem=Bavardagain}, for the equivalence class $[\phf]\in \QG/ \HGR$ of $\phf\in \QG$, both of the value $\phf'(\hl)$ and the defect $\DD(\phf')$ are independent of the choices of a representative $\phf'\in [\phf]$ (recall that $\hl\in [\Gg,\Gg]$). From the latter independence, $\DD$ induces a genuine norm on the quotient vector space $\QG/\HGR$; we write $\DD$ for this norm as well. Thus, \eqref{eq=Bavard} may be restated as
\[
\sclG(\hl)=\sup_{[\phf]\in \big(\QG/\HGR\big)\setminus \{0\} }\frac{|\phf(\hl)|}{2\DD([\phf])}.
\]
\end{remark}

As we mentioned in Subsection~\ref{subsec=qm}, we have the following inequality between the defect of a quasimorphism and that of its homogenization.

\begin{proposition}\label{prop=defect2bai}
Let $\Gg$ be a group. Let $\phfh\in \QhG$, and let $\phfh_{\mathrm{h}}\in \QG$ be its homogenization. Set
\[
\DD'_{\Gg}(\phfh)=\sup\left\{\left|\phfh(\gl_1\gl_2\gl_1^{-1})-\phfh(\gl_2)\right|\;\middle|\; \gl_1,\gl_2\in \Gg\right\}.
\]
Then, we have
\[
\DD(\phfh_{\mathrm{h}})\leq \DD(\phfh)+\frac{1}{2}\DD'_{\Gg}(\phfh).
\]
\end{proposition}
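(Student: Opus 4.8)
The plan is to estimate the defect of the homogenization $\phfh_{\mathrm{h}}$ directly from the limit formula $\phfh_{\mathrm{h}}(\gl)=\lim_{n\to\infty}\phfh(\gl^n)/n$ of Lemma~\ref{lem=homoge2}~(1), by controlling $\phfh((\gl_1\gl_2)^n)$ in terms of $\phfh(\gl_1^n)$ and $\phfh(\gl_2^n)$ up to an error governed by $\DD(\phfh)$ and $\DD'_{\Gg}(\phfh)$. First I would fix $\gl_1,\gl_2\in\Gg$ and, for each $n\in\NN$, compare the word $(\gl_1\gl_2)^n$ with the ``sorted'' word $\gl_1^n\gl_2^n$. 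The key combinatorial input is that $(\gl_1\gl_2)^n$ can be transformed into $\gl_1^n\gl_2^n$ by a controlled number of moves, each of which either swaps an adjacent pair $\gl_2\gl_1 \rightsquigarrow \gl_1\gl_2$ or, more precisely, replaces $\gl_2 w \gl_1$ by $(\gl_1)(\gl_1^{-1}\gl_2\gl_1)w$, i.e.\ conjugation moves. A clean way to make this precise: I would write $(\gl_1\gl_2)^n = \gl_1 (\gl_2\gl_1)(\gl_2\gl_1)\cdots(\gl_2\gl_1)\gl_2$ and then iteratively move each $\gl_1$ leftward past the $\gl_2$'s to its left, each time picking up a conjugation term of the form $\gl_1^{-k}\gl_2\gl_1^{k}$; the total number of conjugation-type discrepancies is $O(n^2)$, but crucially each such discrepancy, when fed through $\phfh$, contributes an error bounded by $\DD'_{\Gg}(\phfh)$, and there will be roughly $\binom{n}{2}$ of them while the additive ($\DD(\phfh)$) errors number $O(n)$.

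The hard part will be organizing the bookkeeping so that, after dividing by $n$ and letting $n\to\infty$, the conjugation errors contribute exactly $\tfrac12\DD'_{\Gg}(\phfh)$ rather than something larger. The right device is to set up a double counting: when we expand $\phfh((\gl_1\gl_2)^n)$ by inserting conjugates, we should compare it with $\phfh(\gl_1^n) + \phfh(\gl_2^n)$ using the estimate
\[
\left|\phfh\big((\gl_1\gl_2)^n\big) - \phfh(\gl_1^n)-\phfh(\gl_2^n)\right| \leq (2n-1)\DD(\phfh) + \binom{n}{2}\DD'_{\Gg}(\phfh)
\]
or something of this shape; dividing by $n$ and taking the limit would give the wrong constant. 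To get the sharp $\tfrac12$, I would instead apply the argument to $(\gl_1\gl_2)^n$ where the two letters have been symmetrized — for instance compare $\phfh(((\gl_1\gl_2)^m)^n)$ with $n\phfh((\gl_1\gl_2)^m)$ and, separately, unscramble $(\gl_1\gl_2)^{mn}$ into a sorted form whose ``number of inversions'' is exactly $m^2\binom{n}{2}$ or similar, so that the quadratic-in-$n$ term, after division by $mn$ and the limit $n\to\infty$, survives with coefficient proportional to $m$, and then a second limit $m\to\infty$ is not needed — rather one optimizes. Concretely, the cleanest route is: show $\big|\phfh((\gl_1\gl_2)^n) - \phfh(\gl_1^n) - \phfh(\gl_2^n)\big| \le n\DD(\phfh) + \tfrac{n(n-1)}{2} \DD'_{\Gg}(\phfh) \cdot \tfrac{1}{n}$ is NOT what happens — so I will need to track that the conjugation error terms telescope or average, giving a bound $\le C\, n\,\DD(\phfh) + n\cdot\tfrac12 \DD'_{\Gg}(\phfh) + o(n)$.

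To pin this down I would proceed as follows. Define $\Delta_n = \phfh((\gl_1\gl_2)^n) - \phfh(\gl_1\gl_2^{}\cdots)$; more usefully, establish by induction on $n$ the identity
\[
(\gl_1\gl_2)^n = \gl_1^n \cdot \prod_{k=1}^{n-1}\big(\gl_1^{-(n-k)}\gl_2\gl_1^{n-k}\big)\cdots
\]
— here the exact form must be computed carefully, but the upshot is that $(\gl_1\gl_2)^n$ equals $\gl_1^n$ times a product of $n$ factors each of which is a conjugate of $\gl_2$ by a power of $\gl_1$. Applying $\phfh$, each conjugate $\gl_1^{j}\gl_2\gl_1^{-j}$ satisfies $\phfh(\gl_1^{j}\gl_2\gl_1^{-j}) \sim_{\DD'_{\Gg}(\phfh)} \phfh(\gl_2)$, and $\phfh$ of a product of $n$ terms is within $(n-1)\DD(\phfh)$ of the sum. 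Combining, $\phfh((\gl_1\gl_2)^n) \sim_{n\DD(\phfh) + n\DD'_{\Gg}(\phfh) + o(n)} \phfh(\gl_1^n) + n\phfh(\gl_2)$. Doing the symmetric computation starting by pulling out $\gl_2^n$ on the right, averaging the two, and using $\phfh(\gl_i^n) = n\phfh_{\mathrm{h}}(\gl_i) + O(\DD(\phfh))$, then dividing by $n$ and sending $n\to\infty$ yields $\phfh_{\mathrm{h}}(\gl_1\gl_2) \sim_{\DD(\phfh) + \frac12\DD'_{\Gg}(\phfh)} \phfh_{\mathrm{h}}(\gl_1) + \phfh_{\mathrm{h}}(\gl_2)$, which is exactly $\DD(\phfh_{\mathrm{h}}) \le \DD(\phfh) + \tfrac12 \DD'_{\Gg}(\phfh)$. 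The main obstacle, as noted, is verifying the precise factorization identity and checking that the conjugation errors genuinely average to the factor $\tfrac12$ rather than accumulating — this is the one place where a careful (but elementary) induction is unavoidable.
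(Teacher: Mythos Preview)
Your factorization $(\gl_1\gl_2)^n = \gl_1^n \cdot \prod_{k}(\gl_1^{-k}\gl_2\gl_1^{k})$ is correct and yields
\[
\left|\phfh\big((\gl_1\gl_2)^n\big) - \phfh(\gl_1^n) - n\,\phfh(\gl_2)\right|\ \leq\ n\,\DD(\phfh) + (n-1)\,\DD'_{\Gg}(\phfh).
\]
Dividing by $n$ and letting $n\to\infty$ produces only the bound $\DD(\phfh)+\DD'_{\Gg}(\phfh)$, not $\DD(\phfh)+\tfrac12\DD'_{\Gg}(\phfh)$. Your hope is that ``averaging'' with the symmetric factorization $(\gl_1\gl_2)^n=\big(\prod_k\gl_2^{k}\gl_1\gl_2^{-k}\big)\gl_2^{n}$ halves the $\DD'_{\Gg}$-contribution, but it does not: each of the two factorizations gives an inequality with error bound $\DD+\DD'_{\Gg}$, and the average of two quantities each bounded by $C$ is still bounded only by $C$. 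Concretely, the conjugation errors in the two computations are $\sum_k\bigl[\phfh(\gl_1^{-k}\gl_2\gl_1^{k})-\phfh(\gl_2)\bigr]$ and $\sum_k\bigl[\phfh(\gl_2^{k}\gl_1\gl_2^{-k})-\phfh(\gl_1)\bigr]$; these involve unrelated conjugates and there is no telescoping or sign cancellation between them. So the argument has a genuine gap precisely at the step you flagged as ``the main obstacle.''

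The paper obtains the factor $\tfrac12$ by a different mechanism. It first passes to the anti-symmetrization $\phfh_{\mathrm{as}}$ (Lemma~\ref{lem=antisym}), which does not increase either $\DD$ or $\DD'_{\Gg}$ and has the same homogenization. It then invokes the commutator identity of Lemma~\ref{lem=commutatorcal}: the element $(\gl_1\gl_2)^{-2n}\gl_1^{2n}\gl_2^{2n}$ is a product of only $n$ simple commutators. Since $\phfh_{\mathrm{as}}([a,b])\sim_{\DD(\phfh_{\mathrm{as}})+\DD'_{\Gg}(\phfh_{\mathrm{as}})}0$ for any simple commutator (Lemma~\ref{lem=antisym}(7)), one gets
\[
\left|\phfh_{\mathrm{as}}(\gl_1^{2n})+\phfh_{\mathrm{as}}(\gl_2^{2n})-\phfh_{\mathrm{as}}\big((\gl_1\gl_2)^{2n}\big)\right|\ \leq\ 2\DD(\phfh_{\mathrm{as}})+n\bigl(2\DD(\phfh_{\mathrm{as}})+\DD'_{\Gg}(\phfh_{\mathrm{as}})\bigr).
\]
Dividing by $2n$ --- note the exponent is $2n$ while only $n$ commutators appear --- and letting $n\to\infty$ gives exactly $\DD(\phfh_{\mathrm{as}})+\tfrac12\DD'_{\Gg}(\phfh_{\mathrm{as}})\leq \DD(\phfh)+\tfrac12\DD'_{\Gg}(\phfh)$. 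The $\tfrac12$ therefore comes from a nontrivial group-theoretic identity (the ``$2n$ versus $n$'' in Lemma~\ref{lem=commutatorcal}), not from any symmetrization of your sorting argument.
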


The real number $\DD'_{\Gg}(\phfh)$ measures the ($\Gg$-)quasi-invariance of $\phfh$; we will treat a related notion in Definition~\ref{defn=qinv}. To prove Proposition~\ref{prop=defect2bai}, we will use the anti-symmetrization of a quasimorphism. A real-valued function $\ff$ on a group $\Gg$ is said to be \emph{anti-symmetric} if $\ff(\gl^{-1})=-\ff(\gl)$ holds for every $\gl\in \Gg$.

\begin{lemma}[anti-symmetrization of quasimorphisms]\label{lem=antisym}
Let $\Gg$ be a group. Let $\phfh\in \QhG$. Set $\phfh_{\mathrm{as}}$ by
\[
\phfh_{\mathrm{as}}(\gl)=\frac{\phfh(\gl)-\phfh(\gl^{-1})}{2}
\]
for $\gl\in \Gg$. Then the following hold.
\begin{enumerate}[$(1)$]
  \item $\phfh_{\mathrm{as}}$ is anti-symmetric.
  \item $\|\phfh_{\mathrm{as}}-\phfh\|_{\infty}<\infty$.
  \item $\phfh_{\mathrm{as}}\in \QhG$ and $\DD(\phfh_{\mathrm{as}})\leq \DD(\phfh)$.
  \item $\DD'_{\Gg}(\phfh_{\mathrm{as}})\leq \DD'_{\Gg}(\phfh)$.
  \item $(\phfh_{\mathrm{as}})_{\mathrm{h}}=\phfh_{\mathrm{h}}$, where $\psfh_{\mathrm{h}}$ means the homogenization of $\psfh$ for $\psfh\in \QhG$.
   \item $\DD'_{\Gg}(\phfh_{\mathrm{as}})\leq 2\DD(\phfh_{\mathrm{as}})$.
  \item For every simple commutator $\hl$ in $\Gg$, $\phfh_{\mathrm{as}}(\hl)\sim_{\DD(\phfh_{\mathrm{as}})+\DD'_{\Gg}(\phfh_{\mathrm{as}})} 0$.
  \item For $\hl\in \CG$,
\[
|\phfh_{\mathrm{as}}(\hl)|\leq \left(2\DD(\phfh_{\mathrm{as}})+\DD'_{\Gg}(\phfh_{\mathrm{as}})\right)\clG(\hl).
\]
\end{enumerate}
\end{lemma}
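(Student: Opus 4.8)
The plan is to prove the eight items in the order listed, since each uses its predecessors. Throughout I write $\psi=\phfh_{\mathrm{as}}$ and use repeatedly the defect inequality for $\phfh$ together with the elementary bound $|\phfh(1_{\Gg})|\leq\DD(\phfh)$, which comes from applying that inequality to $1_{\Gg}=1_{\Gg}\cdot 1_{\Gg}$.

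Item (1) is immediate from the formula defining $\psi$. For item (2) I would rewrite $\psi(\gl)-\phfh(\gl)=-\tfrac12\bigl(\phfh(\gl)+\phfh(\gl^{-1})\bigr)$ and estimate $\phfh(\gl)+\phfh(\gl^{-1})$ by comparing with $\phfh(\gl\gl^{-1})=\phfh(1_{\Gg})$, obtaining $\|\psi-\phfh\|_{\infty}\leq\DD(\phfh)$. Items (3) and (4) follow by the same device: using $(\gl_1\gl_2)^{-1}=\gl_2^{-1}\gl_1^{-1}$ and the fact that $\gl_1\gl_2^{-1}\gl_1^{-1}$ is a conjugate of $\gl_2^{-1}$, each of the relevant differences for $\psi$ is a combination with coefficients $\pm\tfrac12$ of two differences of the same type for $\phfh$, whence $\DD(\psi)\leq\DD(\phfh)$ and $\DD'_{\Gg}(\psi)\leq\DD'_{\Gg}(\phfh)$ by the triangle inequality. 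Item (5) is a consequence of (2) and the uniqueness clause of Lemma~\ref{lem=homoge1}: since $\psi$ lies at finite $\ell^{\infty}$-distance from $\phfh$, hence from $\phfh_{\mathrm{h}}$, it has the same homogenization.

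The crux is item (6), the one place where anti-symmetry is used in an essential way. For $\gl_1,\gl_2\in\Gg$, I would apply the defect inequality to $\psi$ twice — first to split $\gl_1\gl_2\gl_1^{-1}=(\gl_1\gl_2)\gl_1^{-1}$, then to split $\gl_1\gl_2=\gl_1\cdot\gl_2$ — to get $\psi(\gl_1\gl_2\gl_1^{-1})\sim_{2\DD(\psi)}\psi(\gl_1)+\psi(\gl_2)+\psi(\gl_1^{-1})$, and then anti-symmetry cancels $\psi(\gl_1)+\psi(\gl_1^{-1})$, leaving $\psi(\gl_1\gl_2\gl_1^{-1})\sim_{2\DD(\psi)}\psi(\gl_2)$, which is exactly the bound $\DD'_{\Gg}(\psi)\leq 2\DD(\psi)$. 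Item (7) is then immediate: expand $[\gl_1,\gl_2]=(\gl_1\gl_2\gl_1^{-1})\gl_2^{-1}$ via the defect inequality, replace $\psi(\gl_1\gl_2\gl_1^{-1})$ by $\psi(\gl_2)$ up to an error $\DD'_{\Gg}(\psi)$, and use $\psi(\gl_2^{-1})=-\psi(\gl_2)$. For item (8), I would write $\hl$ as a product of $n=\clG(\hl)$ simple commutators, compare $\psi(\hl)$ with the sum of the values of $\psi$ on these commutators (total error $(n-1)\DD(\psi)$), apply item (7) to each factor, and read off $|\psi(\hl)|\leq(2n-1)\DD(\psi)+n\,\DD'_{\Gg}(\psi)\leq n\bigl(2\DD(\psi)+\DD'_{\Gg}(\psi)\bigr)$; the degenerate case $n=0$ is handled by $\psi(1_{\Gg})=0$.

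I do not foresee a genuine obstacle: every item reduces to the defect inequality, anti-symmetry, or the uniqueness of homogenization. The only steps calling for care are the parenthesization of the triple product in item (6) (which accounts for the factor $2$) and the off-by-one bookkeeping in the chain of estimates for item (8).
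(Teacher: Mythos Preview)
Your proposal is correct and follows essentially the same route as the paper's proof: items (1)--(4) by direct inspection and the triangle inequality, item (5) via finite $\ell^{\infty}$-distance and the uniqueness of homogenization, item (6) by two applications of the defect inequality plus anti-symmetry, item (7) by splitting $[\gl_1,\gl_2]=(\gl_1\gl_2\gl_1^{-1})\gl_2^{-1}$, and item (8) by iterating as in Lemma~\ref{lem=cl}. The only cosmetic difference is that the paper cites item (1) rather than item (2) for the proof of (5); your justification via (2) and Lemma~\ref{lem=homoge1} is in fact the cleaner one.
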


\begin{proof}
By construction, item (1) follows. Since $\phfh\in \QhG$, we have item (2). The triangle inequality shows that $\DD(\phfh_{\mathrm{as}})\leq \DD(\phfh)$; this proves (3). Item (4) also follows from the triangle inequality. Item (5) follows from item (1) and Lemma~\ref{lem=homoge1}. To show item (6), for $\gl_1,\gl_2\in \Gg$, we have
\begin{align*}
\phfh_{\mathrm{as}}(\gl_1\gl_2\gl_1^{-1})&\sim_{2\DD(\phfh_{\mathrm{as}})}\phfh_{\mathrm{as}}(\gl_1)+\phfh_{\mathrm{as}}(\gl_2)+\phfh_{\mathrm{as}}(\gl_1^{-1})\\
&=\phfh_{\mathrm{as}}(\gl_1)+\phfh_{\mathrm{as}}(\gl_2)-\phfh_{\mathrm{as}}(\gl_1)=\phfh_{\mathrm{as}}(\gl_2).
\end{align*}
Hence $\DD'_{\Gg}(\phfh_{\mathrm{as}})\leq 2\DD(\phfh_{\mathrm{as}})$.
For item (7), write $\hl=[\gl_1,\gl_2]$, where $\gl_1,\gl_2\in \Gg$. Then we have
\begin{align*}
\phfh_{\mathrm{as}}(\hl)&\sim_{\DD(\phfh_{\mathrm{as}})} \phfh_{\mathrm{as}}(\gl_1\gl_2\gl_1^{-1})+\phfh_{\mathrm{as}}(\gl_2^{-1})\\
&\sim_{\DD'_{\Gg}(\phfh_{\mathrm{as}})} \phfh_{\mathrm{as}}(\gl_2)+\phfh_{\mathrm{as}}(\gl_2^{-1})\\
&=\phfh_{\mathrm{as}}(\gl_2)-\phfh_{\mathrm{as}}(\gl_2)=0,
\end{align*}
as desired. Finally, item (8) can be deduced from item (7) in a manner similar to one in the proof of Lemma~\ref{lem=cl}.
\end{proof}

The map $\phfh_{\mathrm{as}}$ in Lemma~\ref{lem=antisym} is called the \emph{anti-symmetrization} of $\phfh$.

\begin{proof}[Proof of Proposition~\textup{\ref{prop=defect2bai}}]
Set $\phfh_{\mathrm{as}}$ to be the anti-symmetrization of $\phfh$.
Let $\gl_1,\gl_2\in \Gg$. Let $n\in \NN$. Then, by anti-symmetricity of $\phfh_{\mathrm{as}}$ we have
\begin{align*}
&\phfh_{\mathrm{as}}(\gl_1^{2n})+\phfh_{\mathrm{as}}(\gl_2^{2n})-\phfh_{\mathrm{as}}((\gl_1\gl_2)^{2n})\\
&=\phfh_{\mathrm{as}}(\gl_1^{2n})+\phfh_{\mathrm{as}}(\gl_2^{2n})+\phfh_{\mathrm{as}}((\gl_1\gl_2)^{-2n})\\
&\sim_{2\DD(\phfh_{\mathrm{as}})} \phfh_{\mathrm{as}}((\gl_1\gl_2)^{-2n}\gl_1^{2n}\gl_2^{2n})\\
&\sim_{n\left(2\DD(\phfh_{\mathrm{as}})+\DD'_{\Gg}(\phfh_{\mathrm{as}})\right)} 0.
\end{align*}
Here, the last estimate follows from Lemma~\ref{lem=commutatorcal} and Lemma~\ref{lem=antisym}~(8). Therefore, we obtain that
\[
\left|\frac{\phfh_{\mathrm{as}}(\gl_1^{2n})}{2n}+\frac{\phfh_{\mathrm{as}}(\gl_2^{2n})}{2n}-\frac{\phfh_{\mathrm{as}}((\gl_1\gl_2)^{2n})}{2n}\right|\leq \left(1+\frac{1}{n}\right)\DD(\phfh_{\mathrm{as}})+\frac{1}{2}\DD'_{\Gg}(\phfh_{\mathrm{as}}).
\]
By letting $n\to \infty$ we deduce from Lemma~\ref{lem=homoge2}~(1) and Lemma~\ref{lem=antisym}~(5) that
\[
\left|\phfh_{\mathrm{h}}(\gl_1)+\phfh_{\mathrm{h}}(\gl_2)-\phfh_{\mathrm{h}}(\gl_1\gl_2)\right|\leq \DD(\phfh_{\mathrm{as}})+\frac{1}{2}\DD'_{\Gg}(\phfh_{\mathrm{as}}).
\]
By taking the supremum over $\gl_1,\gl_2\in \Gg$, we conclude that
\[
\DD(\phfh_{\mathrm{h}})\leq \DD(\phfh_{\mathrm{as}})+\frac{1}{2}\DD'_{\Gg}(\phfh_{\mathrm{as}}).
\]
Together with Lemma~\ref{lem=antisym}~(3) and (4), we obtain the desired inequality.
\end{proof}


\begin{corollary}[defect estimate under the homogenization process]\label{cor=defect2bai}
Let $\Gg$ be a group. Let $\phfh\in \QhG$, and let $\phfh_{\mathrm{h}}$ be its homogenization. Then, we have
\[
\DD(\phfh_{\mathrm{h}})\leq 2\DD(\phfh).
\]
\end{corollary}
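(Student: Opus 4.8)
The plan is to reduce everything to the anti\-symmetric case, where the quasi\-invariance defect $\DD'_{\Gg}$ is automatically controlled by the ordinary defect $\DD$, and then invoke Proposition~\ref{prop=defect2bai}. Note first that applying Proposition~\ref{prop=defect2bai} directly to $\phfh$ would only yield $\DD(\phfh_{\mathrm{h}})\leq \DD(\phfh)+\frac12\DD'_{\Gg}(\phfh)$, which is not enough, because $\DD'_{\Gg}(\phfh)$ need not be bounded in terms of $\DD(\phfh)$ for a general (non\-homogeneous, non\-anti\-symmetric) quasimorphism. So the first step is to replace $\phfh$ by its anti\-symmetrization $\phfh_{\mathrm{as}}$.

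The key point of this replacement is that it does not change the homogenization: by Lemma~\ref{lem=antisym}~(5) we have $(\phfh_{\mathrm{as}})_{\mathrm{h}}=\phfh_{\mathrm{h}}$. Hence it suffices to bound $\DD(\phfh_{\mathrm{h}})=\DD\big((\phfh_{\mathrm{as}})_{\mathrm{h}}\big)$. Now I would apply Proposition~\ref{prop=defect2bai} to the quasimorphism $\phfh_{\mathrm{as}}$, obtaining
\[
\DD(\phfh_{\mathrm{h}})\;\leq\;\DD(\phfh_{\mathrm{as}})+\tfrac12\,\DD'_{\Gg}(\phfh_{\mathrm{as}}).
\]
Then Lemma~\ref{lem=antisym}~(6), which says precisely that $\DD'_{\Gg}(\phfh_{\mathrm{as}})\leq 2\DD(\phfh_{\mathrm{as}})$, collapses the right\-hand side to $\DD(\phfh_{\mathrm{as}})+\DD(\phfh_{\mathrm{as}})=2\DD(\phfh_{\mathrm{as}})$. (If one prefers, Lemma~\ref{lem=antisym}~(4) together with the bound $\DD'_{\Gg}(\phfh)\leq 2\DD(\phfh)+\dots$ hidden in (6)'s proof can be used instead, but (6) is the cleanest route.)

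Finally, Lemma~\ref{lem=antisym}~(3) gives $\DD(\phfh_{\mathrm{as}})\leq \DD(\phfh)$, so chaining the two inequalities yields $\DD(\phfh_{\mathrm{h}})\leq 2\DD(\phfh)$, as claimed. There is essentially no genuine obstacle here: the substance is already packaged into Proposition~\ref{prop=defect2bai} (the factor $\frac12$ in front of $\DD'_{\Gg}$, obtained via the commutator identity of Lemma~\ref{lem=commutatorcal}) and into Lemma~\ref{lem=antisym}~(3),(5),(6); the only thing to get right is the order of operations, i.e.\ anti\-symmetrize \emph{before} homogenizing so that (6) is available, rather than trying to control $\DD'_{\Gg}(\phfh)$ directly.
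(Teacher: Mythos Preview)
Your argument is correct and matches the paper's own proof: apply Proposition~\ref{prop=defect2bai} to $\phfh_{\mathrm{as}}$, then use Lemma~\ref{lem=antisym}~(5) to identify $(\phfh_{\mathrm{as}})_{\mathrm{h}}=\phfh_{\mathrm{h}}$, (6) to collapse $\DD'_{\Gg}(\phfh_{\mathrm{as}})$ into $2\DD(\phfh_{\mathrm{as}})$, and (3) to pass back to $\DD(\phfh)$. The paper cites items (3), (4), (6) rather than (5), but (4) is not actually needed in the chain and (5) is the step that justifies $(\phfh_{\mathrm{as}})_{\mathrm{h}}=\phfh_{\mathrm{h}}$; your bookkeeping is the cleaner one.
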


\begin{proof}
We have
\begin{align*}
\DD(\phfh_{\mathrm{h}})&=\DD((\phfh_{\mathrm{as}})_{\mathrm{h}})\\
&\leq \DD(\phfh_{\mathrm{as}})+\frac{1}{2}\DD'_{\Gg}(\phfh_{\mathrm{as}})\\
&\leq \DD(\phfh_{\mathrm{as}})+\frac{1}{2}\cdot 2\DD(\phfh_{\mathrm{as}})\\
&\leq \DD(\phfh)+\frac{1}{2}\cdot 2\DD(\phfh)=2\DD(\phfh).
\end{align*}
Here, the first equality, the second inequality, the third inequality, and the fourth inequailty follow from Lemma~\ref{lem=antisym}~(5), Proposition~\ref{prop=defect2bai}, Lemma~\ref{lem=antisym}~(6), and Lemma~\ref{lem=antisym}~(3), respectively.
\end{proof}

\section{The generalized Bavard duality}\label{sec=gBavard}

In this section, we state the generalized Bavard duality theorem due to Calegari \cite{Calegari1}. For simplicity, here we only treat integral chains.

Let $\Gg$ be a group. Fix a unital commutative ring $A$. Recall that $\CCC_2(\Gg ; A)$ denotes the set of $2$-chains of $\Gg$ with $A$-coefficients. Namely, $\CCC_2(\Gg ; A)$ is the free $A$-module generated by the $2$-fold direct product $\Gg^2=\Gg \times \Gg$ of $\Gg$. Similarly, $\CCC_1(\Gg ; A)$ is the free $A$-module generated by $\Gg$. The boundary map $\partial\colon \CCC_2(\Gg ; A)\to \CCC_1(\Gg ; A)$ is defined as
\[
\partial ((\gl_1,\gl_2))=\gl_2-\gl_1\gl_2+\gl_1
\]
for every $(\gl_1,\gl_2)\in \Gg^2$, extended as an $A$-linear map. Finally, we recall  the definition of ($1$-)boundaries.

\begin{definition}\label{defn=BBB}
Define the set
$\BBB_1(\Gg;A)$ to be $\partial \CCC_2(\Gg;A)$.
\end{definition}

In the case of $A=\ZZ$, we have the following lemma.

\begin{lemma}\label{lem=domainG}
Let $\Gg$ be a group. Then, the following two conditions on $c\in \CCC_1(\Gg;\ZZ)$ are equivalent.
\begin{enumerate}
 \item[\textup{(i)}] There exist $k,l\in \ZZ_{\geq 0}$ and $\gl_1, \cdots, \gl_k, \gbr_1, \cdots, \gbr_l \in \Gg$ such that
\[ c = \gl_1 + \cdots + \gl_k - \gbr_1 - \cdots - \gbr_l\]
and $\gl_1 \cdots \gl_k \gbr_1^{-1} \cdots \gbr_l^{-1} \in [\Gg,\Gg]$.
 \item[\textup{(ii)}] $c$ belongs to $\BBB_1(\Gg;\ZZ)$.
\end{enumerate}
\end{lemma}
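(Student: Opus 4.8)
The plan is to prove both implications directly, using only the definition of $\partial$ together with elementary identities relating $2$-chains to commutators. The content is essentially that $\BBB_1(\Gg;\ZZ) = \Ker(\CCC_1(\Gg;\ZZ) \to \HHH_0 \oplus \Gg^{\mathrm{ab}})$, but since we want a self-contained account, I would argue in terms of explicit $2$-chains rather than invoking the standard computation of $\HHH_1$.

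First I would prove (i) $\Rightarrow$ (ii). Suppose $c = \gl_1 + \cdots + \gl_k - \gbr_1 - \cdots - \gbr_l$ with $w := \gl_1\cdots\gl_k\gbr_1^{-1}\cdots\gbr_l^{-1} \in \CG$. The key observations are: (a) for $a,b\in\Gg$, the element $ab - a - b = -\partial((a,b))$ lies in $\BBB_1(\Gg;\ZZ)$, so by induction $\gl_1 + \cdots + \gl_k \equiv \gl_1\cdots\gl_k \pmod{\BBB_1}$ and similarly $\gbr_1 + \cdots + \gbr_l \equiv \gbr_1\cdots\gbr_l$; (b) for any $g\in\Gg$, $g + g^{-1} - 1_{\Gg} = -\partial((g,g^{-1}))$, and $1_{\Gg} = -\partial((1_{\Gg},1_{\Gg}))$, so $g^{-1} \equiv -g \pmod{\BBB_1}$; (c) for a simple commutator, $[a,b] + 1_{\Gg} - \text{(something)}$... more cleanly, $\partial((ab, a^{-1}b^{-1}a b a^{-1}b^{-1}))$-type computations show $[a,b] \equiv 1_{\Gg} \equiv 0$ in $\CCC_1/\BBB_1$; indeed from (a) and (b), for $w = [a,b]$ we get $w \equiv a + b + a^{-1} + b^{-1} \equiv a + b - a - b = 0$. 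Combining, $c \equiv \gl_1\cdots\gl_k - \gbr_1\cdots\gbr_l \equiv \gl_1\cdots\gl_k + \gbr_l^{-1}\cdots\gbr_1^{-1} \equiv \gl_1\cdots\gl_k\gbr_l^{-1}\cdots\gbr_1^{-1}$ $= w$ modulo $\BBB_1$, and writing $w$ as a product of simple commutators and applying (a) and the commutator computation, $w \equiv 0$. Hence $c\in\BBB_1(\Gg;\ZZ)$.

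For (ii) $\Rightarrow$ (i), suppose $c = \partial(\sum_j n_j (a_j, b_j))$ for integers $n_j$ and $a_j, b_j \in \Gg$. Each $\partial((a_j,b_j)) = b_j - a_jb_j + a_j$; absorbing signs of $n_j$ into swapping the $+$ and $-$ parts and using that $g + g^{-1} \equiv 1_\Gg$ to handle negative multiplicities (replacing $-g$ by $g^{-1}$ up to a correction that is again a boundary, which is harmless since we only need the existence of some presentation of the stated form), I would rewrite $c$ as $\gl_1 + \cdots + \gl_k - \gbr_1 - \cdots - \gbr_l$ and then need to check the product $\gl_1\cdots\gl_k\gbr_1^{-1}\cdots\gbr_l^{-1}$ lies in $\CG$. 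This is exactly the statement that the composite $\CCC_1(\Gg;\ZZ) \to \Gg^{\mathrm{ab}}$ sending $g \mapsto [g]$ kills $\BBB_1(\Gg;\ZZ)$: since $[b] - [ab] + [a] = 0$ in the abelianization, $\partial$ lands in the kernel, so $[\gl_1\cdots\gl_k\gbr_1^{-1}\cdots\gbr_l^{-1}] = 0$ in $\Gg^{\mathrm{ab}}$, i.e. the product is a product of simple commutators. I expect the routine bookkeeping in (ii) $\Rightarrow$ (i) — normalizing an arbitrary integral $2$-chain into the $+/-$ single-coefficient form while tracking that the correction terms remain boundaries — to be the only mildly delicate point; everything else is a direct computation with $\partial$.
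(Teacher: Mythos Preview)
Your argument is correct in substance, modulo a couple of inessential slips: $\partial((g,g^{-1}))=g+g^{-1}-1_{\Gg}$ (not its negative), and the product you reach at the end of (i)$\Rightarrow$(ii) is $\gl_1\cdots\gl_k\gbr_l^{-1}\cdots\gbr_1^{-1}$, which is not literally $w$ but differs from $w$ by an element of $[\Gg,\Gg]$, so it is still in $[\Gg,\Gg]$ and the argument goes through. Also, the ``delicate bookkeeping'' you anticipate in (ii)$\Rightarrow$(i) is not needed: every element of $\CCC_1(\Gg;\ZZ)$ is \emph{by definition} of the form $\gl_1+\cdots+\gl_k-\gbr_1-\cdots-\gbr_l$ (repetitions allowed), so you can write $c$ this way immediately and the only content is the abelianisation check.

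The paper takes a genuinely different route. It does not prove Lemma~\ref{lem=domainG} directly; instead it proves the mixed generalisation Lemma~\ref{lemma=domain} (for a pair $(\Gg,\Ng)$, with $\CZ(\Gg,\Ng)=\CCC_1(\Ng;\ZZ)\cap\BBB_1'(\Gg,\Ng;\ZZ)$ replacing $\BBB_1(\Gg;\ZZ)$) and recovers the statement by setting $\Ng=\Gg$. That proof is geometric: for (i)$\Rightarrow$(ii) it builds a $(\Gg,\Ng)$-simplicial surface with boundary $\xl_1\cdots\xl_k\xbr_1^{-1}\cdots\xbr_l^{-1}$ via Theorem~\ref{theorem characterization 1} and then attaches triangles; for (ii)$\Rightarrow$(i) it invokes the surface-cutting argument of Theorem~\ref{theorem characterization 2} to produce conjugating elements witnessing membership in $[\Gg,\Ng]$. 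Your algebraic approach is shorter and entirely self-contained for $\Ng=\Gg$, using nothing beyond $\partial$-identities and the map $\CCC_1(\Gg;\ZZ)\to\Gg^{\mathrm{ab}}$. The trade-off is that the paper's geometric machinery is what generalises cleanly to arbitrary normal $\Ng$: your abelianisation argument does not obviously extend, since there is no natural homomorphism $\CCC_1(\Gg;\ZZ)\to\Ng/[\Gg,\Ng]$ and the $2$-chains in $\CCC_2'(\Gg,\Ng;\ZZ)$ involve elements outside $\Ng$.
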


Here, we do not prove Lemma~\ref{lem=domainG}. Instead, we will prove a generalization (Lemma~\ref{lemma=domain}) of Lemma~\ref{lem=domainG} in Section~\ref{sec=gmBavard}. Indeed, Lemma~\ref{lemma=domain} for the special case of $\Ng=\Gg$ recovers Lemma~\ref{lem=domainG} (observe Remark~\ref{rem=BBB_C}).

For an integral chain $c\in \BBB_1(\Gg;\ZZ)$, we define the following notions.

\begin{definition}[cl and scl for integral chains]\label{defn=chainonG}
Let $\Gg$ be a group. Let $c\in \BBB_1(\Gg;\ZZ)$. Write $c=\gl_1 + \cdots + \gl_k - \gbr_1 - \cdots - \gbr_l$ as in Lemma~\ref{lem=domainG}~\textup{(i)}.
\begin{enumerate}[(1)]
  \item For $\phf\in \QG$, define
\[
\phf(c)=\phf(\gl_1) + \cdots + \phf(\gl_k) - \phf(\gbr_1) - \cdots - \phf(\gbr_l).
\]
  \item Define
\[
\cl_{\Gg}(c)=\inf \cl_{\Gg}(\xi_1\cdots \xi_k \eta_1^{-1} \cdots \eta_l^{-1} ).
\]
Here, in the infimum in the right-hand side of the equality, for every $i\in \{1,\ldots,k\}$, $\xi_i$ runs over the conjugacy class of $\gl_i$ in $\Gg$; for every $j\in \{1,\ldots,l\}$, $\eta_j$ runs over the conjugacy class of $\gbr_j$ in $\Gg$.
  \item Define
\[
\scl_{\Gg}(c)=\lim_{n\to \infty} \frac{1}{n} \cl_{\Gg}(\gl_1^n + \cdots + \gl_k^n - \gbr_1^n - \cdots - \gbr_l^n).
\]
\end{enumerate}
\end{definition}

The well-definedness of $\cl_{\Gg}(c)$ and $\scl_{\Gg}(c)$ are both non-trivial; but here we do not present the proofs. Instead, we treat these notions (Definition~\ref{definition=commutator_lengths_for_integral_chains}, Definition~\ref{defn=scl_chain} and Definition~\ref{definition=scl_integral_2}) in the setting of a pair $(\Gg,\Ng)$ in Section~\ref{sec=gmBavard}: the special case of $\Ng=\Gg$ guarantees the well-definedness of Definition~\ref{defn=chainonG}.

Now we are ready to state the generalized Bavard duality theorem for integral chains.

\begin{theorem}[generalized Bavard duality theorem for integral chains, \cite{Calegari}]\label{thm=gBavardZ}
Let $\Gg$  be a group. Then, for every chain $c\in \BBB_1(\Gg;\ZZ)$, we have
\[
\sclG(c)=\sup_{\phf\in \QG\setminus \HGR}\frac{|\phf(c)|}{2\DD(\phf)}.
\]
\end{theorem}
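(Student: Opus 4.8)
The plan is to prove the two inequalities separately. Since Theorem~\ref{thm=gBavardZ} is the case $\Ng=\Gg$ of the generalized mixed Bavard duality (Theorem~\ref{theorem=generalized_mixed_Bavard}), one could also simply invoke Section~\ref{sec=gmBavard}; but I will outline the argument directly, since it simplifies considerably in the single-group case. Throughout, fix $c=\gl_1+\cdots+\gl_k-\gbr_1-\cdots-\gbr_l\in\BBB_1(\Gg;\ZZ)$ presented as in Lemma~\ref{lem=domainG}~(i).

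For the easy inequality $\sclG(c)\ge\sup_{\phf}|\phf(c)|/(2\DD(\phf))$, I would mimic Corollary~\ref{cor=sclabove} at the level of chains. Fix $\phf\in\QG$ and $n\in\NN$; choose conjugates $\xi_i$ of $\gl_i^n$ and $\eta_j$ of $\gbr_j^n$ that nearly realize the infimum defining $m_n:=\clG(\gl_1^n+\cdots-\gbr_l^n)$, and put $w_n=\xi_1\cdots\xi_k\eta_1^{-1}\cdots\eta_l^{-1}$, a product of $m_n$ simple commutators. Homogeneity and conjugation invariance of $\phf$ (Lemma~\ref{lem=defect}~(1)) give $\phf(w_n)\sim_{(k+l-1)\DD(\phf)}n\phf(c)$, while Lemma~\ref{lem=cl} gives $|\phf(w_n)|\le 2\DD(\phf)\,m_n$; dividing by $n$ and letting $n\to\infty$ yields the inequality. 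This already settles the case $\sclG(c)=0$, so afterwards I would assume $\sclG(c)>0$.

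For the hard inequality I would use Hahn--Banach. First I would record that $\sclG$ descends to a genuine seminorm $\|\cdot\|_{\scl}$ on the quotient $B_1^H(\Gg;\RR):=\BBB_1(\Gg;\RR)/\langle g^n-ng,\ hgh^{-1}-g\rangle$ (both kinds of displayed elements lie in $\BBB_1(\Gg;\RR)$): it is well defined and subadditive on integral chains, extends to rational chains by rescaling, and to real chains by continuity --- this is precisely the well-definedness package that Section~\ref{sec=gmBavard} supplies in greater generality. The one quantitative input I need is $\sclG(g+h-gh)\le\tfrac12$ for all $g,h\in\Gg$; this holds because $g+h-gh=\partial(g,h)$ and, by Lemma~\ref{lem=commutatorcal}, $\clG(g^{2m}+h^{2m}-(gh)^{2m})\le m$ for every $m$ (the element $(gh)^{-2m}g^{2m}h^{2m}$, hence also its conjugate $g^{2m}h^{2m}(gh)^{-2m}$, is a product of $m$ simple commutators). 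Then I would apply Hahn--Banach to $[c]\in B_1^H(\Gg;\RR)$, whose norm is $\sclG(c)>0$, obtaining a linear functional $f$ with $f([c])=\sclG(c)$ and $|f(x)|\le\|x\|_{\scl}$ for every $x$. Pulling $f$ back to a functional on $\BBB_1(\Gg;\RR)$ and extending it linearly to $\tilde\phf\colon\CCC_1(\Gg;\RR)\to\RR$ that still annihilates $\langle g^n-ng,\ hgh^{-1}-g\rangle$, I set $\phf:=\tilde\phf|_{\Gg}$. The relations $\phf(g^n)=n\phf(g)$ make $\phf$ homogeneous, and the identity $\phf(g)+\phf(h)-\phf(gh)=\tilde\phf(\partial(g,h))=f([\partial(g,h)])$ forces $\DD(\phf)=\sup_{g,h}|f([\partial(g,h)])|\le\tfrac12$. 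Since $\phf(c)=f([c])=\sclG(c)>0$ while genuine homomorphisms vanish on $\BBB_1(\Gg;\RR)$, we have $\phf\in\QG\setminus\HGR$ and therefore $|\phf(c)|/(2\DD(\phf))\ge\sclG(c)/(2\cdot\tfrac12)=\sclG(c)$, which together with the easy inequality completes the proof.

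The genuinely hard part will not be the Hahn--Banach step --- that, together with the elementary linear algebra producing $\phf$ from $f$, is routine --- but rather establishing that $\clG$ and $\sclG$ on chains are well defined (independent of the chosen presentation of $c$), subadditive, and regular enough to descend to a seminorm on $B_1^H(\Gg;\RR)$ and to pass from rational to real chains. Any fully self-contained proof of Theorem~\ref{thm=gBavardZ} must carry this package along; this is exactly why the present article instead derives it from Theorem~\ref{theorem=generalized_mixed_Bavard}, whose proof in Section~\ref{sec=gmBavard} settles precisely these points for pairs $(\Gg,\Ng)$, and hence in particular for $\Ng=\Gg$.
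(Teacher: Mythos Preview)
Your argument is correct; it is essentially Calegari's original proof from \cite{Calegari}, and it differs from the route the paper takes. The paper does not apply Hahn--Banach to the $\scl$ seminorm directly. Instead it introduces the $\|\cdot\|'$-norm on $\BBB_1'(\Gg,\Ng;\RR)$ (Definition~\ref{defn=BBB'}), identifies the continuous dual of this normed space with $\hQQQ_{\Ng}(\Gg)/\HHH^1_{\Ng}(\Gg;\RR)$ (Proposition~\ref{proposition=isometry}), applies Hahn--Banach there (Corollary~\ref{cor=HB}), and then links $\|\cdot\|'$ to $\scl$ via the filling norm and the equality $4\scl_{\Gg,\Ng}=\fl_{\Gg,\Ng}$ (Theorem~\ref{theorem=4scl=fill}), which is proved using $(\Gg,\Ng)$-simplicial surfaces. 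Only afterwards does the paper restrict and homogenize to land in $\QNG$, paying the factor~$2$ from Corollary~\ref{cor=defect2bai}.

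What each approach buys: your route is shorter and more transparent in the single-group case --- the homogeneity and conjugation invariance are baked into the quotient $B_1^H$, so the functional produced by Hahn--Banach is already a homogeneous quasimorphism, and the single estimate $\scl_{\Gg}(\partial(g,h))\le\tfrac12$ replaces the filling-norm machinery. The paper's route, by contrast, is designed for the pair $(\Gg,\Ng)$: the space of $\Gg$-quasi-invariant (or $\Ng$-) quasimorphisms is not naturally the dual of an obvious ``$\scl$-quotient'' of chains, so the detour through $\|\cdot\|'$ and $\fl_{\Gg,\Ng}$ is what makes the mixed duality go through uniformly. Your final paragraph already identifies this trade-off correctly.
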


As is mentioned in the introduction, Calegari \cite{Calegari} in fact established the generalized Bavard duality for real chains (Theorem~\ref{theorem=generalized_Bavard}). Here, we do not present the formulations in real chains. Instead, we discuss the general case (Definition~\ref{defn=scl_Q}, Definition~\ref{definition=scl_real} and Theorem~\ref{generalized mixed bavard}) for a pair $(\Gg,\Ng)$ in Section~\ref{sec=gmBavard}.

Here we briefly review the background of the generalized Bavard duality theorem.
In \cite{Calegari1}, Calegari proved that $\scl$ on a free group $F$ takes on only rational number values.
Furthermore, given an element of $[F,F]$, he provided an explicit algorithm to compute the $\scl$ of it.
By considering $\scl_F$ as a seminorm on $\BBB_1(F;\RR)$, he further discovered the following:
for every finite dimensional $\QQ$-linear subspace $V$ of $\BBB_1(F;\RR)$, the unit ball of the scl seminorm restricted to $V$ is a finite-sided rational convex polyhedron.
Based on this phenomenon, which is analogous to the Gromov--Thurston norm of a $3$-manifold, he investigated polyhedral structures on the unit ball with respect to the  $\scl$ seminorm in \cite{Calegari2}.
Furthermore, he studied the problem of whether an immersed loop on a surface bounds (in a certain sense) an immersed surface, and established a theorem \cite[Theorem C]{Calegari2}. To prove this theorem, he obtained a characterization of  this bounding in terms of scl \cite[Proposition 3.8]{Calegari2}; for this characterization, the generalized Bavard theorem plays the key role.

We finally note that even for the study of  scl of group elements, scl of chains may naturally show up. Here we exhibit the following two results (Proposition~\ref{prop=chain1} and \ref{prop=chain2}) from \cite{Calegari}.

\begin{proposition}[{see \cite[Corollary 2.81]{Calegari}}]\label{prop=chain1}
Let $\Gg$ be a group, and let $\Ng$ be a normal subgroup of $\Gg$ of finite index.
Let $A\subset \Gg$ be a complete set of representatives of $\Gam=\Gg/\Ng$. Then, for every $\xl \in [\Ng,\Ng]$, we have

\[ \scl_{\Gg} (\xl) = \frac{1}{\#
A} \scl_{\Ng} \left( \sum_{a \in A} a\xl a^{-1} \right). \]

\end{proposition}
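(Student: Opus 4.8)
The plan is to exploit the fact that for a finite-index normal subgroup $\Ng$, the ``restriction of scl from $\Gg$ to $\Ng$'' is perfectly reflected on the dual side by the averaging (transfer) operation on invariant quasimorphisms, and then to invoke the two Bavard dualities already at hand. Concretely, for $\xl\in[\Ng,\Ng]$ I would first check that the chain $c=\sum_{a\in A}a\xl a^{-1}\in\CCC_1(\Ng;\ZZ)$ indeed lies in $\BBB_1(\Ng;\ZZ)$ (each summand is conjugate in $\Gg$ to $\xl\in[\Ng,\Ng]$, but conjugation by $a$ need not preserve $\Ng$-boundedness; however each $a\xl a^{-1}$ is an element of $\Ng$ since $\Ng$ is normal, and it is a product of simple commutators in $\Ng$ because $a[\gl_1,\gl_2]a^{-1}=[a\gl_1a^{-1},a\gl_2a^{-1}]$ with $a\gl_ia^{-1}\in\Ng$). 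So $\scl_{\Ng}(c)$ is defined. The target identity then reads $\#A\cdot\scl_{\Gg}(\xl)=\scl_{\Ng}\big(\sum_{a\in A}a\xl a^{-1}\big)$.

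Next I would set up the dual side. By the Bavard duality (Theorem~\ref{theorem=Bavardagain}) applied in $\Gg$, $\scl_{\Gg}(\xl)=\sup_{\phf\in\QG\setminus\HGR}|\phf(\xl)|/(2\DD(\phf))$; by the generalized Bavard duality for integral chains (Theorem~\ref{thm=gBavardZ}) applied in $\Ng$, $\scl_{\Ng}(c)=\sup_{\psf\in\QN\setminus\HHH^1(\Ng;\RR)}|\psf(c)|/(2\DD(\psf))$, where $\psf(c)=\sum_{a\in A}\psf(a\xl a^{-1})$. The bridge between the two suprema is the \emph{transfer} map $\mathrm{tr}\colon\QN\to\QG$ defined by $(\mathrm{tr}\,\psf)(\gl)=\frac1{\#A}\sum_{a\in A}\psf(a\gl a^{-1})$; since $[\Gg:\Ng]<\infty$ each $a\gl a^{-1}$ with $a\in A$, $\gl\in\Gg$ lies in $\Ng$ up to the ambiguity of coset representatives — more precisely one checks $\mathrm{tr}\,\psf$ is a well-defined homogeneous quasimorphism on $\Gg$ with $\DD(\mathrm{tr}\,\psf)\le\DD(\psf)$, and that restriction $\QG\to\QN$ followed by transfer recovers the $\Gg$-averaging of a quasimorphism. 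The two key computations are: (a) for $\psf\in\QN$, $(\mathrm{tr}\,\psf)(\xl)=\frac1{\#A}\sum_{a\in A}\psf(a\xl a^{-1})=\frac1{\#A}\psf(c)$; and (b) for $\phf\in\QG$, $\phf(c)=\sum_{a\in A}\phf(a\xl a^{-1})=\sum_{a\in A}\phf(\xl)=\#A\cdot\phf(\xl)$, using Lemma~\ref{lem=defect}~(1). Combining (a) with the defect bound gives $|\psf(c)|/(2\DD(\psf))\le \#A\cdot|(\mathrm{tr}\,\psf)(\xl)|/(2\DD(\mathrm{tr}\,\psf))\le\#A\cdot\scl_{\Gg}(\xl)$, so $\scl_{\Ng}(c)\le\#A\cdot\scl_{\Gg}(\xl)$; combining (b) gives $\scl_{\Gg}(\xl)=\sup_\phf|\phf(c)|/(2\#A\,\DD(\phf))\le\frac1{\#A}\scl_{\Ng}(c)$ once one observes that a $\phf\in\QG\setminus\HGR$ restricting to a homomorphism on $\Ng$ contributes nothing to the $\scl_{\Ng}(c)$-supremum while $c\in\BBB_1(\Ng;\ZZ)$ annihilates genuine homomorphisms. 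Together these two inequalities give the claimed equality.

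The main obstacle I expect is the careful bookkeeping around the transfer map: one must verify it lands in $\QG$ (not just $\QhG$), that $\DD$ does not increase under it, and — most delicately — that the supremum on the $\Ng$ side is \emph{not} diminished by restricting attention to quasimorphisms of the form (restriction to $\Ng$ of something on $\Gg$) versus arbitrary elements of $\QN$. This last point is where the finiteness of the index is essential: the averaging $\psf\mapsto\frac1{\#A}\sum_{a}a\cdot\psf$ is a genuine $\RR$-linear projection $\QN\to\QNG$ that does not increase defect, and $\mathrm{tr}\,\psf$ only depends on the average of $\psf$, so replacing $\psf$ by its $\Gg$-average changes neither $\psf(c)$ (since $c$ is $\Gg$-invariant as a chain up to reindexing $A$) nor the relevant side of the inequality; thus one may as well assume $\psf\in\QNG$, and then $\mathrm{tr}\,\psf$ restricts back to $\psf$. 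Once this is set up cleanly, the argument is a short two-inequality sandwich between the two Bavard dualities. An alternative, more self-contained route would avoid quasimorphisms entirely and instead compare $\cl$ directly: a surface bounding $\prod_a(a\xl^n a^{-1})$ in $\Ng$ maps to a surface bounding $\xl^{n\#A}$ in $\Gg$ (giving $\scl_{\Gg}(\xl)\le\frac1{\#A}\scl_{\Ng}(c)$), while a surface in $\Gg$ bounding $\xl^n$ can be ``pulled back'' along the finite cover corresponding to $\Ng$ to a surface in $\Ng$ of Euler characteristic multiplied by $\#A$ bounding $\sum_a a\xl^n a^{-1}$ (giving the reverse inequality); I would present the quasimorphism argument as the main proof since the paper has just developed exactly that machinery.
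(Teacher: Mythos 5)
The paper offers no proof of this proposition; it is quoted from Calegari's book (where it appears as Corollary~2.81, a direct consequence of a result on covers of admissible surfaces). So the comparison here is against Calegari's argument, which is geometric: an admissible surface for $\xl$ over $B\Gg$ is pulled back along the $\#A$-fold cover $B\Ng\to B\Gg$, and conversely an admissible surface for $\sum_a a\xl a^{-1}$ over $B\Ng$ is pushed forward, with $\chi^-$ and $n(S)$ tracked exactly. Your ``alternative, more self-contained route'' at the end is essentially this proof, and that is the route you should actually write out.

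Your main (quasimorphism/transfer) argument has a genuine gap. First, a cosmetic but real problem: the formula $(\mathrm{tr}\,\psf)(\gl)=\frac{1}{\#A}\sum_{a}\psf(a\gl a^{-1})$ does not define a function on $\Gg$, since for $\gl\notin\Ng$ one has $a\gl a^{-1}\notin\Ng$ and $\psf(a\gl a^{-1})$ is meaningless; the actual transfer must be built from the cocycle $n(\gl,a)$ determined by $\gl a = a'(\gl,a)\,n(\gl,a)$, or equivalently (after averaging $\psf$ to lie in $\QNG$) by the formula $\phf(\gl)=\frac{1}{d(\gl)}\muf(\gl^{d(\gl)})$. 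More seriously, the inequality $\DD(\mathrm{tr}\,\psf)\le\DD(\psf)$ that you invoke is the entire content of the hard direction and is not ``one checks.'' The natural constructions produce a quasimorphism on $\Gg$ that is \emph{not} homogeneous, and homogenizing it costs a factor of $2$ (Corollary~\ref{cor=defect2bai}); likewise, going through bounded cohomology with only Lemma~\ref{lemma=qm_defect_seminorm} available also costs a factor of $2$. The assertion is in fact true for $\muf\in\QNG$ and the unique homogeneous extension $\phf(\gl)=\frac{1}{d(\gl)}\muf(\gl^{d(\gl)})$, but the only clean proof I know combines Monod's isometric isomorphism $\HHH^2_b(\Gg)\cong\HHH^2_b(\Ng)^{\Gg}$ (Theorem~\ref{theorem_isometric_isom}) with the Matsumoto--Morita equality $\DD(\phf)=2\|[\delta\phf]\|$ for homogeneous $\phf$ (a sharpening of the right-hand inequality in Lemma~\ref{lemma=qm_defect_seminorm}, not stated in this paper), together with $\WW(\Gg,\Ng)=0$ from Corollary~\ref{cor=vfree} to see that the isometric lift actually lands in $\mathrm{EH}^2_b(\Gg)$. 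That is substantially more machinery than ``one checks,'' and without it your first proof only yields $\scl_{\Ng}(c)\le 2\#A\,\scl_{\Gg}(\xl)$, which sandwiched against your (correct) easy direction $\scl_{\Gg}(\xl)\le\frac{1}{\#A}\scl_{\Ng}(c)$ gives a bi-Lipschitz comparison rather than the claimed equality. Either supply the isometric-lift argument in full, or present the cover argument as the actual proof.
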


By definition (Definition~\ref{defn=chainonG}), the right-hand side of the formula above does not depend on the choice of a complete set $A$ of representatives of $\Gam$.

\begin{proposition}[{see \cite[Theorem 2.101]{Calegari}}]\label{prop=chain2}
Let $\Gg$ be a group. Let $t$ be a generator of $\ZZ$, and set $\tilde{G} = \Gg \ast \ZZ(=\Gg\ast \langle t\rangle)$. Then, for every $\gl_1,\gl_2\in \Gg$ that are of infinite order, we have
\[ \scl_{\tilde{\Gg}}(\gl_1t\gl_2t^{-1}) = \sclG(\gl_1 + \gl_2) + \frac{1}{2}.\]\end{proposition}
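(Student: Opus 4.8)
The plan is to prove the two inequalities separately. The bound $\scl_{\tilde{\Gg}}(w)\le\sclG(\gl_1+\gl_2)+\tfrac12$ follows quickly from the generalized Bavard duality, whereas the reverse inequality --- the harder half --- I would obtain from a surface decomposition. Write $w=\gl_1 t\gl_2 t^{-1}$; since $w$ lies in $[\tilde{\Gg},\tilde{\Gg}]$ exactly when $\gl_1\gl_2\in\CG$, which by Lemma~\ref{lem=domainG} is precisely the condition under which $\sclG(\gl_1+\gl_2)$ is defined, I assume $\gl_1\gl_2\in\CG$ throughout (the general case reduces to this via the torsion order of $[\gl_1\gl_2]$ in $\HHH_1(\Gg;\ZZ)$).

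\emph{Upper bound.} Let $\phf\in\QQQ(\tilde{\Gg})$ and set $\psf:=\phf|_{\Gg}\in\QG$, so $\DD(\psf)\le\DD(\phf)$. Using the defect inequality at the factorization $w=\gl_1\cdot(t\gl_2 t^{-1})$, the conjugation invariance of the homogeneous quasimorphism $\phf$ (Lemma~\ref{lem=defect}~(1)), and the easy inequality $|\psf(\gl_1+\gl_2)|\le 2\DD(\psf)\sclG(\gl_1+\gl_2)$ (proved as in Corollary~\ref{cor=sclabove}), one gets
\begin{align*}
|\phf(w)|&\le|\phf(\gl_1)+\phf(t\gl_2 t^{-1})|+\DD(\phf)=|\psf(\gl_1)+\psf(\gl_2)|+\DD(\phf)\\
&\le 2\DD(\psf)\sclG(\gl_1+\gl_2)+\DD(\phf)\le 2\DD(\phf)\bigl(\sclG(\gl_1+\gl_2)+\tfrac12\bigr).
\end{align*}
Dividing by $2\DD(\phf)$, passing to the supremum over $\phf\in\QQQ(\tilde{\Gg})\setminus\HHH^1(\tilde{\Gg};\RR)$, and applying Theorem~\ref{theorem=generalized_Bavard} to $\tilde{\Gg}$ gives the claim. (The extra $\tfrac12$ is already visible in the identity $w=(\gl_1\gl_2)[\gl_2^{-1},t]$, which also yields a direct but less efficient commutator estimate.)

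\emph{Lower bound.} Here I would use the surface-theoretic formula for $\scl$ developed in Section~\ref{sec=gmBavard}. Model $K(\tilde{\Gg},1)$ by $X:=K(\Gg,1)\vee S^1$ with wedge point $\ast$, and let $f\colon(S,\partial S)\to(X,w^{n})$ be a reduced admissible surface of degree $n$. Homotoping $f$ to be transverse to $\ast$, the preimage $f^{-1}(\ast)$ is a properly embedded $1$-manifold (which we may take to have no closed components) meeting $\partial S$ in the $4n$ syllable boundaries of $w^n$, hence consisting of $2n$ arcs. Cutting $S$ along $f^{-1}(\ast)$ splits it as $S_{\Gg}\sqcup S_{S^1}$ with $\chi(S)=\chi(S_{\Gg})+\chi(S_{S^1})-2n$. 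After reduction, $S_{S^1}$ is a disjoint union of at most $n$ disks, each pairing a $t$-subarc of $\partial S$ with a $t^{-1}$-subarc; hence, using the infinite order of $\gl_1,\gl_2$ to rule out disk components of $S$ and $S_{\Gg}$, one obtains $-\chim(S)\ge -\chim(S_{\Gg})+n$. Moreover this pairing forces the cut to concatenate the $\gl_1$- and $\gl_2$-labelled subarcs of $\partial S_{\Gg}$ into loops of the form $\gl_1^{m}$ and $\gl_2^{m}$, so that $S_{\Gg}$ is an admissible surface of degree $n$ for the chain $\gl_1+\gl_2$, whence $-\chim(S_{\Gg})\ge 2n\,\sclG(\gl_1+\gl_2)$. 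Dividing by $2n$ and taking the infimum over $S$ gives $\scl_{\tilde{\Gg}}(w)\ge\sclG(\gl_1+\gl_2)+\tfrac12$.

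The step I expect to demand the most care is the combinatorial lemma underlying the lower bound: making rigorous that, for a reduced surface, $S_{S^1}$ may be taken to be a disjoint union of such bigons, and that this pairing is \emph{exactly} what renders $S_{\Gg}$ admissible for the chain $\gl_1+\gl_2$ --- rather than for some coarser chain obtained by fusing boundary loops. The hypothesis that $\gl_1,\gl_2$ have infinite order is needed precisely here, to keep all these loops essential so that the degree is preserved; it cannot be dropped, as the formula fails already when $\gl_1=1_{\Gg}$. Alternatively, the lower bound admits a dual formulation: from a near-extremal homogeneous quasimorphism for the chain $\gl_1+\gl_2$ on $\Gg$ one assembles a homogeneous quasimorphism on $\tilde{\Gg}=\Gg\ast\ZZ$ by adding, to a suitable extension, a Brooks-type counting quasimorphism that records the pattern of $t$-syllables; there the obstacle is to check that the two contributions to the defect are concentrated at disjoint syllable transitions, so that the combined defect equals $\DD(\psf)$ and not $2\DD(\psf)$.
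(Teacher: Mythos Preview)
The paper does not supply its own proof of this proposition; it is stated with a bare citation to \cite[Theorem~2.101]{Calegari} as one of two illustrations of why $\scl$ of chains arises naturally. So there is no ``paper's proof'' to compare against, only Calegari's original.

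Your outline is essentially Calegari's argument. The lower bound---cutting an admissible surface for $w$ along $f^{-1}(\ast)\subset K(\Gg,1)\vee S^1$, reducing the $S^1$-side to bigons that pair each $t$-arc with a $t^{-1}$-arc, and reading off from the induced pairing that the $\Gg$-side becomes admissible of degree $n$ for the chain $\gl_1+\gl_2$---is exactly what Calegari does, and the points you flag (why the bigon reduction is legitimate, and why the infinite-order hypothesis is needed to keep boundary loops of $S_{\Gg}$ essential) are precisely the places where his proof does the work. One small slip: with $n$ bigons and $2n$ cutting arcs your Euler-characteristic bookkeeping gives $-\chi(S)=-\chi(S_{\Gg})+2n-\chi(S_{S^1})=-\chi(S_{\Gg})+n$, which is what you use, but it is worth noting that ``at most $n$ disks'' is the \emph{worst} case for the inequality, not the best; non-disk components of $S_{S^1}$ only help. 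For the upper bound Calegari also argues geometrically (building an explicit admissible surface), whereas you go through the generalized Bavard duality; your route is legitimate and arguably cleaner here, since the easy direction of Theorem~\ref{theorem=generalized_Bavard} is all that is needed.
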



\section{Invariant quasimorphisms and the mixed Bavard duality}\label{sec=invqm}

In this section, we extend the framework of Section~\ref{sec=qmBavard} to that for a pair $(\Gg,\Ng)$ of a group and its normal subgroup; the case where $\Ng=\Gg$ recovers the original framework. We refer the reader to \cite{KKMMMsurvey} for a survey on this `mixed' framework; for instance, one of the motivations of studying this setting comes from symplectic geometry.

\begin{definition}[invariant quasimorphisms]\label{defn=invqm}
Let $\Gg$ be a group and $\Ng$ its normal subgroup.
\begin{enumerate}[(1)]
  \item A function $\ff\colon \Ng\to \RR$ is said to be \emph{$\Gg$-invariant} if for every $\gl\in \Gg$ and every $\xl\in \Ng$, $\ff(\gl\xl\gl^{-1})=\ff(\xl)$ holds.
  \item Let $V$ be a space of real valued functions on $\Ng$ that admits the $\Gg$-action by conjugation. Then, we write $V^{\Gg}$ for the $\Gg$-invariant part, that is, the subspace of $V$ consisting of $G$-invariant functions in $V$. In particular, the space $\QNG$ means the space of $\Gg$-invariant homogeneous quasimorphisms on $\Ng$; the space $\HNRG$ means the space of $\Gg$-invariant (genuine) homomorphisms from $\Ng$ to $\RR$.
\end{enumerate}
\end{definition}

Lemma~\ref{lem=defect}~(1) can be summarized in the following manner, which supplies us one of the motivations to study invariant quasimorphisms.

\begin{lemma}\label{lem=invariant}
Let $\Gg$ be a group. Then we have
\[
\QG=\QG^{\Gg}.
\]
\end{lemma}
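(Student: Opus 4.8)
The plan is to unwind both sides and show that $\QG=\QG^{\Gg}$ follows immediately from Lemma~\ref{lem=defect}~(1), so the only substantive point is to check that no extra content is being hidden by the notational conventions of Definition~\ref{defn=invqm}.

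\textbf{Step 1: Identify what the statement means.} Here we are in the special case $\Ng=\Gg$ of Definition~\ref{defn=invqm}. The conjugation action of $\Gg$ on real-valued functions on $\Ng=\Gg$ is the usual action $(\gl\cdot\ff)(\xl)=\ff(\gl^{-1}\xl\gl)$. The space $\QG^{\Gg}$ is by definition the $\Gg$-invariant part of $\QG$, so the inclusion $\QG^{\Gg}\subseteq\QG$ is automatic, and the content is the reverse inclusion $\QG\subseteq\QG^{\Gg}$.

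\textbf{Step 2: Prove the reverse inclusion.} Let $\phf\in\QG$ be an arbitrary homogeneous quasimorphism. By Lemma~\ref{lem=defect}~(1), for every $\gl_1,\gl_2\in\Gg$ we have $\phf(\gl_1\gl_2\gl_1^{-1})=\phf(\gl_2)$; equivalently, $\phf(\gl^{-1}\xl\gl)=\phf(\xl)$ for all $\gl,\xl\in\Gg$. This is precisely the condition that $\phf$ is $\Gg$-invariant in the sense of Definition~\ref{defn=invqm}~(1). Hence $\phf\in\QG^{\Gg}$, and since $\phf$ was arbitrary, $\QG\subseteq\QG^{\Gg}$.

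\textbf{Main obstacle.} There is essentially no obstacle: the lemma is a one-line reformulation of Lemma~\ref{lem=defect}~(1), stated separately because it isolates the conceptual point that in the ``mixed'' framework the invariance hypothesis is vacuous when $\Ng=\Gg$ — every homogeneous quasimorphism on a group is automatically a class function. The only thing to be careful about is to make sure the definition of the $\Gg$-action on functions (conjugation) matches the conjugation appearing in Lemma~\ref{lem=defect}~(1), which it does. So the proof will be just two or three sentences combining the trivial inclusion with a citation of Lemma~\ref{lem=defect}~(1).
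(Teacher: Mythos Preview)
Your proof is correct and matches the paper's approach exactly: the paper presents this lemma without a separate proof, introducing it simply as a summary of Lemma~\ref{lem=defect}~(1). Your unpacking of the notational conventions and the observation that the trivial inclusion $\QG^{\Gg}\subseteq\QG$ leaves only the conjugation-invariance of homogeneous quasimorphisms to verify is precisely the intended reading.
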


The concepts of $\clG$ and $\sclG$ can be extended to the present framework as follows.

\begin{definition}[mixed commutator subgroups, mixed scl]\label{defn=mixedscl}
Let $\Gg$ be a group and $\Ng$ its normal subgroup.
\begin{enumerate}[(1)]
  \item A \emph{simple mixed commutator} (or, a \emph{simple $(\Gg,\Ng)$-commutator}) means an element in $\Gg$ of the form $[\gl,\xl]$, where $\gl\in \Gg$ and $\xl\in \Ng$. The {mixed commutator subgroup} (or, the \emph{$(\Gg,\Ng)$-commutator subgroup}) $\CGN$ is defined as the subgroup of $\Gg$ generated by the set of simple mixed commutators.
  \item The \emph{mixed commutator length} (or, the \emph{$(\Gg,\Ng)$-commutator length}) $\clGN\colon \CGN\to \ZZ_{\geq 0}$ is defined to be the word length with respect to the set of simple mixed commutators.
  \item The \emph{stable mixed commutator length} (or, the \emph{stable $(\Gg,\Ng)$-commutator length}) $\sclGN\colon \CGN\to \RR_{\geq 0}$ is defined as
\[
\sclGN(\yl)=\lim_{n\to \infty}\frac{\clGN(\yl^n)}{n}
\]
for every $\yl\in \CGN$.
\end{enumerate}
\end{definition}

We note that $\CGN$ is a subgroup of $\Ng$ because $\Ng$ is normal in $\Gg$. We also observe that a $\Gg$-conjugate of a simple mixed commutator is a simple mixed commutator: indeed, for $\gl_1,\gl_2\in \Gg$ and for $\xl\in \Ng$, we have $
\gl_2[\gl_1,\xl]\gl_2^{-1}=[\gl_2\gl_1\gl_2^{-1},\gl_2\xl\gl_2^{-1}]$.
In particular, $[\xl,\gl]$ is a simple mixed commutator for $\gl\in \Gg$ and $\xl\in \Ng$ because $[\xl,\gl]=\gl[\gl^{-1},\xl]\gl^{-1}$. Also, $\clGN$ and $\sclGN$ are both $\Gg$-invariant.

\begin{proposition}\label{prop=sclabove}
Let $\Gg$ be a group and $\Ng$ its normal subgroup.
\begin{enumerate}
  \item[\textup{(1)}] Let $\muf\in \QNG$. Then we have
\[
\DD(\muf)= \sup_{\gl\in \Gg,\, \xl\in \Ng} \big| \muf([\gl, \xl]) \big|.
\]
  \item[\textup{(2)}] Let $\yl\in \CGN$. Then for every $\muf\in \QNG$, we have
\[
2\DD(\muf)\clGN(\yl)\geq |\muf(\yl)|.
\]
  \item[\textup{(3)}] Let $\yl\in \CGN$. Then for every $\muf\in \QNG$, we have
\[
2\DD(\muf)\sclGN(\yl)\geq |\muf(\yl)|.
\]
\end{enumerate}
\end{proposition}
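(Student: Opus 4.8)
The three items are the `mixed' analogues of Proposition~\ref{prop=commutator_bavard}, Lemma~\ref{lem=cl} and Corollary~\ref{cor=sclabove}, respectively, and the plan is to transport those arguments verbatim to the pair $(\Gg,\Ng)$. The single structural observation that makes this possible is: if $\xl_1,\xl_2\in \Ng$, then the simple commutator $[\xl_1,\xl_2]$ is a simple mixed commutator (take $\gl=\xl_1\in \Gg$ and $\xl=\xl_2\in \Ng$), and moreover $\gl\xl\gl^{-1}\in \Ng$ whenever $\gl\in \Gg$ and $\xl\in \Ng$, so evaluating $\muf$ on such conjugates and on such commutators makes sense. Note also that $\DD(\muf)<\infty$ by definition of $\QNG$.

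For item (1), I would first record the easy inequality $\sup_{\gl\in\Gg,\xl\in\Ng}|\muf([\gl,\xl])|\le \DD(\muf)$: using $\Gg$-invariance and homogeneity of $\muf$ (cf.\ Lemma~\ref{lem=defect}~(2)) one gets $\muf([\gl,\xl])=\muf\bigl((\gl\xl\gl^{-1})\xl^{-1}\bigr)\sim_{\DD(\muf)}\muf(\gl\xl\gl^{-1})+\muf(\xl^{-1})=\muf(\xl)-\muf(\xl)=0$. For the reverse inequality I would imitate the proof of Proposition~\ref{prop=commutator_bavard}: fix $\varepsilon>0$ and choose $\xl_1,\xl_2\in \Ng$ with $|\muf(\xl_1)+\muf(\xl_2)-\muf(\xl_1\xl_2)|\ge \DD(\muf)-\varepsilon$; set $\hl_n=(\xl_1\xl_2)^{-2n}\xl_1^{2n}\xl_2^{2n}\in \Ng$; apply Lemma~\ref{lem=commutatorcal} \emph{inside the group $\Ng$} to write $\hl_n=\hl_n^{(1)}\cdots\hl_n^{(n)}$ with each $\hl_n^{(i)}$ a simple commutator of elements of $\Ng$, hence a simple mixed commutator; then the two estimates $\muf(\hl_n)\sim_{(n-1)\DD(\muf)}\sum_i\muf(\hl_n^{(i)})$ and $\muf(\hl_n)\sim_{2\DD(\muf)}2n\bigl(\muf(\xl_1)+\muf(\xl_2)-\muf(\xl_1\xl_2)\bigr)$ force some factor to satisfy $|\muf(\hl_n^{(i)})|\ge (1-\tfrac1n)\DD(\muf)-2\varepsilon$; letting $n\to\infty$ and $\varepsilon\searrow 0$ yields $\sup_{\gl,\xl}|\muf([\gl,\xl])|\ge \DD(\muf)$.

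For item (2), I would copy the proof of Lemma~\ref{lem=cl}: put $n=\clGN(\yl)$, assume $n\ge 1$ (the case $n=0$ is trivial since $\muf(1_{\Gg})=0$), write $\yl=\yl_1\cdots\yl_n$ with each $\yl_i$ a simple mixed commutator, and combine subadditivity of $\muf$ with item (1) (the easy direction, $|\muf(\yl_i)|\le \DD(\muf)$) to obtain $\muf(\yl)\sim_{(n-1)\DD(\muf)}\sum_i\muf(\yl_i)\sim_{n\DD(\muf)}0$, so $|\muf(\yl)|\le (2n-1)\DD(\muf)\le 2\DD(\muf)\clGN(\yl)$. For item (3), apply item (2) to $\yl^m$, use $\muf(\yl^m)=m\muf(\yl)$ to get $2\DD(\muf)\,\clGN(\yl^m)/m\ge |\muf(\yl)|$, and let $m\to\infty$ invoking the definition of $\sclGN$ in Definition~\ref{defn=mixedscl}.

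The only step requiring genuine care — and the place a reader is most likely to hesitate — is the application of Lemma~\ref{lem=commutatorcal} in item (1): it must be invoked with the ambient group taken to be $\Ng$, so that the $n$ resulting factors already lie in $\Ng$ and are therefore simple mixed commutators; one never needs to exhibit factors of the asymmetric shape $[\gl,\xl]$ with $\gl\notin\Ng$. Granting that remark, everything else is a mechanical transcription of the arguments in Section~\ref{sec=qmBavard}.
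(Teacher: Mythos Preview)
Your proposal is correct and follows essentially the same approach as the paper. The only cosmetic difference is that for the lower bound in item~(1) the paper simply invokes Proposition~\ref{prop=commutator_bavard} for $\muf$ viewed as an element of $\QQQ(\Ng)$ to obtain $\DD(\muf)=\sup_{\xl_1,\xl_2\in\Ng}|\muf([\xl_1,\xl_2])|\leq \sup_{\gl\in\Gg,\xl\in\Ng}|\muf([\gl,\xl])|$, whereas you re-run the proof of that proposition inside $\Ng$; both arrive at the same place via the same mechanism.
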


\begin{proof}
For item (1), by Proposition~\ref{prop=commutator_bavard} we have
\[
\DD(\muf)=\sup_{\xl_1,\xl_2\in \Ng} \big| \muf([\xl_1, \xl_2]) \big|\leq \sup_{\gl\in \Gg,\, \xl\in \Ng} \big| \muf([\gl, \xl]) \big|.
\]
Here, observe that $\muf\in \QQQ(\Ng)$. To show $\DD(\muf)\geq  \sup\limits_{\gl\in \Gg,\, \xl\in \Ng} \big| \muf([\gl, \xl]) \big|$, note that for every $\gl\in \Gg$ and $\xl\in \Ng$,
\[
\muf([\gl,\xl])=\muf(\gl\xl\gl^{-1}\xl^{-1})\sim_{\DD(\muf)}\muf(\gl\xl\gl^{-1})+\muf(\xl^{-1})=0.
\]
Item (2) may be shown in a manner similar to the proof of Lemma~\ref{lem=cl}; now item (3) can be deduced from item (2) by the same argument as in the proof of Corollary~\ref{cor=sclabove}.
\end{proof}

Now we state the mixed Bavard duality theorem, again from Theorem~\ref{theorem=mixed_Bavard}.

\begin{theorem}[mixed Bavard duality theorem, \cite{KKMM1}]\label{theorem=mixedBavardagain}
Let $\Gg$ be a group and $\Ng$ its normal subgroup. Then for every $\yl\in \CGN$, we have
\begin{equation}\label{eq=BavardGN}
\sclGN(\yl)=\sup_{\muf\in \QNG\setminus \HNRG}\frac{|\muf(\yl)|}{2\DD(\muf)}.
\end{equation}
\end{theorem}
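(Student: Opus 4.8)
The plan is to deduce the mixed Bavard duality from the generalized mixed Bavard duality (Theorem~\ref{theorem=generalized_mixed_Bavard}), exactly as indicated in Figure~\ref{fig=visual}: one specializes the chain $c \in \CR(\Gg,\Ng)$ to a single group element $\yl \in \CGN$. Concretely, given $\yl \in \CGN$, I would regard $\yl$ as the $1$-chain $\yl \in \CCC_1(\Gg;\RR)$ supported on a single point. The key compatibility statement is Lemma~\ref{lemma=domain} (whose special case $\Ng = \Gg$ is Lemma~\ref{lem=domainG}): this should assert that an element $\yl \in \CGN$, viewed as a $1$-chain, indeed lies in $\CR(\Gg,\Ng)$, and moreover that the chain-level definitions of $\sclGN(\cdot)$ and of $\muf(\cdot)$ restrict on such chains to the element-level definitions given in Definition~\ref{defn=mixedscl} and in the evaluation $\muf(\yl)$. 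Once this identification of data is in place, the statement $\sclGN(\yl) = \sup_{\muf \in \QNG \setminus \HNRG} |\muf(\yl)| / (2\DD(\muf))$ is literally the $c = \yl$ instance of Theorem~\ref{theorem=generalized_mixed_Bavard}.

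The one direction that does not even need the generalized theorem is the inequality $\sclGN(\yl) \geq \sup_{\muf} |\muf(\yl)|/(2\DD(\muf))$, which is Proposition~\ref{prop=sclabove}~(3); I would cite it directly. So the content is the reverse inequality, and for that I would invoke Theorem~\ref{theorem=generalized_mixed_Bavard} after checking that the specialization of all the chain-theoretic notions (mixed commutator length of a chain, its stabilization, and the linear functional a $\Gg$-invariant homogeneous quasimorphism induces on chains) agrees with the element-theoretic notions. For the quasimorphism side this is immediate from the definition of $\muf(c)$ on a one-term chain. For the $\scl$ side it requires knowing that $\sclGN$ of the chain $\yl$, computed via conjugacy-class minimization over the single summand, coincides with $\sclGN(\yl)$ of Definition~\ref{defn=mixedscl}; since $\clGN$ and $\sclGN$ are $\Gg$-invariant (noted after Definition~\ref{defn=mixedscl}), minimizing over the conjugacy class of $\yl$ changes nothing, so the two agree.

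The main obstacle is therefore not a hard estimate but a bookkeeping point: one must be certain that Lemma~\ref{lemma=domain} genuinely covers the embedding $\CGN \hookrightarrow \CR(\Gg,\Ng)$ and that the forthcoming chain-level definitions in Section~\ref{sec=gmBavard} are set up so that they restrict correctly to single elements. I would phrase the proof as: ``By Proposition~\ref{prop=sclabove}~(3) the left-hand side of \eqref{eq=BavardGN} dominates the right-hand side. For the converse, by Lemma~\ref{lemma=domain} the element $\yl \in \CGN$ defines a chain in $\CR(\Gg,\Ng)$ with $\sclGN(\yl)$ and $\muf(\yl)$ unchanged under this identification; applying Theorem~\ref{theorem=generalized_mixed_Bavard} to this chain yields the claim.'' Everything else is deferred to the self-contained proof of Theorem~\ref{theorem=generalized_mixed_Bavard} in Section~\ref{sec=gmBavard}, which is where the genuine work (building an admissible surface from a near-optimal quasimorphism, or dually extracting a quasimorphism from the $\scl$ seminorm via Hahn--Banach on the space of chains) takes place.
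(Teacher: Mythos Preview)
Your proposal is correct and matches the paper's own strategy exactly: the paper does not give an independent proof of Theorem~\ref{theorem=mixedBavardagain} but derives it from Theorem~\ref{theorem=generalized_mixed_Bavard} by specializing to the one-term chain $c=\yl$, with Lemma~\ref{lemma=domain} guaranteeing $\yl\in\CZ(\Gg,\Ng)\subset\CR(\Gg,\Ng)$ and the chain-level definitions (Definitions~\ref{definition=commutator_lengths_for_integral_chains}, \ref{defn=scl_chain}, \ref{defn=mufR}) collapsing to the element-level ones when $m=1$. Your observation that the easy inequality is already Proposition~\ref{prop=sclabove}~(3) is also how the paper organizes things.
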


\begin{remark}\label{rem=quotientGN}
By an argument similar to one in Remark~\ref{rem=quotient}, $\DD$ induces a genuine norm on the quotient vector space $\QNG/\HNRG$; we write $\DD$ for this norm as well. Then,  \eqref{eq=BavardGN} may be restated as
\[
\sclGN(\yl)=\sup_{[\muf]\in \big(\QNG/\HNRG\big)\setminus \{0\} }\frac{|\muf(\yl)|}{2\DD([\muf])}.
\]
\end{remark}

The following notion of quasi-invariance will be used in Section~\ref{sec=gmBavard}.

\begin{definition}[quasi-invariance]\label{defn=qinv}
Let $\Gg$ be a group and $\Ng$ its normal subgroup. A quasimorphism $\mufh \colon \Ng \to \RR$ is said to be \emph{$\Gg$-quasi-invariant} if
\[
\DD'_{\Gg,\Ng}(\mufh)=\sup\left\{\left|\mufh(\gl \xl \gl^{-1}) - \mufh(\xl)\right|\; \middle|\; \gl\in \Gg,\ \xl\in \Ng\right\}
\]
is finite.
\end{definition}

An element $\mufh$ of $\hQQQ(\Ng)$ is $\Gg$-invariant if and only if $\DD'_{\Gg,\Ng}(\mufh)=0$. We have the following lemma.

\begin{lemma}\label{lem=qinv_inv}
Let $\Gg$ be a group and $\Ng$ its normal subgroup. If a $\Gg$-quasi-invariant quasimorphism $\muf \colon \Ng \to \RR$ is homogeneous, then $\muf$ is $\Gg$-invariant.
\end{lemma}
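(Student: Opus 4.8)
The plan is to exploit the homogeneity of $\muf$ to upgrade the uniform bound $\DD'_{\Gg,\Ng}(\muf)<\infty$ into the exact equality $\muf(\gl\xl\gl^{-1})=\muf(\xl)$, mimicking the argument used in the proof of Lemma~\ref{lem=defect}~(1). Concretely, fix $\gl\in\Gg$ and $\xl\in\Ng$, and set $C=\DD'_{\Gg,\Ng}(\muf)$.

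First I would observe that for every $n\in\NN$ we have $\gl\xl^n\gl^{-1}=(\gl\xl\gl^{-1})^n$, which is again an element of $\Ng$ since $\Ng$ is normal in $\Gg$. Then by homogeneity of $\muf$ (applied on $\Ng$, where $\muf\in\QN$),
\[
n\,\muf(\gl\xl\gl^{-1})=\muf\big((\gl\xl\gl^{-1})^n\big)=\muf(\gl\xl^n\gl^{-1})\sim_{C}\muf(\xl^n)=n\,\muf(\xl).
\]
Dividing by $n$ gives $\big|\muf(\gl\xl\gl^{-1})-\muf(\xl)\big|\leq C/n$ for all $n\in\NN$, and letting $n\to\infty$ forces $\muf(\gl\xl\gl^{-1})=\muf(\xl)$. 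Since $\gl\in\Gg$ and $\xl\in\Ng$ were arbitrary, $\muf$ is $\Gg$-invariant by Definition~\ref{defn=invqm}~(1).

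There is essentially no obstacle here; the only point requiring a moment's care is that the $\Gg$-conjugation action preserves $\Ng$ (so that $\gl\xl^n\gl^{-1}$ lies in $\Ng$ and the homogeneity of $\muf$ can be invoked on it), which holds by normality of $\Ng$. This is exactly the same mechanism as in Lemma~\ref{lem=defect}~(1), now with the inner element $\gl_1$ allowed to range over all of $\Gg$ rather than just $\Ng$, and with the finite quantity $\DD'_{\Gg,\Ng}(\muf)$ playing the role of $2\DD(\phf)$.
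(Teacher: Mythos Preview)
Your proof is correct and follows essentially the same argument as the paper's own proof: both use homogeneity to write $n\,\muf(\gl\xl\gl^{-1})=\muf(\gl\xl^n\gl^{-1})\sim_{\DD'_{\Gg,\Ng}(\muf)}\muf(\xl^n)=n\,\muf(\xl)$, then divide by $n$ and let $n\to\infty$.
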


\begin{proof}
Let $\gl\in \Gg$ and $\xl\in \Ng$. Then for every $n\in \NN$,
\begin{align*}
n\muf(\gl \xl\gl^{-1})&=\muf((\gl \xl\gl^{-1})^n)=\muf(\gl \xl^n\gl^{-1})\\
&\sim_{\DD'_{\Gg,\Ng}(\muf)}\muf(\xl^n)=n\muf(\xl)
\end{align*}
by homogeneity. Hence, we have
\[
\muf(\gl \xl\gl^{-1})\sim_{\DD'_{\Gg,\Ng}(\muf)/n}\muf(\xl).
\]
By letting $n\to \infty$, we obtain the conclusion.
\end{proof}

\begin{corollary}\label{cor=homoge_qinv}
Let $\Gg$ be a group and $\Ng$ its normal subgroup. If a $\mufh\in \QhN$ is $\Gg$-quasi-invariant, then its homogenization $\mufh_{\mathrm{h}}$ belongs to $\QNG$.
\end{corollary}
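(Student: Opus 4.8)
The plan is to reduce everything to a finiteness estimate. By Lemma~\ref{lem=homoge1}, the homogenization $\mufh_{\mathrm{h}}$ already lies in $\QN$, so the only thing to prove is that it is $\Gg$-invariant. By Lemma~\ref{lem=qinv_inv}, a homogeneous quasimorphism on $\Ng$ that is $\Gg$-quasi-invariant is automatically $\Gg$-invariant; hence it suffices to check that $\mufh_{\mathrm{h}}$ is $\Gg$-quasi-invariant, that is, $\DD'_{\Gg,\Ng}(\mufh_{\mathrm{h}})<\infty$.

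To bound $\DD'_{\Gg,\Ng}(\mufh_{\mathrm{h}})$, I would fix $\gl\in\Gg$ and $\xl\in\Ng$ and interpolate between $\mufh_{\mathrm{h}}$ and $\mufh$ using the uniform bound $\|\mufh-\mufh_{\mathrm{h}}\|_{\infty}\leq \DD(\mufh)$ from Lemma~\ref{lem=homoge2}~(2), together with the hypothesis that $\DD'_{\Gg,\Ng}(\mufh)<\infty$:
\[
\mufh_{\mathrm{h}}(\gl\xl\gl^{-1})\sim_{\DD(\mufh)}\mufh(\gl\xl\gl^{-1})\sim_{\DD'_{\Gg,\Ng}(\mufh)}\mufh(\xl)\sim_{\DD(\mufh)}\mufh_{\mathrm{h}}(\xl).
\]
Taking the supremum over $\gl$ and $\xl$ yields $\DD'_{\Gg,\Ng}(\mufh_{\mathrm{h}})\leq 2\DD(\mufh)+\DD'_{\Gg,\Ng}(\mufh)<\infty$, which is exactly the $\Gg$-quasi-invariance of $\mufh_{\mathrm{h}}$; Lemma~\ref{lem=qinv_inv} then finishes the argument.

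Alternatively, one can shortcut the appeal to Lemma~\ref{lem=qinv_inv} by working directly with the formula $\mufh_{\mathrm{h}}(w)=\lim_{n\to\infty}\mufh(w^n)/n$ of Lemma~\ref{lem=homoge2}~(1): since $(\gl\xl\gl^{-1})^n=\gl\xl^n\gl^{-1}$ and $\mufh(\gl\xl^n\gl^{-1})\sim_{\DD'_{\Gg,\Ng}(\mufh)}\mufh(\xl^n)$, dividing by $n$ and letting $n\to\infty$ gives $\mufh_{\mathrm{h}}(\gl\xl\gl^{-1})=\mufh_{\mathrm{h}}(\xl)$ on the nose, i.e.\ $\Gg$-invariance directly. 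Either route is short, and I do not anticipate a genuine obstacle: the only point requiring a little care is to keep track that every $\sim$-error above is a finite quantity, which is guaranteed precisely by the standing hypothesis $\DD'_{\Gg,\Ng}(\mufh)<\infty$ and by the finiteness of the defect $\DD(\mufh)$.
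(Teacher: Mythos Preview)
Your first route is exactly the paper's proof: it, too, uses $\|\mufh-\mufh_{\mathrm{h}}\|_{\infty}<\infty$ together with $\DD'_{\Gg,\Ng}(\mufh)<\infty$ to conclude $\DD'_{\Gg,\Ng}(\mufh_{\mathrm{h}})<\infty$ and then invokes Lemma~\ref{lem=qinv_inv}. Your alternative direct-limit argument is also correct and simply re-proves Lemma~\ref{lem=qinv_inv} in this special case.
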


\begin{proof}
It suffices to show that $\mufh_{\mathrm{h}}$ is $\Gg$-invariant. Since $\|\mufh_{\mathrm{h}}-\mufh\|_{\infty}<\infty$ and $\DD'_{\Gg,\Ng}(\mufh)<\infty$, we conclude that $\DD'_{\Gg,\Ng}(\mufh_{\mathrm{h}})<\infty$. Then, Lemma~\ref{lem=qinv_inv} applies to $\mufh_{\mathrm{h}}$.
\end{proof}

In relation to Remark~\ref{rem=quotientGN}, the study of $\sclGN$ might be completely done if the quotient space $\QNG/\HNRG$ is finite dimensional. However, the following proposition provides a wide class of pairs $(\Gg,\Ng)$ for which the dimension of $\QNG/\HNRG$ is the cardinality of the continuum. Here, we refer the reader to \cite{Osin} for the concept of acylindrically hyperbolic groups; non-elementary Gromov-hyperbolic groups, relatively hyperbolic groups and the mapping class group of  an (oriented) connected closed surface of genus at least two are typical examples. In Proposition~\ref{prop=AH}, the dimension of an $\RR$-linear space means the Hamel dimension over $\RR$, that is, the cardinality of a Hamel basis over $\RR$.

\begin{proposition}\label{prop=AH}
Let $\Gg$ be an acylindrically hyperbolic group and $\Ng$ an infinite normal subgroup of $\Gg$. Then, the  dimension of $\QNG/\HNRG$  is at least the cardinality of the continuum. If $\Ng$ is countable, then the  dimension of $\QNG/\HNRG$  equals the cardinality of the continuum.

In particular, if $\Gg$ is an acylindrically hyperbolic group that is countable, then the dimension of $\QG/\HGR$ equals the cardinality of the continuum.
\end{proposition}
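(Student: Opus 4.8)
The plan is to establish the lower bound on $\dim(\QNG/\HNRG)$ by producing, inside $\Ng$, an infinite family of elements that are simultaneously independent from the point of view of homogeneous quasimorphisms, and then to invoke the known result of Hull--Osin (and its quantitative refinements) on the abundance of quasimorphisms on acylindrically hyperbolic groups. Concretely, first I would recall the notion of a hyperbolically embedded subgroup and the theorem that an acylindrically hyperbolic group $\Gg$ admits a hyperbolically embedded copy of $F_2 \times K$ for a suitable finite (or just: any prescribed) group; more usefully, I would use the fact that if $\Gg$ is acylindrically hyperbolic and $\Ng \trianglelefteq \Gg$ is infinite, then $\Ng$ is itself acylindrically hyperbolic \emph{and} $\Gg$ acts on a hyperbolic space so that $\Ng$ contains a generalized loxodromic element; one can moreover arrange infinitely many such elements that are pairwise ``independent'' (their axes are not close) and, crucially, whose conjugacy behaviour under $\Gg$ is controlled.

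The key mechanism is the construction of invariant quasimorphisms via the Brooks-type / Hull--Osin construction adapted to the pair $(\Gg,\Ng)$. I would proceed as follows. Step 1: choose a generalized loxodromic element $g \in \Ng$ for the $\Gg$-action (such $g$ exists since $\Ng$ is an infinite normal subgroup of an acylindrically hyperbolic group, by a theorem of Osin); in fact choose countably many $g_1, g_2, \dots \in \Ng$ that are pairwise non-commensurable and not commensurable to any $\Gg$-conjugate of each other's inverse. Step 2: for each $i$, invoke the result (Hull--Osin, building on Bestvina--Fujiwara) that there is a homogeneous quasimorphism $\mufh_i$ on $\Ng$ which is moreover $\Gg$-invariant — here one uses that one may average/construct the quasimorphism using the $\Gg$-action, or directly cite \cite{KKMM1} / the survey where $\Gg$-invariant quasimorphisms on normal subgroups of acylindrically hyperbolic groups are produced — such that $\mufh_i(g_i) \neq 0$ while $\mufh_i(g_j) = 0$ for $j \neq i$ and $\mufh_i$ vanishes on $\HNRG$-relevant directions. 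Step 3: conclude the $\mufh_i$ are linearly independent modulo $\HNRG$, giving $\dim \geq \aleph_0$; then upgrade to the cardinality of the continuum by the standard infinite-word / $2^{\aleph_0}$-many-sign-choices argument: one builds, for each subset $S \subseteq \NN$ (or each point of a Cantor set of ``infinite de Rham paths''), a distinct $\Gg$-invariant homogeneous quasimorphism, using that the relevant small-cancellation configurations persist under the $\Gg$-action. Step 4 (upper bound, when $\Ng$ is countable): $\QhN$ embeds into $\ell^\infty(\Ng \times \Ng; \RR) \cong \RR^{\Ng \times \Ng}$ via $\mufh \mapsto \delta\mufh$ plus the value at one point, so $\dim_\RR \QhN \leq |\RR^{\Ng\times\Ng}| = 2^{\aleph_0}$ when $\Ng$ is countable, hence $\dim(\QNG/\HNRG) \leq 2^{\aleph_0}$; combined with the lower bound this gives equality. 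Step 5: the final sentence is the case $\Ng = \Gg$, for which $\QG = \QG^{\Gg}$ by Lemma~\ref{lem=invariant}, so the general statement specializes directly.

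The main obstacle, and the step requiring genuine care, is the $\Gg$-invariance in Step 2–3: the classical Bestvina--Fujiwara / Brooks quasimorphisms on $\Ng$ are not a priori $\Gg$-invariant, so one must either (a) use that $\Ng$ being normal forces the $\Gg$-action on the hyperbolic space to permute the relevant loxodromic directions in a way that lets one symmetrize the quasimorphism over a $\Gg$-orbit while keeping it a quasimorphism (this needs the defect to stay finite, i.e. one needs the quasimorphism to be $\Gg$-quasi-invariant, after which Corollary~\ref{cor=homoge_qinv} homogenizes it into $\QNG$), or (b) cite the existing construction in \cite{KKMM1} or \cite{KKMMMsurvey} that already produces a continuum of elements of $\QNG$ for $(\Gg,\Ng)$ with $\Gg$ acylindrically hyperbolic and $\Ng$ infinite normal, and merely record here the dimension count. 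Since this proposition is stated as background (with references), the cleanest route is (b): cite the construction, then spell out only the linear-independence bookkeeping and the $\ell^\infty$-embedding upper bound, which are routine. I would also double-check that ``$\Ng$ infinite normal in acylindrically hyperbolic $\Gg$'' genuinely yields generalized loxodromics in $\Ng$ — this is where one invokes Osin's dichotomy (an infinite normal subgroup of an acylindrically hyperbolic group is either finite, virtually cyclic and loxodromic, or acylindrically hyperbolic), so $\Ng$ contains loxodromic elements for a $\Gg$-cobounded acylindrical action, which is exactly what the quasimorphism machinery needs.
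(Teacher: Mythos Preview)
Your proposal is plausible in outline but takes a substantially harder route than the paper, and the step you yourself flag as the main obstacle is exactly the one the paper sidesteps entirely.

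The paper does two things differently. First, rather than constructing quasimorphisms on $\Ng$ and then wrestling with $\Gg$-invariance, it simply restricts homogeneous quasimorphisms from $\Gg$: by Lemma~\ref{lem=invariant} every element of $\QG$ is already $\Gg$-invariant, so $i^*\QG \subset \QNG$ automatically. The relevant input is then a result of Fournier-Facio--Wade (building on Bestvina--Fujiwara) that the restriction map $\QG \to \QQQ([\Ng,\Ng])^{\Gg}$ has infinite-dimensional image; since this factors through $\QNG$ and since $\HNRG$ restricts to zero on $[\Ng,\Ng]$, one concludes $\QNG/\HNRG$ is infinite-dimensional. This completely avoids your options (a) and (b): no averaging is needed, and no separate construction of $\Gg$-invariant quasimorphisms on $\Ng$ is invoked. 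Second, to pass from ``infinite-dimensional'' to ``dimension at least continuum'', the paper does not build $2^{\aleph_0}$ independent quasimorphisms by hand; instead it cites that $(\QNG/\HNRG, \DD)$ is a Banach space (from \cite{KKMMM1}) and invokes the general fact that an infinite-dimensional Banach space has Hamel dimension at least $2^{\aleph_0}$.

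Your upper-bound argument is essentially the same as the paper's. What your approach would buy, if completed, is an explicit family of independent invariant quasimorphisms; what it costs is the delicate invariance step, which as you note does not follow from the classical constructions without further work (and ``averaging over the $\Gg$-orbit'' is not available when $\Gg$ is infinite). The paper's approach is shorter and dodges this entirely.
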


\begin{proof}
Let $i_{[\Ng,\Ng],\Ng}$ be the inclusion map from $[\Ng,\Ng]$ to $\Ng$. Set $i_{[\Ng,\Ng],\Ng}^{\ast}\colon \QQQ(\Ng)^{\Gg}\to \QQQ([\Ng,\Ng])^{\Gg}$ be the induced map. In a manner similar to this, we set the map $i_{[\Ng,\Ng],\Gg}^{\ast}\colon \QQQ(\Gg)\to \QQQ([\Ng,\Ng])^{\Gg}$. By \cite[Corollary~1.5]{Osin}, every infinite normal subgroup of an acylindrically hyperbolic group is again acylindrially hyperbolic. Hence,  $\Ng$ is acylindrically hyperbolic; so is $[\Ng,\Ng]$. Then the argument of \cite[Corollary 4.3]{FW} (based on \cite{BF}) shows that $i_{[\Ng,\Ng],\Gg}^{\ast}$ has an infinite dimensional image. Since $i_{[\Ng,\Ng],\Ng}^{\ast}\HNRG=0$, we conclude that the image of $\QQQ([\Ng,\Ng])^{\Gg}$ under the canonical projection  $\QNG\twoheadrightarrow \QNG/\HNRG$ is already infinite dimensional. Hence, the space $\QNG/\HNRG$ is infinite dimensional. By \cite[Theorem~7.4]{KKMMM1}, the normed space $(\QNG/\HNRG,\DD)$ is a Banach space. General theory on Banach spaces tells us that the (Hamel) dimension of an infinite dimensional real Banach space is at least the cardinality of the continuum (see for instance \cite{Lacey}). Therefore, the  dimension of $(\QNG/\HNRG,\DD)$ is at least the cardinality of the continuum.

Now, we furthermore assume that $\Ng$ is countable. By the homogenization process of quasi-invariant quasimorphisms (Corollary~\ref{cor=homoge_qinv}), the space $\QNG$ is isomorphic to the quotient space of the $\Gg$-quasi-invariant quasimorphisms (not necessarily homogeneous) on $\Ng$ over $\ell^{\infty}(\Ng;\RR)$. By taking a $\QQ$-valued map as a representative of an element in this quotient space, we deduce from countability of $\Ng$ that the cardinality of $\QNG/\HNRG$ is at most that of the continuum. In particular, so is the dimension of $(\QNG/\HNRG,\DD)$. Therefore, we conclude that the dimension of $(\QNG/\HNRG,\DD)$ equals the cardinality of the continuum, as desired. The final assertion on $\QG/\HGR$ immediately follows from the special case where $\Ng=\Gg$.
\end{proof}

Contrastingly, we will see in Subsection~\ref{subsec=dimW} that the space $\WGN$, defined in the introduction, is \emph{finite dimensional} under certain mild conditions on $(\Gg,\Ng)$; see for instance Corollary~\ref{cor=nilp}.

\section{The generalized mixed Bavard duality}\label{sec=gmBavard}

In this section, we prove the generalized mixed Bavard duality theorem, which is a new result in the present article. For the reader's convenience, here we restate Theorem~\ref{theorem=generalized_mixed_Bavard}.

\begin{theorem}[generalized mixed Bavard duality theorem]\label{theorem=generalized_mixed_Bavardagain}
Let $\Gg$ be a group and $\Ng$ its normal subgroup. For every chain $c \in \CR(\Gg,\Ng)$, we have
\[
\sclGN(c)=\sup_{\muf\in \QNG\setminus \HNRG}\frac{|\muf(c)|}{2\DD(\muf)}.
\]
\end{theorem}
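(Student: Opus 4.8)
The plan is to follow the classical Bavard strategy, but carried out in the ``mixed'' setting for chains, so that the four dualities in Figure~\ref{fig=visual} all fall out as special cases. The easy direction is essentially already done: for a chain $c\in\CR(\Gg,\Ng)$ one extends Proposition~\ref{prop=sclabove}(2) and (3) by linearity (together with $\Gg$-invariance of $\muf$, which kills the ambiguity in the conjugacy classes appearing in the chain-level definition of $\clGN$), giving $2\DD(\muf)\sclGN(c)\ge|\muf(c)|$ for all $\muf\in\QNG$; hence $\sclGN(c)\ge\sup_{\muf\in\QNG\setminus\HNRG}|\muf(c)|/(2\DD(\muf))$. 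So the whole content is the reverse inequality: produce, for a chain $c$ with $\sclGN(c)>0$, an invariant homogeneous quasimorphism $\muf$ on $\Ng$ with $|\muf(c)|\ge(2-\varepsilon)\,\sclGN(c)\,\DD(\muf)$, and show that it does not come from an honest homomorphism.

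First I would reduce to the integral (or rational) case. For an integral chain I would set up the relevant chain complex: a suitable space of $1$-chains on $\Ng$ modulo the boundaries $\BBB_1'$ coming from ``$(\Gg,\Ng)$-admissible'' $2$-chains (surfaces whose boundary words are simple mixed commutators, up to $\Gg$-conjugation), exactly as Definition~\ref{defn=chainonG} foreshadows. On this quotient, $\clGN$ becomes a genuine norm-like function: the point is that filling a cyclic word that represents a chain by a surface, where one is allowed to conjugate each boundary component independently by elements of $\Gg$, is governed by counting simple mixed commutators. Then $\sclGN(c)$ is the corresponding stable seminorm on the real vector space $\BBR(\Gg,\Ng):=\BBB_1'(\Gg;\RR)/(\text{degenerate part})$ — here I would prove it is well-defined, homogeneous, subadditive and $\RR$-homogeneous using Fekete (Lemma~\ref{lem=Fekete}) and the surface-gluing argument, which is the bookkeeping analogue of Lemma~\ref{lem=commutatorcal}.

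The heart of the argument is Hahn--Banach. The function $\sclGN$ is a seminorm on the real vector space $V=\BBR(\Gg,\Ng)$; fix $c$ with $\sclGN(c)=:s>0$. By Hahn--Banach there is a linear functional $\Phi$ on $V$ with $\Phi(c)=s$ and $|\Phi|\le\sclGN$ on $V$. The task is then to realize $\Phi$, up to the correct constant, as evaluation against a $\Gg$-invariant homogeneous quasimorphism on $\Ng$ — i.e. to identify the dual of $(V,\sclGN)$ with (a piece of) $\QNG/\HNRG$. For this I would use the bounded-cohomology description: mixed scl of a chain is, via the mixed version of Lemma~\ref{lem=Q/H}, dual to the kernel of the comparison map in the appropriate relative/invariant bounded cohomology, and a bounded functional on the chain side lifts to a bounded $2$-cocycle whose class lies in that kernel, hence (after homogenization, using Corollary~\ref{cor=homoge_qinv} and Corollary~\ref{cor=defect2bai}) to an element of $\QNG$. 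The numerical factor $2$ and the sharpness are controlled by Proposition~\ref{prop=commutator_bavard} / Proposition~\ref{prop=sclabove}(1), which say $\DD(\muf)=\sup|\muf(\text{simple mixed commutator})|$, so that a functional of norm $\le 1$ with respect to $\clGN$ corresponds to a quasimorphism of defect $\le 2$. Finally, if the resulting $\muf$ lay in $\HNRG$ then $\muf(c)=0$ because $c$ is a boundary (a chain in $\BBB_1'$), contradicting $\Phi(c)=s>0$; hence $\muf\in\QNG\setminus\HNRG$ and it witnesses $\sup\ge s-\varepsilon$.

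The main obstacle I anticipate is the $\Gg$-\emph{invariance} bookkeeping, i.e. making precise the chain complex $\CR(\Gg,\Ng)$ and the ``$(\Gg,\Ng)$-admissible surface'' filling norm so that (a) its stabilization is exactly $\sclGN(c)$ and (b) its dual is exactly $\QNG/\HNRG$ rather than the larger $\QN/\HNR$. Allowing each boundary loop of a filling surface to be conjugated independently by a \emph{group} element (not merely by an element of $\Ng$) is what forces the dual quasimorphisms to be $\Gg$-invariant; getting the two sides of this dictionary to match — including the passage from not-necessarily-homogeneous bounded cochains to honest invariant homogeneous quasimorphisms via the quasi-invariance lemmas (Lemma~\ref{lem=qinv_inv}, Corollary~\ref{cor=homoge_qinv}) — is the delicate part. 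The passage from integral to real chains is then routine by continuity/density of the scl seminorm (Fekete again), as in Calegari's treatment, so I would state it but not belabor it.
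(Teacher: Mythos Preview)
Your high-level architecture is right --- easy direction by linearity, hard direction by Hahn--Banach --- and you have correctly located the delicate point (``identify the dual''). But the step where you apply Hahn--Banach directly to the seminorm $\sclGN$ on $V$ and then ``realize $\Phi$ as evaluation against an element of $\QNG$'' hides the entire difficulty, and as written it is circular: knowing that every bounded functional on $(V,\sclGN)$ comes from some $\muf\in\QNG$ with the correct constant \emph{is} the Bavard duality. The paper does not do this.

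What the paper does instead is introduce an auxiliary norm whose dual is identifiable \emph{before} you know anything about $\sclGN$. Concretely: on $\BBB_1'(\Gg,\Ng;\RR)$ one takes the quotient $\ell^1$-norm $\|\cdot\|'$ coming from $\CCC_2'(\Gg,\Ng;\RR)$ (Definition~\ref{defn=BBB'}), and proves directly (Proposition~\ref{proposition=isometry}) that its continuous dual is the space $\hQQQ_{\Ng}(\Gg)/\HHH^1_{\Ng}(\Gg;\RR)$ of \emph{$\Ng$-quasimorphisms on $\Gg$} --- functions $\psf\colon\Gg\to\RR$ satisfying $|\psf(\gl\xl)-\psf(\gl)-\psf(\xl)|\le D''$ only when one of $\gl,\xl$ lies in $\Ng$. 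This is the object you are missing; it is neither $\QNG$ nor $\QN$, and it is the genuine answer to your anticipated obstacle about ``forcing the dual quasimorphisms to be $\Gg$-invariant''. One then proves the bridge $4\,\sclGN(c)=\fl_{\Gg,\Ng}(c)=\lim_n \|c^{(n)}\|'/n$ (Theorem~\ref{theorem=4scl=fill}), applies Hahn--Banach to $\|\cdot\|'$ (not to $\sclGN$), obtains an $\Ng$-quasimorphism $\psf$ on $\Gg$, restricts to $\Ng$ to get a $\Gg$-quasi-invariant quasimorphism (Lemma~\ref{lemma=restriction}), and only \emph{then} homogenizes to land in $\QNG$ via Corollary~\ref{cor=homoge_qinv}. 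The factor $4$ from the filling theorem together with the factor $2$ lost at homogenization (Corollary~\ref{cor=defect2bai}) combine to give the $2$ in the statement. Your sentence ``a functional of norm $\le 1$ with respect to $\clGN$ corresponds to a quasimorphism of defect $\le 2$'' is morally what happens, but the correspondence is routed through $\Ng$-quasimorphisms on $\Gg$, and without that detour your proof has no mechanism for producing $\muf$.
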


In Subsection~\ref{subsec=Nqhom}, we define a notion of $\Ng$-quasimorphisms and its basic properties, which was one of the keys to proving the mixed Bavard duality (Theorem~\ref{theorem=mixed_Bavard}) in \cite{KKMM1}. In Subsection~\ref{subsection 3.1}, we introduce the mixed commutator length for  integral chains (Definition~\ref{definition=commutator_lengths_for_integral_chains}) and provide a  geometric interpretation of the mixed commutator lengths for integral chains (Theorem~\ref{theorem characterization 2}). In Subsection~\ref{subsection 3.2}, we introduce the stable mixed commutator lengths for rational chains and show some properties of it by using admissible $(\Gg,\Ng)$-simplicial surfaces (Definition~\ref{definition=admissible}). In Subsection~\ref{subsection 3.3} we introduce the filling norm $\fl_{G,N}$ for rational chains (\ref{definition=filling}) and prove the equality $4 \scl_{\Gg,\Ng} = \fl_{\Gg,\Ng}$ (Theorem~\ref{theorem=4scl=fill}).  Subsection~\ref{subsection 3.4} is devoted to the proof of the generalized mixed Bavard duality theorem  for rational chains (Theorem~\ref{generalized mixed bavard rational}). Based on this theorem, we introduce $\scl_{G,N}$ for real chains; finally, we obtain the generalized mixed Bavard duality theorem as Theorem~\ref{generalized mixed bavard}. As is mentioned in the introduction, the generalized mixed Bavard duality recovers the other three Bavard dualities treated in this article. In this manner, we present complete proofs of the four Bavard duality theorems. In this section, for a group $\Gg$ we use the symbol $B\Gg$ to denote the classifying space of $\Gg$.

\subsection{$N$-quasimorphisms}\label{subsec=Nqhom}
In this subsection, we review several notions and facts we need in the proof of the generalized mixed Bavard duality.  These notions (of $\Ng$-quasimorphisms and $\BBB'_1(\Gg,\Ng;\RR)$) were studied in \cite{KKMM1} for the proof of the mixed Bavard duality theorem.


\begin{definition}[{\cite[Section~2]{KKMM1}}] \label{definition_N-quasimorphism}
Let $\Gg$ be a group and $\Ng$ its normal subgroup.
\begin{enumerate}[(1)]
 \item A function $\psf \colon \Gg \to \RR$ is an \emph{$\Ng$-quasimorphism on $\Gg$} if there exists a non-negative number $\DD''$ such that
\[ |\psf(\gl \xl) - \psf(\gl) - \psf(\xl)| \le \DD'' \quad \textrm{and} \quad |\psf(\xl \gl) - \psf(\xl) - \psf(\gl)| \le \DD''\]
hold for every $\gl \in \Gg$ and for every $\xl \in \Ng$. Let $\DD''_{\Gg,\Ng}(\psf)$ denote the infimum of such a non-negative number $\DD''$, and call it the \emph{defect} of the $\Ng$-quasimorphism $\psf$. In other words,
\begin{align*}
&\DD''_{\Gg,\Ng}(\psf)\\
&=\sup\left\{\max\{|\psf(\gl \xl) - \psf(\gl) - \psf(\xl)|,\, |\psf(\xl \gl) - \psf(\xl) - \psf(\gl)|\}\;|\; \gl\in \Gg,\ \xl\in \Ng\right\}.
\end{align*}
 \item An \emph{$\Ng$-homomorphism on $\Gg$} is an $\Ng$-quasimorphism $\psf \colon \Gg \to \RR$ on $\Gg$ such that $\DD''_{\Gg,\Ng}(\psf) = 0$.
  \item Define $\rQQQ_N(G)$ as the $\RR$-linear space of $\Ng$-quasimorphisms on $\Gg$, and $\HHH^1_{\Ng}(\Gg;\RR)$ as the $\RR$-linear space of $\Ng$-homomorphisms on $\Gg$.
\end{enumerate}
\end{definition}

We note that the concept of $N$-quasimorphisms is introduced as a special case of the one of partial quasimorphism, which was intrinsically introduced in \cite{EP06} (see also \cite[Example ~11.4]{KKMMMsurvey}).

An $\Ng$-quasimorphism on $\Gg$ is closely related to a $\Gg$-quasi-invariant quasimorphism on $\Ng$. More precisely, the restriction of an $\Ng$-quasimorphism on $\Gg$  to $\Ng$  is a  $\Gg$-quasi-invariant quasimorphism  on $\Ng$ (Lemma~\ref{lemma=restriction} below); conversely, a $\Gg$-quasi-invariant quasimorphism on $\Ng$ can be extended to an $\Ng$-quasimorphism on $\Gg$ (Proposition~\ref{prop 3.4.1} below). Thus, a real-valued function on $\Ng$  is a $\Gg$-quasi-invariant quasimorphism if and only if this can be extended as an $\Ng$-quasimorphism on $\Gg$.

\begin{lemma}[{\cite[Lemma~2.3]{KKMM1}}]\label{lemma=restriction}
Let $\psf \colon \Gg \to \RR$ be an $\Ng$-quasimorphism on $\Gg$. Then the restriction $\psf|_{\Ng} \colon \Ng \to \RR$ is a $\Gg$-quasi-invariant quasimorphism on $\Ng$.
\end{lemma}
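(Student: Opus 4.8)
The plan is to prove Lemma~\ref{lemma=restriction} by a routine but careful translation of the $N$-quasimorphism inequalities on $\Gg$ into the quasimorphism and quasi-invariance inequalities on $\Ng$. First I would verify that $\psf|_{\Ng}$ is a quasimorphism on $\Ng$: since $\Ng$ is a subgroup of $\Gg$, for any $\xl_1,\xl_2\in \Ng$ we may regard $\xl_1\in \Gg$ and $\xl_2\in \Ng$, so the first defining inequality for an $\Ng$-quasimorphism immediately gives $|\psf(\xl_1\xl_2)-\psf(\xl_1)-\psf(\xl_2)|\leq \DD''_{\Gg,\Ng}(\psf)$. Hence $\psf|_{\Ng}\in \QhN$ with $\DD(\psf|_{\Ng})\leq \DD''_{\Gg,\Ng}(\psf)$.

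Next I would estimate $\DD'_{\Gg,\Ng}(\psf|_{\Ng})$, i.e.\ control $|\psf(\gl\xl\gl^{-1})-\psf(\xl)|$ for $\gl\in \Gg$ and $\xl\in \Ng$; note $\gl\xl\gl^{-1}\in \Ng$ since $\Ng$ is normal. The idea is to write $\gl\xl\gl^{-1}$ as a product and peel off factors using both inequalities in the definition of an $\Ng$-quasimorphism, being careful about which factor lies in $\Ng$. Writing $\wl=\gl\xl\gl^{-1}\in \Ng$, one has $\wl\gl = \gl\xl$, so applying the ``$\xl\gl$-type'' inequality to the left side ($\wl\in\Ng$, $\gl\in\Gg$) and the ``$\gl\xl$-type'' inequality to the right side ($\gl\in\Gg$, $\xl\in\Ng$) yields
\[
\psf(\wl)+\psf(\gl)\sim_{\DD''_{\Gg,\Ng}(\psf)}\psf(\wl\gl)=\psf(\gl\xl)\sim_{\DD''_{\Gg,\Ng}(\psf)}\psf(\gl)+\psf(\xl),
\]
so $\psf(\wl)\sim_{2\DD''_{\Gg,\Ng}(\psf)}\psf(\xl)$, giving $\DD'_{\Gg,\Ng}(\psf|_{\Ng})\leq 2\DD''_{\Gg,\Ng}(\psf)<\infty$. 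Thus $\psf|_{\Ng}$ is a $\Gg$-quasi-invariant quasimorphism on $\Ng$ in the sense of Definition~\ref{defn=qinv}, which is exactly the claim.

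There is essentially no serious obstacle here; the statement is a direct unpacking of definitions, and the only thing to be careful about is bookkeeping which of the two $\Ng$-quasimorphism inequalities applies at each step (the factor on one side must lie in $\Ng$), and the cancellation of the $\psf(\gl)$ terms. If one wanted to, one could also sharpen the constants slightly, but the finiteness of both $\DD(\psf|_{\Ng})$ and $\DD'_{\Gg,\Ng}(\psf|_{\Ng})$ is all that is needed. I would close by remarking that the converse direction is handled separately in Proposition~\ref{prop 3.4.1}.
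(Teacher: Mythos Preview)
Your proof is correct and is essentially identical to the paper's own proof: both first observe that $\DD(\psf|_{\Ng})\leq \DD''_{\Gg,\Ng}(\psf)$ directly from the definition, and then obtain $\DD'_{\Gg,\Ng}(\psf|_{\Ng})\leq 2\DD''_{\Gg,\Ng}(\psf)$ via the chain $\psf(\gl\xl\gl^{-1})+\psf(\gl)\sim_{\DD''}\psf(\gl\xl)\sim_{\DD''}\psf(\gl)+\psf(\xl)$, using the two defining inequalities of an $\Ng$-quasimorphism in precisely the order you describe.
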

\begin{proof}
By the definition of $\Ng$-quasimorphisms, $\psf|_\Ng$ is a quasimorphism on $\Ng$ whose defect is at most $D''_{\Gg,\Ng}(\psf)$. In what follows, we show that this is $\Gg$-quasi-invariant. For $\gl \in \Gg$ and $\xl \in \Ng$, we have
\[ \psf(\gl \xl \gl^{-1}) + \psf(\gl ) \sim_{D''_{\Gg,\Ng}(\psf)} \psf(\gl \xl) \sim_{D''_{\Gg,\Ng}(\psf)} \psf(\gl ) + \psf(\xl).\]
This means that
\[ \psf(\gl \xl \gl^{-1}) - \psf(\xl) \sim_{2D''_{\Gg,\Ng}(\psf)} 0,\]
and hence $\psf$ is a $\Gg$-quasi-invariant quasimorphism on $\Ng$.
\end{proof}

In the following proposition, we use the symbol $\DD_{\Ng}$ for the defect of elements in $\widehat{\QQQ}(\Ng)$ to clarify that this defect is for quasimorphisms on $\Ng$.

\begin{proposition}[{\cite[Proposition~2.4]{KKMM1}}] \label{prop 3.4.1}
Let $\hat{\muf}$ be a $\Gg$-quasi-invariant quasimorphism on $\Ng$.
Then, there exists an $\Ng$-quasimorphism $\psf \colon \Ng \to \RR$ such that  $\psf |_\Ng = \hat{\muf}$ and that
\[
\DD_{\Gg, \Ng}''(\psf) \le \DD_{\Ng}(\hat{\muf}) + \DD_{\Gg, \Ng}'(\hat{\muf}).
\]
In particular, if $\hat{\muf}$ is moreover homogeneous, then there exists an $\Ng$-quasimorphism $\psf \colon \Ng \to \RR$ such that  $\psf |_\Ng = \hat{\muf}$ and that
\[
\DD''_{\Gg, \Ng}(\psf) = \DD_{\Ng}(\hat{\muf}).
\]
\end{proposition}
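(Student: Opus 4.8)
The plan is to construct the desired $\Ng$-quasimorphism $\psf$ on $\Gg$ by extending $\hat{\muf}$ using a set-theoretic section of the quotient map $\Gg \to \Gam = \Gg/\Ng$. First I would fix a complete set of representatives: choose a map $s\colon \Gam \to \Gg$ with $s(1_{\Gam}) = 1_{\Gg}$ such that $s(\bar{\gl})\Ng = \bar{\gl}$ for every $\bar{\gl}\in\Gam$; write $\bar{\gl}$ for the image of $\gl\in\Gg$ in $\Gam$. Every $\gl\in\Gg$ then has a unique decomposition $\gl = \xl_{\gl}\cdot s(\bar{\gl})$ with $\xl_{\gl}\in\Ng$. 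Define $\psf\colon\Gg\to\RR$ by $\psf(\gl) = \hat{\muf}(\xl_{\gl})$. Since $s(\bar{1_{\Gg}}) = 1_{\Gg}$, for $\xl\in\Ng$ we get $\xl_{\xl} = \xl$, hence $\psf|_{\Ng} = \hat{\muf}$, giving the restriction property immediately.

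The main work is to estimate $\DD''_{\Gg,\Ng}(\psf)$, i.e.\ to bound $|\psf(\gl\xl) - \psf(\gl) - \psf(\xl)|$ and $|\psf(\xl\gl) - \psf(\xl) - \psf(\gl)|$ for $\gl\in\Gg$ and $\xl\in\Ng$ using only the defect $\DD_{\Ng}(\hat\muf)$ and the quasi-invariance defect $\DD'_{\Gg,\Ng}(\hat\muf)$. Here I would write out the decompositions explicitly: since $\xl\in\Ng$, we have $\overline{\gl\xl} = \bar{\gl} = \overline{\xl\gl}$, so $s(\overline{\gl\xl}) = s(\bar{\gl}) = s(\overline{\xl\gl})$. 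Thus $\gl\xl = \bigl(\xl_{\gl}\,s(\bar{\gl})\,\xl\,s(\bar{\gl})^{-1}\bigr)\,s(\bar{\gl})$ exhibits the $\Ng$-part of $\gl\xl$ as $\xl_{\gl}\cdot\bigl(s(\bar\gl)\,\xl\,s(\bar\gl)^{-1}\bigr)$, and $\xl\gl = (\xl\xl_{\gl})\,s(\bar\gl)$ exhibits the $\Ng$-part of $\xl\gl$ as $\xl\cdot\xl_{\gl}$. Then
\[
\psf(\gl\xl) = \hat\muf\bigl(\xl_{\gl}\cdot s(\bar\gl)\xl s(\bar\gl)^{-1}\bigr) \sim_{\DD_{\Ng}(\hat\muf)} \hat\muf(\xl_{\gl}) + \hat\muf\bigl(s(\bar\gl)\xl s(\bar\gl)^{-1}\bigr) \sim_{\DD'_{\Gg,\Ng}(\hat\muf)} \hat\muf(\xl_{\gl}) + \hat\muf(\xl),
\]
which is exactly $\psf(\gl) + \psf(\xl)$ up to $\DD_{\Ng}(\hat\muf) + \DD'_{\Gg,\Ng}(\hat\muf)$. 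For the other side, $\psf(\xl\gl) = \hat\muf(\xl\xl_{\gl}) \sim_{\DD_{\Ng}(\hat\muf)} \hat\muf(\xl) + \hat\muf(\xl_{\gl}) = \psf(\xl) + \psf(\gl)$, which is already within $\DD_{\Ng}(\hat\muf) \le \DD_{\Ng}(\hat\muf) + \DD'_{\Gg,\Ng}(\hat\muf)$. Taking the supremum gives $\DD''_{\Gg,\Ng}(\psf) \le \DD_{\Ng}(\hat\muf) + \DD'_{\Gg,\Ng}(\hat\muf)$, as claimed.

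For the ``in particular'' clause: if $\hat\muf$ is homogeneous, then by Lemma~\ref{lem=qinv_inv} it is $\Gg$-invariant, so $\DD'_{\Gg,\Ng}(\hat\muf) = 0$; the bound above yields $\DD''_{\Gg,\Ng}(\psf) \le \DD_{\Ng}(\hat\muf)$. The reverse inequality $\DD''_{\Gg,\Ng}(\psf) \ge \DD_{\Ng}(\hat\muf)$ is automatic, because restricting the $\Ng$-quasimorphism inequalities to arguments in $\Ng$ recovers the quasimorphism inequality for $\psf|_{\Ng} = \hat\muf$, so the defect cannot drop (this is the estimate already noted in Lemma~\ref{lemma=restriction}). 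Hence equality holds. I expect the only genuinely delicate point to be bookkeeping the $\Ng$-components of products correctly — in particular keeping track of the conjugation $s(\bar\gl)\,\xl\,s(\bar\gl)^{-1}$ that appears in the $\gl\xl$ case and recognizing that this is precisely where the quasi-invariance defect enters; everything else is a routine triangle-inequality chase.
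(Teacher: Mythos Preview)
Your proof is correct and follows essentially the same approach as the paper: extend $\hat\mu$ via a set-theoretic section of $\Gg\to\Gam$ and estimate the two $N$-quasimorphism inequalities using the defect and quasi-invariance defect. The only cosmetic differences are that the paper writes the decomposition as $\gl = s\xl'$ (section on the left) rather than $\gl = \xl_\gl s(\bar\gl)$, which merely swaps which of the two cases $\gl\xl$ or $\xl\gl$ picks up the extra $\DD'_{\Gg,\Ng}(\hat\mu)$ term, and the paper allows an arbitrary function $\varphi$ on the image of the section where you implicitly take $\varphi=0$.
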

\begin{proof}
Set $\Gam=\Gg/\Ng$. Fix a (set-theoretical) section of the quotient map $\Gg\to \Gam$ sending $1_{\Gam}$ to $1_\Gg$, and let $S$ be its image.
Note that the map $S \times \Ng \to \Gg$, $(s, \xl) \mapsto s \xl$ is a bijection. Fix a function  $\varphi \colon S \to \RR$ satisfying that $\varphi(1_{\Gg})=0$. Then, define $\psf\colon \Gg\to \RR$ by
\[ \psf(\gl) = \varphi(s) + \hat{\muf}(\xl).\]
Here for $\gl\in \Gg$, take the unique pair $(s,\xl)\in S\times \Ng$ such that $s\xl=\gl$.

In what follows, we show that $\psf$ is an $\Ng$-quasimorphism such that $\DD''_{\Gg, \Ng}(\psf) \le \DD_{\Ng}(\hat{\muf}) + \DD'_{\Gg, \Ng}(\hat{\muf})$ and $\psf |_\Ng = \hat{\muf}$. By construction, we clearly have $\psf |_\Ng = \hat{\muf}$. Let $\gl \in \Gg$ and $\xl \in \Ng$. Let $s \in S$ and $\xl' \in \Ng$ with $s \xl' = \gl$. Then we have
\begin{align*}
\psf(\gl \xl) & = \psf(s \xl' \xl) = \varphi(s) + \hat{\muf}(\xl' \xl) \\
& \sim_{\DD_{\Ng}(\hat{\muf})} \varphi(s) + \hat{\muf}(\xl') + \hat{\muf}(\xl) \\
& = \psf(s\xl') + \psf(\xl) \\
& = \psf(\gl ) + \psf(\xl),
\end{align*}
and hence
\[|\psf(\gl \xl) - \psf(\gl ) - \psf(\xl)| \le \DD_{\Ng}(\hat{\muf}).\]
Next set $\xl'' = \gl^{-1} \xl \gl$. Then we have
\begin{align*}
\psf(\xl \gl) - \psf(\xl) - \psf(\gl ) & = \psf(\gl \xl'') - \psf(\gl \xl'' \gl^{-1}) - \psf(\gl ) \\
& \sim_{\DD'_{\Gg, \Ng}(\hat{\muf})} \psf(\gl \xl'') - \psf(\xl'') - \psf(\gl ) \\
& \sim_{\DD_{\Ng}(\hat{\muf})} 0,
\end{align*}
and hence
\[|\psf(\xl \gl) - \psf(\xl) - \psf(\gl )| \le \DD_{\Ng}(\hat{\muf}) + \DD'_{\Gg,\Ng}(\hat{\muf}).\]
Thus we have shown that $\psf$ is an $\Ng$-quasimorphism on $\Gg$ satisfying that
\[\DD''_{\Gg, \Ng}(\psf) \le \DD_{\Ng}(\hat{\muf}) + \DD'_{\Gg, \Ng}(\hat{\muf}).
\]

In particular, if $\hat{\muf}$ is homogeneous, then Lemma~\ref{lem=qinv_inv} implies that $\DD'_{\Gg, \Ng}(\hat{\muf}) = 0$. Hence we have $\DD''_{\Gg, \Ng}(\psf) \le \DD_{\Ng}(\hat{\muf})$. On the other hand, it is clear that $\DD_{\Ng}(\hat{\muf}) \le \DD''_{\Gg, \Ng}(\psf)$. This completes the proof.
\end{proof}

It is straightforward to show that the defect $\DD''_{\Gg,\Ng}$ is a seminorm on $\rQQQ_{\Ng}(\Gg)$, and that $\HHH^1_{\Ng}(\Gg;\RR)$ is the kernel of $\DD''_{\Gg,\Ng}$. Hence $\DD''_{\Gg,\Ng}$ induces a norm on $\rQQQ_{\Ng}(\Gg) / \HHH^1_{\Ng}(\Gg;\RR)$; we write $\DD''_{\Gg,\Ng}$ for this norm as well.

In fact, $(\rQQQ_{\Ng}(\Gg)/ \HHH^1_{\Ng}(\Gg;\RR), \DD''_{\Gg,\Ng})$ is not merely a normed space, but is isomorphic to the continuous dual of a certain normed space. To state it precisely, we need to introduce several notions.

Let $A$ be a unital commutative ring. Recall the definitions of $\CCC_2(\Gg;A)$, $\CCC_1(\Gg;A)$ and $\partial \colon \CCC_2(\Gg;A)\to \CCC_1(\Gg;A)$ from Section~\ref{sec=gBavard}.

\begin{definition}\label{defn=BBB'}
Let $\Gg$ be a group and $\Ng$ its normal subgroup.  Let $A$ be a unital commutative ring.
\begin{enumerate}[(1)]
 \item Let $\CCC'_2(\Gg,\Ng ; A)$ be the $A$-submodule of $\CCC_2(G; A)$ generated by the set
\[ \{ (\gl_1,\gl_2) \; | \; \textrm{$\gl_1$ or $\gl_2$ belongs to $\Ng$}\}.\]
  \item Define
\[
\BBB'_1(\Gg,\Ng;A) = \partial \CCC_2'(\Gg,\Ng ; A)
\]
and
\[
\ZZZ'_2(\Gg,\Ng;A) = \Ker \left(\partial |_{\CCC_2'(\Gg,\Ng;A)} \colon \CCC'_2(\Gg,\Ng;A) \to \CCC_1(\Gg;A)\right).\]
Here $\partial \colon \CCC_2(\Gg;A) \to \CCC_1(\Gg;A)$ denotes the boundary map.\item Regard $\CCC_2(\Gg ; \RR)$ as a normed space equipped with the $\ell^1$-norm, and consider $\BBB'_1(\Gg,\Ng; \RR)$ as the normed space $\CCC'_2(\Gg,\Ng; \RR) / \ZZZ_2'(\Gg,\Ng; \RR)$ with the quotient norm. Define $\| \cdot\|'$ as this norm on $\BBB_1'(\Gg,\Ng;\RR)$. In other words, for $c \in \BBB_1'(\Gg,\Ng; \RR)$, set
\begin{equation}\label{eq='norm}
 \| c\|' = \inf \{ \| c' \|_1 \; | \; c' \in \CCC_2'(\Gg,\Ng ; \RR),\ \partial c'=c \}.
\end{equation}
Here $\| \cdot \|_1$ denotes the $\ell^1$-norm.
\end{enumerate}
\end{definition}

For Definition~\ref{defn=BBB'}~(3), $\ZZZ_2'(\Gg,\Ng; \RR)$ is a closed subspace of $\CCC_2'(\Gg,\Ng; \RR)$. Indeed, if we regard $\partial$ as a operator $\partial\colon (\CCC_2(\Gg ; \RR),\|\cdot\|_1) \to (\CCC_1(\Gg ; \RR),\|\cdot\|_1)$, then its operator norm is at most $3$. Hence, $\ZZZ_2(\Gg; \RR)=\Ker(\partial)$ is closed in $(\CCC_2(\Gg ; \RR),\|\cdot\|_1)$, and  $\ZZZ_2'(\Gg,\Ng; \RR)$ is a closed in $(\CCC_2'(\Gg,\Ng; \RR), \|\cdot\|_1)$ as well. Thus, $\|\cdot\|'$ is a genuine norm on $\BBB_1'(\Gg,\Ng;\RR)$. We note that we equip $\BBB_1'(\Gg,\Ng;\RR)$ with the norm $\|\cdot\|'$ defined by \eqref{eq='norm}, not the norm induced by the $\ell^1$-norm on $\CCC_1(\Gg ; \RR)$. We will
use the following assertion on the norm $\|\cdot\|'$ in the proof of Proposition~\ref{proposition trivial}.

\begin{lemma}[{\cite[Proposition~3.3]{KKMM1}}]\label{lem='norm_cl}
\begin{enumerate}[$(1)$]
\item For $\gl \in \Gg$ and $\xl \in \Ng$, we have $[\gl, \xl] \in \BBB'_1(\Gg,\Ng;\RR)$ and $\| [\gl, \xl ]\|' \le 3$.

\item Let $\yl_1, \yl_2 \in [\Gg, \Ng]$, and assume that $\yl_1, \yl_2 \in \BBB'_1(\Gg,\Ng;\RR)$. Then $\yl_1 \yl_2 \in \BBB'_1(\Gg,\Ng;\RR)$ and $\| \yl_1 \yl_2\|' \le \| \yl_1 \|' + \| \yl_2\|' + 1$.
\end{enumerate}
In particular, for every $\yl \in [\Gg, \Ng]$, we have $\yl \in \BBB'_1(\Gg,\Ng;\RR)$ and $\| \yl\|' \leq  4 \cl_{\Gg, \Ng}(\yl) - 1$.
\end{lemma}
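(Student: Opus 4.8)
The plan is to establish (1) and (2) by producing explicit fillings in $\CCC_2'(\Gg,\Ng;\RR)$, and then to deduce the concluding inequality by induction on $\clGN$. Throughout I would use the boundary formula $\partial((\gl_1,\gl_2))=\gl_2-\gl_1\gl_2+\gl_1$ from Section~\ref{sec=gBavard}, together with the observation that a generator $(\gl_1,\gl_2)$ belongs to $\CCC_2'(\Gg,\Ng;\RR)$ as soon as at least one of $\gl_1,\gl_2$ lies in $\Ng$.

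For part (1), set $\zl=\gl\xl\gl^{-1}$. Since $\Ng$ is normal in $\Gg$ we have $\zl\in\Ng$, hence also $\zl^{-1},\xl^{-1}\in\Ng$. The heart of the matter is the identity
\[
\partial\bigl(-(\zl,\xl^{-1})+(\xl^{-1},\gl^{-1})+(\xl^{-1}\gl^{-1},\zl)\bigr)=\zl\xl^{-1}=[\gl,\xl],
\]
which one checks by a direct expansion, using $\xl^{-1}\gl^{-1}\zl=\gl^{-1}$. All three generators occurring here are admissible for $\CCC_2'(\Gg,\Ng;\RR)$: in $(\zl,\xl^{-1})$ and $(\xl^{-1},\gl^{-1})$ the first entry lies in $\Ng$, while in $(\xl^{-1}\gl^{-1},\zl)$ the second entry $\zl$ lies in $\Ng$. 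Hence $[\gl,\xl]\in\BBB'_1(\Gg,\Ng;\RR)$, and the definition of $\|\cdot\|'$ in Definition~\ref{defn=BBB'} gives $\|[\gl,\xl]\|'\le 3$. I expect this to be the only genuinely delicate step: the naive bar-resolution filling of a commutator passes through inadmissible generators such as $(\gl\xl,\gl^{-1})$, so one is forced to route the computation through $\gl\xl\gl^{-1}\in\Ng$, and one must be careful to choose the filling with $\ell^1$-norm exactly $3$, since this precise constant is what makes the final bound $4\clGN-1$ come out.

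For part (2), pick $c_i\in\CCC_2'(\Gg,\Ng;\RR)$ with $\partial c_i=\yl_i$ and $\|c_i\|_1$ as close to $\|\yl_i\|'$ as we like. From $\partial((\yl_1,\yl_2))=\yl_2-\yl_1\yl_2+\yl_1$ we obtain $\partial\bigl(c_1+c_2-(\yl_1,\yl_2)\bigr)=\yl_1\yl_2$, and $(\yl_1,\yl_2)\in\CCC_2'(\Gg,\Ng;\RR)$ because $\yl_1,\yl_2\in\CGN\subseteq\Ng$. Therefore $\yl_1\yl_2\in\BBB'_1(\Gg,\Ng;\RR)$ and $\|\yl_1\yl_2\|'\le\|c_1\|_1+\|c_2\|_1+1$; letting $\|c_i\|_1\to\|\yl_i\|'$ yields $\|\yl_1\yl_2\|'\le\|\yl_1\|'+\|\yl_2\|'+1$.

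For the concluding assertion I would induct on $n=\clGN(\yl)$ (reading the stated inequality for $\yl\neq 1_\Gg$, equivalently $n\ge 1$; the trivial element lies in $\BBB'_1(\Gg,\Ng;\RR)$ since $1_\Gg=\partial((1_\Gg,1_\Gg))$ with $(1_\Gg,1_\Gg)\in\CCC_2'(\Gg,\Ng;\RR)$). Assuming $n\ge 1$, write $\yl=\yl_1\cdots\yl_n$ with each $\yl_i$ a simple mixed commutator. By part (1) each $\yl_i$ lies in $\BBB'_1(\Gg,\Ng;\RR)$ with $\|\yl_i\|'\le 3$, and every partial product $\yl_k\cdots\yl_n$ lies in $\CGN$ because $\CGN$ is a subgroup; hence part (2) applies $n-1$ times and an easy induction gives $\|\yl\|'\le 3n+(n-1)=4n-1=4\clGN(\yl)-1$, as desired.
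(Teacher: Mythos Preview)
Your proof is correct and follows the same strategy as the paper: an explicit three-term filling for the simple mixed commutator in (1), the single extra simplex $(\yl_1,\yl_2)$ in (2), and induction for the final bound. The only cosmetic difference is the choice of filling in (1): the paper uses $\partial\bigl(([\gl,\xl],\xl\gl)-(\gl,\xl)+(\xl,\gl)\bigr)=[\gl,\xl]$ (each generator admissible since $[\gl,\xl]\in\Ng$ and $\xl\in\Ng$), whereas you route through $\zl=\gl\xl\gl^{-1}$; both give $\|[\gl,\xl]\|'\le 3$.
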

\begin{proof}
Item (1) immediately follows from
\[ \partial ([\gl,\xl], \xl \gl) - \partial (\gl, \xl) + \partial (\xl, \gl) = [\xl, \gl].\]
Next we show item (2). Since $\yl_1, \yl_2,$ and $\yl_1 \yl_2 - \yl_1 - \yl_2 = \partial (\yl_1, \yl_2) \in \BBB'_1(\Gg,\Ng;\RR)$, we have $\yl_1 \yl_2 \in \BBB'_1(\Gg,\Ng;\RR)$. The inequality clearly follows from the estimates
\begin{align*}
\| \yl_1 \yl_2\|' & = \| (\yl_1 + \yl_2) + (\yl_1 \yl_2 - \yl_1 - \yl_2) \|' \\
& \le \| \yl_1 + \yl_2\|' + \| \yl_1 \yl_2 - \yl_1 - \yl_2 \|' \\
& \le \| \yl_1\|' + \| \yl_2 \|' + 1.
\end{align*}
Hence item (2) is proved.
The final assertions follow from items (1) and (2).
\end{proof}

Then the following holds.

\begin{proposition}[{\cite[Proposition~3.5]{KKMM1}}]\label{proposition=isometry}
The normed space $(\rQQQ_{\Ng}(\Gg) / \HHH^1_{\Ng}(\Gg;\RR), \DD''_{\Gg,\Ng})$ is isometrically isomorphic to the continuous dual of the space $(\BBB_1'(\Gg,\Ng; \RR), \| \cdot\|')$, and the isomorphism is induced by the  pairing
\begin{equation}\label{eq=B_1pairing}
(\rQQQ_{\Ng}(\Gg) / \HHH^1_{\Ng}(\Gg;\RR))\times \BBB_1'(\Gg,\Ng; \RR)\to \RR;\quad ([\psf],c)\mapsto \psf(c).
\end{equation}
\end{proposition}
\begin{proof}
In this proof, we abbreviate $\CCC_2'(\Gg ;\RR)$ as $\CCC_2'$, $\ZZZ'_2(\Gg,\Ng; \RR)$ as $\ZZZ'_2$, and $\BBB_1'(\Gg,\Ng; \RR)$ as $\BBB_1'$. We also abbreviate $\QGN$ as $\rQQQ_{\Ng}$ and $\HHH^1_{\Ng}(\Gg;\RR)$ as $\HHH^1_{\Ng}$, respectively.  Recall that $\CCC_2' / \ZZZ_2'$ and $\BBB_1'$ are isometrically isomorphic as normed spaces via the isomorphism $\bar{\partial} \colon \CCC_2' / \ZZZ_2' \to \BBB_1'$ induced by the boundary maps $\partial \colon \CCC_2' \to \BBB_1'$ (see Definition~\ref{definition_N-quasimorphism}).  In what follows, we  first construct a surjective  linear isometry $\Phi$ from $\rQQQ_{\Ng} / \HHH^1_\Ng$ to $(\CCC_2' / \ZZZ_2')^{\ast}$;  namely,  this $\Phi$ will give the isometric isomorphism $\rQQQ_{\Ng} / \HHH^1_\Ng\cong (\CCC_2' / \ZZZ_2')^{\ast}$. Here, we consider the norm of $(\CCC_2' / \ZZZ_2')^{\ast}$ as the dual norm  $\|\cdot\|_*$  of $\CCC_2' / \ZZZ_2'$.  That is,  for $f \in (\CCC_2' / \ZZZ_2')^{\ast}$, we set 
\[
\| f\|_* =  \inf\{ a \ge 0 \mid \text{$|f(x)| \le a \| [x]\|_{\CCC_2' / \ZZZ_2'}$ for every $x \in \CCC_2'$}\}. 
\]
Here, and $\| \cdot \|_{\CCC_2' / \ZZZ_2'}$ denotes the quotient norm. Let $\pi$ denote the quotient map from $\CCC_2'$ to $\CCC_2' / \ZZZ_2'$. Since
\[
\| f\|_*= \inf \{ a \ge 0 \mid \text{$|f \circ \pi(x)| \le a \| x\|_1$ for every $x \in \CCC_2'$}\},
\]
we have
\begin{equation}\label{eq=Rudin}
\|f\|_*= \| f \circ \pi\|_*
\end{equation}
(see \cite[Theorem~4.9(b)]{Rudin} for more details). Here $\|\cdot\|_*$ in the right-hand side of \eqref{eq=Rudin} denotes the dual norm on $(\CCC_2',\|\cdot\|_1)^{\ast}$.

Let $\psf \colon \Gg \to \RR$ be an $\Ng$-quasimorphism on $\Gg$. Then $\psf$ is identified with an $\RR$-linear map $\psf \colon \CCC_1(G ; \RR) \to \RR$. Since $\psf$ is an $\Ng$-quasimorphism, we have
\[|\psf \circ \partial (\gl ,\xl)| = |\psf(\xl) - \psf(\gl \xl) + \psf(\gl )| \le D''_{\Gg,\Ng}(\psf),\]
and
\[|\psf \circ \partial (\xl,\gl)| = |\psf(\gl ) - \psf(\xl \gl) + \psf(\xl)| \le D''_{\Gg,\Ng}(\psf)\]
for every $\gl \in \Gg$ and every $\xl \in \Ng$. 
 This implies that $\psf \circ \partial \in (\CCC_2',\|\cdot\|_1)^*$; more precisely, we have
\begin{equation}\label{eq=tocho_ue}
\|\psf \circ \partial\|_{*}\leq D''_{\Gg, \Ng}(\psf).
\end{equation}
Since $\psf \circ \partial$ vanishes on $\ZZZ_2'$, we have that $\psf \circ \partial$ induces a continuous linear map $\hat{\psf} \colon \CCC_2' / \ZZZ_2' \to \RR$, whose dual norm  is at most $D''_{\Gg,\Ng}(\psf)$. Clearly, this correspondence $\psf \mapsto \hat{\psf}$ is linear. Since the kernel of this correspondence $\rQQQ_\Ng(\Gg) \to (\CCC_2' / \ZZZ_2')^*$, $\psf \mapsto \hat{\psf}$ contains $\HHH^1_\Ng(\Gg)$, this correspondence induces a linear map $\Phi \colon \rQQQ_\Ng(\Gg)/ \HHH^1_\Ng(\Gg) \to (\CCC_2' / \ZZZ_2')^*$.

We  claim that $\Phi \colon \rQQQ_\Ng(\Gg)/ \HHH^1_\Ng(\Gg) \to (\CCC_2' / \ZZZ_2')^*$ is a (not necessarily surjective) linear isometry. Indeed, we  will show the following  for every $\psf\in \rQQQ_\Ng(\Gg)$:
\begin{enumerate}[(a)]
\item $\| \Phi([\psf])\|_* \le D''_{\Gg, \Ng}(\psf)$.

\item $\| \Phi([\psf])\|_* \ge D''_{\Gg, \Ng}(\psf)$.
\end{enumerate}
 For (a), combine \eqref{eq=tocho_ue} with \eqref{eq=Rudin}. To see (b), by the definition of $D''_{\Gg,\Ng}$ (see Definition~\ref{definition_N-quasimorphism})  and \eqref{eq=Rudin}, we have
\begin{align*}
D''_{\Gg,\Ng}(\psf) & = \sup \{ |\psf(\gl \xl) - \psf(g) - \psf(x)|,\ |\psf(\xl \gl) - \psf(x) - \psf(\gl)| \mid \gl \in \Gg, \xl \in \Ng \} \\
& = \sup \{ |\psf \circ \partial (\gl, \xl)|,\ |\psf \circ \partial (\xl, \gl)| \mid \gl \in \Gg, \xl \in \Ng \} \\
& \le \| \psi \circ \partial \|_* \cdot 1 = \| \hat{\psf} \|_* = \| \Phi([\psf])\|_*.
\end{align*}
This completes the proof of (b), and we have completed the proof that $\Phi \colon \rQQQ_\Ng / \HHH^1_{\Ng} \to (\CCC_2' / \ZZZ_2')^{\ast}$ is a (not necessarily surjective) linear isometry.

To see the surjectivity of $\Phi$, we construct the  inverse correspondence $ \Psi \colon (\CCC_2' / \ZZZ_2')^{\ast} \to \rQQQ_\Ng / \HHH^1_\Ng$  of $\Phi$. Let $\psf \colon \CCC_2' / \ZZZ_2' \to \RR$ be a bounded operator. Then $\psf$ is identified with a linear map $\overline{\psf} \colon \BBB_1' \to \RR$. Since our coefficient ring is $\RR$ and this ring is injective, there exists a linear map $\psf' \colon \CCC_1(\Gg;\RR) \to \RR$ such that $\psf'|_{\BBB_1'} = \overline{\psf}$, which is not necessarily continuous. The function $\psf' \colon \CCC_1 (\Gg;\RR) \to \RR$ is identified with a real-valued function $\psf' \colon \Gg \to \RR$ on $\Gg$. Let $D''$ be the dual norm of $f \colon \CCC_2' \to \RR$. Then we have
\[| \psf'(\gl \xl) - \psf'(\gl) - \psf'(\xl)| = | \psf'(\partial (\gl ,\xl))| = | \psf(\gl ,\xl)| \le D''.\]
Similarly, we can show $|\psf'(\xl \gl) - \psf'(\xl) - \psf'(\gl )| \le D''$. This implies that $\psf'$ is an $\Ng$-quasimorphism.

To complete the construction of the correspondence $\Psi \colon (\CCC_2' / \ZZZ_2')^{\ast} \to \rQQQ_\Ng / \HHH^1_\Ng$,  we let $f''$ be another linear extension of $\overline{\psf}\colon \BBB'_1 \to \RR$ and show that $\psf' - \psf''$ is an $\Ng$-homomorphism. Since $\psf'$ and $\psf''$ coincide on $\BBB_1'$, we have
\[\psf'(\gl \xl) -\psf'(\gl ) - \psf'(\xl) = \psf'(\gl \xl - \gl - \xl) = \psf''(\gl \xl - \gl - \xl) = \psf''(\gl \xl) - \psf''(\gl ) - \psf''(\xl).\]
This means that
\[(\psf' - \psf'')(\gl \xl) = (\psf' - \psf'')(\gl ) + (\psf' - \psf'')(\xl).\]
Similarly, we can show that
\[(\psf' - \psf'')(\xl \gl) = (\psf' - \psf'')(\xl) + (\psf' - \psf'')(\gl ),\]
and hence $\psf' - \psf''$ is an $\Ng$-homomorphism. Thus we have completed the construction of the correspondence $ \Psi \colon (\CCC_2' / \ZZZ_2')^{\ast} \to \rQQQ_\Ng / \HHH^1_\Ng$, $\psi \mapsto [\psi']$.
Since these correspondences  $\Phi$ and $\Psi$ are mutually inverses, we conclude that $\Phi \colon  \rQQQ_\Ng / \HHH^1_\Ng \to (\CCC_2'/ \ZZZ_2')^{\ast}$ is a surjective  linear isometry.  Therefore, $\rQQQ_\Ng / \HHH^1_\Ng$ and $(\CCC_2'/ \ZZZ_2')^{\ast}$ are isometrically isomorphic, and so are $\rQQQ_\Ng / \HHH^1_\Ng$ and $(\BBB_1')^{\ast}$.

 Finally, we will show that the isometric isomorphism $\rQQQ_\Ng / \HHH^1_\Ng\cong (\BBB_1')^{\ast}$ is given by the pairing \eqref{eq=B_1pairing}. Let $\bar{\partial} \colon \CCC_2' / \ZZZ_2' \to \BBB_1'$ be the isomorphism induced by the boundary map $\partial \colon \CCC_2' \to \BBB_1'$. Then the composite $(\bar{\partial}^{-1})^* \circ \Phi \colon \rQQQ_\Ng / \HHH^1_\Ng \to (\BBB_1')^*$ is a surjective  linear isometry. Then the pairing
\[ \rQQQ_\Ng /\HHH^1_\Ng \times \BBB_1' \to \RR, \quad ([\psf], c) \mapsto (\bar{\partial}^{-1})^* \circ \Phi([\psf]) (c)\]
coincides with the pairing  \eqref{eq=B_1pairing}: $([\psf], c) \mapsto \psf(c)$. Indeed, for $c' \in \CCC_2'$ such that $\bar{\partial} ([c']) = c$, we have
\[ (\bar{\partial}^{-1})^* \circ \Phi([\psf])(c) = (\bar{\partial}^{-1})^*(\Phi([\psf])) (\bar{\partial}([c'])) = \Phi([\psf])([c']) = \psi \circ \partial (c') = \psf(c).\]
This completes the proof.
\end{proof}

Recall the following consequence of the Hahn--Banach theorem in functional analysis.
\begin{theorem}[for instance, see \cite{Rudin}]\label{thm=HB}
Let $(X,\|\cdot\|_X)$ be a real normed space, and let $\|\cdot\|_{X^\ast}$ be the dual norm on the  continuous dual $X^\ast$ of $X$. Let $X^{\ast}\times X\to \RR$; $(\varphi,v)\mapsto \varphi(v)$ be the standard pairing. Then, for every $v\in X$ we have
\[
\|v\|_X=\sup_{\varphi\in X^{\ast}\setminus \{0\}}\frac{|\varphi(v)|}{\|\varphi\|_{X^\ast}}.
\]
\end{theorem}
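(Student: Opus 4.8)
The plan is to derive Theorem~\ref{thm=HB} from the Hahn--Banach extension theorem, treating the two inequalities separately. For the easy inequality I would just unwind the definition of the dual norm: for every $\varphi\in X^{\ast}\setminus\{0\}$ and every $v\in X$ one has $|\varphi(v)|\le \|\varphi\|_{X^{\ast}}\,\|v\|_{X}$, hence $|\varphi(v)|/\|\varphi\|_{X^{\ast}}\le \|v\|_{X}$; taking the supremum over such $\varphi$ gives $\sup_{\varphi\in X^{\ast}\setminus\{0\}}|\varphi(v)|/\|\varphi\|_{X^{\ast}}\le \|v\|_{X}$. If $X=\{0\}$ then $v=0$ and both sides vanish under the convention that an empty supremum equals $0$, so from now on I may assume $v\neq 0$.

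For the reverse inequality I would exhibit a single functional attaining the value $\|v\|_{X}$. Set $Y=\RR v\subseteq X$ and let $\varphi_{0}\colon Y\to\RR$ be the linear map with $\varphi_{0}(v)=\|v\|_{X}$; since $|\varphi_{0}(tv)|=|t|\,\|v\|_{X}=\|tv\|_{X}$ for every $t\in\RR$, the functional $\varphi_{0}$ is dominated on $Y$ by the sublinear functional $p(w):=\|w\|_{X}$. The Hahn--Banach theorem then furnishes a linear extension $\varphi\colon X\to\RR$ of $\varphi_{0}$ with $\varphi(w)\le\|w\|_{X}$ for all $w\in X$; applying this also to $-w$ gives $|\varphi(w)|\le\|w\|_{X}$, so $\varphi\in X^{\ast}$ and $\|\varphi\|_{X^{\ast}}\le 1$. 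Since $\varphi(v)=\varphi_{0}(v)=\|v\|_{X}\neq 0$, we get $\varphi\neq 0$ and $\|\varphi\|_{X^{\ast}}\ge|\varphi(v)|/\|v\|_{X}=1$, hence $\|\varphi\|_{X^{\ast}}=1$ and $|\varphi(v)|/\|\varphi\|_{X^{\ast}}=\|v\|_{X}$. Thus the right-hand supremum is at least $\|v\|_{X}$, and together with the easy inequality this gives the claimed equality.

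Everything here is bookkeeping with the definition of the operator norm except for one step: the existence of a linear extension of $\varphi_{0}$ from the line $\RR v$ to all of $X$ that remains dominated by $\|\cdot\|_{X}$. This is the entire content of the argument, and it is the classical Hahn--Banach theorem for real scalars, with the norm $\|\cdot\|_{X}$ itself playing the role of the dominating sublinear functional. Since the paper treats this as a standard fact (see \cite{Rudin}), I would invoke it directly rather than reprove it.
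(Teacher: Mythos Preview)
Your argument is correct and is exactly the standard derivation of this duality from the Hahn--Banach extension theorem. The paper does not give its own proof of Theorem~\ref{thm=HB}; it simply cites \cite{Rudin} and uses the result as a black box, so your write-up aligns with what any such reference would contain.
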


\begin{corollary}\label{cor=HB}
Let $\Gg$ be a group and $\Ng$ its normal subgroup. Then, for every $c\in \BBB_1'(\Gg,\Ng; \RR)$, we have
\[
\|c\|'=\sup_{\psf\in \rQQQ_{\Ng}(\Gg) \setminus \HHH^1_{\Ng}(\Gg;\RR) } \frac{|\psf(c)|}{\DD''_{\Gg,\Ng}(\psf)}.
\]
\end{corollary}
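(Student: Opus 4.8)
The plan is to read off the statement directly from the two results already at hand: the functional-analytic identification of $(\rQQQ_{\Ng}(\Gg)/\HHH^1_{\Ng}(\Gg;\RR),\DD''_{\Gg,\Ng})$ with the continuous dual of $(\BBB_1'(\Gg,\Ng;\RR),\|\cdot\|')$ from Proposition~\ref{proposition=isometry}, and the Hahn--Banach duality formula of Theorem~\ref{thm=HB}. Concretely, I would set $X=(\BBB_1'(\Gg,\Ng;\RR),\|\cdot\|')$, which is a genuine normed space as observed after Definition~\ref{defn=BBB'}, and invoke Proposition~\ref{proposition=isometry} to identify $X^{\ast}$ isometrically with $\rQQQ_{\Ng}(\Gg)/\HHH^1_{\Ng}(\Gg;\RR)$ via $[\psf]\mapsto(c\mapsto\psf(c))$, so that the canonical pairing $X^{\ast}\times X\to\RR$ is exactly $([\psf],c)\mapsto\psf(c)$ and the dual norm of $[\psf]$ is $\DD''_{\Gg,\Ng}(\psf)$.

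Then, for a fixed $c\in\BBB_1'(\Gg,\Ng;\RR)$, Theorem~\ref{thm=HB} applied to $v=c$ gives
\[
\|c\|'=\sup_{\varphi\in X^{\ast}\setminus\{0\}}\frac{|\varphi(c)|}{\|\varphi\|_{X^{\ast}}}=\sup_{[\psf]\neq 0}\frac{|\psf(c)|}{\DD''_{\Gg,\Ng}([\psf])},
\]
the last supremum ranging over the nonzero classes in $\rQQQ_{\Ng}(\Gg)/\HHH^1_{\Ng}(\Gg;\RR)$. The only remaining task is to turn this into a supremum over representatives: since an $\Ng$-homomorphism $\psf$ satisfies $\psf\circ\partial=0$ on $\CCC_2'(\Gg,\Ng;\RR)$ and hence annihilates $\BBB_1'(\Gg,\Ng;\RR)=\partial\CCC_2'(\Gg,\Ng;\RR)$, the number $\psf(c)$ depends only on $[\psf]$ for $c$ in this space; and $\DD''_{\Gg,\Ng}(\psf)=\DD''_{\Gg,\Ng}([\psf])$ by the definition of the quotient norm. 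Because a class $[\psf]$ is nonzero precisely when $\psf\notin\HHH^1_{\Ng}(\Gg;\RR)$, the index set may be replaced by $\rQQQ_{\Ng}(\Gg)\setminus\HHH^1_{\Ng}(\Gg;\RR)$ without changing the value of the supremum, which is the asserted formula.

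There is no genuine obstacle here: all of the mathematical substance --- the identification of the dual space and the computation of its norm --- is contained in Proposition~\ref{proposition=isometry} (which in turn rests on Lemma~\ref{lem='norm_cl} and the injectivity of $\RR$ as a module, used to extend functionals off $\BBB_1'$), while Theorem~\ref{thm=HB} supplies the duality formula. The one point that deserves an explicit sentence is the well-definedness of the pairing, namely that $\Ng$-homomorphisms kill $\BBB_1'(\Gg,\Ng;\RR)$; everything else is bookkeeping about passing between a quotient space and its representatives.
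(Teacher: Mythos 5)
Your argument is exactly the paper's: apply Theorem~\ref{thm=HB} to the isometric identification of Proposition~\ref{proposition=isometry} and then pass from classes to representatives. You simply spell out the well-definedness of the pairing (that $\Ng$-homomorphisms annihilate $\BBB_1'(\Gg,\Ng;\RR)$) more explicitly than the paper does, which is a harmless addition.
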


\begin{proof}
By combining Proposition~\ref{proposition=isometry} with Theorem~\ref{thm=HB}, we have
\[
\|c\|'=\sup_{[\psf]\in (\rQQQ_{\Ng}(\Gg) / \HHH^1_{\Ng}(\Gg;\RR)) \setminus \{0\}} \frac{|\psf(c)|}{\DD''_{\Gg,\Ng}([\psf])}
\]
for every $c\in \BBB_1'(\Gg,\Ng;\RR)$. Now, we obtain the desired equality.
\end{proof}

\subsection{$(G,N)$-simplicial surfaces} \label{subsection 3.1}

In this subsection, we introduce the mixed commutator lengths for chains (Definition~\ref{definition=commutator_lengths_for_integral_chains}) and provide a geometric interpretation of it (Theorem~\ref{theorem characterization 2})  in terms of $(G,N)$-simplicial surfaces.

Recall from the notation and conventions in the introduction that a surface means a (not necessarily connected) compact orientable 2-dimensional manifold in the present article. A \emph{triangulation} of a topological space $X$ means a $\Delta$-complex structure on $X$ (see \cite[Section~2.1]{Hatcher}). A \emph{simplicial surface} is a triangulated surface. Hence every edge ($1$-simplex) is oriented and every triangle ($2$-simplex) is surrounded by three oriented edges as is depicted in Figure~\ref{figure 1}. For a simplicial surface $S$, we write $S_i$ to indicate the set of $i$-simplices of $S$. A \emph{$\Gg$-labelling of a simplicial surface $S$} is a map $f \colon S_1 \to \Gg$ which satisfies
\begin{align} \label{equality_1}
f(\partial_2 \sigma) \cdot f(\partial_0 \sigma) = f(\partial_1 \sigma)
\end{align}
for every $2$-simplex $\sigma$ of $S$.
Here we present the definition of $(\Gg,\Ng)$-simplicial surfaces.

\begin{definition}[$(\Gg,\Ng)$-simplicial surfaces]\label{defn=GN_simpl}
Let $\Gg$ be a group and $\Ng$ its normal subgroup.
\begin{enumerate}[(1)]
 \item A $\Gg$-labelling $f$ of a simplicial surface $S$ is called a \emph{$(\Gg,\Ng)$-labelling} if $f$ satisfies the following condition:  for every $2$-simplex $\sigma$ of $S$, either $f(\partial_0 \sigma)$ or $f(\partial_2 \sigma)$ belongs to $\Ng$.
  \item A \emph{$(\Gg,\Ng)$-simplicial surface} is defined as a pair $(S, f)$ of a simplicial surface $S$ and a $(\Gg,\Ng)$-labelling $f$ of $S$.
\end{enumerate}
\end{definition}

\begin{figure}[t]
\centering
\includegraphics[width=5cm]{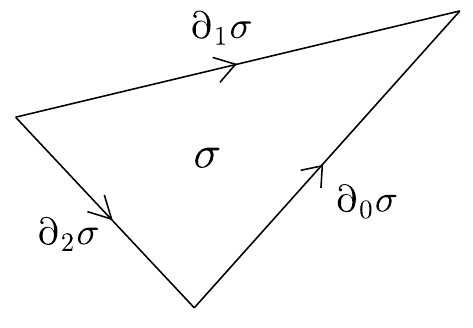}
\caption{$2$-simplex $\sigma$} \label{figure 1}
\end{figure}

\begin{remark} \label{remark continuous map}
Recall from the beginning of this section that $B\Gg$ denotes the classifying space of $\Gg$.
Let $S$ be a simplicial surface and $f$ a $\Gg$-labelling of $S$.
Then there exists a continuous map $\hat{f} \colon S \to B\Gg$ which satisfies the following: first, $\hat{f}$ sends every vertex of $S$ to the basepoint of $B\Gg$. Then, $\hat{f}$ sends an edge $e$ of $S$ to a loop of $B\Gg$, and its homotopy class  with fixed endpoints coincides with $f(e) \in \Gg = \pi_1(B\Gg)$. Then \eqref{equality_1} implies that $\hat{f}$ can be extended to the whole surface $S$, which is unique up to free homotopy.

Assume that $B\Ng$ is a subspace of $B\Gg$ and that the map $\pi_1(B\Ng) \to \pi_1(B\Gg)$ induced by the inclusion $B\Ng \hookrightarrow B\Gg$ coincides with the inclusion $\Ng \hookrightarrow \Gg$. Then we can take $\hat{f}$ such that the following two statements hold true.
\begin{enumerate}[(a)]
\item $\hat{f}(e) \subset B\Ng$ for every edge $e$ labelled by an element of $\Ng$.

\item $\hat{f}(\sigma) \subset B\Ng$ for every $2$-simplex $\sigma$ surrounded by edges labelled by elements of $\Ng$.
\end{enumerate}
\end{remark}

In the paper \cite{KKMM1}, Kawasaki, Kimura, Matsushita and Mimura showed that $(\Gg,\Ng)$-simplicial surfaces and $(\Gg,\Ng)$-mixed commutator lengths are related as follows.

\begin{theorem}[{\cite[Proposition~4.7 and Proposition~4.8]{KKMM1}}] \label{theorem characterization 1}
Let $\yl \in N$ and $k\in \mathbb{Z}_{\geq 0}$. Then the following are equivalent:
\begin{enumerate}
\item[\textup{(i)}] $\yl$ is a product of $k$ mixed commutators.

\item[\textup{(ii)}] There exists a $(\Gg,\Ng)$-simplicial surface of genus $k$ whose boundary component is $\yl$.
\end{enumerate}
\end{theorem}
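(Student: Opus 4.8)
The plan is to prove the two implications separately, using $(\Gg,\Ng)$-simplicial surfaces as a dictionary between words in mixed commutators and labelled triangulated surfaces; for $(\mathrm{ii})\Rightarrow(\mathrm{i})$ the key tool is the continuous map $\hat f\colon S\to B\Gg$ provided by Remark~\ref{remark continuous map}.

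\textbf{The implication $(\mathrm{i})\Rightarrow(\mathrm{ii})$.} By definition, a product of $k$ mixed commutators has the form $\yl=[\gl_1,\xl_1]\cdots[\gl_k,\xl_k]$ with $\gl_i\in\Gg$ and $\xl_i\in\Ng$. Since genus is additive under boundary connected sum, boundary words concatenate once the gluing arcs are taken through the basepoints, and the $(\Gg,\Ng)$-condition is local to each $2$-simplex (so it survives the gluing, after a subdivision near the gluing arc if needed to retain a $\Delta$-complex structure), it suffices to treat the case $k=1$, i.e.\ $\yl=[\gl,\xl]$ with $\gl\in\Gg$ and $\xl\in\Ng$. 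For this I would write down an explicit triangulated one-holed torus with a $\Gg$-labelling whose boundary loop is labelled $[\gl,\xl]$. Starting from the standard one-relator model of the one-holed torus, with the two handle edges labelled $\gl$ and $\xl$ and the boundary labelled $[\gl,\xl]$, one subdivides the handle region along the chord labelled $\gl\xl\gl^{-1}$. The crucial point is that $\gl\xl\gl^{-1}\in\Ng$ by normality of $\Ng$; hence this auxiliary edge can legitimately serve as the $\Ng$-edge required by Definition~\ref{defn=GN_simpl}, and one can complete the triangulation and choose the vertex ordering of each $2$-simplex $\sigma$ so that one of the two edges of $\sigma$ incident to the middle vertex (i.e.\ one of $\partial_0\sigma,\partial_2\sigma$) carries either the label $\xl$ or the label $\gl\xl\gl^{-1}$. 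Checking the labelling identity \eqref{equality_1} on each triangle and the $(\Gg,\Ng)$-condition is then a routine verification.

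\textbf{The implication $(\mathrm{ii})\Rightarrow(\mathrm{i})$.} Let $(S,f)$ be a $(\Gg,\Ng)$-simplicial surface of genus $k$ whose unique boundary component is labelled $\yl$, and let $\hat f\colon S\to B\Gg$ be as in Remark~\ref{remark continuous map}, chosen so that every edge labelled by an element of $\Ng$ is mapped into $B\Ng$. Since $S$ is a connected orientable surface of genus $k$ with one boundary circle, $\pi_1(S)$ is free of rank $2k$ and admits a geometric symplectic basis $\alpha_1,\beta_1,\dots,\alpha_k,\beta_k$ (realized by simple closed curves) with $[\partial S]=\prod_{i=1}^{k}[\alpha_i,\beta_i]$; applying $\hat f_\ast$ gives $\yl=\prod_{i=1}^{k}[\hat f_\ast\alpha_i,\hat f_\ast\beta_i]$ in $\Gg=\pi_1(B\Gg)$, so $\yl$ is at least a product of $k$ ordinary commutators. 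The remaining, and essential, task is to arrange that $\hat f_\ast\beta_i\in\Ng$ for every $i$; this is exactly where the $(\Gg,\Ng)$-condition must be used, since the labelling forces every $2$-simplex of $S$ to have a $\Gg/\Ng$-trivial edge among $\partial_0,\partial_2$. I would carry this out by induction on $k$: the base case $k=0$ is a disk, on which $\yl$ is trivial in $\Gg$, hence the empty product of mixed commutators; for the inductive step one uses the $\Ng$-labelled subcomplex of $S$ to locate an embedded non-separating simple closed curve $\gamma$ whose $\hat f$-image lies in $B\Ng$ (so $\xl:=\hat f_\ast[\gamma]\in\Ng$), together with a curve $\delta$ meeting $\gamma$ once, and splits off the corresponding handle: cutting $S$ along $\gamma\cup\delta$ yields a $(\Gg,\Ng)$-simplicial surface of genus $k-1$, at the cost of an extra boundary component labelled by the mixed commutator $[\gl,\xl]^{\pm 1}$ with $\gl:=\hat f_\ast[\delta]$, from which, after the bookkeeping with several boundary components, one reads off $\yl$ as a product of $1+(k-1)=k$ mixed commutators. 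An alternative to the handle surgery would be to normalise $(S,f)$ by a sequence of elementary moves (subdivisions and diagonal flips) preserving both the genus and the $(\Gg,\Ng)$-property, reducing to the standard genus-$k$ surface built in the proof of $(\mathrm{i})\Rightarrow(\mathrm{ii})$, off which $\yl=\prod_{i=1}^{k}[\gl_i,\xl_i]$ with $\xl_i\in\Ng$ is manifest.

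\textbf{Expected main obstacle.} The hard part is the inductive step of $(\mathrm{ii})\Rightarrow(\mathrm{i})$: one must extract from the $\Ng$-labelled subcomplex a \emph{non-separating} curve $\gamma$ realizing an element of $\Ng$ together with a dual curve, and carry out the cut-and-split while staying inside the class of $(\Gg,\Ng)$-simplicial surfaces and without increasing the genus. This is a genuinely combinatorial-topological point about how the $\Ng$-subcomplex (which meets every $2$-simplex) is positioned inside $S$; a purely homotopy-theoretic argument using only $\hat f_\ast\colon\pi_1(S)\to\Gg$ cannot suffice, since forgetting the labelling condition one obtains merely a product of $k$ \emph{ordinary} commutators, and there exist $\yl$ whose mixed commutator length strictly exceeds their commutator length. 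In the direction $(\mathrm{i})\Rightarrow(\mathrm{ii})$ the only real care is to produce a genuine $\Delta$-complex structure (consistent vertex orderings and face identifications) for which the forced auxiliary labels still obey the $(\Gg,\Ng)$-condition, which is straightforward but must be done explicitly.
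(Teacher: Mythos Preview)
Your direction $(\mathrm{i})\Rightarrow(\mathrm{ii})$ is fine in outline, though the paper does it a bit differently: rather than assembling one-holed tori by boundary connected sum, it starts with a triangulated $(k+1)$-gon whose boundary edges read $\yl,[\gl_1,\xl_1],\ldots,[\gl_k,\xl_k]$, attaches three triangles (with edges $\gl_i,\xl_i,\gl_i\xl_i,\xl_i\gl_i$) to each commutator edge, and identifies like-labelled edges to close up the handles. Both constructions work; the paper's has the advantage that no post hoc subdivision is needed to recover a $\Delta$-complex after gluing.

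The real gap is in $(\mathrm{ii})\Rightarrow(\mathrm{i})$. You correctly isolate the problem---producing $k$ disjoint non-separating simple closed curves whose $\hat f$-images lie in $B\Ng$---but the mechanism you propose (``use the $\Ng$-labelled subcomplex of $S$ to locate an embedded non-separating curve'') is exactly the step you cannot take for granted. There is no a priori reason that the $1$-skeleton of the $\Ng$-labelled subcomplex $S'$ contains any non-separating curve at all; and your inductive cut along $\gamma\cup\delta$ destroys the $(\Gg,\Ng)$-simplicial structure, so the induction hypothesis does not obviously apply to the piece of genus $k-1$.

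The paper's device is the following. In each $2$-simplex $\sigma$ having an edge not labelled in $\Ng$, exactly two of its edges are non-$\Ng$ (by the $(\Gg,\Ng)$-condition and normality); draw the segment $l_\sigma$ joining their midpoints. The union $C=\bigcup_\sigma l_\sigma$ is a disjoint union of simple closed curves in the interior of $S$, and one checks two facts: (A) each component of $C$ can be slid into $S\setminus C$, and (B) $S\setminus C$ deformation retracts onto $S'$. Consequently \emph{every} simple closed curve in $S$ that either is a component of $C$ or misses $C$ has $\hat f$-image freely homotopic into $B\Ng$. One then greedily extracts a maximal system $\gamma_1,\ldots,\gamma_j$ of non-separating curves from the components of $C$ and completes it to $\gamma_1,\ldots,\gamma_k$ by curves lying in $S\setminus C$; together with dual curves $\delta_i$ this gives a geometric basis with $\hat f_*[\gamma_i]\in\Ng$ for all $i$, and the product-of-commutators identity for $\partial S$ yields $(\mathrm{i})$. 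Your inductive scheme would work if augmented with this midpoint construction, but without it the existence of the required $\gamma$ is unjustified.
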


We will prove a generalization (Theorem~\ref{theorem characterization 2}) of Theorem~\ref{theorem characterization 1}. However, in the proof of Theorem~\ref{theorem characterization 2}, we will employ the fact that (i) implies (ii) in Theorem~\ref{theorem characterization 1}. For the self-containedness of the present article, here we provide the  proof of this one-way implication.

\begin{figure}[t]
\begin{picture}(340,130)(10,0)

  \put(140,30){\vector(-1,0){120}}
  \put(140,30){\vector(-1,1){60}}
  \put(20,30){\vector(1,1){60}}

  \put(125,75){\vector(-3,1){43}}
  \put(125,75){\line(-3,1){45}}
  \put(140,30){\vector(-1,3){15}}

\put(35,75){\vector(3,1){43}}
\put(35,75){\line(3,1){45}}
\put(20,30){\vector(1,3){15}}

\put(20,30){\line(-1,-3){5}}
\put(145,15){\vector(-1,3){5}}

\put(68,18){\tiny $[\gl_i, \xl_i]$}

\put(16,55){\tiny $\xl_i$}
\put(51,89){\tiny $\gl_i$}

\put(100,89){\tiny $\xl_i$}
\put(137,55){\tiny $\gl_i$}

\put(51,52){\tiny $\xl_i \gl_i$}
\put(93,52){\tiny $\gl_i \xl_i$}

\put(300,0){\vector(1,1){40}}
\put(300,0){\vector(-1,0){40}}
\put(300,0){\vector(1,2){40}}

\put(300,0){\vector(-1,3){40}}
\put(300,0){\vector(-1,1){80}}
\put(300,0){\vector(-2,1){80}}

\put(340,40){\vector(0,1){38}}
\put(340,40){\line(0,1){40}}
\put(340,80){\line(-3,4){20}}

\put(260,120){\vector(-1,-1){40}}
\put(220,80){\vector(0,-1){40}}
\put(220,40){\vector(1,-1){40}}

\put(323,13){\tiny $[\gl_1, \xl_1]$}
\put(210,13){\tiny $[\gl_k, \xl_k]$}
\put(170,58){\tiny $[\gl_{k-1}, \xl_{k-1}]$}

\put(343,55){\tiny $[\gl_2,\xl_2]$}

\put(278,-10){\footnotesize $\yl$}

\put(265,115){$\cdots$}
\end{picture}
\caption{$\yl$ to $[\gl_1, \xl_1] \cdots [\gl_k, \xl_k]$} \label{figure bavard}
\end{figure}

\begin{proof}[Deduction of \textup{(ii)} from \textup{(i)} in Theorem~\textup{\ref{theorem characterization 1}}]
Let $\yl \in [\Gg, \Ng]$. Suppose that there exist $\gl_1, \cdots, \gl_k \in \Gg$ and $\xl_1, \cdots, \xl_k \in \Ng$ such that
\[ \yl = [\gl_1, \xl_1] \cdots [\gl_k, \xl_k].\]
We first consider a triangulated $(k+1)$-gon with $(\Gg,\Ng)$-labelling depicted in the right of Figure \ref{figure bavard}. Next, we attach three triangles depicted in the left of Figure~\ref{figure bavard} to each edge labelled by $[\gl_i, \hl_i]$. Finally, identify the edges labelled by the same symbol. Then we have a $(\Gg,\Ng)$-simplicial surface whose boundary component is labelled by $\yl$ and whose genus is $k$. Therefore, we have proved that (i) implies (ii) in Theorem~\ref{theorem characterization 1}.
\end{proof}

\begin{remark}
In \cite[Proposition~4.7]{KKMM1}, the authors showed that (i) implies  (ii) for the case $k = \cl_{G,N}(y)$. However, the proof is valid for every  $k\in \ZZ$ such that $k \ge \cl_{\Gg,\Ng}(\yl)$.
\end{remark}

We define the mixed commutator length $\cl_{\Gg,\Ng}$ for integral chains as follows.

\begin{definition} \label{definition=commutator_lengths_for_integral_chains}
Let $\Gg$ be a group and $\Ng$ its normal subgroup. Let $m\in \NN$ and $\xl_1, \cdots, \xl_m \in \Ng$. Assume that $\xl_1 \cdots \xl_m \in [\Gg,\Ng]$. We define the number $\cl_{\Gg,\Ng}(\xl_1 + \cdots + \xl_m)$ by
\[ \cl_{\Gg,\Ng}(\xl_1 + \cdots + \xl_m) = \inf_{\gl_1, \cdots, \gl_{m-1} \in \Gg} \cl_{\Gg,\Ng}(\xl_1 \gl_1 \xl_2 \gl_1^{-1} \cdots \gl_{m-1} \xl_m \gl_{m-1}^{-1}).\]
In other words,
\[
\cl_{\Gg,\Ng}(\xl_1 + \cdots + \xl_m) = \inf\cl_{\Gg,\Ng}(\xi_1 \xi_2 \cdots \xi_m),
\]
where in the infimum in the right-hand side of the equality, for every $i\in \{1,\ldots,m\}$, $\xi_i$ runs over the $\Gg$-conjugacy class of $\xl_i$ (here, recall that $\cl_{\Gg,\Ng}\colon [\Gg,\Ng]\to \RR$ is $\Gg$-invariant).
\end{definition}

Using $(\Gg,\Ng)$-simplicial surfaces, we obtain another formulation of $(\Gg,\Ng)$-commutator lengths for chains.

\begin{definition} \label{definition=GN_surface}
Let $m\in \NN$. Let $\xl_1, \cdots, \xl_m \in \Ng$. We define a \emph{$(\Gg,\Ng)$-simplicial surface whose boundary components are $\xl_1, \cdots, \xl_m$} to be a connected $(\Gg,\Ng)$-simplicial surface $(S,f)$ which satisfies the following three properties.
\begin{enumerate}[(1)]
\item $S$ has $m$ boundary components.

\item Every boundary component of $S$ consists of one edge and one vertex.

\item There exists a linear order $C_1, \cdots, C_m$ of the set of boundary components of $S$ such that $f(e_i) = x_i$. Here $e_i$ is the edge of $C_i$.
\end{enumerate}
\end{definition}

\begin{remark}\label{rem=productGN}
We note that in Definition \ref{definition=GN_surface}, we do not assume $\xl_1 \cdots \xl_m \in [\Gg,\Ng]$. In fact, in the next theorem (Theorem~\ref{theorem characterization 2}), we show that if there exists a $(\Gg,\Ng)$-simplicial surface whose boundary components are $\xl_1, \cdots, \xl_m$, then the product $\xl_1 \cdots \xl_m$ belongs to $[\Gg,\Ng]$. We also note that the validity of the assumption $\xl_1\cdots \xl_m\in [\Gg,\Ng]$ does not depend on the order of $\xl_1, \cdots, \xl_m\in \Ng$. Indeed, the group quotient $\Ng/[\Gg,\Ng]$ is abelian.
\end{remark}

\begin{theorem}[geometric interpretation of the mixed commutator lengths of integral chains] \label{theorem characterization 2}
Let $m\in \NN$ and  $\xl_1, \cdots, \xl_m \in \Ng$. Let $k\in \ZZ_{\geq 0}$. Then, the following are equivalent.
\begin{enumerate}
\item[\textup{(i)}] There exist $\gl_1, \cdots, \gl_{m-1} \in \Gg$ such that
\[ \xl_1 \gl_1 \xl_2 \gl_1^{-1} \cdots \gl_{m-1} \xl_m \gl_{m-1}^{-1} \]
is a product ot $k$ mixed commutators.

\item[\textup{(ii)}] There exists a $(G,N)$-simplicial surface of genus $k$ whose boundary components are labelled by $x_1, \cdots, x_m$.
\end{enumerate}
In particular, $\cl_{G,N}(x_1 + \cdots + x_m)$ coincides with the minimum genus of $(G,N)$-simplicial surface whose boundary components are $x_1, \cdots, x_m$.
\end{theorem}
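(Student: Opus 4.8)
The plan is to reduce both implications to the case $m=1$, which is Theorem~\ref{theorem characterization 1}, by attaching or cutting off auxiliary genus‑$0$ $(\Gg,\Ng)$‑simplicial surfaces; the guiding principle is that the operations involved preserve the genus. For (i)$\Rightarrow$(ii), I would start from conjugators $\gl_1,\ldots,\gl_{m-1}\in\Gg$ (set $\gl_0=1_\Gg$ and $\xi_i=\gl_{i-1}\xl_i\gl_{i-1}^{-1}$) for which $w:=\xi_1\cdots\xi_m$ is a product of $k$ mixed commutators, and apply the implication (i)$\Rightarrow$(ii) of Theorem~\ref{theorem characterization 1} (proved in the excerpt) to get a connected $(\Gg,\Ng)$‑simplicial surface of genus $k$ with a single boundary edge labelled $w$. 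Since $w,\xi_1,\ldots,\xi_m$ all lie in $\Ng$, I can then glue on a genus‑$0$ simplicial surface with $m+1$ boundary components, triangulated and $\Gg$‑labelled entirely by elements of $\Ng$ (so the $(\Gg,\Ng)$‑condition is vacuous), whose boundary loops are $w,\xi_1,\ldots,\xi_m$; this exists because a sphere with $m+1$ holes realizes any relation $d_0 d_1\cdots d_m=1_\Gg$, here with $d_0=w^{-1}$ and $d_i=\xi_i$. Gluing along the two edges labelled $w$ produces a connected $(\Gg,\Ng)$‑simplicial surface, still of genus $k$, whose boundary components are $\xi_1,\ldots,\xi_m$. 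Finally I would attach to the $i$‑th boundary an explicitly triangulated ``conjugation annulus'' with core edge labelled $\gl_{i-1}$, one boundary edge labelled $\xi_i$, and the other labelled $\xl_i$ (two triangles realizing $\gl_{i-1}\xl_i=\xi_i\gl_{i-1}$, each carrying a side in $\Ng$); this changes neither the genus nor the number of boundary components and rewrites $\xi_i$ as $\xl_i$, giving (ii).

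For (ii)$\Rightarrow$(i), given a connected $(\Gg,\Ng)$‑simplicial surface $(S,f)$ of genus $k$ with boundary components $C_1,\ldots,C_m$ labelled $\xl_1,\ldots,\xl_m$, I would choose $m-1$ disjoint edge‑paths in the $1$‑skeleton of $S$ forming a tree rooted at the vertex of $C_1$ and joining the boundary vertices, and cut $S$ open along this tree. No $2$‑simplex is affected, so the result $S'$ is again a $(\Gg,\Ng)$‑simplicial surface; it is connected with a single boundary component, and an Euler‑characteristic count gives $\chi(S')=\chi(S)+(m-1)=1-2k$, so $S'$ has genus $k$ as well (note that merging the boundary components by \emph{gluing on} a planar surface would instead raise the genus by $m-1$, so cutting is essential here). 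The boundary loop of $S'$ reads $\xi_1\xi_2\cdots\xi_m$ with $\xi_i=\gl_{i-1}\xl_i\gl_{i-1}^{-1}$, where $\gl_{i-1}\in\Gg$ is the $f$‑label of the tree path from the root to the vertex of $C_i$ (and $\gl_0=1_\Gg$); a standard amalgamation of boundary edges turns this into a single boundary edge. Applying the implication (ii)$\Rightarrow$(i) of Theorem~\ref{theorem characterization 1} to $S'$ then shows that $\xi_1\cdots\xi_m$ is a product of $k$ mixed commutators, which is exactly statement (i). (Alternatively one can argue directly, running the proof of Theorem~\ref{theorem characterization 1} from \cite{KKMM1} with $S$ cut along this tree together with $2k$ handle curves into a polygon: the $2k$ curves yield the $k$ commutators, mixed thanks to the $(\Gg,\Ng)$‑condition, and the $m-1$ tree edges yield the conjugators $\gl_i$.)

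The ``in particular'' clause then follows immediately: by Definition~\ref{definition=commutator_lengths_for_integral_chains}, $\cl_{\Gg,\Ng}(\xl_1+\cdots+\xl_m)=\inf_{\gl_i}\cl_{\Gg,\Ng}(\xi_1\cdots\xi_m)$ is the least $k$ for which (i) holds, hence by the equivalence just proved the least $k$ for which (ii) holds, i.e.\ the minimum genus of a $(\Gg,\Ng)$‑simplicial surface whose boundary components are $\xl_1,\ldots,\xl_m$ (a genuine minimum, as all the quantities are non‑negative integers). I expect the main obstacle to be the bookkeeping in (ii)$\Rightarrow$(i): checking carefully that cutting along the tree preserves the genus and that the labels read off along the cut are precisely the $\Gg$‑conjugates $\xi_i$ permitted in Definition~\ref{definition=commutator_lengths_for_integral_chains}, whereas the $(\Gg,\Ng)$‑compatibility of the auxiliary pieces used in (i)$\Rightarrow$(ii) (the all‑$\Ng$‑labelled planar surface and the conjugation annuli) should be routine.
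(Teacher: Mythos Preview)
Your direction (i)$\Rightarrow$(ii) is essentially the paper's: the paper attaches triangles (its Figure~\ref{figure=3}) to convert the single boundary $\yl$ into the $m$ boundaries $\xl_1,\ldots,\xl_m$, which amounts to your ``all-$\Ng$ planar surface plus conjugation annuli'' done in one step.

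Your (ii)$\Rightarrow$(i), however, has a genuine gap at precisely the step you flag as ``standard amalgamation of boundary edges.'' After cutting $S$ along the tree, the boundary of $S'$ contains the tree-path edges, labelled by elements $a\in\Gg$ that are typically \emph{not} in $\Ng$. Collapsing this boundary to a single edge (so that Theorem~\ref{theorem characterization 1} applies, via Definition~\ref{definition=GN_surface}) means attaching triangles externally, and these triangles are forced to violate the $(\Gg,\Ng)$-condition. Concretely, to replace three consecutive edges $a,\xi,a^{-1}$ (with $\xi\in\Ng$, $a\notin\Ng$) by a single edge $a\xi a^{-1}$ you must fill a square; with the diagonal $a\xi$ the triangle $(a\xi,\,a^{-1},\,a\xi a^{-1})$ has neither $\partial_2$ nor $\partial_0$ in $\Ng$, and with the diagonal $\xi a^{-1}$ the triangle $(a,\,\xi a^{-1},\,a\xi a^{-1})$ fails for the same reason. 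Adding an interior vertex does not help: tracing the relations through the four triangles shows that forcing one radial edge into $\Ng$ forces $a\in\Ng$. So this is a real obstruction, not bookkeeping, and the reduction to the $m=1$ case does not go through as written.

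The paper's (ii)$\Rightarrow$(i) is organized quite differently and does not reduce to $m=1$. It exploits the $(\Gg,\Ng)$-labelling structurally: in any triangle with an edge outside $\Ng$ there are exactly two such edges, and the segments joining their midpoints assemble into a disjoint union $C$ of simple closed curves whose complement deformation-retracts onto the subcomplex $S'$ of $\Ng$-labelled cells. Consequently every component of $C$, and every further genus-reducing curve chosen inside $S\setminus C$, is freely homotopic to a loop mapping into $B\Ng$. Cutting along a full system of $k$ such curves produces a planar surface from which one reads off $\xl_1\,\gl_1\xl_2\gl_1^{-1}\cdots=[v_1,u_1]\cdots[v_k,u_k]$ with each $u_i\in\Ng$. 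Your parenthetical alternative (``run the proof of Theorem~\ref{theorem characterization 1}'') is exactly this argument, but the phrase ``mixed thanks to the $(\Gg,\Ng)$-condition'' hides precisely this midpoint-curve mechanism; that mechanism is the substance of the proof, not a formality.
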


The idea of the proof of `(ii) implies (i)' in Theorem~\ref{theorem characterization 2} is similar to that of \cite[Proposition~4.8]{KKMM1}.

\begin{proof}[Proof of Theorem~\textup{\ref{theorem characterization 2}}]
\begin{figure}[t]
\begin{minipage}[b]{0.6\linewidth}
    \centering
\includegraphics[width=6.5cm]{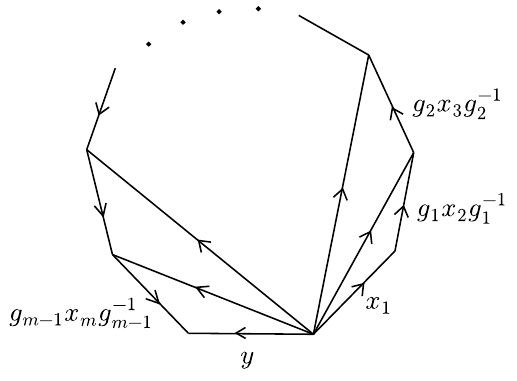}
  \end{minipage}
  \begin{minipage}[b]{0.3\linewidth}
    \centering
\includegraphics[width=3.5cm]{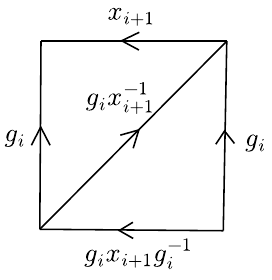}
  \end{minipage}
  \caption{$\yl$ to $\xl_1 \cdots \xl_m$} \label{figure=3}
\end{figure}

Assume (i). Then there exist $\gl_1, \cdots, \gl_{m-1} \in \Gg$ such that
\[ \yl = \xl_1 \gl_1 \xl_2 \gl_1^{-1} \cdots \gl_{m-1} \xl_m \gl_{m-1}^{-1}\]
is a product of $k$ mixed commutators.  Recall that we have already proved that (i) implies  (ii) in Theorem~\ref{theorem characterization 1}. Therefore, there exists a $(\Gg,\Ng)$-simplicial surface whose boundary component is $\yl$ and whose genus is $k$. By attaching triangles as is depicted in Figure~\ref{figure=3}, we obtain a $(\Gg,\Ng)$-simplicial surface whose boundary components are $\xl_1, \cdots, \xl_m$. Now we have shown that (i) implies (ii) in Theorem~\ref{theorem characterization 2}.

Next, assume (ii). Then there exists a $(\Gg,\Ng)$-simplicial surface $S$ of genus $k$ whose boundary components are $\xl_1, \cdots, \xl_m$. We first define the subcomplex $S'$ of $S$ as follows: the set of $0$-simplices of $S'$ coincides with the set of $0$-simplices of $S$. A $1$-simplex $e$ of $S$ belongs to $S'$ if and only if $f(e) \in \Ng$. A $2$-simplex $\sigma$ of $S$ belongs to $S'$ if and only if $f(\partial_i \sigma) \in \Ng$ for $i \in \{0,1,2\}$. Let $\hat{f} \colon S \to B\Gg$ be a continuous map described in Remark~\ref{remark continuous map}, and assume that $\hat{f}$ satisfies conditions (a) and (b) mentioned there. Then $\hat{f} |_{S'} \colon S'\to B\Gg$ can be factored through $B\Ng$.

First, we would like to find disjoint simple closed curves $\gamma_1, \cdots, \gamma_k$ which satisfy the following:
\begin{enumerate}[(1)]
\item $\hat{f}(\gamma_1), \cdots, \hat{f}(\gamma_k)$ are freely homotopic to a loop of $B\Ng$.

\item The surface cut along the loops $\gamma_1, \cdots, \gamma_k$ is connected and of genus $k$.
\end{enumerate}

Now we start to find such simple closed curves. Let $\sigma$ be a triangle of $S$ such that some edge of $\sigma$ is not labelled by an element of $\Ng$. By the definition of $(\Gg,\Ng)$-labelling, the number of edges of $\sigma$ which is not labelled by elements of $\Ng$ is exactly two. Let $l_\sigma$ be the line segment connecting the midpoints of these two edges in $\sigma$. Let $C$ be the union of $l_\sigma$ for all $\sigma$ having an edge not labelled by an element of $N$. Since the boundary of $S$ is labelled by elements of $\Ng$, $C$ is a compact $1$-dimensional manifold without boundary. Thus $C$ is a disjoint union of simple closed curves in $S$.

Here we prove the following two properties of $C$.
\begin{enumerate}[(A)]
\item For each connected component $\gamma$ of $C$, the inclusion $\gamma \hookrightarrow S$ is homotopic to a map factored through $S'$.

\item $S'$ is a deformation retract of $S \setminus C$.
\end{enumerate}
We now show (A). Let $\gamma$ be a connected component of $C$. Since $\gamma$ is a simple closed curve in the  (orientable) surface $S$, the normal bundle of $\gamma$ is trivial and hence we can slide $\gamma$ to $S \setminus C$. Thus (A) follows from (B). To see (B), for every triangle $\sigma$ having an edge not labelled by an element of $N$, the $\sigma \setminus l_\sigma$ can be deformed to the ends at the same speed. See Figure~\ref{figure homotopy} for this homotopy.

\begin{figure}[t]
\centering
\includegraphics[width=6cm]{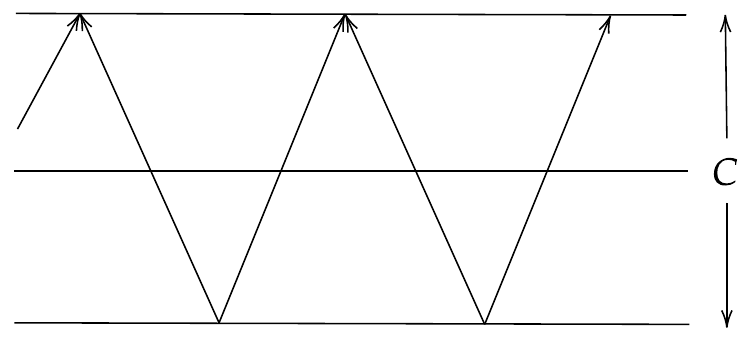}
\caption{a deformation retract of $S \setminus C$ to $S'$} \label{figure homotopy}
\end{figure}

If there exists a component of $C$ which is a non-separating curve of $S$, then we let $\gamma_1$ denote the component. If there exists a component of $C \setminus \gamma_1$ which is a non-separating curve of $S \setminus \gamma_1$, then we let $\gamma_2$ be such a component. Iterating this, we have a sequence of simple closed curve of $\gamma_1, \cdots, \gamma_j$ which has the following:
\begin{enumerate}[(1)]
\item $\gamma_1, \cdots, \gamma_j$ are components of $C$.

\item For $i \in \{1, \cdots, j\}$, $\gamma_i$ is a non-separating curve in $S \setminus (\gamma_1 \cup \cdots \cup \gamma_{i-1})$.

\item There does not exist a component of $C \setminus (\gamma_1 \cup \cdots \cup \gamma_j)$ which is a non-separating curve in $S \setminus (\gamma_1 \cup \cdots \cup \gamma_j)$.
\end{enumerate}

Since the sum of genera of components of $S \setminus C$ is $k - j$, there exist simple closed curves $\gamma_{j+1}, \cdots, \gamma_k$ which satisfy the following:
\begin{enumerate}[(1)]
\item For $i \in \{j+1, \cdots, k\}$, the curve $\gamma_i$ does not intersect $C$.

\item For $i \in \{j+1, \cdots, k\}$, the curve $\gamma_i$ is a non-separating simple closed curve in $S \setminus (\gamma_1 \cup \cdots \cup \gamma_{i-1})$.
\end{enumerate}

Now we have a sequence of simple closed curves $\gamma_1, \cdots, \gamma_k$ such that $S \setminus (\gamma_1 \cup \cdots \cup \gamma_k)$ is a compact connected surface of genus $0$. By the classification theorem of compact surfaces, there exists a homeomorphism from $S$ to a surface depicted in Figure~\ref{figure key}. Let $\delta_1, \cdots, \delta_k$, $\alpha_1, \cdots, \alpha_k$, $\tau_1, \cdots, \tau_{m-1}$ be the paths depicted in Figure~\ref{figure key}. Here we assume that the intersection of $\delta_i$ and $\gamma_i$ is the initial point of the loop $\gamma_i$. Then set
\[ \vl_i = [f \circ (\alpha_i \cdot \delta_i \cdot \bar{\alpha}_i)], \quad y_i = [f \circ (\alpha_i \cdot \gamma_i \cdot \bar{\alpha}_i)], \quad \gl_i = [f \circ \tau_i].\]
Here we consider the basepoint of $S$ is the initial point of $\alpha_i$ and $[-]$ denotes the pointed homotopy classes of loops in $BG$ (note that $\pi_1(BG) \cong G$). Then we have
\[ \xl_1 \cdot \gl_1 \xl_2 \gl_1^{-1} \cdots \gl_{m-1} \xl_m \gl_{m-1}^{-1} = [\vl_1, \ul_1] \cdots [\vl_k, \ul_k].\]
If $\ul_1, \cdots, \ul_k \in \Ng$, then we will have
\[ \cl_{\Gg,\Ng}(\xl_1 + \cdots + \xl_m) \le k,\]
which will yield (i). Thus it suffices to see that $\ul_1, \cdots, \ul_k \in N$. To see this, since $\gamma_1, \cdots, \gamma_j$ are components of $C$, property (A) of $C$ implies that $\gamma_1, \cdots, \gamma_j$ can be slide in $S \setminus C$, and property (B) implies that $\gamma_1, \cdots, \gamma_j$ are freely homotopic to loops in $S'$. The loops $\gamma_{j+1}, \cdots, \gamma_k$ are loops in $S \setminus C$ and hence property (B) implies that $\gamma_{j+1}, \cdots, \gamma_k$ are freely homotopic to loops in $S'$. Thus we can see that $\gamma_1, \cdots, \gamma_k$ are homotopic to loops $\gamma'_1, \cdots, \gamma'_k$ in $S'$. We can assume that the initial point of $\gamma_i'$ is a vertex of $S'$ for every $i \in \{1, \cdots, k\}$. By the homotopy extension property, there exists a continuous map $\varphi \colon S \to S$ which satisfies the following:
\begin{itemize}
\item $\varphi$ and the identity map ${\rm id}_S$ of $S$ are homotopic relative to the boundary of $S$.

\item $\varphi \circ \gamma_i = \gamma'_i$.
\end{itemize}
Set $\alpha'_i = \varphi \circ \alpha_i$. Set $\gbr_i = [f \circ \alpha'_i]$ and $\zl_i = [f \circ \gamma'_i]$. Then we have
\[ \ul_i = [f \circ (\alpha_i \cdot \gamma_i \cdot \bar{\alpha}_i)] = [f \circ \varphi \circ (\alpha_i \cdot \gamma_i \cdot \alpha'_i)] = [f \circ \alpha'_i] \cdot [f \circ \gamma'_i] \cdot [f \circ \bar{\alpha}'_i]= \gbr_i \zl_i \gbr_i^{-1}.\]

\begin{figure}[t]
\centering
\includegraphics[width=10cm]{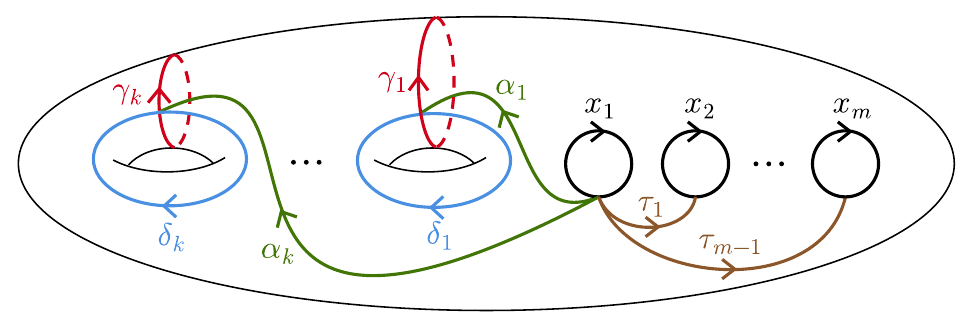}
\caption{paths $\alpha_i$, $\gamma_i$, $\delta_i$ and $\tau_i$} \label{figure key}
\end{figure}

Since $\gamma'_i$ is a loop of $S'$, the loop $f \circ \gamma'_i$ in $B\Gg$ is contained in $B\Ng$ and hence we have $\zl_i \in N$. Since $\Ng$ is a normal subgroup of $\Gg$, we have $\ul_i \in N$. Thus, we now have proved that (ii) implies (i) in Theorem~\ref{theorem characterization 2}; this completes the proof.
\end{proof}

Let $\pi$ be a permutation of $\{ 1, \cdots, m\}$. By definition, a $(G,N)$-simplicial surface whose boundary components are $x_1, \cdots, x_m$ is a $(G,N)$-simplicial surface whose boundary components are $x_{\pi(1)}, \cdots, x_{\pi(m)}$. Hence Theorem~\ref{theorem characterization 2} implies the following (recall also Remark~\ref{rem=productGN}).

\begin{corollary}
Let $m\in \NN$ and $\xl_1,\ldots,\xl_m\in \Ng$ such that $\xl_1\cdots \xl_m\in [\Gg,\Ng]$. Then, the value of $\cl_{\Gg,\Ng}(\xl_1 + \cdots + \xl_m)$ does not depend on the order of $\xl_1, \cdots, \xl_m$.
\end{corollary}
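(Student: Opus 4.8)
The plan is to deduce the claim directly from the geometric interpretation of the mixed commutator length of integral chains supplied by Theorem~\ref{theorem characterization 2}. The ``in particular'' clause of that theorem identifies $\cl_{\Gg,\Ng}(\xl_1+\cdots+\xl_m)$ with the minimal genus of a $(\Gg,\Ng)$-simplicial surface whose boundary components are $\xl_1,\ldots,\xl_m$ in the sense of Definition~\ref{definition=GN_surface}. So it suffices to observe that this class of surfaces does not depend on the order in which the $\xl_i$ are listed.

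Concretely, I would first record that, for any permutation $\pi$ of $\{1,\ldots,m\}$, the hypothesis $\xl_1\cdots\xl_m\in[\Gg,\Ng]$ forces $\xl_{\pi(1)}\cdots\xl_{\pi(m)}\in[\Gg,\Ng]$ as well, by Remark~\ref{rem=productGN} (the quotient $\Ng/[\Gg,\Ng]$ is abelian), so that $\cl_{\Gg,\Ng}(\xl_{\pi(1)}+\cdots+\xl_{\pi(m)})$ is defined. Next I would unwind Definition~\ref{definition=GN_surface}: a connected $(\Gg,\Ng)$-simplicial surface $(S,f)$ has boundary components $\xl_1,\ldots,\xl_m$ exactly when $S$ has $m$ boundary components, each consisting of one edge and one vertex, and there \emph{exists} a linear order $C_1,\ldots,C_m$ of these components with $f(e_i)=\xl_i$, where $e_i$ is the edge of $C_i$. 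Since this order is only required to exist, re-enumerating the very same boundary components as $C_{\pi^{-1}(1)},\ldots,C_{\pi^{-1}(m)}$ exhibits $(S,f)$ as a $(\Gg,\Ng)$-simplicial surface whose boundary components are $\xl_{\pi(1)},\ldots,\xl_{\pi(m)}$, and conversely. Hence the two families of surfaces coincide, their minimal genera agree, and Theorem~\ref{theorem characterization 2} yields
\[
\cl_{\Gg,\Ng}(\xl_1+\cdots+\xl_m)=\cl_{\Gg,\Ng}(\xl_{\pi(1)}+\cdots+\xl_{\pi(m)}).
\]

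I do not anticipate any real obstacle: the whole point is the bookkeeping observation that the ``linear order'' in Definition~\ref{definition=GN_surface} is an existential quantifier, so permuting the target tuple changes nothing. If one preferred to bypass surfaces, a purely algebraic argument is also available. It suffices to treat an adjacent transposition $(i,i+1)$; for any $\Gg$-conjugates $\xi_j$ of $\xl_j$ one has $\xi_1\cdots\xi_m=\xi_1\cdots\xi_{i-1}(\xi_i\xi_{i+1}\xi_i^{-1})\xi_i\xi_{i+2}\cdots\xi_m$, where $\xi_i\xi_{i+1}\xi_i^{-1}$ is a $\Gg$-conjugate of $\xl_{i+1}$ and $\xi_i$ is a $\Gg$-conjugate of $\xl_i$, so the value of $\cl_{\Gg,\Ng}$ on this common group element is attained in the infimum of Definition~\ref{definition=commutator_lengths_for_integral_chains} for the transposed tuple as well; since adjacent transpositions generate the symmetric group, this gives the claim. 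I would present the surface argument as the primary proof, as it only reuses machinery already established.
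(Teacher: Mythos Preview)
Your primary argument is correct and essentially identical to the paper's own proof: both invoke Theorem~\ref{theorem characterization 2} and observe that, by Definition~\ref{definition=GN_surface}, the class of $(\Gg,\Ng)$-simplicial surfaces with boundary components $\xl_1,\ldots,\xl_m$ is unchanged under permutation, so the minimal genus is as well. Your additional algebraic argument via adjacent transpositions is a nice alternative that the paper does not include.
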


\subsection{$\scl_{G,N}$ for rational chains and admissible $(\Gg,\Ng)$-surfaces} \label{subsection 3.2}

In this subsection, we introduce the definition of $\scl_{G,N}$ for certain integral chains (Definition~\ref{definition=scl_integral_2}). After that, we deduce several algebraic properties of it by using admissible $(G,N)$-surfaces (Definition~\ref{definition=admissible}), and finally extend the definition of $\scl_{\Gg,\Ng}$ to rational chains (Definition~\ref{defn=scl_Q}).


Recall from Lemma~\ref{lem=Fekete} that a real sequence $(a_n)_{n\in \NN}$ is said to be subadditive if $a_{n_1 + n_2} \le a_{n_1} + a_{n_2}$ for every $n_1,n_2\in \NN$.

\begin{lemma} \label{lemma subadditive cl}
Let $m\in \NN$ and $\xl_1, \cdots, \xl_m \in \Ng$ such that $\xl_1 \cdots \xl_m \in [\Gg,\Ng]$. Set $a_n = \cl_{\Gg,\Ng}(\xl_1^n + \cdots + \xl_m^n) + (m-1)$. Then the sequence $(a_n)_{n \in \NN}$ is subadditive. In particular, there exists the limit
\[ \lim_{n \to \infty} \frac{1}{n} \cl_{\Gg,\Ng}(x_1^n + \cdots + x_m^n).\]
\end{lemma}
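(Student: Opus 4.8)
The plan is to reduce subadditivity of $(a_n)_{n\in\NN}$ to the single inequality
\[
\clGN(\xl_1^{n_1+n_2}+\cdots+\xl_m^{n_1+n_2}) \le \clGN(\xl_1^{n_1}+\cdots+\xl_m^{n_1}) + \clGN(\xl_1^{n_2}+\cdots+\xl_m^{n_2}) + (m-1)
\]
for all $n_1,n_2\in\NN$, and to prove it by cutting-and-pasting $(\Gg,\Ng)$-simplicial surfaces. First I would record that every mixed commutator length appearing here is defined: since $\Ng/\CGN$ is abelian, $\xl_1\cdots\xl_m\in\CGN$ forces $n(\overline{\xl_1}+\cdots+\overline{\xl_m})=0$ in $\Ng/\CGN$, hence $\xl_1^n\cdots\xl_m^n\in\CGN$ for every $n$, and likewise for $n_1+n_2$.

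Set $p=\clGN(\xl_1^{n_1}+\cdots+\xl_m^{n_1})$ and $q=\clGN(\xl_1^{n_2}+\cdots+\xl_m^{n_2})$; these infima are attained, being infima of sets of non-negative integers. By the last assertion of Theorem~\ref{theorem characterization 2}, there is a $(\Gg,\Ng)$-simplicial surface $S_1$ of genus $p$ whose boundary components are $\xl_1^{n_1},\ldots,\xl_m^{n_1}$, and a $(\Gg,\Ng)$-simplicial surface $S_2$ of genus $q$ whose boundary components are $\xl_1^{n_2},\ldots,\xl_m^{n_2}$. For each $i\in\{1,\ldots,m\}$ I would also take a ``pair-of-pants'' $(\Gg,\Ng)$-simplicial surface $P_i$: a connected triangulated surface of genus $0$ with three boundary components, each consisting of one edge, labelled by $\xl_i^{n_1}$, $\xl_i^{n_2}$ and $\xl_i^{n_1+n_2}$ respectively. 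Such a $P_i$ exists: on a pair of pants one boundary loop is (up to conjugation) the product of the other two, so the relation $\xl_i^{n_1}\cdot\xl_i^{n_2}=\xl_i^{n_1+n_2}$ makes this a valid $\Gg$-labelling, and it is automatically a $(\Gg,\Ng)$-labelling because all edges can be labelled by powers of $\xl_i\in\Ng$.

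Now I would glue: for each $i$, identify the $\xl_i^{n_1}$-boundary of $P_i$ with the $i$-th boundary edge of $S_1$, and the $\xl_i^{n_2}$-boundary of $P_i$ with the $i$-th boundary edge of $S_2$ (in the standard way producing an oriented triangulated surface, refining the $\Delta$-complex structures if needed). The result $S$ is a $(\Gg,\Ng)$-simplicial surface which is connected (each $P_i$ meets both $S_1$ and $S_2$, and $m\ge 1$) and has $m$ boundary components labelled $\xl_1^{n_1+n_2},\ldots,\xl_m^{n_1+n_2}$; and since Euler characteristic is additive under gluing along circles,
\[
\chi(S)=\chi(S_1)+\chi(S_2)+\sum_{i=1}^{m}\chi(P_i)=(2-2p-m)+(2-2q-m)+m(-1)=4-2p-2q-3m.
\]
As $S$ is connected with $m$ boundary components, $\chi(S)=2-2\,\mathrm{genus}(S)-m$, so $\mathrm{genus}(S)=p+q+m-1$. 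Hence, by Theorem~\ref{theorem characterization 2} again, $\clGN(\xl_1^{n_1+n_2}+\cdots+\xl_m^{n_1+n_2})\le p+q+(m-1)$, which is exactly the inequality above; equivalently $a_{n_1+n_2}\le a_{n_1}+a_{n_2}$. Finally, since $\clGN\ge 0$ and $m\ge 1$ we have $a_n\ge 0$, so the subadditive sequence $(a_n)$ satisfies the hypotheses of Fekete's lemma (Lemma~\ref{lem=Fekete}); thus $\lim_{n\to\infty}a_n/n$ exists in $[0,\infty)$, and since $\tfrac1n\clGN(\xl_1^n+\cdots+\xl_m^n)=\tfrac{a_n}{n}-\tfrac{m-1}{n}$ with $\tfrac{m-1}{n}\to 0$, the limit $\lim_{n\to\infty}\tfrac1n\clGN(\xl_1^n+\cdots+\xl_m^n)$ exists as well.

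The step I expect to be the main obstacle is the bookkeeping in the gluing: verifying that the boundary edges of $S_1$ and $S_2$ can be identified with those of the pieces $P_i$ within the category of triangulated surfaces carrying $(\Gg,\Ng)$-labellings, with matching orientations and labels and compatible $\Delta$-complex structures. This is routine but is where the care lies, and it parallels the surface constructions already used in the proofs of Theorem~\ref{theorem characterization 1} and Theorem~\ref{theorem characterization 2}.
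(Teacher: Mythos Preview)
Your proposal is correct and follows essentially the same approach as the paper: both use Theorem~\ref{theorem characterization 2} to realize $\clGN(\xl_1^{n_j}+\cdots+\xl_m^{n_j})$ by connected $(\Gg,\Ng)$-simplicial surfaces and then combine them, for each index $i$, by a pair-of-pants merging the $\xl_i^{n_1}$- and $\xl_i^{n_2}$-boundaries into a single $\xl_i^{n_1+n_2}$-boundary, yielding a surface of genus $p+q+(m-1)$. The only cosmetic difference is that the paper phrases the merge as ``identify the two boundary basepoints and attach a single triangle'' rather than gluing a full pair of pants, but this is the same operation and leads to the same genus count (your Euler-characteristic computation and the paper's direct genus count agree).
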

\begin{proof}
Let $n_1,n_2\in \NN$. Set $k_i = \cl_{\Gg,\Ng}(\xl_1^{n_i} + \cdots + \xl_m^{n_i})$ for $i \in \{1,2\}$. Let $S_i$ be a $(\Gg,\Ng)$-simplicial surface of genus $k_i$ whose boundary components are $\xl_1^{n_i}, \cdots, \xl_m^{n_i}$ for $i \in \{1,2\}$. Consider the disjoint union $S_1 \sqcup S_2$. Identify the basepoint of the boundary component labelled by $\xl_i^{n_1}$ in $S_1$ with the basepoint of the boundary component labelled by $\xl_i^{n_2}$ in $S_2$. After that, attach a triangle to that part for every $i\in \{1,\cdots, m\}$. Then we have a $(\Gg,\Ng)$-simpilcial surface of genus $k_1 + k_2 + (m-1)$ whose boundary components are $\xl_1^{n_1 + n_2}, \cdots, \xl_m^{n_1 + n_2}$. Thus we have
\begin{align*}
& \cl_{\Gg,\Ng}(\xl_1^{n_1 + n_2} + \cdots + \xl_m^{n_1 + n_2})\\
& \le k_1 + k_2 + (m-1) \\
& = \cl_{\Gg,\Ng}(\xl_1^{n_1} + \cdots + \xl_m^{n_1}) + \cl_{\Gg,\Ng}(\xl_1^{n_2} + \cdots + \xl_m^{n_2}) + (m-1).
\end{align*}
Adding $(m-1)$ to the above equality, we have  $a_{n_1 + n_2} \le a_{n_1} + a_{n_2}$.

The latter assertion follows from Fekete's lemma (Lemma~\ref{lem=Fekete}). This completes the proof.
\end{proof}

\begin{definition}\label{defn=scl_chain}
Let $m\in \NN$ and $\xl_1, \cdots, \xl_m \in \Ng$ such that $\xl_1 \cdots \xl_m \in [\Gg,\Ng]$. Then we define
\[ \scl_{\Gg,\Ng}(\xl_1 + \cdots + \xl_m) = \lim_{n \to \infty} \frac{1}{n} \cl_{\Gg,\Ng}(\xl_1^n + \cdots + \xl_m^n).\]
\end{definition}

\begin{definition}[admissible $(\Gg,\Ng)$-simplicial surface] \label{definition=admissible}
Let $\Gg$ be a group and $\Ng$ its normal subgroup.
Let $m\in \NN$ and $\xl_1, \cdots, \xl_m \in \Ng$. A $(\Gg,\Ng)$-simplicial surface $(S,f)$ is said to be \emph{admissible} with respect to $\xl_1, \cdots, \xl_m$ if there exist an integer $n(S)$ and a continuous map $f' \colon \partial S \to \coprod\limits_i S$ which satisfies the following properties.
\begin{enumerate}[(1)]
\item The diagram
\[ \xymatrix{
\partial S \ar[r]^i \ar[d]_{f'} & S \ar[d]^f \\
\coprod\limits_i S^1 \ar[r]^{\coprod \gamma_i} & BG
}\]
commutes up to free homotopy. Here $\gamma_i \colon S^1 \to B\Gg$ is a loop of $B\Gg$ whose pointed homotopy class is $\xl_i$.

\item $f'_* [\partial S] = n(S) \big[ \coprod\limits_i S^1\big]$.
\end{enumerate}
\end{definition}

The following is deduced from a geometric interpretation of the mixed commutator lengths for integral chains.

\begin{lemma}
Let $m\in \NN$ and $\xl_1, \cdots, \xl_m \in \Ng$. Let $n\in \NN$. Then, for every $k\in \NN$, the inequality
\[ \cl_{\Gg,\Ng}(\xl_1^n + \cdots + \xl_m^n) \le k\]
holds if and only if there exists an admissible $(\Gg,\Ng)$-simplicial surface $S$ such that $n(S) = n$ and the genus of $S$ is at most $k$.
\end{lemma}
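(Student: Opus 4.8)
The plan is to reduce the statement for the chain $\xl_1^n + \cdots + \xl_m^n$ to the already-established geometric interpretation of $\cl_{\Gg,\Ng}$ for integral chains (Theorem~\ref{theorem characterization 2}), by building a dictionary between admissible $(\Gg,\Ng)$-simplicial surfaces with $n(S)=n$ and honest $(\Gg,\Ng)$-simplicial surfaces whose boundary components are $\xl_1^n, \cdots, \xl_m^n$. Concretely, given $k\in\NN$, I will prove the two implications separately.

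For the ``only if'' direction: if $\cl_{\Gg,\Ng}(\xl_1^n+\cdots+\xl_m^n)\le k$, then by Theorem~\ref{theorem characterization 2} there is a $(\Gg,\Ng)$-simplicial surface $T$ of genus $\le k$ whose boundary components are labelled by $\xl_1^n,\cdots,\xl_m^n$. I would like to turn this into an admissible surface with $n(S)=n$ \emph{with respect to} $\xl_1,\cdots,\xl_m$. The key move is to note that each boundary loop of $T$, labelled by $\xl_i^n$, maps (via the classifying map $\hat f$ of Remark~\ref{remark continuous map}) to a loop freely homotopic to $\gamma_i^n$, where $\gamma_i$ represents $\xl_i$. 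Hence the composite $\partial T \to T \to BG$ factors, up to free homotopy, through $\coprod_i S^1$ by the map which is the degree-$n$ self-map on the $i$-th circle; taking $f' \colon \partial T \to \coprod_i S^1$ to be precisely this (a disjoint union of degree-$n$ covering maps of circles) gives $f'_*[\partial T] = n\,[\coprod_i S^1]$, so $T$ is admissible with $n(T)=n$ and genus $\le k$. One should be slightly careful that $f'$ be genuinely continuous and that the orientations match so that the degree is $+n$ and not $-n$ on each component; this is where the orientability convention on surfaces is used.

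For the ``if'' direction: suppose $S$ is admissible with $n(S)=n$ and genus $\le k$. The goal is to produce a $(\Gg,\Ng)$-simplicial surface of genus $\le k$ whose boundary components are $\xl_1^n,\cdots,\xl_m^n$, whence Theorem~\ref{theorem characterization 2} gives $\cl_{\Gg,\Ng}(\xl_1^n+\cdots+\xl_m^n)\le k$. The idea is the standard ``capping off'' construction: the admissibility condition $f'_*[\partial S]=n[\coprod_i S^1]$ says that, up to the necessary homological bookkeeping, the boundary of $S$ wraps $n$ times around the loops $\gamma_i$; after passing to a suitable finite cover or by directly manipulating the boundary circles and attaching planar pieces (pairs of pants / annuli subdivided into triangles, as in the surgeries already used in Lemma~\ref{lemma subadditive cl} and the proof of Theorem~\ref{theorem characterization 2}), one can modify $S$ near $\partial S$ so that it has exactly $m$ boundary components, the $i$-th being a single edge labelled $\xl_i^n$, without increasing the genus. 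The triangulations can be arranged throughout, since every building block used is a polygon with a $(\Gg,\Ng)$-labelling. Combining the two directions completes the proof.

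The main obstacle I anticipate is the ``if'' direction, specifically extracting, from the purely homological statement $f'_*[\partial S]=n[\coprod_i S^1]$, the \emph{combinatorial/geometric} control needed to reassemble the boundary into $m$ clean boundary edges labelled by $\xl_i^n$ while keeping the genus bounded by $k$ and keeping everything a $(\Gg,\Ng)$-simplicial surface. In particular the boundary $\partial S$ may a priori consist of many circles mapping with various degrees to the $\gamma_i$'s (with only the \emph{total} signed degree over the $i$-th circle equal to $n$), so one must argue that cancelling and merging these does not create extra handles. The cleanest route is probably to mimic the argument of \cite[Proposition~4.8]{KKMM1} (as the text signals is analogous), pushing the free-homotopy factorization through $BG$ back to the labelled surface and performing the surgeries there; the bookkeeping of genus under these surgeries is the delicate point.
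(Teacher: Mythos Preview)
Your ``only if'' direction is correct and coincides with the paper's argument: take a $(\Gg,\Ng)$-simplicial surface of genus $\le k$ whose boundary components are $\xl_1^n,\ldots,\xl_m^n$ (from Theorem~\ref{theorem characterization 2}) and declare $f'\colon \partial S\to\coprod_i S^1$ to be the coproduct of the degree-$n$ maps $C_i\to S^1$.

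For the ``if'' direction there is a real gap in your plan. You correctly observe that the admissibility condition $f'_*[\partial S]=n\bigl[\coprod_i S^1\bigr]$ only constrains the \emph{total} signed degree over each $\gamma_i$, so $\partial S$ may consist of several circles over the $i$-th factor with various (possibly negative) degrees summing to $n$. Your remedy is to merge these by attaching pairs of pants and similar pieces so as to end up with exactly $m$ boundary circles labelled $\xl_1^n,\ldots,\xl_m^n$, ``without increasing the genus''. But this is precisely what fails: if $S$ is connected of genus $g$, attaching a pair of pants along two boundary circles of $S$ yields a connected surface of genus $g+1$ (compare Euler characteristics: the new surface has one fewer boundary component and $\chi$ drops by $1$). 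Each merge therefore costs one unit of genus, and after the merges you no longer know the genus is $\le k$. So the surgery route, as you describe it, does not deliver the bound $\cl_{\Gg,\Ng}(\xl_1^n+\cdots+\xl_m^n)\le k$ for a general admissible $S$; the ``delicate point'' you flag at the end is in fact fatal to the approach.

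For comparison, the paper's own proof of this direction is much more cursory: it simply writes ``let $C_i$ denote the boundary component of $S$ which sends [to the] $i$-th $S^1$'' and asserts that the composite $C_i\to S^1\to B\Gg$ represents $\xl_i^n$. In other words, the paper tacitly treats only admissible surfaces that already have exactly $m$ boundary circles, one per $\xl_i$, each mapping with degree exactly $n$ --- which is the form produced by the ``only if'' direction and the only form that actually arises in the subsequent arguments. It does not engage with the general admissible surfaces you worry about. So your instinct that the ``if'' direction needs more care than a one-line appeal to Theorem~\ref{theorem characterization 2} is sound; it is just that the surgeries you propose do not supply that care.
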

\begin{proof}
If $\cl_{\Gg,\Ng}(\xl_1^n + \cdots + \xl_m^n) \le k$, then there exists a $(\Gg,\Ng)$-simplicial surface whose boundary components are $\xl_1^n, \cdots, \xl_m^n$ and whose genus is at most $k$. We write $C_i$ to mean the boundary component of $S$ which is labelled by $\xl_i^n$. Define the map
\[ f' \colon \partial S \to \coprod\limits_i S^1 \]
to be the coproduct of the degree $n$ map $C_i \to S^1$. Then the diagram
\[ \xymatrix{
\partial S \ar[r]^i \ar[d]_{f'} & S \ar[d]^f \\
\coprod\limits_i S^1 \ar[r]^{\coprod \gamma_i} & B\Gg
}\]
commutes up to free homotopy. Here $\gamma_i$ is a loop of $B\Gg$ whose pointed homotopy class is $x_i$.

To show the converse, suppose that there exists an admissible $(\Gg,\Ng)$-simplicial surface with respect to $\xl_1, \cdots, \xl_m$ whose genus is at most $k$. Namely, there exists the diagram
\[ \xymatrix{
\partial S \ar[r]^i \ar[d]_{f'} & S \ar[d]^f \\
\coprod\limits_i S^1 \ar[r]^{\coprod \gamma_i} & B\Gg,
}\]
which commutes up to free homotopy. Here $\gamma_i$ is the loop of $BG$ whose pointed homotopy class is $x_i$. Let $C_i$ denote the boundary component of $S$ which sends $i$-th $S^1$ in $\coprod\limits_i S^1$. Then the composition of $C_i \to S^1 \to B\Gg$ is $x_i^n$, and hence $f$ is a $(\Gg,\Ng)$-simplicial surface whose boundary components are $\xl_1^n, \cdots, \xl_m^n$. Hence we have $\cl_{\Gg,\Ng}(\xl_1^n + \cdots + \xl_m^n) \le k$.
\end{proof}

\begin{lemma}
Let $m\in \ZZ_{\geq 0}$. Let $\xl, \xl_1, \cdots, \xl_m \in N$ and $a\in \NN$. Assume that $\xl^a \xl_1 \cdots \xl_m \in [\Gg,\Ng]$. Then we have
\[ \scl_{\Gg,\Ng} \Big( \xl^a + \sum_{i=1}^m \xl_i \Big)
= \scl_{G,N} \Big( \underbrace{\xl + \cdots + \xl}_{a \text{ times}} + \sum_{i=1}^m \xl_i \Big).\]
\end{lemma}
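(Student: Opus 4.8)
The plan is to reduce the identity to a comparison of the mixed commutator lengths of the ``$n$-th power'' chains and then argue geometrically with $(\Gg,\Ng)$-simplicial surfaces, using the geometric interpretation of $\cl_{\Gg,\Ng}$ for integral chains (Theorem~\ref{theorem characterization 2}). First I would record that, since $\Ng/[\Gg,\Ng]$ is abelian (cf.\ Remark~\ref{rem=productGN}), the hypothesis $\xl^a \xl_1 \cdots \xl_m \in \CGN$ forces $\xl^{an}\xl_1^n\cdots \xl_m^n \in \CGN$ for every $n \in \NN$, so both sides of the asserted equality are defined and, by Definition~\ref{defn=scl_chain},
\[
\scl_{\Gg,\Ng}\Big(\xl^a + \sum_{i} \xl_i\Big) = \lim_{n\to\infty}\frac{1}{n}\, \cl_{\Gg,\Ng}\Big(\xl^{an} + \sum_{i} \xl_i^n\Big),
\]
and likewise for the other chain with $\xl^{an}$ replaced by the $a$-term subchain consisting of $a$ copies of $\xl^n$. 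Hence it suffices to prove, for every $n\in\NN$,
\begin{align*}
\cl_{\Gg,\Ng}\Big(\underbrace{\xl^n + \cdots + \xl^n}_{a} + \sum_{i} \xl_i^n\Big) &\le \cl_{\Gg,\Ng}\Big(\xl^{an} + \sum_{i} \xl_i^n\Big), \\
\cl_{\Gg,\Ng}\Big(\xl^{an} + \sum_{i} \xl_i^n\Big) &\le \cl_{\Gg,\Ng}\Big(\underbrace{\xl^n + \cdots + \xl^n}_{a} + \sum_{i} \xl_i^n\Big) + (a-1);
\end{align*}
dividing by $n$ and letting $n\to\infty$ then yields the claim. (If $m=0$, one uses the $m=1$ instance, i.e.\ Theorem~\ref{theorem characterization 1} and the one-term case of Definition~\ref{definition=commutator_lengths_for_integral_chains}, in place of Theorem~\ref{theorem characterization 2} throughout.)

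For the first inequality I would use a gluing trick. Let $S$ be a connected $(\Gg,\Ng)$-simplicial surface of minimal genus $k=\cl_{\Gg,\Ng}(\xl^{an}+\sum_i\xl_i^n)$ whose boundary components are labelled $\xl^{an},\xl_1^n,\dots,\xl_m^n$ (Theorem~\ref{theorem characterization 2}). Next observe that $\cl_{\Gg,\Ng}\big(\xl^{-an} + \underbrace{\xl^n + \cdots + \xl^n}_{a}\big) = 0$: in Definition~\ref{definition=commutator_lengths_for_integral_chains} one may take all conjugating elements equal to $1_{\Gg}$, and then the resulting element $\xl^{-an}\xl^n\cdots\xl^n$ equals $1_{\Gg}$, a product of $0$ mixed commutators. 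By Theorem~\ref{theorem characterization 2} there is therefore a connected genus-$0$ $(\Gg,\Ng)$-simplicial surface $T$ with boundary components labelled $\xl^{-an}$ and $a$ copies of $\xl^n$. I would glue $T$ to $S$ along the (single-edge, single-vertex) boundary component of $S$ labelled $\xl^{an}$ and that of $T$ labelled $\xl^{-an}$, via an orientation-reversing identification of the two loop-edges; since the two labels are mutually inverse, the identified edge is consistently labelled and lies in $\Ng$, so the glued space $S'$ is again a connected $(\Gg,\Ng)$-simplicial surface, now with boundary components consisting of $a$ copies of $\xl^n$ together with $\xl_1^n,\dots,\xl_m^n$. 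The Euler-characteristic identity $\chi(S') = \chi(S) + \chi(T)$ together with $\chi(T)=1-a$ shows that the genus of $S'$ equals $k$, whence $\cl_{\Gg,\Ng}(\underbrace{\xl^n+\cdots+\xl^n}_a+\sum_i\xl_i^n) \le k$, as desired.

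For the second inequality I would argue symmetrically: starting from a minimal-genus connected surface $S''$ realizing $k'=\cl_{\Gg,\Ng}(\underbrace{\xl^n+\cdots+\xl^n}_a+\sum_i\xl_i^n)$, glue onto it along the $a$ boundary circles labelled $\xl^n$ the connected genus-$0$ $(\Gg,\Ng)$-simplicial surface with boundary components $\xl^{an}$ and $a$ copies of $\xl^{-n}$ (which exists, since again $\cl_{\Gg,\Ng}(\xl^{an}+\underbrace{\xl^{-n}+\cdots+\xl^{-n}}_a)=0$ by taking trivial conjugators); the Euler-characteristic count then gives a connected $(\Gg,\Ng)$-simplicial surface with boundary components $\xl^{an},\xl_1^n,\dots,\xl_m^n$ of genus exactly $k'+a-1$, yielding the stated bound. (Equivalently, one can obtain the same bound by using the triangle-attaching/basepoint-identification move from the proof of Lemma~\ref{lemma subadditive cl} to merge the $a$ boundary components labelled $\xl^n$ into one labelled $\xl^{an}$.) I expect no genuinely hard point here; the only delicate part is the combinatorial bookkeeping --- verifying that gluing two $(\Gg,\Ng)$-simplicial surfaces along matching boundary loop-edges with the correct (orientation-reversing) identification again produces a $(\Gg,\Ng)$-simplicial surface in the sense of Definition~\ref{defn=GN_simpl} and Definition~\ref{definition=GN_surface}, and keeping the Euler-characteristic/genus arithmetic straight; the algebraic inputs (vanishing of the auxiliary $\cl_{\Gg,\Ng}$'s, and membership of the relevant products in $\CGN$) are immediate.
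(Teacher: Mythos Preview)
Your proof is correct and follows the same overall architecture as the paper: establish two inequalities between the relevant $\cl_{\Gg,\Ng}$ quantities (one sharp, one with slack $a-1$), divide by $n$, and pass to the limit. For the inequality with the $(a-1)$ slack, you and the paper do essentially the same thing: the paper's ``attach an $(a+1)$-gon'' is exactly your gluing of a genus-$0$ surface with boundary labels $\xl^{an},\xl^{-n},\dots,\xl^{-n}$, and the Euler-characteristic count is identical.

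The one genuine difference is in the sharp inequality $\cl_{\Gg,\Ng}\big(\underbrace{\xl^n+\cdots+\xl^n}_a+\sum_i\xl_i^n\big)\le\cl_{\Gg,\Ng}\big(\xl^{an}+\sum_i\xl_i^n\big)$. You prove it geometrically by gluing in an auxiliary genus-$0$ piece; the paper instead observes it in one line directly from Definition~\ref{definition=commutator_lengths_for_integral_chains}: in the infimum defining the right-hand side one may specialize the first $a-1$ conjugators to $1_{\Gg}$, collapsing $\xl^n\cdot\xl^n\cdots\xl^n$ to $\xl^{an}$, so the left-hand infimum is over a larger set and is therefore no larger. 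That shortcut avoids the combinatorial bookkeeping you flag (verifying that gluing two $(\Gg,\Ng)$-simplicial surfaces along loop-edges again yields a $(\Gg,\Ng)$-simplicial surface). Your geometric route is fine, but here the algebraic one is cleaner.
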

\begin{proof}
Set $k(n) = \cl_{\Gg,\Ng}\Big(\underbrace{\xl^n + \cdots + \xl^n}_{a \text{ times}} + \sum\limits_{i=1}^m \xl_i^n\Big)$. Then there exists a connected admissible $(\Gg,\Ng)$-surface $S$ with respect to $ \xl, \cdots, \xl$ ($a$ times), $\xl_1, \cdots, \xl_m$ whose genus is $k(n)$ and $n(S) = n$. Then attaching an $(a + 1)$-gon depicted in Figure~\ref{figure a+1} to $a$ components labelled by $\xl^n$, we have a connected admissible $(G,N)$-simplicial surface $S'$ with respect to $x_1, \cdots, x_m$ such that the genus of $S'$ is $k(n) + (a-1)$ and $n(S') = n$. Thus we have
\[ \cl_{\Gg,\Ng}(\xl^{na} + \xl_1^n + \cdots + \xl_m^n) \le k(n) + (a-1).\]
By dividing by $n$ and taking the limits as $n\to \infty$, we have
\[ \scl_{\Gg,\Ng}(\xl^a + \xl_1 + \cdots + \xl_m) \le \scl_{\Gg,\Ng}( \underbrace{\xl + \cdots + \xl}_{a \text{ times}} + \xl_1 + \cdots + \xl_m).\]

\begin{figure}[t]
  \begin{minipage}[b]{0.5\linewidth}
    \centering
\includegraphics[width=4.5cm]{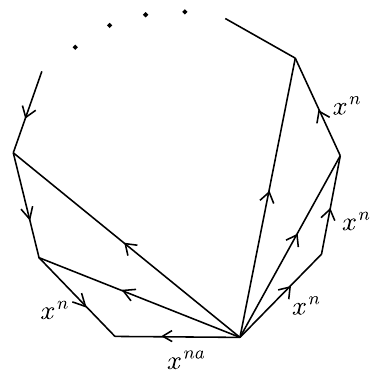}
\caption{$a x^n$ to $x^{na}$} \label{figure a+1}
  \end{minipage}
  \begin{minipage}[b]{0.5\linewidth}
    \centering
\includegraphics[width=4.5cm]{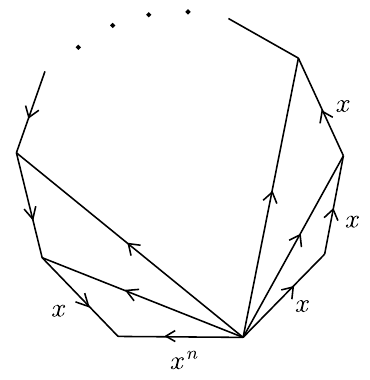}
\caption{$x^n$ to $nx$} \label{figure xn}
  \end{minipage}
\end{figure}



On the other hand, by the definition of $\cl_{\Gg,\Ng}$ we have
\[ \cl_{\Gg,\Ng}(\xl^{na}  + \xl_1^n + \cdots + \xl_m^n) \ge \cl_{\Gg,\Ng}(\underbrace{\xl^n + \cdots + \xl^n}_{a \text{ times}} + \xl_1^n + \cdots + \xl_m^n).\]
By dividing by $n$ and taking the limits as $n\to \infty$, we have
\[ \scl_{\Gg,\Ng}(\xl^a + \xl_1 + \cdots + \xl_m) \ge \scl_{\Gg,\Ng}(\underbrace{\xl + \cdots + \xl}_{a \text{ times}} + \xl_1 + \cdots + \xl_m).\]
This completes the proof.
\end{proof}

\begin{lemma}
Let $m\in \NN$ and $\xl, \xl_1, \cdots, \xl_m \in \Ng$. Assume that $\xl_1 \cdots \xl_m \in [\Gg,\Ng]$. Then the following holds:
\[ \scl_{\Gg,\Ng} \Big( \xl + \xl^{-1} + \sum_{i=1}^m \xl_i \Big) = \scl_{\Gg,\Ng}\Big( \sum_{i=1}^m \xl_i \Big).\]
\end{lemma}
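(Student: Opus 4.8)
The plan is to prove the identity first at the level of the mixed commutator length of integral chains and then pass to the stable limit, using only Definition~\ref{definition=commutator_lengths_for_integral_chains}, the subadditivity of $\clGN$ (immediate from its definition as a word length), and the fact that $[\gl,\zl]$ and $[\zl,\gl]$ are simple mixed commutators whenever $\gl\in\Gg$ and $\zl\in\Ng$ (see Definition~\ref{defn=mixedscl} and the observation following it). Set $c_n=\clGN(\xl_1^n+\cdots+\xl_m^n)$ and $b_n=\clGN(\xl^n+\xl^{-n}+\xl_1^n+\cdots+\xl_m^n)$; both are defined, since $\xl_1\cdots\xl_m\in\CGN$ together with the fact that $\Ng/\CGN$ is abelian forces $\xl_1^n\cdots\xl_m^n\in\CGN$, and $\xl^n\xl^{-n}\xl_1^n\cdots\xl_m^n=\xl_1^n\cdots\xl_m^n$. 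I will show $b_n\le c_n$ and $c_n\le b_n+1$ for every $n\in\NN$; dividing by $n$ and letting $n\to\infty$ through Definition~\ref{defn=scl_chain} (the limits exist by Lemma~\ref{lemma subadditive cl}) then gives $\sclGN(\xl+\xl^{-1}+\sum_{i=1}^m\xl_i)\le\sclGN(\sum_{i=1}^m\xl_i)$ from the first inequality and the reverse inequality from the second, which is the assertion.

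For $b_n\le c_n$: by Definition~\ref{definition=commutator_lengths_for_integral_chains}, $b_n$ is the infimum of $\clGN$ over all words $\xl^n\cdot \gl_1\xl^{-n}\gl_1^{-1}\cdot \gl_2\xl_1^n\gl_2^{-1}\cdots \gl_{m+1}\xl_m^n\gl_{m+1}^{-1}$ with $\gl_1,\dots,\gl_{m+1}\in\Gg$. Restricting the infimum to the subfamily with $\gl_1=1_{\Gg}$ cancels the first two factors, so what remains is $\clGN$ of $\gl_2\xl_1^n\gl_2^{-1}\cdots \gl_{m+1}\xl_m^n\gl_{m+1}^{-1}$; conjugating this element by $\gl_2^{-1}$ (which leaves $\clGN$ unchanged) turns it into $\xl_1^n h_1\xl_2^n h_1^{-1}\cdots h_{m-1}\xl_m^n h_{m-1}^{-1}$ with $h_j=\gl_2^{-1}\gl_{j+1}$, and as the $\gl_i$ range over $\Gg$ the tuple $(h_1,\dots,h_{m-1})$ ranges over all of $\Gg^{m-1}$, so the infimum of $\clGN$ over this subfamily is exactly $c_n$. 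Hence $b_n\le c_n$.

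For $c_n\le b_n+1$: let $W=\xl^n\cdot \gl_1\xl^{-n}\gl_1^{-1}\cdot V$ be any word as above, where $V=\gl_2\xl_1^n\gl_2^{-1}\cdots \gl_{m+1}\xl_m^n\gl_{m+1}^{-1}$. Its leading factor equals $[\xl^n,\gl_1]$, which is a simple mixed commutator since $\xl^n\in\Ng$; hence its inverse $[\xl^n,\gl_1]^{-1}=[\gl_1,\xl^n]$ is also a simple mixed commutator, so $\clGN([\xl^n,\gl_1]^{-1})\le 1$. Since $V=[\xl^n,\gl_1]^{-1}W$, subadditivity of $\clGN$ gives $\clGN(V)\le 1+\clGN(W)$, while the conjugation trick of the previous paragraph shows $\clGN(V)\ge c_n$. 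Thus $c_n\le 1+\clGN(W)$ for every such word $W$, and taking the infimum over $\gl_1,\dots,\gl_{m+1}$ yields $c_n\le b_n+1$.

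The only mildly delicate point is the bookkeeping in the second inequality: one has to recognize the leading factor of $W$ as a mixed commutator — this is where normality of $\Ng$ is used — and then observe that, after removing it, the remaining word is, up to $\Gg$-conjugacy, one of the words whose infimum defines $c_n$; both facts are immediate once the definitions are unwound. If a geometric argument in the style of the two preceding lemmas is preferred, the same two inequalities also follow from Theorem~\ref{theorem characterization 2}: for $b_n\le c_n$, glue onto an optimal $(\Gg,\Ng)$-simplicial surface for $\xl_1^n+\cdots+\xl_m^n$ a genus-zero $(\Gg,\Ng)$-simplicial surface with four boundary circles labelled $\xl^n,\xl^{-n},\xl_1^n,\xl_1^{-n}$ (which exists because these labels lie in $\Ng$, so the relevant classifying map factors through $B\Ng$), identifying its $\xl_1^{-n}$-circle with the $\xl_1^n$-boundary, an operation that preserves the genus; for $c_n\le b_n+1$, glue the $\xl^n$- and $\xl^{-n}$-boundary circles of an optimal surface for the larger chain to one another, which raises the genus by exactly one.
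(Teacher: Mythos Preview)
Your proof is correct. The two inequalities $b_n\le c_n$ and $c_n\le b_n+1$ are exactly the ones the paper establishes, and your passage to the stable limit is identical. The argument for $b_n\le c_n$ is essentially the same as the paper's (both simply note that restricting the infimum in Definition~\ref{definition=commutator_lengths_for_integral_chains} to the case where the first two conjugates are $x^n$ and $x^{-n}$ gives a word whose $\clGN$ is at least $c_n$).

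Where you genuinely diverge from the paper is in the inequality $c_n\le b_n+1$. The paper argues geometrically: take an optimal $(\Gg,\Ng)$-simplicial surface realizing $b_n$, attach a single $2$-simplex joining the $x^n$- and $x^{-n}$-boundary circles, and observe that this raises the genus by one while producing a surface whose boundary components are $e,x_1^n,\ldots,x_m^n$; then invoke $\clGN(e^n+x_1^n+\cdots+x_m^n)=c_n$. Your argument is purely algebraic and avoids Theorem~\ref{theorem characterization 2} altogether: you recognise that the prefix $x^n\cdot g_1 x^{-n} g_1^{-1}=[x^n,g_1]$ of any test word is itself a single simple $(\Gg,\Ng)$-commutator, so peeling it off costs at most one in $\clGN$, and the remainder $V$ is (up to $\Gg$-conjugacy) a test word for $c_n$. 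This is more elementary and entirely self-contained within Definition~\ref{definition=commutator_lengths_for_integral_chains}; the paper's surface argument, on the other hand, is uniform with the proofs of the two neighbouring lemmas and makes the geometric picture explicit. Both approaches give the same constant $1$, so neither is sharper. Your closing geometric sketch is reasonable but not needed, and the construction of the auxiliary genus-zero four-holed sphere would require a line of justification (e.g.\ via Theorem~\ref{theorem characterization 2} applied to $x^n+x^{-n}+x_1^n+x_1^{-n}$, whose product lies in $\CGN$); since your algebraic argument already closes the proof, this is harmless.
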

\begin{proof}
Set $k(n) = \cl_{\Gg,\Ng}\Big(\xl^n + \xl^{-n} + \sum\limits_{i=1}^m \xl_i^n\Big)$. Then there exists a connected admissible $(\Gg,\Ng)$-simplicial surface $S$ with respect to $x, x^{-1}, x_1, \cdots, x_m$ such that the genus of $S$ is $k(n)$ and $n(S) = n$. Then attaching a $2$-simplex to the parts of $\xl$ and $\xl^{-1}$, we have a connected admissible $(\Gg,\Ng)$-surface with respect to $e, \xl_1, \cdots, \xl_m$ such that $n(S) = n$ and the genus of $S$ is $k(n) + 1$. Hence we have
\begin{align*}
\cl_{\Gg,\Ng}(\xl_1^n + \cdots + \xl_m^n) &= \cl_{\Gg,\Ng}(e^n + \xl_1^n + \cdots + \xl_m^n) \le k(n) + 1 \\
&= \cl_{\Gg,\Ng}(\xl^n + x^{-n} + \xl_1^n + \cdots + \xl_m^n) + 1.
\end{align*}
By dividing the above inequality by $n$ and taking the limits as $n\to \infty$, we have
\[ \scl_{\Gg,\Ng}(\xl_1 + \cdots + \xl_m) \le \scl_{\Gg,\Ng}(\xl + \xl^{-1} + \xl_1 + \cdots + \xl_m).\]

On the other hand, by the definition of $\cl_{G,N}$, we have
\[ \cl_{\Gg,\Ng}(\xl_1^n + \cdots + \xl_m^n) \ge \cl_{\Gg,\Ng}(\xl^n + \xl^{-n} + \xl_1^n + \cdots + \xl_m^n).\]
By dividing the above inequality by $n$ and taking the limits as $n\to \infty$, we have
\[ \scl_{\Gg,\Ng}(\xl_1 + \cdots + \xl_m) \ge \scl_{\Gg,\Ng}(\xl + \xl^{-1} + \xl_1 + \cdots + \xl_m).\]
This completes the proof.
\end{proof}

Now we define $\scl_{\Gg,\Ng}$ for integral chains in a certain condition.

\begin{definition} \label{definition=scl_integral_2}
Let $\Gg$ be a group and $\Ng$ its normal subgroup. Let $k,l\in \ZZ_{\geq 0}$. Let $\xl_1, \cdots, \xl_k, \xbr_1, \cdots, \xbr_l \in \Ng$ such that $\xl_1 \cdots \xl_k \xbr_1^{-1} \cdots \xbr_l^{-1} \in [\Gg,\Ng]$. Define the \emph{stable mixed commutator length of the chain $\xl_1 + \cdots + \xl_k - \xbr_1 - \cdots - \xbr_l$} by
\[ \scl_{\Gg,\Ng}(\xl_1 + \cdots + \xl_k - \xbr_1 - \cdots - \xbr_l) = \scl_{\Gg,\Ng}(\xl_1 + \cdots + \xl_k + \xbr_1^{-1} + \cdots + \xbr_l^{-1}).\]
\end{definition}

By Definition~\ref{defn=scl_chain}, in the setting of Definition~\ref{definition=scl_integral_2} we have
\begin{align*}
&\scl_{\Gg,\Ng}(\xl_1 + \cdots + \xl_k - \xbr_1 - \cdots - \xbr_l)\\
&=\lim_{n \to \infty} \frac{1}{n} \cl_{\Gg,\Ng}(\xl_1^n + \cdots + \xl_k^n+\xbr_1^{-n}+\cdots +\xbr_l^{-n}).
\end{align*}

Next we provide a characterization of the condition that $\xl_1, \cdots, \xl_k, \xbr_1, \cdots, \xbr_l \in N$ and $\xl_1 \cdots \xl_k \xbr_1^{-1} \cdots \xbr_l^{-1} \in [\Gg, \Ng]$ in Definition~\ref{definition=scl_integral_2}. Here we recall our notation and terminology from Subsection~\ref{subsec=Nqhom}. For a unital commutative ring $A$, $\CCC_2(\Gg ; A)$ is the group of $2$-chains of $\Gg$ with $A$-coefficients and $\CCC_2'(\Gg; A)$ is the $A$-submodule of $\CCC_2(\Gg ; A)$ generated by the set
\[ \{ (\gl_1,\gl_2) \in \CCC_2(\Gg; A)\; | \; \textrm{$\gl_1$ or $\gl_2$ belongs to $\Ng$}\}.\]
Recall from Definition~\ref{defn=BBB'} that $\BBB_1'(\Gg,\Ng ; A)$ is defined to be  the image of $\CCC'_2(\Gg,\Ng; A)$ through the boundary $\partial \colon \CCC_2(\Gg ; A) \to \CCC_1(\Gg; A)$. To state the generalized mixed Bavard duality theorem, we define the following space of chains.

\begin{definition} \label{definition=CA}
Let $\Gg$ be a group and $\Ng$ its normal subgroup.  Let $A$ be a unital commutative ring. Then define $\CA(\Gg,\Ng)$ by
\[ \CA(\Gg,\Ng) = \CCC_1(\Ng ; A) \cap \BBB'_1(\Gg,\Ng; A).\]
\end{definition}

\begin{remark}\label{rem=BBB_C}
Recall from Definition~\ref{defn=BBB} that $\BBB_1(\Gg;A)=\partial \CCC_2(\Gg;A)$. By definition, if $\Ng=\Gg$, then we have $\CA(\Gg,\Ng)=\BBB_1(\Gg;A)$.
\end{remark}

The following lemma characterizes elements in $\CZ(\Gg,\Ng)$. Recall that a part of the assertions of Lemma~\ref{lem='norm_cl} states that $\yl$ is an element of $\BBB'_1(\Gg,\Ng;\RR)$ for every $\yl\in \CGN$. This statement can also be deduced from`(i) implies(ii)' in Lemma~\ref{lemma=domain}. We also note that  the special case of Lemma~\ref{lemma=domain} for $\Ng=\Gg$ recovers Lemma~\ref{lem=domainG} in Section~\ref{sec=gBavard} by Remark~\ref{rem=BBB_C}.

\begin{lemma} \label{lemma=domain}
Let $c \in \CCC_1(\Gg; \ZZ)$. Then the following are equivalent.
\begin{enumerate}
\item[\textup{(i)}] There exist $k,l\in \ZZ_{\geq 0}$ and $\xl_1, \cdots, \xl_k, \xbr_1, \cdots, \xbr_l \in \Ng$ such that
\[ c = \xl_1 + \cdots + \xl_k - \xbr_1 - \cdots - \xbr_l\]
and $\xl_1 \cdots \xl_k \xbr_1^{-1} \cdots \xbr_l^{-1} \in [\Gg,\Ng]$.

\item[\textup{(ii)}] $c$ belongs to $\CZ(\Gg,\Ng)$.
\end{enumerate}
\end{lemma}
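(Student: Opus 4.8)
The plan is to reduce both conditions to a single statement about the canonical projection $\pi\colon \Ng \to \Ng/\CGN$. Note first that $\CGN \supseteq [\Ng,\Ng]$ (a commutator of two elements of $\Ng$ is a simple mixed commutator), so $\Ng/\CGN$ is abelian, and moreover the $\Gg$-conjugation action on it is trivial, i.e. $\pi(\gl\xl\gl^{-1}) = \pi(\xl)$ for $\gl \in \Gg$, $\xl \in \Ng$, since $[\gl,\xl] \in \CGN$. The key observation is that, for a chain $c = \sum_{g} n_g (g)$ supported on $\Ng$ (that is, $c \in \CCC_1(\Ng;\ZZ)$) and \emph{any} way of writing $c = \xl_1 + \cdots + \xl_k - \xbr_1 - \cdots - \xbr_l$ with all terms in $\Ng$, the class $\pi(\xl_1\cdots\xl_k\xbr_1^{-1}\cdots\xbr_l^{-1}) \in \Ng/\CGN$ equals $\sum_g n_g\,\pi(g)$, because $\pi$ is a homomorphism into an abelian group; in particular it depends only on $c$. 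Hence (i) is equivalent to: $c \in \CCC_1(\Ng;\ZZ)$ and $\sum_g n_g\,\pi(g) = 0$. Since (ii) reads $c \in \CCC_1(\Ng;\ZZ)$ and $c \in \BBB'_1(\Gg,\Ng;\ZZ)$ by Definition~\ref{definition=CA}, it remains to prove, for $c$ supported on $\Ng$, that $\sum_g n_g\,\pi(g) = 0 \iff c \in \BBB'_1(\Gg,\Ng;\ZZ)$.

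For the implication (ii) $\Rightarrow$ (i) I would produce an $\Ng$-homomorphism $\theta\colon \Gg \to \Ng/\CGN$ (in the sense of Subsection~\ref{subsec=Nqhom}, but with $\Ng/\CGN$ in place of $\RR$ as the coefficient group) restricting to $\pi$ on $\Ng$. This is exactly the construction in the proof of Proposition~\ref{prop 3.4.1}, which goes through verbatim for an arbitrary abelian coefficient group: fix a set-theoretic section $S$ of $\Gg \to \Gg/\Ng$ with $1_{\Gg} \mapsto 1_{\Gg}$, write each $\gl \in \Gg$ uniquely as $\gl = s\xl$ with $s \in S$, $\xl \in \Ng$, and set $\theta(\gl) = \pi(\xl)$; the $\Gg$-invariance of $\pi$ then yields $\theta(\gl\xl') = \theta(\gl) + \theta(\xl')$ and $\theta(\xl'\gl) = \theta(\xl') + \theta(\gl)$ for $\gl \in \Gg$, $\xl' \in \Ng$. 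Extending $\theta$ linearly to $\CCC_1(\Gg;\ZZ) \to \Ng/\CGN$, these two identities say precisely that $\theta(\partial(\gl_1,\gl_2)) = 0$ whenever $\gl_1$ or $\gl_2$ belongs to $\Ng$, i.e. $\theta$ vanishes on $\BBB'_1(\Gg,\Ng;\ZZ)$. Since $\theta$ sends $c = \sum_g n_g (g) \in \CCC_1(\Ng;\ZZ)$ to $\sum_g n_g\,\pi(g)$, the condition $c \in \BBB'_1(\Gg,\Ng;\ZZ)$ forces $\sum_g n_g\,\pi(g) = 0$, which is (i).

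For the implication (i) $\Rightarrow$ (ii) the task is to exhibit a filling: given $c = \xl_1 + \cdots + \xl_k - \xbr_1 - \cdots - \xbr_l$ with all terms in $\Ng$ and $\wl := \xl_1\cdots\xl_k\xbr_1^{-1}\cdots\xbr_l^{-1} \in \CGN$, show $c \in \BBB'_1(\Gg,\Ng;\ZZ)$. I would argue modulo $\BBB'_1(\Gg,\Ng;\ZZ)$ using the following elementary congruences, each arising from an explicit boundary of a $2$-chain in $\CCC'_2(\Gg,\Ng;\ZZ)$: (a) $(ab) \equiv (a) + (b)$ when $a$ or $b$ lies in $\Ng$, from $\partial(a,b)$; (b) $(1_{\Gg}) \equiv 0$ from $\partial(1_{\Gg},1_{\Gg})$, whence $(a) + (a^{-1}) \equiv 0$ for $a \in \Ng$; (c) $(\gl\xl\gl^{-1}) \equiv (\xl)$ for $\gl \in \Gg$, $\xl \in \Ng$, from $\gl\xl\gl^{-1}\cdot\gl = \gl\cdot\xl$ together with (a); and consequently (d) $([\gl,\xl]) \equiv 0$, since $[\gl,\xl] = (\gl\xl\gl^{-1})\cdot\xl^{-1}$ is a product of two elements of $\Ng$. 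Using (a)+(b) to replace each $-\,(\xbr_j)$ by $(\xbr_j^{-1})$ and then to collapse the sum (all partial products stay in $\Ng$), one gets $c \equiv (\wl)$; writing $\wl$ as a product of simple mixed commutators and applying (a) and (d) gives $(\wl) \equiv 0$. Everything here is over $\ZZ$, so $c \in \BBB'_1(\Gg,\Ng;\ZZ)$, which with $c \in \CCC_1(\Ng;\ZZ)$ is (ii). Alternatively one may invoke Lemma~\ref{lem='norm_cl}, which already records $\wl \in \BBB'_1(\Gg,\Ng;\RR)$ for $\wl \in \CGN$; the same manipulations upgrade this to the integral statement for the full chain.

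The main obstacle is the direction (ii) $\Rightarrow$ (i), and more precisely the identification of $\Ng/\CGN$ as the correct receptacle for the obstruction together with the $\Ng$-homomorphism $\theta$ extending $\pi$. Once this is in place, the fact that $\theta$ annihilates $\BBB'_1(\Gg,\Ng;\ZZ)$ is immediate from the definition of $\CCC'_2(\Gg,\Ng;\ZZ)$, and the remaining work — unfolding a chain supported on $\Ng$ into elements of $\Ng$, and the explicit $2$-chains in (a)--(d) — is routine bookkeeping in the free abelian groups $\CCC_1$ and $\CCC'_2$.
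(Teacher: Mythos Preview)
Your proof is correct and complete, but it takes a genuinely different route from the paper's. The paper argues geometrically: for (i)$\Rightarrow$(ii) it invokes Theorem~\ref{theorem characterization 1} to produce a $(\Gg,\Ng)$-simplicial surface with boundary $\xl_1\cdots\xl_k\xbr_1^{-1}\cdots\xbr_l^{-1}$, then attaches triangles to split the boundary into components; for (ii)$\Rightarrow$(i) it reads the condition $c\in\BBB'_1(\Gg,\Ng;\ZZ)$ as the existence of a $(\Gg,\Ng)$-simplicial surface with the prescribed boundary components and then applies the substantial Theorem~\ref{theorem characterization 2} to extract the commutator expression. Your argument is purely algebraic: you identify $\Ng/\CGN$ as the universal receptacle for the obstruction, build the $\Ng$-homomorphism $\theta$ extending $\pi$ (via the section construction of Proposition~\ref{prop 3.4.1}), and observe that $\theta$ kills $\BBB'_1(\Gg,\Ng;\ZZ)$ by definition; the forward direction is handled by the elementary congruences (a)--(d). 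Your approach is more self-contained and avoids the simplicial-surface machinery entirely, so in particular it does not rely on Theorem~\ref{theorem characterization 2}. The paper's approach, on the other hand, is consonant with the surrounding geometric development and makes the link between $\CZ(\Gg,\Ng)$ and admissible surfaces explicit, which is what is actually used downstream.
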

\begin{proof}
First, we show that (i) implies (ii). It is clear that $c \in \CCC_1(\Ng ; \ZZ)$. Since $\xl_1 \cdots \xl_k \xbr_1^{-1} \cdots \xbr_l^{-1} \in [\Gg,\Ng]$, there exists a simplicial $(\Gg,\Ng)$-surface whose boundary component is $\xl_1\cdots \xl_k \xbr_1^{-1} \cdots \xbr_l^{-1}$ (see Theorem~\ref{theorem characterization 1}). By attaching triangles depitected in Figure~\ref{figure product}, we obtain a $(G,N)$-simplicial surface whose boundary components are $\xl_1, \cdots, \xl_k, \xbr_1^{-1},\cdots, \xbr_l^{-1}$. Since $\xbr_i + \xbr_i^{-1} \in \BBB_1'(\Gg,\Ng; \ZZ)$, we have $\xl_1 + \cdots + \xl_k - \xbr_1 - \cdots - \xbr_l \in \BBB'_1(\Gg,\Ng; \ZZ)$.

\begin{figure}[t]
    \centering
\includegraphics[width=5cm]{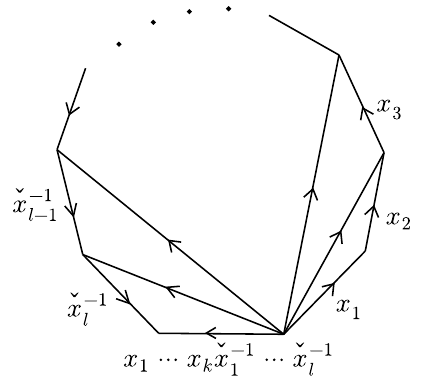}
\caption{$\xl_1 \cdots \xl_k \xbr_1^{-1} \cdots \xbr_l^{-1}$ to $\xl_1 + \cdots + \xl_k + \xbr_1^{-1} + \cdots + \xbr_l^{-1}$} \label{figure product}
\end{figure}

Next, we show that (ii) implies (i). If $\xl_1, \cdots, \xl_k, \xbr_1, \cdots, \xbr_l \in \Ng$ and $\xl_1 + \cdots + \xl_k - \xbr_1 - \cdots - \xbr_l \in \BBB_1'(\Gg,\Ng ; \ZZ)$, then there exists a $(\Gg,\Ng)$-simplicial surface whose boundary components are $\xl_1, \cdots, \xl_k, \xbr_1^{-1}, \cdots, \xbr_l^{-1}$. By Theorem~\ref{theorem characterization 2} there exist $\gl_1, \cdots, \gl_k , \gbr_1, \cdots, \gbr_l \in \Gg$ such that
\[ \gl_1 \xl_1 \gl_1^{-1} \cdots \gl_k \xl_k \gl_k^{-1} \gbr_1 \xbr_1^{-1} \gbr_1^{-1} \cdots \gbr_l \xbr_l^{-1} \gbr_l \in [\Gg,\Ng].\]
Since every element of $\Gg$ and every element of $\Ng$ commute in $\Gg/ [\Gg,\Ng]$, we have
\[ \xl_1 \cdots \xl_k \xbr_1^{-1} \cdots \xbr_l^{-1} \in [\Gg,\Ng].\]
This completes the proof.
\end{proof}

Let $k\in \NN$ and $\xl_1,\ldots,\xl_k\in \Ng$ satisfy that $\xl_1\cdots\xl_k\in \CGN$. Then, clearly we have
\[ |m| \scl_{\Gg,\Ng} \Big( \sum_{i=1}^k \xl_i \Big) = \scl_{\Gg,\Ng} \Big( \sum_{i=1}^k \xl_i^m \Big)\]
for every $m\in \ZZ$. Hence, we can extend the definition of $\scl_{\Gg,\Ng}$ to $\CQ(\Gg,\Ng)$ as follows.

\begin{definition}\label{defn=scl_Q}
Let $\Gg$ be a group and $\Ng$ its normal subgroup.
Let $c \in \CQ(\Gg, \Ng)$. Then there exists $m\in \NN$ such that $mc \in \CZ(\Gg, \Ng)$. Define $\scl_{\Gg,\Ng}(c)$ by
\[ \scl_{\Gg,\Ng}(c) = \frac{1}{m}\scl_{\Gg,\Ng}(mc).\]
\end{definition}

The well-definedness of $\scl_{\Gg,\Ng}(c)$ above is straightforward from the definition.

\subsection{Filling norm for rational chains} \label{subsection 3.3}

In this subsection, we define the filling norm (Definition~\ref{definition=filling}) for elements in $\CQ(\Gg,\Ng)$. Then, we provide the relationship between this norm and $\scl_{\Gg,\Ng}$ (Theorem~\ref{theorem=4scl=fill}), which plays a key role in the proof of the generalized mixed Bavard duality for rational chains.

\begin{lemma}\label{lem=n-jo}
Let $k,l\in \ZZ_{\geq 0}$. Let $\xl_1, \cdots, \xl_k, \xbr_1, \cdots, \xbr_l \in \Ng$. If
\[ (\xl_1 + \cdots + \xl_k) - (\xbr_1 + \cdots + \xbr_l) \in \BBB'_1(\Gg,\Ng ; \ZZ),\]
then for every $n\in \NN$ we have
\[ (\xl_1^n + \cdots + \xl_k^n) - (\xbr_1^n + \cdots + \xbr_l^n) \in \BBB'_1(\Gg,\Ng ; \ZZ).\]
\end{lemma}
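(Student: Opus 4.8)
The plan is to translate the statement, via Lemma~\ref{lemma=domain}, into a triviality about the abelian group $\Ng/\CGN$. Since each $\xl_i$ and each $\xbr_j$ lies in $\Ng$, the chain $(\xl_1+\cdots+\xl_k)-(\xbr_1+\cdots+\xbr_l)$ automatically belongs to $\CCC_1(\Ng;\ZZ)$, so that its membership in $\BBB'_1(\Gg,\Ng;\ZZ)$ is the same as its membership in $\CZ(\Gg,\Ng)=\CCC_1(\Ng;\ZZ)\cap\BBB'_1(\Gg,\Ng;\ZZ)$. Hence, by the equivalence (i)$\Leftrightarrow$(ii) of Lemma~\ref{lemma=domain}, the hypothesis of the lemma is equivalent to
\[
\xl_1\cdots\xl_k\xbr_1^{-1}\cdots\xbr_l^{-1}\in\CGN,
\]
and, applying the same equivalence to the chain $(\xl_1^n+\cdots+\xl_k^n)-(\xbr_1^n+\cdots+\xbr_l^n)$ (whose terms again all lie in $\Ng$), the conclusion we want is equivalent to
\[
\xl_1^n\cdots\xl_k^n\xbr_1^{-n}\cdots\xbr_l^{-n}\in\CGN.
\]

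So it remains to deduce the second group-membership from the first. I would pass to the quotient $\Ng/\CGN$, which is abelian by Remark~\ref{rem=productGN}. Writing $\overline{g}$ for the image of $g\in\Ng$ in $\Ng/\CGN$ and using additive notation there, the hypothesis becomes
\[
\overline{\xl}_1+\cdots+\overline{\xl}_k-\overline{\xbr}_1-\cdots-\overline{\xbr}_l=0 .
\]
Multiplying this identity in the abelian group $\Ng/\CGN$ by the integer $n$ gives
\[
n\overline{\xl}_1+\cdots+n\overline{\xl}_k-n\overline{\xbr}_1-\cdots-n\overline{\xbr}_l=0 ,
\]
which says precisely that $\xl_1^n\cdots\xl_k^n\xbr_1^{-n}\cdots\xbr_l^{-n}$ maps to $0$ in $\Ng/\CGN$, i.e.\ lies in $\CGN$. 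Combined with the previous paragraph, this proves the lemma.

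I do not expect a genuine obstacle here; the only point that requires a little care is the bookkeeping in the two invocations of Lemma~\ref{lemma=domain} — namely that all group elements appearing are required to lie in $\Ng$, and that the relevant chains lie in $\CCC_1(\Ng;\ZZ)$ — but both conditions are immediate in the present setting. Alternatively, one could argue geometrically in the style of the rest of this subsection: starting from a $(\Gg,\Ng)$-simplicial surface with boundary components $\xl_1,\dots,\xl_k,\xbr_1^{-1},\dots,\xbr_l^{-1}$, precompose the boundary parametrizations with degree-$n$ self-maps of the boundary circles and cap off with triangles as needed; but the algebraic route above is the shortest.
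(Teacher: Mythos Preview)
Your proof is correct, but it takes a different route from the paper's. The paper gives a two-line chain-level argument: for every $\xl\in\Ng$ one has $\xl^n-n\xl\in\BBB'_1(\Gg,\Ng;\ZZ)$ (since each $(\xl^{j-1},\xl)\in\CCC'_2(\Gg,\Ng;\ZZ)$ and $\partial$ telescopes), and $n$ times the hypothesis chain is trivially in $\BBB'_1(\Gg,\Ng;\ZZ)$; summing gives the result. Your argument instead invokes Lemma~\ref{lemma=domain} twice to convert the boundary condition into a congruence in the abelian group $\Ng/\CGN$, then multiplies by $n$. This is perfectly valid and logically clean, but it rests on the heavier machinery behind Lemma~\ref{lemma=domain} (the $(\Gg,\Ng)$-simplicial surface interpretation via Theorem~\ref{theorem characterization 2}), whereas the paper's direct computation needs nothing beyond the definition of $\partial$ and $\CCC'_2(\Gg,\Ng;\ZZ)$. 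One small point worth making explicit in your version: Lemma~\ref{lemma=domain}(ii)$\Rightarrow$(i) only asserts the \emph{existence} of a representation with product in $\CGN$; you are using that the image of a chain $c\in\CCC_1(\Ng;\ZZ)$ under the natural map $\CCC_1(\Ng;\ZZ)\to\Ng/\CGN$ is independent of the chosen representation, so that \emph{every} representation satisfies the product condition --- this is precisely the abelianness of $\Ng/\CGN$ you invoke, but stating it once up front would remove any ambiguity.
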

\begin{proof}
Observe that $\xl^n - n \xl \in \BBB'_1(\Gg,\Ng; \ZZ)$ for every $\xl\in \Ng$ and that
\[ (n \xl_1 + \cdots + n \xl_k) - (n\xbr_1 + \cdots + n\xbr_l) \in \BBB'_1(\Gg,\Ng; \ZZ).\]
Now the assertion immediately follows.
\end{proof}

Next we define the filling norm of rational chains. Recall from Definition~\ref{defn=BBB'}~(3) that  we set
\[ \| c\|' = \inf \{ \| c' \|_1 \; | \; c' \in \CCC_2'(\Gg,\Ng ; \RR),\ \partial c'=c \}\]
for $c \in \BBB_1'(\Gg,\Ng; \RR)$.

\begin{lemma} \label{lemma=existence_filling}
Let $k,l\in \ZZ_{\geq 0}$. Let $\xl_1, \cdots, \xl_k, \xbr_1, \cdots, \xbr_l \in \Ng$, and assume that
\[ (\xl_1 + \cdots + \xl_k) - (\xbr_1 + \cdots + \xbr_l) \in \BBB'_1(\Gg,\Ng; \ZZ).\]
For every $n\in \NN$, set
\[ a_n = \big\| (\xl_1^n + \cdots + \xl_k^n) - (\xbr_1^n + \cdots + \xbr_l^n) \big\|' + (k + l).\]
Then the sequence $(a_n)_{n\in \NN}$ is subadditive.
\end{lemma}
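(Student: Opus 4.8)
The plan is to carry out the same construction as in the proof of Lemma~\ref{lemma subadditive cl}, but at the level of \emph{fillings} in $\CCC_2'(\Gg,\Ng;\RR)$ rather than $(\Gg,\Ng)$-simplicial surfaces. Fix $n_1,n_2\in\NN$ and write $c^{(n)}=(\xl_1^n+\cdots+\xl_k^n)-(\xbr_1^n+\cdots+\xbr_l^n)$. By Lemma~\ref{lem=n-jo}, each $c^{(n)}$ lies in $\BBB'_1(\Gg,\Ng;\ZZ)\subset\BBB'_1(\Gg,\Ng;\RR)$, so $\|c^{(n)}\|'$ is finite and $a_n$ is well-defined. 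We must show $\|c^{(n_1+n_2)}\|'\le\|c^{(n_1)}\|'+\|c^{(n_2)}\|'+(k+l)$, which after adding $(k+l)$ to both sides is precisely $a_{n_1+n_2}\le a_{n_1}+a_{n_2}$.

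The key identity is the ``power-splitting'' relation in the bar complex: for any $a\in\Ng$ one has $\partial\big((a^{n_1},a^{n_2})\big)=a^{n_2}-a^{n_1+n_2}+a^{n_1}$, and the $2$-chain $(a^{n_1},a^{n_2})$ belongs to $\CCC_2'(\Gg,\Ng;\RR)$ since both of its entries lie in $\Ng$. Summing over $a=\xl_1,\dots,\xl_k$ and subtracting the analogous sum over $a=\xbr_1,\dots,\xbr_l$ yields
\[
c^{(n_1+n_2)}=c^{(n_1)}+c^{(n_2)}-\partial\!\left(\sum_{i=1}^k(\xl_i^{n_1},\xl_i^{n_2})-\sum_{j=1}^l(\xbr_j^{n_1},\xbr_j^{n_2})\right).
\]
Next I would invoke the definition of $\|\cdot\|'$ as an infimum: given $\varepsilon>0$, pick $c'_1,c'_2\in\CCC_2'(\Gg,\Ng;\RR)$ with $\partial c'_1=c^{(n_1)}$, $\partial c'_2=c^{(n_2)}$, and $\|c'_i\|_1\le\|c^{(n_i)}\|'+\varepsilon$. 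Setting
\[
c'=c'_1+c'_2-\sum_{i=1}^k(\xl_i^{n_1},\xl_i^{n_2})+\sum_{j=1}^l(\xbr_j^{n_1},\xbr_j^{n_2}),
\]
one checks that $c'\in\CCC_2'(\Gg,\Ng;\RR)$, that $\partial c'=c^{(n_1+n_2)}$ by the displayed identity, and that $\|c'\|_1\le\|c'_1\|_1+\|c'_2\|_1+(k+l)\le\|c^{(n_1)}\|'+\|c^{(n_2)}\|'+(k+l)+2\varepsilon$. Letting $\varepsilon\to0$ gives the desired estimate for $\|\cdot\|'$, hence subadditivity of $(a_n)$.

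I do not expect a genuine obstacle here; the work is entirely bookkeeping. The points requiring care are: verifying that $c'$ indeed lies in the \emph{primed} chain group $\CCC_2'(\Gg,\Ng;\RR)$ (each splitting simplex $(\xl_i^{n_1},\xl_i^{n_2})$ and $(\xbr_j^{n_1},\xbr_j^{n_2})$ has \emph{both} coordinates in $\Ng$, so this is immediate); getting the signs right in the boundary computation; and observing that the $\ell^1$-norm increases by at most $k+l$ when the $k+l$ splitting simplices are added, since $\|(\xl_i^{n_1},\xl_i^{n_2})\|_1=1$ and any coincidences or cancellations only decrease the norm. This also explains the additive constant $k+l$ in the definition of $a_n$: it is exactly what is needed to absorb the $k+l$ new splitting simplices, playing the role of the constant $m-1$ in Lemma~\ref{lemma subadditive cl}.
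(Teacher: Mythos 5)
Your proposal is correct and rests on the same algebraic identity as the paper's proof, namely the splitting relation $\partial(\xl^{n_1},\xl^{n_2})=\xl^{n_2}-\xl^{n_1+n_2}+\xl^{n_1}$ applied to each $\xl_i$ and $\xbr_j$; the only difference is one of presentation. The paper works directly with the quotient seminorm $\|\cdot\|'$, writing
\[
a_{n_1}+a_{n_2}\ge\|c^{(n_1)}+c^{(n_2)}\|'+2(k+l)
=\Bigl\|\sum_i\partial(\xl_i^{n_1},\xl_i^{n_2})-\sum_j\partial(\xbr_j^{n_1},\xbr_j^{n_2})+c^{(n_1+n_2)}\Bigr\|'+2(k+l)
\ge\|c^{(n_1+n_2)}\|'+(k+l)=a_{n_1+n_2},
\]
using the triangle inequality and the bound $\|\partial(\xl_i^{n_1},\xl_i^{n_2})\|'\le1$ for each of the $k+l$ splitting simplices, whereas you unfold the definition of $\|\cdot\|'$ as an infimum over explicit fillings $c'_1,c'_2\in\CCC_2'(\Gg,\Ng;\RR)$ with an $\varepsilon$-argument. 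Your version is a bit more verbose but makes it transparent where the constant $k+l$ enters (one extra $\ell^1$-unit per splitting simplex), and it correctly emphasizes the one genuinely necessary check, that the splitting simplices lie in the primed chain group $\CCC_2'(\Gg,\Ng;\RR)$, which holds since both coordinates lie in $\Ng$.
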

\begin{proof}
Let $m,n\in \NN$.
Then we have
\begin{align*}
a_m + a_n & = \left\| \sum_{i=1}^k \xl_i^m - \sum_{j=1}^l \xbr_j^m \right\|' + \left\| \sum_{i=1}^k \xl_i^n - \sum_{j=1}^l \xbr_j^n \right\|' + 2 (k + l) \\
& \ge \left\| \Big( \sum_{i=1}^k \xl_i^m - \sum_{j=1}^l \xbr_j^m \Big) - \Big( \sum_{i=1}^k \xl_i^n - \sum_{j=1}^l \xbr_j^n \Big) \right\|' + 2 (k + l) \\
& = \left\| \sum_{i=1}^k \partial (\xl_i^m, \xl_i^n) - \sum_{j=1}^l \partial (\xbr_j^m, \xbr_j^n) + \sum_{i=1}^k \xl_i^{m+n} - \sum_{j=1}^l \xbr_j^{m+n}\right\|' + 2(k+l) \\
& \ge \left\| \sum_{i=1}^k \xl_i^{m+n} - \sum_{j=1}^l \xbr_j^{m+n} \right\|' - (k + l) + 2 (k + l) \\
& = a_{m+n},
\end{align*}
as desired.
\end{proof}

We are now ready to introduce the filling norm $\fl_{G,N}$ on $\CQ(\Gg ; \Ng)$.

\begin{definition} \label{definition=filling}
Let $\Gg$ be  a group and $\Ng$ its normal subgroup.
\begin{enumerate}[(1)]
 \item Let $c \in \CZ(\Gg,\Ng)$, and write $c=(\xl_1 + \cdots + \xl_k) - (\xbr_1 + \cdots + \xbr_l)$, where $k,l\in \ZZ_{\geq 0}$ and $\xl_1, \cdots, \xl_k, \xbr_1, \cdots, \xbr_l \in \Ng$. Then,   define the \emph{filling norm $\fl_{G,N}(c)$ of $c$} to be the number
\begin{equation}\label{eq=lim_c}
 \fl_{\Gg,\Ng}(c) = \lim_{n \to \infty} \frac{\| (\xl_1^n + \cdots + \xl_k^n) - (\xbr_1^n + \cdots + \xbr_l^n)\|' }{n}.
\end{equation}
 \item For $c \in \CQ(\Gg,\Ng)$, let $m \in \NN$ such that $mc \in \CZ(\Gg,\Ng)$. Define the \emph{filling norm $\fl_{G,N}(c)$ of $c$} by
\[ \fl_{\Gg,\Ng}(c) = \frac{1}{m} \fl_{\Gg,\Ng}(mc).\]
\end{enumerate}
\end{definition}
In Definition~\ref{definition=filling}, the existence of the limit appearing in \eqref{eq=lim_c} follows from Lemma~\ref{lemma=existence_filling} (and Lemma~\ref{lem=Fekete}). We also note that for $c \in \CZ(\Gg,\Ng)$ and for $m \in \ZZ$ we have
\begin{align} \label{filling linear}
\fl_{\Gg,\Ng}(mc) = |m| \cdot \fl_{\Gg,\Ng}(c);
\end{align}
this ensures the well-definedness of  $\fl_{\Gg,\Ng}(c)$ for $c \in \CQ(\Gg,\Ng)$.

\begin{theorem} \label{theorem=4scl=fill}
For $c \in \CQ(\Gg,\Ng)$, we have
\[ 4 \scl_{\Gg,\Ng}(c) = \fl_{\Gg,\Ng}(c).\]
\end{theorem}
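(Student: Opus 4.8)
The plan is to reduce to integral chains and then prove the two inequalities $\fl_{\Gg,\Ng}(c)\le 4\scl_{\Gg,\Ng}(c)$ and $4\scl_{\Gg,\Ng}(c)\le\fl_{\Gg,\Ng}(c)$ separately; the first is a soft consequence of Lemma~\ref{lem='norm_cl}, the second requires building $(\Gg,\Ng)$-simplicial surfaces out of $2$-chains. Since on $\CQ(\Gg,\Ng)$ both $\scl_{\Gg,\Ng}$ and $\fl_{\Gg,\Ng}$ are defined from their restrictions to $\CZ(\Gg,\Ng)$ by $F(c)=\tfrac1m F(mc)$ and are positively homogeneous of degree one under integer scaling (Definition~\ref{defn=scl_chain}, \eqref{filling linear}), it suffices to treat $c\in\CZ(\Gg,\Ng)$. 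Using Lemma~\ref{lemma=domain} write $c=\xl_1+\cdots+\xl_k-\xbr_1-\cdots-\xbr_l$ with $\xl_i,\xbr_j\in\Ng$ and $\xl_1\cdots\xl_k\xbr_1^{-1}\cdots\xbr_l^{-1}\in\CGN$; discarding trivial terms we may assume all $\xl_i,\xbr_j\ne 1_{\Gg}$. For $n\in\NN$ put $c_n=\xl_1^n+\cdots+\xl_k^n-\xbr_1^n-\cdots-\xbr_l^n\in\BBB'_1(\Gg,\Ng;\ZZ)$ (Lemma~\ref{lem=n-jo}) and $K_n=\cl_{\Gg,\Ng}(\xl_1^n+\cdots+\xl_k^n+\xbr_1^{-n}+\cdots+\xbr_l^{-n})$, so that $\scl_{\Gg,\Ng}(c)=\lim_n K_n/n$ and $\fl_{\Gg,\Ng}(c)=\lim_n\|c_n\|'/n$.

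For $\fl_{\Gg,\Ng}(c)\le 4\scl_{\Gg,\Ng}(c)$: fix $n$ and choose $\Gg$-conjugates $\xi_i$ of $\xl_i^n$ and $\eta_j$ of $\xbr_j^{-n}$ realizing $K_n=\cl_{\Gg,\Ng}(w)$ for $w:=\xi_1\cdots\xi_k\eta_1\cdots\eta_l\in\CGN$. Then $\|w\|'\le 4K_n-1$ by Lemma~\ref{lem='norm_cl}; since all $\xi_i,\eta_j\in\Ng$, telescoping $\partial$ over adjacent partial products gives $\|w-(\xi_1+\cdots+\xi_k+\eta_1+\cdots+\eta_l)\|'\le k+l-1$; each conjugation difference satisfies $\|\xl_i^n-\xi_i\|'\le 4$ and $\|\xbr_j^{-n}-\eta_j\|'\le 4$ (write $\xi_i=[\gl,\xl_i^n]\cdot\xl_i^n$ and invoke Lemma~\ref{lem='norm_cl}(1)); and $\|\xbr_j^n+\xbr_j^{-n}\|'\le 4$ since $\xbr_j^n+\xbr_j^{-n}=\partial(\xbr_j^n,\xbr_j^{-n})+1_{\Gg}$ with $1_{\Gg}\in\Ng$. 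Writing $c_n=\sum_i\xl_i^n+\sum_j\xbr_j^{-n}-\sum_j(\xbr_j^n+\xbr_j^{-n})$ and applying the triangle inequality for $\|\cdot\|'$ yields $\|c_n\|'\le 4K_n+C$ with $C=C(k,l)$ independent of $n$; dividing by $n$ and letting $n\to\infty$ gives the inequality.

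For $4\scl_{\Gg,\Ng}(c)\le\fl_{\Gg,\Ng}(c)$: fix $n$ and $\varepsilon>0$, choose $c'\in\CCC'_2(\Gg,\Ng;\QQ)$ with $\partial c'=c_n$ and $\|c'\|_1\le\|c_n\|'+\varepsilon$, and let $Q\in\NN$ clear denominators, so $Qc'=\sum_\sigma n_\sigma\sigma\in\CCC'_2(\Gg,\Ng;\ZZ)$ with $\partial(Qc')=Qc_n$ and $\|Qc'\|_1\le Q(\|c_n\|'+\varepsilon)$. The geometric heart is to manufacture from $Qc'$ an admissible $(\Gg,\Ng)$-simplicial surface (Definition~\ref{definition=admissible}): take $|n_\sigma|$ oriented copies of the triangle attached to each $\sigma$ (a $(\Gg,\Ng)$-triangle since $\sigma\in\CCC'_2$), glue interior edges in orientation-matching pairs as prescribed by $\partial(Qc')=Qc_n$, separate the finitely many non-manifold vertices to obtain an honest triangulated orientable surface $S$, and discard its closed and disk components. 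One gets a $(\Gg,\Ng)$-simplicial surface with $F\le\|Qc'\|_1$ triangles whose boundary is $Q$ one-edge circles labelled $\xl_i^n$ for each $i$ and $Q$ labelled $\xbr_j^{-n}$ for each $j$, admissible with respect to $\xl_1,\dots,\xl_k,\xbr_1^{-1},\dots,\xbr_l^{-1}$ with $n(S)=Qn$. Decomposing $S=\coprod_t S_t$ into connected components with data $g_t,V_t,E_t,F_t,b_t$, the relations $3F_t=2E_t-b_t$ and $2-2g_t-b_t=V_t-E_t+F_t$ give $4g_t=F_t-2V_t-b_t+4$; summing and using that boundary vertices are distinct ($V:=\sum V_t\ge b:=\sum b_t=Q(k+l)$) and that each remaining component carries a boundary circle ($r:=\#\{t\}\le b$) gives $4\,\mathrm{genus}(S)\le F+b\le Q(\|c_n\|'+\varepsilon+k+l)$. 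By the lemma of Subsection~\ref{subsection 3.2} characterizing $\cl_{\Gg,\Ng}$ of power chains via admissible surfaces, we then get
\[
\cl_{\Gg,\Ng}\Big(\underbrace{\xl_1^n+\cdots+\xl_1^n}_{Q}+\cdots+\underbrace{\xbr_l^{-n}+\cdots+\xbr_l^{-n}}_{Q}\Big)\le\mathrm{genus}(S)\le\tfrac14 Q(\|c_n\|'+\varepsilon+k+l).
\]
Combining Lemma~\ref{lemma subadditive cl} (iterated to bound $\cl_{\Gg,\Ng}$ of $m$-th powers of a chain) with the identity $\scl_{\Gg,\Ng}(c_{nQ})=nQ\,\scl_{\Gg,\Ng}(c)$ and the lemma of Subsection~\ref{subsection 3.2} identifying the $\scl$ of a $Q$-fold-copies chain with that of the corresponding power chain, one obtains $nQ\,\scl_{\Gg,\Ng}(c)\le\tfrac14 Q(\|c_n\|'+\varepsilon+k+l)+Q(k+l)$; dividing by $nQ$, then letting $\varepsilon\to 0$ and $n\to\infty$, gives $4\scl_{\Gg,\Ng}(c)\le\fl_{\Gg,\Ng}(c)$. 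The two inequalities give $4\scl_{\Gg,\Ng}=\fl_{\Gg,\Ng}$ on $\CZ(\Gg,\Ng)$, and the opening reduction extends this to $\CQ(\Gg,\Ng)$. The main obstacle is the geometric step in the last paragraph — realizing an integral $\CCC'_2$-chain as a genuine admissible $(\Gg,\Ng)$-simplicial surface: carrying out the boundary-dictated edge-gluing consistently with orientations, separating non-manifold vertices without inflating the triangle count (so the Euler-characteristic estimate $4\,\mathrm{genus}(S)\le F+b$ survives), and controlling disk and sphere components; the remaining bookkeeping with power-chains, copies-chains and Lemma~\ref{lemma subadditive cl} is routine.
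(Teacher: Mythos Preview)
Your argument is correct and follows the same two–inequality strategy as the paper (Proposition~\ref{proposition 4scl le fill} for $4\scl_{\Gg,\Ng}\le\fl_{\Gg,\Ng}$, Proposition~\ref{proposition trivial}/Corollary~\ref{corollary trivial} for the reverse), with both directions relying on the same ingredients: Lemma~\ref{lem='norm_cl} for the ``easy'' bound and an Euler-characteristic count on $(\Gg,\Ng)$-simplicial surfaces for the ``hard'' bound. In fact you are more careful than the paper on one point: the paper simply asserts the existence of a $(\Gg,\Ng)$-simplicial surface with exactly $\|c\|'$ triangles realizing a given real norm, whereas you correctly pass through a rational approximation, clear denominators with $Q$, and then undo this via the $Q$-copies lemma and homogeneity of $\scl_{\Gg,\Ng}$.

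Two minor remarks. First, in the surface step you should \emph{not} discard disk components: doing so may delete boundary circles carrying nontrivial labels $\xl_i^n$ or $\xbr_j^{-n}$ and spoil admissibility; discarding only the closed components already gives $r\le b$, and disk components (having genus~$0$) are harmless in the estimate $4\sum_t g_t\le F+b$. Second, to invoke the genus bound on $\cl_{\Gg,\Ng}$ you should connect the remaining components by tubes labelled in $\Ng$ (which preserves total genus), since Theorem~\ref{theorem characterization 2} and Definition~\ref{definition=GN_surface} are stated for connected surfaces. With these cosmetic fixes your proof is complete.
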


This follows from Proposition~\ref{proposition 4scl le fill} and Corollary~\ref{corollary trivial} below.

\begin{proposition} \label{proposition 4scl le fill}
For $c \in \CQ(\Gg,\Ng)$, we have
\[ 4 \scl_{\Gg,\Ng}(c) \le \fl_{\Gg,\Ng}(c).\]
\end{proposition}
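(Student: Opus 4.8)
The plan is to deduce the inequality from the geometric pictures already available: interpret $\cl_{\Gg,\Ng}$ of a chain via admissible $(\Gg,\Ng)$-simplicial surfaces, and produce such a surface of small genus out of an $\ell^1$-efficient filling of a chain, reading off the factor $\tfrac14$ from an Euler-characteristic count. First I would reduce to the integral case: since $\scl_{\Gg,\Ng}$ and $\fl_{\Gg,\Ng}$ scale linearly under $c\mapsto mc$ (Definition~\ref{definition=scl_integral_2}, Definition~\ref{defn=scl_Q}, Definition~\ref{definition=filling}), it suffices to take $c\in\CZ(\Gg,\Ng)$, and by Lemma~\ref{lemma=domain} write $c=\xl_1+\cdots+\xl_k-\xbr_1-\cdots-\xbr_l$ with all entries in $\Ng$ and $\xl_1\cdots\xl_k\xbr_1^{-1}\cdots\xbr_l^{-1}\in\CGN$. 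Because $\xbr_j^n+\xbr_j^{-n}=\partial(\xbr_j^n,\xbr_j^{-n})+\partial(1_\Gg,1_\Gg)\in\BBB'_1(\Gg,\Ng;\ZZ)$ with $\|\xbr_j^n+\xbr_j^{-n}\|'\le 2$ uniformly in $n$, the chains $\sum\xl_i^n-\sum\xbr_j^n$ and $\sum\xl_i^n+\sum\xbr_j^{-n}$ differ by a chain of $\|\cdot\|'$-norm $\le 2l$, so (together with Definition~\ref{definition=scl_integral_2}) it is enough to prove the inequality when $c=\yl_1+\cdots+\yl_m$ with $\yl_i\in\Ng$, $\yl_1\cdots\yl_m\in\CGN$, in which case $\fl_{\Gg,\Ng}(c)=\lim_{n\to\infty}\|\yl_1^n+\cdots+\yl_m^n\|'/n$.

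Next, fix $n\in\NN$ and $\varepsilon>0$, set $c^{(n)}=\yl_1^n+\cdots+\yl_m^n\in\BBB'_1(\Gg,\Ng;\ZZ)$ (Lemma~\ref{lem=n-jo}), choose $z\in\CCC'_2(\Gg,\Ng;\QQ)$ with $\partial z=c^{(n)}$ and $\|z\|_1<\|c^{(n)}\|'+\varepsilon$, and $d\in\NN$ with $w:=dz\in\CCC'_2(\Gg,\Ng;\ZZ)$, so $\partial w=d\,c^{(n)}=\sum_i d\,\yl_i^n$ and $\|w\|_1=d\|z\|_1$. The core step is to \emph{realize} $w$ as an admissible $(\Gg,\Ng)$-simplicial surface $S$ with $n(S)=dn$: take $\sum_\sigma|a_\sigma|$ triangles, one for each bar $2$-simplex in $w=\sum_\sigma a_\sigma\sigma$ (each carries a coordinate in $\Ng$ since $w\in\CCC'_2$, giving a $(\Gg,\Ng)$-labelling); glue them in pairs along the edges that cancel in the formal boundary of $w$, which is possible exactly because $\partial w=\sum_i d\,\yl_i^n$, arranging the gluing so that the unglued edges are precisely $d$ one-edge boundary loops labelled $\yl_i^n$ for each $i$, and so that no component is closed. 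Then $S$ is admissible with respect to $\yl_1,\dots,\yl_m$ with $n(S)=dn$, via $f'\colon\partial S\to\coprod_i S^1$ wrapping each loop labelled $\yl_i^n$ around the $i$-th circle $n$ times. Writing $F\le\|w\|_1$ for the number of triangles, $r\le dm$ for the number of components, $V\ge dm$ (the $dm$ boundary loops have distinct vertices), and using $E_{\mathrm{bd}}=dm$ with $3F=2E_{\mathrm{int}}+E_{\mathrm{bd}}$, a component-wise Euler-characteristic count gives
\[
2g(S)=2r-dm-\chi(S)=2r-dm-\Big(V-\tfrac{F}{2}-\tfrac{dm}{2}\Big)\le \tfrac{F}{2}+\tfrac{dm}{2},
\]
hence $g(S)\le\tfrac14\|w\|_1+C(m)\,d$ for a constant $C(m)$ depending only on $m$.

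Finally, by the characterisation of $\cl_{\Gg,\Ng}$ of integral chains via admissible surfaces (the lemma stated after Definition~\ref{definition=admissible}, applied to the possibly disconnected $S$ together with subadditivity of $\cl_{\Gg,\Ng}$ over components) we get $\cl_{\Gg,\Ng}(\yl_1^{dn}+\cdots+\yl_m^{dn})\le g(S)$, so by Lemma~\ref{lemma subadditive cl} and Lemma~\ref{lem=Fekete}, $\scl_{\Gg,\Ng}(\yl_1^{dn}+\cdots+\yl_m^{dn})\le g(S)+(m-1)$. Combining this with $\scl_{\Gg,\Ng}(\sum_i\yl_i^{dn})=dn\cdot\scl_{\Gg,\Ng}(c)$ and the bound on $g(S)$,
\[
dn\cdot\scl_{\Gg,\Ng}(c)\le \tfrac14\|w\|_1+C(m)\,d+m-1<\tfrac{d}{4}\big(\|c^{(n)}\|'+\varepsilon\big)+C(m)\,d+m-1 .
\]
Dividing by $dn$, letting $d\to\infty$, then $\varepsilon\to0$, gives $\scl_{\Gg,\Ng}(c)\le\tfrac{\|c^{(n)}\|'}{4n}+\tfrac{C(m)}{n}$ for every $n$; letting $n\to\infty$ and using $\fl_{\Gg,\Ng}(c)=\lim_n\|c^{(n)}\|'/n$ (Definition~\ref{definition=filling}, Lemma~\ref{lemma=existence_filling}) yields $4\scl_{\Gg,\Ng}(c)\le\fl_{\Gg,\Ng}(c)$, and the general case $c\in\CQ(\Gg,\Ng)$ follows by scaling. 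The main obstacle is the realization step: turning the formal $\ell^1$-efficient $2$-chain $w$ into an honest admissible $(\Gg,\Ng)$-simplicial surface with the stated genus bound. The delicate points are handling the signs/orientations of the bar $2$-simplices so the triangle count stays $\|w\|_1$ up to an $O_m(d)$ error (e.g.\ by the standard positivization using degenerate simplices), choosing the gluing pattern so that each $\yl_i^n$ boundary edge bounds its own one-edge boundary loop and no closed component appears, and keeping the component count $\le dm$; with these in hand the Euler-characteristic bookkeeping delivers the factor $\tfrac14$ in the limit. This is the $(\Gg,\Ng)$-analogue of Calegari's passage from chains to monotone simplicial surfaces in the identity $\scl=\tfrac14\fl$.
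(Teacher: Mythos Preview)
Your approach is essentially the same as the paper's: realize an $\ell^1$-efficient $2$-chain in $\CCC'_2(\Gg,\Ng)$ as a $(\Gg,\Ng)$-simplicial surface and read off the factor $\tfrac14$ from the Euler-characteristic identity $3F=2E_{\mathrm{int}}+E_{\mathrm{bd}}$ combined with $\chi=2-2g-(\text{\#bdry})$. The paper is more direct in two respects. First, it does \emph{not} reduce to positive chains; it keeps $c=\sum\xl_i-\sum\xbr_j$ and, after building the surface, attaches two extra triangles per $\xl_i$ and per $\xbr_j$ (their Figure~9) to normalize the boundary into single-edge loops labelled $\xl_i$ and $\xbr_j^{-1}$. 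This replaces your delicate requirement that the gluing already produce $d$ one-edge loops per $\yl_i^n$: you should not try to \emph{arrange} this in the gluing, but rather accept whatever boundary loops arise and then attach $O(k+l)$ correction triangles, which disappears after dividing by $n$. Second, the paper bounds $\cl_{\Gg,\Ng}$ of the $n$-th power chain directly by the genus of this single surface and then takes $n\to\infty$, avoiding your detour through admissible surfaces and the extra parameter $d$.

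Your version is, however, more honest about one point the paper glosses over: the paper writes ``there exists a $(\Gg,\Ng)$-simplicial surface $S'$ with $A=\|c\|'$ triangles,'' which is imprecise since $\|\cdot\|'$ is an infimum over \emph{real} chains. Your rational approximation and denominator-clearing (the parameter $d$) is exactly the right fix; after that, the $d\to\infty$ limit is unnecessary (all $d$-dependence already cancels once you divide by $dn$), so you can simply fix one $d$. The positivization of negative coefficients you mention is also needed in the paper's argument and is handled the same standard way. In short: correct plan, same mechanism as the paper; drop the positive-chain reduction and replace ``arrange the gluing'' by ``attach $O(m)$ correction triangles,'' and your sketch becomes the paper's proof.
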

\begin{proof}
It suffices to show that $4 \scl_{\Gg,\Ng}(c)\leq \fl_{\Gg,\Ng}(c)$ for $c \in \CZ(\Gg,\Ng)$. Let $\xl_1, \cdots, \xl_k, \xbr_1, \cdots, \xbr_l \in \Ng$ such that
\[ c = (\xl_1 + \cdots + \xl_k) - (\xbr_1 + \cdots + \xbr_l) \in \BBB_1'(\Gg,\Ng ;\ZZ).\]
Set $A = \| (\xl_1 + \cdots + \xl_k) - (\xbr_1 + \cdots + \xbr_l)\|'$. Then there exists a $(\Gg,\Ng)$-simplicial surface $S'$ such that the number of $2$-simplices of $S'$ is $A$ and we have
\[ \partial S' = (\xl_1 + \cdots + \xl_k) - (\xbr_1 + \cdots + \xbr_l).\]
We attach two $2$-simplices to $\xl_i$ for each $i \in \{1, \cdots, k\}$ and two $2$-simplices to $\xbr_j$ for each $j \in \{1, \cdots, l\}$ depicted in Figure~\ref{figure x to inverse}. Then we have a $(\Gg,\Ng)$-simplicial surface $S$ such that the number of $2$-simplices of $S$ is $A + 2(k+l)$ and the boundary of $S$ is labelled by $\xl_1, \cdots, \xl_k, \xbr_1^{-1}, \cdots, \xbr_l^{-1}$. Let $v,e,f$ be the numbers of $0$-simplices, $1$-simplices, $2$-simplices of $S$, respectively. Let $g$ be the genus of $S$. We observe the following two points.
\begin{enumerate}[(1)]
\item Comparing the Euler characteristics, we have
\begin{align} \label{equality Euler}
f - e + v = 2 - 2g - (k+l).
\end{align}

\begin{figure}[t]

    \centering
\includegraphics[width=9cm]{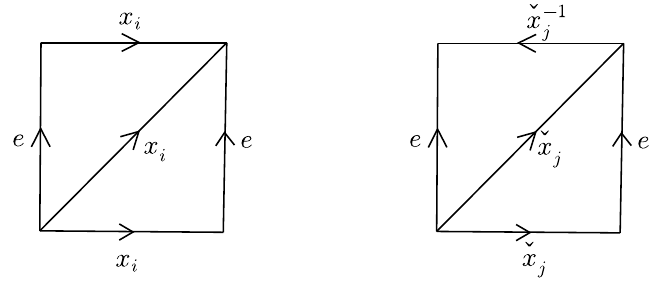}
\caption{$\xl_i$ to $\xl_i$ and $-\xbr_j$ to $\xbr_j^{-1}$}\label{figure x to inverse}
\end{figure}

\item Every $2$-simplex of $S$ is surrounded by three $1$-simplices. Every $1$-simplex not on the boundary is contained in exactly two $2$-simplices, and every $1$-simplex on the boundary is contained in exactly one $2$-simplex. These imply
\begin{align} \label{equality simplices}
(k+ l) + 2 (e - k - l) = 3f.
\end{align}
\end{enumerate}
By \eqref{equality Euler} and \eqref{equality simplices} and $A + 2(k+l) = s$, we have
\[ A + 2 (k + l) = 4g + 4 + k + l + 2v.\]
Since $p \ge k + l$ and
\[ g \ge \cl_{\Gg,\Ng}(\xl_1 + \cdots + \xl_k + \xbr_1^{-1} + \cdots + \xbr_l^{-1}),\]
we have
\begin{align*}
& \| (\xl_1 + \cdots + \xl_k) - (\xbr_1 + \cdots + \xbr_l) \|' \\
& \ge 4g + 4 + k + l \\
& \ge 4 \cl_{\Gg,\Ng}(\xl_1 + \cdots + \xl_k + \xbr_1^{-1} + \cdots + \xbr_l^{-1}) + k + l + 4.
\end{align*}
Thus for every $n\in \NN$, we have
\[ \| (\xl_1^n + \cdots + \xl_k^n) - (\xbr_1^n + \cdots + \xbr_l^n) \|' \ge 4 \cl_{\Gg,\Ng}(\xl_1^n + \cdots + \xl_k^n + \xbr_1^{-n} + \cdots + \xbr_l^{-n}) + k + l + 4.\]
By dividing the above inequality by $n$ and taking the limits as $n\to \infty$, we have
\[ \fl_{\Gg,\Ng}\Big( \sum_{i=1}^k \xl_i - \sum_{j=1}^l \xbr_j\Big) \ge 4 \scl_{\Gg,\Ng} \Big( \sum_{i=1}^k \xl_i + \sum_{j=1}^l \xbr_j^{-1} \Big) = 4 \scl_{\Gg,\Ng}\Big(\sum_{i=1}^k \xl_i - \sum_{j=1}^l \xbr_j\Big).\]
This completes the proof. \end{proof}

\begin{proposition} \label{proposition trivial}
Let $k,l\in \ZZ_{\geq 0}$. Let $\xl_1, \cdots, \xl_k, \xbr_1, \cdots, \xbr_l \in \Ng$, and assume that
\[ (\xl_1 + \cdots + \xl_k) - (\xbr_1 + \cdots + \xbr_l) \in \BBB_1'(\Gg,\Ng;\ZZ).\]
Then we have
\[ \| \xl_1 + \cdots + \xl_k - \xbr_1 - \cdots - \xbr_l \|' \le 4 \cl_{\Gg,\Ng}(\xl_1 + \cdots + \xl_k + \xbr_1^{-1} + \cdots + \xbr_l^{-1}) - 1 + 3(k+l).\]
\end{proposition}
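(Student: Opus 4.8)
The plan is to deduce the chain bound from the single-element bound $\|z\|' \le 4\cl_{\Gg,\Ng}(z)-1$ of Lemma~\ref{lem='norm_cl} by unwinding the definition of the mixed commutator length of a chain and controlling the resulting error by an explicit filling. Write $c := \xl_1 + \cdots + \xl_k - \xbr_1 - \cdots - \xbr_l$ and $y := \xl_1 \cdots \xl_k \xbr_1^{-1} \cdots \xbr_l^{-1}$. By Lemma~\ref{lemma=domain}, the hypothesis $c \in \BBB_1'(\Gg,\Ng;\ZZ)$ is exactly the assertion $y \in \CGN$, so Lemma~\ref{lem='norm_cl} is available for $y$ and, more generally, for the products appearing below; we may also assume $k+l\geq 1$, the case $k=l=0$ being degenerate. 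By Definition~\ref{definition=commutator_lengths_for_integral_chains}, $\cl_{\Gg,\Ng}(\xl_1 + \cdots + \xl_k + \xbr_1^{-1} + \cdots + \xbr_l^{-1})$ is the infimum of $\cl_{\Gg,\Ng}(\xi_1 \cdots \xi_k \eta_1 \cdots \eta_l)$ over all $\xi_i$ in the $\Gg$-conjugacy class of $\xl_i$ and all $\eta_j$ in the $\Gg$-conjugacy class of $\xbr_j^{-1}$. Hence it suffices to fix $\xi_i = a_i \xl_i a_i^{-1}$ and $\eta_j = b_j \xbr_j^{-1} b_j^{-1}$ with $a_i,b_j\in\Gg$, set $P := \xi_1 \cdots \xi_k \eta_1 \cdots \eta_l \in \CGN$, and prove
\[ \| c \|' \;\le\; \| P \|' + 3(k+l); \]
combining this with $\|P\|' \le 4\cl_{\Gg,\Ng}(P)-1$ and taking the infimum over the $a_i,b_j$ then yields the proposition.

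The core is to produce a $2$-chain $D \in \CCC_2'(\Gg,\Ng;\ZZ)$ with $\partial D = c - P$ and $\|D\|_1 \le 3(k+l)$; then $\| c - P\|' \le \|D\|_1$ by the definition of $\|\cdot\|'$, and the displayed inequality follows from the triangle inequality for $\|\cdot\|'$ on $\BBB_1'(\Gg,\Ng;\RR)$. I would assemble $D$ from three types of boundary pieces, thinking of each of the $k+l$ letters of $c$ as having a budget of $3$: \emph{conjugation} pieces, using $\partial\big((g,x) - (gxg^{-1},g)\big) = x - gxg^{-1}$ (both $2$-simplices lie in $\CCC_2'(\Gg,\Ng;\ZZ)$ since one entry is in $\Ng$), which turn each $\xl_i$ into $\xi_i$ and each $\xbr_j^{-1}$ into $\eta_j$; an \emph{inversion} piece for each $j$, using $\partial\big((1_\Gg,1_\Gg) + (\xbr_j,\xbr_j^{-1})\big) = \xbr_j^{-1} + \xbr_j$, which turns each $-\xbr_j$ into $+\xbr_j^{-1}$; and \emph{merging} pieces, the telescoping $2$-chain $\sum_{i\ge 2}(\xi_1\cdots\xi_{i-1},\xi_i)$ together with its $\eta$-analogue, whose boundary collapses $\xi_1 + \cdots + \xi_k + \eta_1 + \cdots + \eta_l$ into the single element $P$ at a cost of one $2$-simplex per merge. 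Throughout, the routine check is that every $2$-simplex occurring in $D$ lies in $\CCC_2'(\Gg,\Ng;\ZZ)$: this holds because each has an entry equal to $1_\Gg$, to one of $\xl_i$ or $\xbr_j^{\pm1}$, or to a partial product of elements of $\Ng$, hence an entry in $\Ng$.

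The main obstacle is making the constant come out to $3(k+l)$ rather than something larger. The naive accounting is tight for the $\xl_i$-letters — conjugation ($2$) plus merge ($1$) fills the budget of $3$ — but overshoots for the $\xbr_j$-letters, where inversion ($2$) plus conjugation ($2$) plus merge ($1$) costs $5$. To recover the claimed bound one must treat the $\xbr_j$-letters more economically: I expect this is done by first using the order-independence of $\cl_{\Gg,\Ng}$ on chains to let one of the $\xbr_j$'s play the role of the (un-conjugated) first factor, and, more essentially, by fusing the inversion-and-conjugation of each $-\xbr_j$ into a single short filling built from the fact that $\xbr_j\cdot(b_j\xbr_j^{-1}b_j^{-1}) = [\xbr_j,b_j]$ is a \emph{simple mixed commutator}, so that Lemma~\ref{lem='norm_cl}(1) supplies a norm-$3$ filling of it directly in place of the norm-$5$ estimate obtained from inversion plus conjugation separately. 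Verifying that this bookkeeping closes up to exactly $3(k+l)$ is the one genuinely delicate point; the remaining manipulations are straightforward expansions of the boundary operator. (If one is content with a larger linear constant in place of $3(k+l)$ — which is all that Theorem~\ref{theorem=4scl=fill} ultimately uses, since $k$ and $l$ do not grow under stabilization — the argument above goes through with no extra care.)
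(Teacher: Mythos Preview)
Your approach is exactly the paper's: bound $\|P\|'\le 4\cl_{\Gg,\Ng}(P)-1$ via Lemma~\ref{lem='norm_cl}, then fill $c-P$ with at most $3(k+l)$ simplices in $\CCC_2'(\Gg,\Ng;\ZZ)$. The paper is equally terse about that second step, so your worry about the constant is legitimate. However, your proposed fix through $[\xbr_j,b_j]$ does not actually drop below five per $\xbr_j$-letter: the norm-$3$ filling of $[\xbr_j,b_j]$ from Lemma~\ref{lem='norm_cl}(1) has boundary $[\xbr_j,b_j]$, not $-\xbr_j-\eta_j$, and passing between them costs the extra simplex $(\xbr_j,\eta_j)$; together with the merge simplex you are back at $3+1+1=5$.

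The clean fix is to fuse inversion and conjugation for each $\xbr_j$ into \emph{two} triangles directly. Since $\eta_j\cdot b_j\xbr_j=b_j$, one checks
\[
\xbr_j+\eta_j \;=\; \partial(\eta_j,\,b_j\xbr_j)\;+\;\partial(b_j,\,\xbr_j),
\]
and both simplices lie in $\CCC_2'(\Gg,\Ng;\ZZ)$ because $\eta_j,\xbr_j\in\Ng$. Together with your two conjugation triangles per $\xl_i$ and the $k{+}l{-}1$ merge triangles, this gives $\|c-P\|'\le 3(k{+}l)-1$, already sharper than the stated bound. Your closing parenthetical is also correct: any constant linear in $k{+}l$ suffices for Corollary~\ref{corollary trivial} and hence for Theorem~\ref{theorem=4scl=fill}.
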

\begin{proof}
Set $M = \cl_{\Gg,\Ng}(\xl_1 + \cdots + \xl_k + \xbr_1^{-1} + \cdots + \xbr_l^{-1})$. Then there exist simple  $(\Gg,\Ng)$-commutators $\yl_1, \cdots, \yl_M$ and $\gl_1, \cdots, \gl_k, \gbr_1, \cdots, \gbr_l \in G$ such that
\[ \yl_1 \cdots \yl_M = \gl_1 \xl_1 \gl_1^{-1} \cdots \gl_k \xl_k \gl_k^{-1} \cdot \gbr_1 \xbr_1^{-1} \gbr_1^{-1} \cdots \gbr_l \xbr_l^{-1} \gbr_l^{-1}.\]
By Lemma~\ref{lem='norm_cl},  we have
\begin{align*}
\| \gl_1 \xl_1 \gl_1^{-1} \cdots \gl_k \xl_k \gl_k^{-1} \cdot \gbr_1 \xbr_1^{-1} \gbr_1^{-1} \cdots \gbr_l \xbr_l^{-1} \gbr_l^{-1}\|' &= \| \yl_1 \cdots \yl_M\|' \\
&\le \| \yl_1\|' + \cdots + \| \yl_M\|' + (M - 1) \\
&\le 4M - 1.
\end{align*}
Attaching $3(k+l)$ triangles to the boundary $\gl_1 \xl_1 \gl_1^{-1} \cdots \gl_k \xl_k \gl_k^{-1} \cdot \gbr_1 \xbr_1^{-1} \gbr_1^{-1} \cdots \gbr_l \xbr_l^{-1} \gbr_l^{-1}$, we have
\begin{align*}
& \| \xl_1 + \cdots + \xl_k - \xbr_1 - \cdots - \xbr_l\|' \\
& \le 4 M - 1 + 3 (k+l) \\
& = 4 \cl_{\Gg,\Ng}(\xl_1 + \cdots + \xl_k + \xbr_1^{-1} + \cdots + \xbr_l^{-1}) - 1 + 3 (k+l).
\end{align*}
This completes the proof.
\end{proof}

\begin{corollary} \label{corollary trivial}
For $c \in \CQ(\Gg,\Ng)$, we have
\[ \fl_{\Gg,\Ng}(c) \le 4 \scl_{\Gg,\Ng}(c).\]
\end{corollary}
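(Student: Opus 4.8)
The plan is to obtain this inequality as a direct consequence of Proposition~\ref{proposition trivial}, applied to the chains gotten by raising each summand to the $n$-th power, followed by a passage to the limit.

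First I would reduce to the case $c\in\CZ(\Gg,\Ng)$. For $c\in\CQ(\Gg,\Ng)$, pick $m\in\NN$ with $mc\in\CZ(\Gg,\Ng)$; since $\fl_{\Gg,\Ng}(c)=\frac{1}{m}\fl_{\Gg,\Ng}(mc)$ and $\scl_{\Gg,\Ng}(c)=\frac{1}{m}\scl_{\Gg,\Ng}(mc)$ by Definition~\ref{definition=filling}~(2) and Definition~\ref{defn=scl_Q}, the inequality for $mc$ yields the one for $c$. So I may write $c=(\xl_1+\cdots+\xl_k)-(\xbr_1+\cdots+\xbr_l)$ with $\xl_i,\xbr_j\in\Ng$ and $c\in\BBB'_1(\Gg,\Ng;\ZZ)$; by Lemma~\ref{lemma=domain} this forces $\xl_1\cdots\xl_k\xbr_1^{-1}\cdots\xbr_l^{-1}\in[\Gg,\Ng]$, so all the quantities below are defined.

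Next, for each $n\in\NN$, Lemma~\ref{lem=n-jo} ensures that $(\xl_1^n+\cdots+\xl_k^n)-(\xbr_1^n+\cdots+\xbr_l^n)$ again lies in $\BBB'_1(\Gg,\Ng;\ZZ)$, so Proposition~\ref{proposition trivial} applies to it and gives
\[
\big\|(\xl_1^n+\cdots+\xl_k^n)-(\xbr_1^n+\cdots+\xbr_l^n)\big\|'\le 4\,\cl_{\Gg,\Ng}(\xl_1^n+\cdots+\xl_k^n+\xbr_1^{-n}+\cdots+\xbr_l^{-n})-1+3(k+l).
\]
I would then divide by $n$ and let $n\to\infty$: the left-hand side tends to $\fl_{\Gg,\Ng}(c)$ by the definition of the filling norm \eqref{eq=lim_c}; the term $\frac{1}{n}\cl_{\Gg,\Ng}(\xl_1^n+\cdots+\xl_k^n+\xbr_1^{-n}+\cdots+\xbr_l^{-n})$ tends to $\scl_{\Gg,\Ng}(\xl_1+\cdots+\xl_k+\xbr_1^{-1}+\cdots+\xbr_l^{-1})=\scl_{\Gg,\Ng}(c)$ by Definition~\ref{defn=scl_chain} and Definition~\ref{definition=scl_integral_2}; and the additive constant $-1+3(k+l)$ disappears. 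This yields $\fl_{\Gg,\Ng}(c)\le 4\scl_{\Gg,\Ng}(c)$, as desired.

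There is no serious obstacle here: the argument is a routine limiting computation once Proposition~\ref{proposition trivial} is in hand. The only point needing care is to keep track of which chains lie in $\BBB'_1(\Gg,\Ng;\ZZ)$, so that $\|\cdot\|'$ and $\cl_{\Gg,\Ng}$ make sense throughout — but this is precisely guaranteed by Lemma~\ref{lem=n-jo} together with Lemma~\ref{lemma=domain}.
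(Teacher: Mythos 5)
Your proof is correct and follows essentially the same route as the paper: reduce to integral chains, apply Proposition~\ref{proposition trivial} to the $n$-th power chain, divide by $n$, and let $n\to\infty$. You are slightly more careful than the paper's own proof in that you explicitly invoke Lemma~\ref{lem=n-jo} and Lemma~\ref{lemma=domain} to justify that all the quantities are defined, and you carry the correct constant $3(k+l)$ from Proposition~\ref{proposition trivial} (the paper's proof writes $2(k+l)$, a harmless slip since the constant vanishes in the limit).
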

\begin{proof}
It suffices to show the case that $c \in \CZ(\Gg ,\Ng)$. Let $\xl_1, \cdots, \xl_k, \xbr_1, \cdots, \xbr_l \in N$ such that
\[ c = \xl_1 + \cdots + \xl_k - \xbr_1 - \cdots - \xbr_l.\]
By Proposition~\ref{proposition trivial}, we have
\[ \| \xl_1^n + \cdots + \xl_k^n - \xbr_1^n - \cdots - \xbr_l^n \|' \le 4 \cl_{\Gg,\Ng}(\xl_1^n + \cdots + \xl_k^n + \xbr_1^{-n} + \cdots + \xbr_l^{-n}) - 1 + 2(k+l).\]
By dividing the above inequality by $n$ and taking the limits as $n\to \infty$, we have
\[ \fl_{G,N}(c) \le 4 \scl_{G,N}(c),\]
which completes the proof.
\end{proof}

\subsection{Proof of the generalized mixed Bavard duality theorem} \label{subsection 3.4}

Recall that  the goal in this section is to prove the generalized Bavard duality theorem for stable mixed commutator length. In this subsection, we complete the proof. For brevity of the description, we introduce the following notation.

\begin{definition}\label{defn=mufR}
Let $\Gg$ be a group and $\Ng$ its normal subgroup.
Let $\muf \in \QQQ(\Ng)^{\Gg}$. Then we regard $\muf$ as a map from $\CR(\Gg,\Ng)$ to $\RR$ in the following manner: for $c \in \CR(\Gg,\Ng)$, write $c = \sum\limits_{i=1}^m t_i \xl_i$, where  $m\in \ZZ_{\geq 0}$, $t_1,\ldots, t_m\in\RR$, and $\xl_1,\ldots ,\xl_m\in \Ng$. Then, define
\[ \muf(c) = \sum_{i=1}^m t_i \muf(\xl_i).\]
\end{definition}

\begin{lemma}\label{lem=H^1is0}
Let  $ \hf \in \HNRG$. Then we have $\hf(c) = 0$ for every $c \in \CR(\Gg,\Ng)$.
\end{lemma}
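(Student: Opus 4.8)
The plan is to unwind the definitions. Recall from Definition~\ref{defn=mufR} that for $c \in \CR(\Gg,\Ng)$ we write $c = \sum_{i=1}^m t_i \xl_i$ with $t_i \in \RR$ and $\xl_i \in \Ng$, and set $\hf(c) = \sum_{i=1}^m t_i \hf(\xl_i)$. Since $\hf \in \HNRG$ is in particular a genuine homomorphism $\Ng \to \RR$, the value $\hf(c) = \sum_{i=1}^m t_i \hf(\xl_i)$ makes sense and is $\RR$-linear in $c$; the first thing to check is that this does not depend on the chosen representation of $c$ as a linear combination of group elements, which is automatic because $\hf$ extends $\RR$-linearly to all of $\CCC_1(\Ng;\RR)$ and $\CR(\Gg,\Ng) \subseteq \CCC_1(\Ng;\RR)$.

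Next I would use the hypothesis $c \in \CR(\Gg,\Ng) = \CCC_1(\Ng;\RR) \cap \BBB_1'(\Gg,\Ng;\RR)$. In particular $c \in \BBB_1'(\Gg,\Ng;\RR) = \partial \CCC_2'(\Gg,\Ng;\RR)$, so there is $c' = \sum_j s_j (\gl_j, \xl_j') \in \CCC_2'(\Gg,\Ng;\RR)$ (where for each $j$, $\gl_j \in \Ng$ or $\xl_j' \in \Ng$) with $\partial c' = c$. Since $\partial(\gl,\xl') = \xl' - \gl\xl' + \gl$, applying $\hf$ (viewed as the $\RR$-linear map $\CCC_1(\Gg;\RR) \to \RR$ extending the homomorphism, wherever it is defined — here all three terms $\gl, \gl\xl', \xl'$ lie in $\Ng$ because $\Ng$ is normal and at least one of $\gl,\xl'$ lies in $\Ng$) gives $\hf(\partial(\gl,\xl')) = \hf(\xl') - \hf(\gl\xl') + \hf(\gl) = \hf(\xl') - \hf(\gl) - \hf(\xl') + \hf(\gl) = 0$ by the homomorphism property. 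Summing over $j$ yields $\hf(c) = \hf(\partial c') = 0$.

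The only genuinely delicate point — and hence what I would flag as the main obstacle — is the bookkeeping that $\hf$ is \emph{a priori} only defined on $\Ng$, so one must be careful that every group element to which $\hf$ is applied really lies in $\Ng$. For a generator $(\gl,\xl')$ of $\CCC_2'(\Gg,\Ng;\RR)$ we have by definition $\gl \in \Ng$ or $\xl' \in \Ng$; in the first case $\xl' = \gl^{-1}(\gl\xl')$, so $\gl\xl' \in \Ng$ iff $\xl' \in \Ng$, but we do not know $\xl' \in \Ng$. This is resolved by noting that $\partial c' = c \in \CCC_1(\Ng;\RR)$: the ``bad'' terms (those group elements not in $\Ng$) must cancel in the boundary, so it is cleaner to argue directly that $\hf$, extended $\RR$-linearly to the subspace $\CCC_1(\Ng;\RR)$ and then by zero on a complement, is a well-defined linear functional on $\CCC_1(\Gg;\RR)$ vanishing on $\partial\CCC_2'(\Gg,\Ng;\RR)$; alternatively, and most economically, compose $\hf \colon \Ng \to \RR$ with the quotient $\Gg \to \Gg/[\Gg,\Ng]$-type argument used in Lemma~\ref{lemma=domain}. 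I would opt for the direct computation above, spelling out that for each generator $(\gl,\xl')$ of $\CCC_2'(\Gg,\Ng;\RR)$ one has $\gl,\xl',\gl\xl' \in \Ng$: indeed if $\gl \in \Ng$ then $\gl\xl' \in \Ng \iff \xl' \in \Ng$, and if instead $\xl' \in \Ng$ then $\gl\xl' \in \Ng \iff \gl \in \Ng$; in either case, should the remaining element fail to be in $\Ng$, that generator contributes a nonzero multiple of an element of $\Gg \setminus \Ng$ to $\partial c'$, contradicting $\partial c' \in \CCC_1(\Ng;\RR)$ unless such contributions cancel — and one checks they cancel in pairs, after which the vanishing $\hf(\partial(\gl,\xl')) = 0$ goes through termwise. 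This completes the proof.
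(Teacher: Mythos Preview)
There is a genuine gap. Your argument never uses the $\Gg$-invariance of $\hf$, and the lemma is false without it. Take $\Gg$ to be the infinite dihedral group $\langle t,s\mid s^2=1,\ sts^{-1}=t^{-1}\rangle$ with $\Ng=\langle t\rangle$, and let $\hf(t^n)=n$. Then $\hf$ is a homomorphism on $\Ng$ but is not $\Gg$-invariant. The chain $c=t+t$ lies in $\CZ(\Gg,\Ng)$ because $t\cdot t=t^2=[s,t^{-1}]\in\CGN$ (Lemma~\ref{lemma=domain}), yet $\hf(c)=2\ne 0$. Since every step you wrote applies verbatim to this $\hf$, the argument cannot be correct as stated.

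Concretely, the extension of $\hf$ by zero on a complement of $\CCC_1(\Ng;\RR)$ does \emph{not} give a functional vanishing on $\partial\CCC_2'(\Gg,\Ng;\RR)$: for a generator $(\gl,\xl)$ with $\gl\notin\Ng$ and $\xl\in\Ng$, one has $\tilde\hf(\partial(\gl,\xl))=\hf(\xl)$, generically nonzero. Your ``cancel in pairs'' remark observes that the $\Gg\setminus\Ng$ summands in $\partial c'$ must total zero because $\partial c'=c\in\CCC_1(\Ng;\RR)$, but that cancellation is a relation in $\CCC_1(\Gg;\RR)$ among elements of $\Gg\setminus\Ng$ and says nothing about the $\hf$-values of the surviving $\Ng$-terms.

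If you want to salvage the boundary approach, the right extension is not by zero but to an $\Ng$-\emph{homomorphism} $\psf$ on $\Gg$: apply Proposition~\ref{prop 3.4.1} to $\hf\in\HNRG$ (so $\DD_{\Ng}(\hf)=\DD'_{\Gg,\Ng}(\hf)=0$, and this is where $\Gg$-invariance enters) to obtain $\psf$ with $\DD''_{\Gg,\Ng}(\psf)=0$, whence $\psf(\partial(\gl,\xl))=0$ for every generator of $\CCC_2'(\Gg,\Ng;\RR)$ and hence $\hf(c)=\psf(c)=0$. The paper's route is different but equivalent in spirit to the alternative you mention and then abandon: reduce to $\CZ(\Gg,\Ng)$ by flatness of $\RR$ over $\ZZ$, use Lemma~\ref{lemma=domain} to write an integral chain as $\xl_1+\cdots+\xl_k-\xbr_1-\cdots-\xbr_l$ with $\xl_1\cdots\xl_k\xbr_1^{-1}\cdots\xbr_l^{-1}\in\CGN$, and conclude via $\hf|_{\CGN}=0$ (which again uses $\Gg$-invariance: $\hf([\gl,\xl])=\hf(\gl\xl\gl^{-1})-\hf(\xl)=0$).
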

\begin{proof}
Since $\RR$ is flat over $\ZZ$, the map
\[ \CZ(\Gg, \Ng) \otimes \RR \xrightarrow{\cong} \CR(\Gg,\Ng); \quad \sum_{i=1}^m x_i \otimes r_i \mapsto \sum_{i=1}^m r_i x_i\]
is an isomorphism.  Hence, for every $\muf\in \QQQ(\Ng)^{\Gg}$, the  map $\muf\colon \CR(\Gg,\Ng)\to \RR$ (defined in Definition~\ref{defn=mufR}) is induced by the bilinear form
\[ \CZ(\Gg,\Ng) \times \RR \to \RR, \quad ( c', r) \mapsto r \cdot \muf( c').\]
 Since we are dealing with the case where $\muf=\hf\in \HNRG$,  it suffices to show that $ \hf( c') = 0$ for every $ c'\in \CZ(\Gg, \Ng)$  in order to prove the lemma. By Lemma~\ref{lemma=domain}, there exist $\xl_1, \cdots, \xl_k, \xbr_1, \cdots, \xbr_l \in \Ng$ such that
\[  c' = \xl_1 + \cdots + \xl_k - \xbr_1 - \cdots - \xbr_l, \quad \xl_1 \cdots \xl_k \xbr_1^{-1} \cdots \xbr_l^{-1} \in [\Gg,\Ng].\]
Since $ \hf \in \HNRG$, we have
\begin{align*}
\hf(c') &= \hf(\xl_1) + \cdots + \hf(\xl_k) - \hf(\xbr_1) - \cdots - \hf(\xbr_l)\\
&= \hf(\xl_1 \cdots \xl_k \xbr_1^{-1} \cdots \xbr_l^{-1}) \\
&= 0.
\end{align*}
This completes the proof.
\end{proof}

Lemma~\ref{lem=H^1is0} means that the map $\muf \colon \CR(\Gg, \Ng) \to \RR$ induced by $\muf \in \QQQ(\Ng)^{\Gg}$ is determined by the equivalence class in $\QQQ(\Ng)^{\Gg} / \HNRG$.

\begin{theorem}[generalized mixed Bavard duality theorem for rational chains] \label{generalized mixed bavard rational}
Let $\Gg$ be a group and $\Ng$ its normal subgroup.
Then, for every rational chain $c \in \CQ(\Gg,\Ng)$, the equality
\[ \scl_{\Gg,\Ng}(c) =\sup_{\muf \in \QQQ(N)^G \setminus \HNRG} \frac{|\muf(c)|}{2\DD(\muf)}\]
holds true.
\end{theorem}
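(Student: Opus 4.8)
The plan is to deduce the generalized mixed Bavard duality for rational chains from the identification $4\sclGN = \fl_{\Gg,\Ng}$ (Theorem~\ref{theorem=4scl=fill}) together with the Hahn--Banach duality already packaged in Corollary~\ref{cor=HB}, which states that for every $c\in \BBB_1'(\Gg,\Ng;\RR)$,
\[
\|c\|'=\sup_{\psf\in \rQQQ_{\Ng}(\Gg)\setminus \HHH^1_{\Ng}(\Gg;\RR)}\frac{|\psf(c)|}{\DD''_{\Gg,\Ng}(\psf)}.
\]
So first I would fix $c\in\CQ(\Gg,\Ng)$, write $c=\tfrac1m c'$ with $c'\in\CZ(\Gg,\Ng)$ integral, and recall that $\fl_{\Gg,\Ng}(c)=\lim_{n\to\infty}\frac1n\|c'^{(n)}\|'$ where $c'^{(n)}$ denotes the chain obtained by raising each group element appearing in $c'$ to the $n$-th power (this $c'^{(n)}$ lies in $\BBB_1'(\Gg,\Ng;\ZZ)$ by Lemma~\ref{lem=n-jo}). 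Both $\fl_{\Gg,\Ng}$ and the right-hand side scale correctly under $c\mapsto \tfrac1m c'$ and under $c\mapsto c^{(n)}$ (the latter because an invariant homogeneous quasimorphism $\muf$ satisfies $\muf(c^{(n)})=n\muf(c)$), so it suffices to prove the equality $4\scl=\text{(sup)}$, i.e.\ $\fl_{\Gg,\Ng}(c')=\sup_{\muf}\frac{2|\muf(c')|}{\DD(\muf)}$, for integral chains, after passing to the limit in $n$.

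The core of the argument is to relate the two suprema: the one over $\Ng$-quasimorphisms $\psf$ on $\Gg$ appearing in Corollary~\ref{cor=HB}, and the one over $\Gg$-invariant homogeneous quasimorphisms $\muf$ on $\Ng$ appearing in the statement. Here the dictionary is exactly Lemma~\ref{lemma=restriction} and Proposition~\ref{prop 3.4.1}: restriction sends an $\Ng$-quasimorphism $\psf$ on $\Gg$ to a $\Gg$-quasi-invariant quasimorphism $\psf|_\Ng$ on $\Ng$ with $\DD_\Ng(\psf|_\Ng)\le \DD''_{\Gg,\Ng}(\psf)$, and conversely any $\Gg$-quasi-invariant (in particular any $\Gg$-invariant homogeneous) quasimorphism on $\Ng$ extends to an $\Ng$-quasimorphism on $\Gg$ with $\DD''_{\Gg,\Ng}(\psf)=\DD_\Ng(\mufh)$ when $\mufh$ is homogeneous. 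I would then homogenize: given $\psf$ with finite $\DD''_{\Gg,\Ng}(\psf)$, its restriction is $\Gg$-quasi-invariant, so by Corollary~\ref{cor=homoge_qinv} the homogenization $(\psf|_\Ng)_{\mathrm h}=:\muf$ lies in $\QNG$, and by Corollary~\ref{cor=defect2bai} $\DD(\muf)\le 2\DD_\Ng(\psf|_\Ng)\le 2\DD''_{\Gg,\Ng}(\psf)$. Crucially, for a chain $c'=\sum \pm \xl_i$ with each $\xl_i\in\Ng$, the values $\psf(c')$ and $\muf(c')$ differ by a bounded amount per summand, so after the $n\to\infty$ rescaling $\lim_n \frac1n\psf(c'^{(n)})=\muf(c')$ (using Lemma~\ref{lem=homoge2}(1) applied summand-by-summand, plus the fact that $\muf$ is genuinely homogeneous and invariant, hence $\muf(c'^{(n)})=n\muf(c')$). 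Combining these bounds passes the supremum over $\psf$ into a supremum over $\muf\in\QNG$ in the limit, yielding $\fl_{\Gg,\Ng}(c')\le \sup_{\muf}\frac{2|\muf(c')|}{\DD(\muf)}$; the reverse inequality comes from the same dictionary in the other direction (extend $\muf$ to $\psf$ with equal defect, note $\psf(c'^{(n)})=n\muf(c')$ since $\muf$ is invariant and homogeneous on the $\xl_i$, apply Corollary~\ref{cor=HB} and divide by $n$). Finally, one must check that replacing $\QNG$ by $\QNG\setminus\HNRG$ does not change the supremum: this is routine because any $\hf\in\HNRG$ has $\DD(\hf)=0$ and $\hf(c')=\hf$ of the product, which vanishes by Lemma~\ref{lem=H^1is0} (equivalently Lemma~\ref{lemma=domain}), so homomorphisms contribute nothing and can be added to or subtracted from any competitor without affecting the ratio, and when $\QNG\setminus\HNRG=\emptyset$ one checks directly that $\sclGN(c')=0$.

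The main obstacle I anticipate is the careful bookkeeping in the limit $n\to\infty$: one has to make sure that the ``$+(k+l)$'' correction terms in the subadditivity lemmas (Lemma~\ref{lem=domainG}~analogues, Lemma~\ref{lemma=existence_filling}) and the per-summand bounded errors between $\psf$ and its homogenization all wash out after dividing by $n$, and that the chain $c'^{(n)}$ really does stay inside $\BBB_1'(\Gg,\Ng;\ZZ)$ so that $\|c'^{(n)}\|'$ is defined (this is Lemma~\ref{lem=n-jo}). A secondary subtlety is that $\|\cdot\|'$ on $\BBB_1'(\Gg,\Ng;\RR)$ is the \emph{quotient} filling norm rather than the $\ell^1$-norm on $\CCC_1$, so the pairing in Corollary~\ref{cor=HB} is genuinely with $\Ng$-quasimorphisms (not arbitrary functions), and one must not conflate $\muf(c')$ computed via Definition~\ref{defn=mufR} with the value of the extended $\Ng$-quasimorphism on a particular representative $2$-chain --- but Lemma~\ref{lem=H^1is0} together with $\Gg$-invariance guarantees these agree. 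Once these are handled, the equality $\sclGN(c)=\sup_{\muf\in\QNG\setminus\HNRG}\frac{|\muf(c)|}{2\DD(\muf)}$ for rational $c$ follows by assembling Theorem~\ref{theorem=4scl=fill}, Corollary~\ref{cor=HB}, and the homogenization dictionary, and dividing through by the factor of $4$ against the factor of $2$.
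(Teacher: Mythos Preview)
Your strategy for the hard direction (the inequality $\sclGN(c)\le \sup_\mu \frac{|\mu(c)|}{2\DD(\mu)}$) is exactly the paper's: pass through $4\sclGN=\fl_{\Gg,\Ng}$, apply Corollary~\ref{cor=HB} to $\|c'^{(n)}\|'$, restrict the near-optimal $\psf$ to $\Ng$, homogenize via Corollary~\ref{cor=homoge_qinv}, and use $\DD((\psf|_\Ng)_{\mathrm h})\le 2\DD_\Ng(\psf|_\Ng)\le 2\DD''_{\Gg,\Ng}(\psf)$ (Corollary~\ref{cor=defect2bai}) to trade the factor $4$ for $2$. That part is fine.

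The gap is in your ``reverse inequality.'' Extending a given $\mu\in\QNG$ to $\psf$ with $\DD''_{\Gg,\Ng}(\psf)=\DD_\Ng(\mu)$ (Proposition~\ref{prop 3.4.1}) and plugging into Corollary~\ref{cor=HB} yields
\[
\|c'^{(n)}\|'\ \ge\ \frac{|\psf(c'^{(n)})|}{\DD''_{\Gg,\Ng}(\psf)}\ =\ \frac{n|\mu(c')|}{\DD(\mu)},
\]
hence after dividing by $n$ only $\fl_{\Gg,\Ng}(c')\ge \frac{|\mu(c')|}{\DD(\mu)}$, i.e.\ $\sclGN(c')\ge \frac{|\mu(c')|}{4\DD(\mu)}$. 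That is off by a factor of~$2$ from what you need, and there is no way to recover the missing factor from the Hahn--Banach/filling-norm route alone: the loss is intrinsic to the inequality $\DD(\mufh_{\mathrm h})\le 2\DD(\mufh)$, which is used in the hard direction but has no converse available in the easy direction. The paper therefore does \emph{not} prove the easy direction via $\fl_{\Gg,\Ng}$; it argues directly. Writing $\clGN(\xl_1+\cdots+\xl_k)=M$, one picks conjugators $\gl_i$ and simple $(\Gg,\Ng)$-commutators $\yl_j$ with $\gl_1\xl_1\gl_1^{-1}\cdots\gl_k\xl_k\gl_k^{-1}=\yl_1\cdots\yl_M$, then uses $\Gg$-invariance of $\mu$ together with $|\mu(\yl_j)|\le\DD(\mu)$ (Proposition~\ref{prop=sclabove}(1)) to get $|\mu(\xl_1)+\cdots+\mu(\xl_k)|\le(2M+k-2)\DD(\mu)$; replacing $\xl_i$ by $\xl_i^n$ and letting $n\to\infty$ gives the sharp bound $|\mu(c)|\le 2\DD(\mu)\sclGN(c)$. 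You should insert this direct argument (or simply invoke the chain analogue of Proposition~\ref{prop=sclabove}(3)) in place of your ``same dictionary in the other direction'' step.
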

\begin{proof}
It suffices to show the case that $c$ belongs to $\CZ(\Gg,\Ng)$ with positive coefficients. Namely, there exist $\xl_1, \cdots, \xl_k \in \Ng$ such that $c = \xl_1 + \cdots + \xl_k \in \BBB_1'(\Gg,\Ng; \ZZ)$. We first show
\[ \scl_{\Gg,\Ng}(\xl_1 + \cdots + \xl_k) \ge \sup_{\muf \in \QQQ(\Ng)^{\Gg} \setminus \HNRG} \frac{\muf(\xl_1) + \cdots + \muf(\xl_k)}{2\DD(\muf)}.\]
To see this, we set $\cl_{\Gg,\Ng}(\xl_1 + \cdots + \xl_k) = M$. Then there exist $\gl_1, \cdots, \gl_k \in \Gg$ and  simple $(\Gg,\Ng)$-commutators $\yl_1, \cdots, \yl_M$ such that $\gl_1 \xl_1 \gl_1^{-1} \cdots \gl_k \xl_k \gl_k^{-1} = \yl_1 \cdots \yl_M$. For every $\muf \in \QQQ(\Ng)^{\Gg}$, we have
\[ \muf(\gl_1 \xl_1 \gl_1^{-1} \cdots \gl_k \xl_k \gl_k^{-1}) \sim_{(k-1)\DD(\muf)} \muf(\xl_1) + \cdots + \muf(\xl_k)\]
and
\[ \muf(\yl_1 \cdots \yl_M) \sim_{(M-1)\DD(\muf)} \muf(\yl_1) + \cdots + \muf(\yl_M) \sim_{M \DD(\muf)} 0.\]
Thus we have
\[ |\muf(\xl_1) + \cdots + \muf(\xl_k)| \le (2M + k - 2) \DD(\muf) = (2 \cl_{\Gg,\Ng}(\xl_1 + \cdots + \xl_k) + k -2) \DD(\muf).\]
Therefore, for every $n\in \NN$, we have
\[ n \cdot |\muf(\xl_1) + \cdots + \muf(\xl_k)| = |\muf(\xl_1^n) + \cdots + \muf(\xl_k^n) | \le (2 \cl_{\Gg,\Ng}(\xl_1^n + \cdots + \xl_k^n) + k -2) \DD(\muf)
\]
(here, recall Lemma~\ref{lem=n-jo}).
Hence, by dividing the above inequality by $n$ and taking the limits as $n\to \infty$, we have
\[ |\muf(\xl_1) + \cdots + \muf(\xl_k)| \le 2 \DD(\muf) \scl_{\Gg,\Ng}(\xl_1 + \cdots + \xl_k).\]

Secondly, we show the  converse of the inequality. We note that
\begin{align*}
4 \scl_{\Gg,\Ng}(\xl_1 + \cdots + \xl_k) &= \fl_{\Gg,\Ng}(\xl_1 + \cdots + \xl_k) \\
&= \lim_{n \to \infty} \frac{\| \xl_1^n + \cdots \xl_k^n \|'}{n} \\
&= \lim_{n \to \infty} \left( \sup_{\psf \in \rQQQ_{\Ng}(\Gg) \setminus \HHH^1_{\Ng}(\Gg;\RR)} \frac{|\psf (\xl_1^n + \cdots + \xl_k^n)|}{n \DD''_{\Gg,\Ng}(\psf)}\right).
\end{align*}
Here, recall from Proposition~\ref{proposition=isometry} that the continuous dual of $(\BBB_1'(\Gg,\Ng ; \ZZ), \|\cdot\|')$ is isometrically isomorphic to $(\rQQQ_{\Ng}(\Gg) / \HHH^1_{\Ng}(\Gg;\RR),\DD_{\Gg,\Ng})$; we employ Corollary~\ref{cor=HB} to obtain the last equality above. In what follows, we abbreviate $\rQQQ_{\Ng}(\Gg)$ as $\rQQQ_{\Ng}$ and $\HHH^1_{\Ng}(\Gg;\RR)$ as $\HHH^1_{\Ng}$, respectively.
For every $n,m\in \NN$, we take $\psf_{n,m} \in \rQQQ_{\Ng}(\Gg)$ such that
\[ \sup_{\psf \in \rQQQ_{\Ng}(\Gg) \setminus \HHH^1_{\Ng}(\Gg)} \frac{|\psf(\xl_1^n + \cdots + \xl_k^n)|}{n \DD''_{\Gg,\Ng}(\psf)} \sim_{m^{-1}} \frac{|\psf_{n,m}(\xl_1^n + \cdots + \xl_k^n)|}{n D''_{\Gg,\Ng}(\psf_{n,m})}.\]
Set $\mufh_{n,m}=\psf_{n,m}|_{\Ng}$; by Lemma~\ref{lemma=restriction}, this is a $\Gg$-quasi-invariant quasimorphism on $\Ng$. We define $\muf_{n,m}$ as the homogenization of $\mufh_{n,m}$; by Corollary~\ref{cor=homoge_qinv}, $\muf_{n,m}$ belongs to $\QNG$. Also, by Lemma~\ref{lem=homoge2}~(2), we have
\[ \| \mufh_{n,m} - \muf_{n,m}\|_{\infty} \le \DD(\mufh_{n,m}) \le \DD''_{\Gg,\Ng}(\psf_{n,m}).\]
Hence, we obtain that
\begin{align*}
\frac{|\muf_{n,m}(\xl_1 + \cdots + \xl_k)|}{\DD''_{\Gg,\Ng}(\psf_{n,m})} & = \frac{|\muf_{n,m}(\xl_1^n + \cdots + \xl_k^n)|}{n \DD''_{\Gg,\Ng}(\psf_{n,m})} \\
& \sim_{n^{-1}} \frac{|\mufh_{n,m}(\xl_1^n+ \cdots + \xl_k^n)|}{n \DD''_{\Gg,\Ng}(\psf_{n,m})} \\
& = \frac{|\psf_{n,m}(\xl_1^n+ \cdots + \xl_k^n)|}{n \DD''_{\Gg,\Ng}(\psf_{n,m})} \\
& \sim_{m^{-1}} \sup_{\psf \in \rQQQ_{\Ng} \setminus \HHH^1_{\Ng}} \frac{|\psf(\xl_1^n + \cdots + \xl_k^n)|}{n \DD''_{\Gg,\Ng}(\psf)}.
\end{align*}
Therefore we conclude that
\begin{align*} \lim_{n \to \infty} \frac{|\muf_{n,n}(\xl_1 + \cdots + \xl_k)|}{\DD''_{\Gg,\Ng}(\psf_{n,n})} &= \lim_{n \to \infty} \left(\sup_{\psf \in \rQQQ_{\Ng} \setminus \HHH^1_{\Ng}}  \frac{|\psf(\xl_1^n + \cdots + \xl_k^n)|}{n \DD''_{\Gg,\Ng}(\psf)}\right) \\
&= 4 \scl_{\Gg,\Ng}(\xl_1 + \cdots + \xl_k)
\end{align*}
and
\begin{align*}
\scl_{\Gg,\Ng}(c) &= \scl_{\Gg,\Ng}(\xl_1 + \cdots + \xl_k) \\
& \le \frac{1}{4} \sup_{\psf \in \rQQQ_{\Ng} \setminus \HHH^1_{\Ng}} \frac{(\psf|_{\Ng})_{\mathrm{h}}(\xl_1 + \cdots + \xl_k)}{\DD''_{\Gg,\Ng}(\psf)} \\
& \le \frac{1}{4} \sup_{\psf \in \rQQQ_{\Ng} \setminus \HHH^1_{\Ng}} \frac{(\psf|_{\Ng})_{\mathrm{h}}(\xl_1 + \cdots + \xl_k)}{\DD(\psf|_{\Ng})} \\
& \le \frac{1}{2} \sup_{\psf \in \rQQQ_{\Ng} \setminus \HHH^1_{\Ng}} \frac{(\psf|_{\Ng})_{\mathrm{h}}(\xl_1 + \cdots + \xl_k)}{\DD((\psf|_{\Ng})_{\mathrm{h}})} \\
& \le \frac{1}{2} \sup_{\muf \in \QQQ(\Ng)^{\Gg} \setminus \HNRG} \frac{|\muf(\xl_1 + \cdots + \xl_k)|}{\DD(\muf)} \\
& = \frac{1}{2} \sup_{\muf \in \QQQ(\Ng)^{\Gg} \setminus \HNRG} \frac{|\mu(c)|}{\DD(\muf)}.
\end{align*}
Here, for $\psf\in \QGN$, $(\psf|_{\Ng})_{\mathrm{h}}$ means the homogenization of $\psf|_{\Ng}$ (recall Corollary~\ref{cor=homoge_qinv}); by Corollary~\ref{cor=defect2bai}, we have $\DD((\psf|_{\Ng})_{\mathrm{h}}) \le 2 \DD(\psf|_{\Ng}) \le 2 \DD''_{\Gg,\Ng}(\psf)$.
\end{proof}

We have defined $\scl_{\Gg,\Ng}$ on $\CQ(\Gg, \Ng)$, but we have not defined $\scl_{\Gg,\Ng}$ on $\CR(\Gg, \Ng)$. However, the right-hand side of Theorem~\ref{generalized mixed bavard rational} can be interpreted on $\CR(\Gg,\Ng)$, and thus we define $\scl_{G,N}$ on $\CR(\Gg,\Ng)$ as follows.

\begin{definition} \label{definition=scl_real}
Let $\Gg$ be a group and $\Ng$ its normal subgroup.
For $c \in \CR(\Gg,\Ng)$, define the stable mixed commutator length $\scl_{\Gg,\Ng}(c)$ by
\[ \scl_{\Gg,\Ng}(c) = \sup_{\muf \in \QQQ(N)^G \setminus \HNRG} \frac{|\muf(c)|}{2\DD(\muf)}.\]
\end{definition}

Now  we are ready to close up this section with the following statement.

\begin{theorem}[generalized mixed Bavard duality theorem] \label{generalized mixed bavard}
Let $\Gg$ be a group and $\Ng$ its normal subgroup.
Then the definition of $\scl_{\Gg,\Ng}$ on $\CR(\Gg,\Ng)$ in Definition~\textup{\ref{definition=scl_real}}:
\[ \scl_{\Gg,\Ng}(c) =\sup_{\muf \in \QQQ(N)^G \setminus \HNRG} \frac{|\muf(c)|}{2\DD(\muf)}
\]
for every $c\in \CR(\Gg,\Ng)$, is consistent with that on $\CQ(\Gg,\Ng)$.
\end{theorem}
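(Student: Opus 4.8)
The plan is to reduce the statement directly to Theorem~\ref{generalized mixed bavard rational}, the generalized mixed Bavard duality for rational chains, which already carries all of the analytic content. Concretely, what must be checked is that for a chain $c \in \CQ(\Gg,\Ng)$ the real number produced by the formula in Definition~\ref{definition=scl_real} coincides with $\scl_{\Gg,\Ng}(c)$ as previously defined on $\CQ(\Gg,\Ng)$ via Definition~\ref{defn=scl_Q} (and, for integral $c$, via Definition~\ref{defn=scl_chain} and Definition~\ref{definition=scl_integral_2}).

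First I would observe that the only ingredients of the right-hand side of Definition~\ref{definition=scl_real} are, for each $\muf \in \QQQ(\Ng)^{\Gg} \setminus \HNRG$, the defect $\DD(\muf)$ and the value $\muf(c)$ defined through Definition~\ref{defn=mufR}. Since $\CQ(\Gg,\Ng)$ is a $\QQ$-subspace of $\CR(\Gg,\Ng)$ and the map $c \mapsto \muf(c)$ of Definition~\ref{defn=mufR} is $\RR$-linear, its restriction to $\CQ(\Gg,\Ng)$ is exactly the $\QQ$-linear extension of $\muf \colon \Ng \to \RR$ that underlies Theorem~\ref{generalized mixed bavard rational}; the flatness observation in the proof of Lemma~\ref{lem=H^1is0} already records this compatibility. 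Consequently, for $c \in \CQ(\Gg,\Ng)$ the supremum appearing in Definition~\ref{definition=scl_real} is literally the same expression as the one appearing on the right-hand side of Theorem~\ref{generalized mixed bavard rational}.

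Then I would simply invoke Theorem~\ref{generalized mixed bavard rational}, which asserts that this supremum equals $\scl_{\Gg,\Ng}(c)$ for every $c \in \CQ(\Gg,\Ng)$; this is exactly the desired consistency. I do not expect any genuine obstacle here: the only thing to verify is the purely formal compatibility of the pairing $([\muf],c) \mapsto \muf(c)$ between the rational and real settings, and every analytic step — the identity $4\scl_{\Gg,\Ng} = \fl_{\Gg,\Ng}$ of Theorem~\ref{theorem=4scl=fill} and the Hahn--Banach duality of Corollary~\ref{cor=HB} — is already contained in the proof of Theorem~\ref{generalized mixed bavard rational}. If one wished, one could additionally remark that Definition~\ref{definition=scl_real} thereby exhibits $\scl_{\Gg,\Ng}$ on $\CR(\Gg,\Ng)$ as a continuous, $\RR$-homogeneous extension of its restriction to $\CQ(\Gg,\Ng)$, but that refinement is not needed for the bare consistency claim.
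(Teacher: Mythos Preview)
Your proposal is correct and follows exactly the same approach as the paper: the paper's proof is a single sentence invoking Theorem~\ref{generalized mixed bavard rational}, and your argument is precisely that reduction, with a bit more detail on why the pairing $\muf(c)$ from Definition~\ref{defn=mufR} restricts correctly to rational chains.
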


\begin{proof}
This follows from the generalized mixed Bavard duality theorem for rational chains (Theorem~\ref{generalized mixed bavard rational}).
\end{proof}

\section{Further properties of mixed scl for chains}\label{sec=further}
In this section, we investigate further properties of mixed scl for chains. In Subsection~\ref{subsection 3.4.5}, we show some properties of $\scl_{G,N}$ for real chains, such as another duality theorem (Theorem~\ref{thm=another}). In Subsection~\ref{subsection 3.5}, we provide a geometric interpretation of the stable mixed commutator lengths of integral chains (Proposition~\ref{proposition=interpretation_mixed_scl})  in terms of admissible $(G,N)$-simplicial surfaces.

\subsection{Another duality theorem} \label{subsection 3.4.5}

In this subsection, we show some properties of $\scl_{G,N}$ on $\CR(\Gg,\Ng)$.

\begin{lemma}
The stable mixed commutator length $\scl_{G,N}$ on $\CR(\Gg,\Ng)$ is a seminorm.
\end{lemma}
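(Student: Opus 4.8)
The plan is to read off $\sclGN$ on $\CR(\Gg,\Ng)$ from Definition~\ref{definition=scl_real} as a pointwise supremum of seminorms and then to check that this supremum is everywhere finite. Fix $\muf\in\QNG\setminus\HNRG$. By Definition~\ref{defn=mufR} the function $\muf$ extends to an $\RR$-linear map $\muf\colon\CCC_1(\Ng;\RR)\to\RR$, so in particular its restriction to $\CR(\Gg,\Ng)$ is $\RR$-linear and well defined; hence $c\mapsto |\muf(c)|/(2\DD(\muf))$ is a seminorm on $\CR(\Gg,\Ng)$. Homogeneity and subadditivity then pass to the supremum: for $t\in\RR$ we have $\sclGN(tc)=\sup_{\muf}\frac{|t|\,|\muf(c)|}{2\DD(\muf)}=|t|\,\sclGN(c)$, and for $c_1,c_2\in\CR(\Gg,\Ng)$,
\[
\sclGN(c_1+c_2)=\sup_{\muf}\frac{|\muf(c_1)+\muf(c_2)|}{2\DD(\muf)}\le \sup_{\muf}\Bigl(\frac{|\muf(c_1)|}{2\DD(\muf)}+\frac{|\muf(c_2)|}{2\DD(\muf)}\Bigr)\le \sclGN(c_1)+\sclGN(c_2).
\]
Non-negativity is immediate. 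If $\QNG\setminus\HNRG=\emptyset$, then $\sclGN\equiv 0$ on $\CR(\Gg,\Ng)$ by the convention fixed after Theorem~\ref{theorem=Bavard}, which is trivially a seminorm; so we may assume this index set is nonempty.

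It remains to show $\sclGN(c)<\infty$ for every $c\in\CR(\Gg,\Ng)$, which is the only non-formal point. Here I would invoke the decomposition recorded in the proof of Lemma~\ref{lem=H^1is0}: the natural map $\CZ(\Gg,\Ng)\otimes_{\ZZ}\RR\to\CR(\Gg,\Ng)$ is an isomorphism, so any $c\in\CR(\Gg,\Ng)$ may be written $c=\sum_{j=1}^p r_j c_j$ with $r_j\in\RR$ and $c_j\in\CZ(\Gg,\Ng)$. For every $\muf\in\QNG\setminus\HNRG$, $\RR$-linearity of $\muf$ on chains gives
\[
\frac{|\muf(c)|}{2\DD(\muf)}\le \sum_{j=1}^p |r_j|\,\frac{|\muf(c_j)|}{2\DD(\muf)}\le \sum_{j=1}^p |r_j|\,\sclGN(c_j),
\]
and the right-hand side is independent of $\muf$. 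Since $c_j\in\CZ(\Gg,\Ng)\subseteq\CQ(\Gg,\Ng)$, Theorem~\ref{generalized mixed bavard rational} (together with Definition~\ref{definition=scl_real}) identifies $\sclGN(c_j)$ with the value of $\scl_{\Gg,\Ng}$ on the integral chain $c_j$, a limit of the form $\lim_{n\to\infty}\frac1n\cl_{\Gg,\Ng}(\cdots)$, which is finite. Taking the supremum over $\muf$ yields $\sclGN(c)\le\sum_{j=1}^p |r_j|\,\sclGN(c_j)<\infty$.

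I do not anticipate any real obstacle beyond this finiteness bound; the remaining content is simply the elementary fact that a finite-valued supremum of seminorms is a seminorm. The one caveat worth stating explicitly in the write-up is that $\muf(c)$ is genuinely well defined on $\CR(\Gg,\Ng)$ — it is the value on $c$ of the $\RR$-linear extension of $\muf\colon\Ng\to\RR$ — and that the empty-index convention is respected in the degenerate case.
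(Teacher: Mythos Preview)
Your proof is correct and, for the core seminorm axioms (absolute homogeneity and subadditivity), follows essentially the same route as the paper: both arguments read these off directly from the supremum in Definition~\ref{definition=scl_real} via the $\RR$-linearity of $\muf$ on chains. The paper's proof is slightly terser but identical in content on these two points.

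Where you go further is in verifying finiteness of $\sclGN(c)$ for $c\in\CR(\Gg,\Ng)$, which the paper's proof does not address at all. Your argument via the isomorphism $\CZ(\Gg,\Ng)\otimes_{\ZZ}\RR\cong\CR(\Gg,\Ng)$ and the bound $\sclGN(c)\le\sum_j |r_j|\,\sclGN(c_j)$ with each $\sclGN(c_j)<\infty$ by Theorem~\ref{generalized mixed bavard rational} is correct and fills a genuine (if small) gap in the paper's treatment. Strictly speaking, a seminorm is required to take finite values, so this step is not optional; the paper presumably regards it as implicit in the consistency statement of Theorem~\ref{generalized mixed bavard}, but your explicit verification is cleaner.
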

\begin{proof}
By the definition, it is clear that $\scl_{G,N}(ac) = |a| \scl_{G,N}(c)$ for $a \in \RR$ and $c \in \CR(\Gg,\Ng)$. The triangle inequality holds because
\begin{align*}
\scl_{\Gg,\Ng}(c_1 + c_2) & =  \sup_{\muf \in \QQQ(\Ng)^{\Gg} \setminus \HNRG} \frac{|\muf(c_1 + c_2)|}{2D(\muf)}\\
& \le \sup_{\muf \in \QQQ(\Ng)^{\Gg} \setminus \HNRG} \frac{|\muf(c_1)| + |\muf(c_2)|}{2D(\muf)} \\
& \le  \sup_{\muf \in \QQQ(\Ng)^{\Gg} \setminus \HNRG} \frac{|\muf(c_1)|}{2D(\muf)} + \sup_{\muf \in \QQQ(\Ng)^{\Gg} \setminus \HNRG} \frac{|\muf(c_2)|}{2D(\muf)}  \\
&= \scl_{\Gg,\Ng}(c_1) + \scl_{\Gg,\Ng}(c_2).
\end{align*}
This completes the proof.
\end{proof}

In Section~\ref{sec=gmBavard}, we study the following pairing between two $\RR$-linear spaces equipped with seminorms:
\begin{equation}\label{eq=pairing}
(\QNG/\HNRG, 2\DD)\times (\CR(\Gg,\Ng),\scl_{\Gg,\Ng})\to \RR;\ ([\muf],c)\mapsto \muf(c).
\end{equation}
The generalized mixed Bavard theorem can be interpreted that the $\scl_{\Gg,\Ng}$-seminorm of a chain $c\in \CR(\Gg,\Ng)$ is determined by the pairing \eqref{eq=pairing} (this interpretation explains the terminology of `\emph{duality theorem}'). In what follows, we prove another duality theorem (easier than Theorem~\ref{theorem=generalized_mixed_Bavard}), stating that the $2\DD$-norm of an element $[\muf]$ of $\QNG/\HNRG$ is also determined by the pairing \eqref{eq=pairing}.

\begin{theorem}[another duality theorem in the setting of the generalized mixed Bavard duality]\label{thm=another}
Let $\Gg$  be a group and $\Ng$ its normal subgroup. Then for every $\muf \in \QQQ(\Ng)^{\Gg}$, the equality
\begin{equation}\label{eq=another}
2\DD(\muf) = \sup_{c \in \CR(\Gg,\Ng),\ \scl_{G,N}(c) \ne 0} \frac{|\muf(c)|}{\scl_{\Gg,\Ng}(c)}
\end{equation}
holds true.
\end{theorem}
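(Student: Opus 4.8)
The plan is to prove the two inequalities in \eqref{eq=another} separately, and for each direction to reduce to a statement already available in the excerpt. Write $L$ for the left-hand side $2\DD(\muf)$ and $R$ for the right-hand side supremum. First I would dispose of the degenerate case: if $\DD(\muf)=0$, then $\muf\in\HNRG$, so $\muf(c)=0$ for every $c\in\CR(\Gg,\Ng)$ by Lemma~\ref{lem=H^1is0}, and both sides of \eqref{eq=another} are $0$ (with the usual convention that the supremum over an empty index set, or of the zero function, is $0$). So assume $\DD(\muf)>0$ from now on.

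For the inequality $R\le L$: this is the ``easy direction'', and it is essentially a restatement of the generalized mixed Bavard duality (Theorem~\ref{generalized mixed bavard rational}, extended to real chains via Definition~\ref{definition=scl_real}). Indeed, for any $c\in\CR(\Gg,\Ng)$ with $\scl_{\Gg,\Ng}(c)\ne 0$, Definition~\ref{definition=scl_real} gives $\scl_{\Gg,\Ng}(c)\ge \dfrac{|\muf(c)|}{2\DD(\muf)}$ (taking $\muf$ as one particular competitor in the supremum defining $\scl_{\Gg,\Ng}(c)$; note $\muf\notin\HNRG$ since $\DD(\muf)>0$). Rearranging yields $\dfrac{|\muf(c)|}{\scl_{\Gg,\Ng}(c)}\le 2\DD(\muf)=L$, and taking the supremum over all such $c$ gives $R\le L$.

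For the reverse inequality $L\le R$: I would exploit Proposition~\ref{prop=sclabove}~(1), which states that $\DD(\muf)=\sup_{\gl\in\Gg,\ \xl\in\Ng}|\muf([\gl,\xl])|$. Fix $\varepsilon>0$ and choose $\gl\in\Gg$, $\xl\in\Ng$ with $|\muf([\gl,\xl])|\ge\DD(\muf)-\varepsilon$. The obstacle here is that the single simple mixed commutator $[\gl,\xl]$, viewed as an element of $\CGN$, may have $\scl_{\Gg,\Ng}([\gl,\xl])$ much smaller than $|\muf([\gl,\xl])|/(2\DD(\muf))$ — evaluating $\muf$ on a group element only sees $\muf$ up to its defect, whereas the defect is a ``second-order'' quantity. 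The standard fix, mirroring the proof of Proposition~\ref{prop=commutator_bavard}, is to look instead at the chain $c_0 = \xl + \gl\xl^{-1}\gl^{-1} - 1_\Gg \in \CR(\Gg,\Ng)$ (or more robustly the chain $\gl_1+\gl_2-\gl_1\gl_2$-type combination adapted to the mixed setting, using that $\muf$ is $\Gg$-invariant so $\muf(\gl\xl^{-1}\gl^{-1})=\muf(\xl^{-1})=-\muf(\xl)$). Concretely I would take the chain $c = \xl\gl - \xl - \gl$ built from a near-defect-achieving triple and observe: on one hand $|\muf(c)| = |\muf(\xl\gl)-\muf(\xl)-\muf(\gl)|$ can be made $\ge\DD(\muf)-\varepsilon$ by Proposition~\ref{prop=sclabove}~(1) combined with the argument of Proposition~\ref{prop=commutator_bavard} (passing to $n$-th powers $(\xl\gl)^{-2n}\xl^{2n}\gl^{2n}$ to realise the defect as a value on a product of mixed commutators, which requires checking the Lemma~\ref{lem=commutatorcal}-type identity stays inside the mixed-commutator world — this is the main point to verify); on the other hand the generalized mixed Bavard duality applied \emph{in the other direction} bounds $\scl_{\Gg,\Ng}(c)$: from $4\scl_{\Gg,\Ng}=\fl_{\Gg,\Ng}$ (Theorem~\ref{theorem=4scl=fill}) and the elementary estimate $\|(\xl\gl)-\xl-\gl\|'\le\|\partial(\xl,\gl)\|' \le 1$ together with Lemma~\ref{lem=n-jo}, one gets $\scl_{\Gg,\Ng}(c)\le \tfrac14$. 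Combining, $\dfrac{|\muf(c)|}{\scl_{\Gg,\Ng}(c)}\ge \dfrac{\DD(\muf)-\varepsilon}{1/4} = 4\DD(\muf)-4\varepsilon$, which would even overshoot; so I would instead be slightly more careful and use the sharpened chain whose $\scl_{\Gg,\Ng}$ is exactly $\tfrac12$ rather than $\tfrac14$ — the natural candidate, as in Calegari's treatment, is built so that $\scl_{\Gg,\Ng}(c)=\tfrac12|\muf(c)|/\DD(\muf)$ in the limit, giving precisely $L=2\DD(\muf)\le R$.

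The cleanest route, which I would ultimately adopt to avoid bookkeeping of constants, is duality-theoretic: Proposition~\ref{proposition=isometry} identifies $(\QNG/\HNRG$-type space, $\DD'')$ with the dual of $(\BBB'_1(\Gg,\Ng;\RR),\|\cdot\|')$, and Theorem~\ref{thm=HB} (Hahn--Banach) then says the norm of $[\muf]$ is computed by testing against chains; transporting this through $4\scl_{\Gg,\Ng}=\fl_{\Gg,\Ng}$ (Theorem~\ref{theorem=4scl=fill}) and the homogenization-defect comparison $\DD((\psf|_\Ng)_{\mathrm h})\le 2\DD''_{\Gg,\Ng}(\psf)$ (Corollary~\ref{cor=defect2bai}) converts the statement ``$\|\,[\muf]\,\|$ is detected by $\|\cdot\|'$-testing'' into ``$2\DD(\muf)$ is detected by $\scl_{\Gg,\Ng}$-testing'', which is exactly \eqref{eq=another}. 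The main obstacle is matching the factor of $2$: the passage from $N$-quasimorphisms on $\Gg$ to invariant homogeneous quasimorphisms on $\Ng$ loses a factor $2$ in the defect (Corollary~\ref{cor=defect2bai}), and this must be shown to be offset exactly by the factor $4$ versus the factor $2\cdot 2$ appearing in $4\scl_{\Gg,\Ng}=\fl_{\Gg,\Ng}$ versus the $2\DD$ normalisation in the pairing \eqref{eq=pairing}. Verifying that these factors cancel cleanly — i.e.\ that the near-optimal $\psf_{n,n}$ produced in the proof of Theorem~\ref{generalized mixed bavard rational} can be run in reverse to produce a near-optimal chain — is the technical heart of the argument, and I expect it to be routine once the proof of Theorem~\ref{generalized mixed bavard rational} is in hand, since it uses the same ingredients in the same configuration.
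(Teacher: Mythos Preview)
Your handling of the degenerate case and of the inequality $R\le L$ is correct and matches the paper. The trouble is entirely with $L\le R$: your ``obstacle'' is a phantom, and your proposed workarounds are either illegal or vacuous. First, the chain $c=\xl\gl-\xl-\gl$ is not in $\CR(\Gg,\Ng)$ unless $\gl\in\Ng$, since Definition~\ref{definition=CA} requires $c\in\CCC_1(\Ng;\RR)$. Second, your alternative $c_0=\xl+\gl\xl^{-1}\gl^{-1}-1_\Gg$ does lie in $\CR(\Gg,\Ng)$, but by $\Gg$-invariance and homogeneity $\muf(c_0)=\muf(\xl)-\muf(\xl)-0=0$, so it contributes nothing. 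Third, your stated obstacle, that $\scl_{\Gg,\Ng}([\gl,\xl])$ might be ``much smaller than $|\muf([\gl,\xl])|/(2\DD(\muf))$'', is literally impossible: the $R\le L$ direction you already proved gives $\scl_{\Gg,\Ng}(c)\ge|\muf(c)|/(2\DD(\muf))$ for every $c$. Finally, the duality-theoretic route through Proposition~\ref{proposition=isometry} and $\fl_{\Gg,\Ng}=4\,\scl_{\Gg,\Ng}$ does not produce the correct constant: the factor-$2$ loss in Corollary~\ref{cor=defect2bai} is one-sided, and there is no mechanism here forcing equality.

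The paper's argument for $L\le R$ is a one-liner you overlooked. Apply Proposition~\ref{prop=commutator_bavard} to $\muf$ as an element of $\QQQ(\Ng)$: there exist $\xl_1,\xl_2\in\Ng$ with $|\muf([\xl_1,\xl_2])|\ge\DD(\muf)-\varepsilon>0$. Then the element $c=[\xl_1,\xl_2]\in[\Gg,\Ng]$ is a single simple $(\Gg,\Ng)$-commutator, so the mixed Bavard duality gives both $\scl_{\Gg,\Ng}(c)>0$ (since $\muf(c)\ne0$) and $\scl_{\Gg,\Ng}(c)\le\tfrac12$ (since $|\muf'(c)|\le\DD(\muf')$ for every $\muf'\in\QNG$). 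Hence $|\muf(c)|/\scl_{\Gg,\Ng}(c)\ge 2(\DD(\muf)-\varepsilon)$, and letting $\varepsilon\searrow0$ gives $L\le R$.
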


\begin{proof}
It is easy to see that the right-hand side of \eqref{eq=another} equals $0=2\DD(\muf)$ if $\muf\in \HNRG$. Hence, in what follows, we assume that $\muf\in \QQQ(\Ng)^{\Gg} \setminus \HNRG$.

Let $c \in \CR(\Gg,\Ng)$ such that $\scl_{G,N}(c) \ne 0$. By Definition~\ref{definition=scl_real}, we have
\begin{align} \label{second duality 1}
\scl_{\Gg,\Ng}(c) \ge \frac{|\muf(c)|}{2\DD(\muf)}.
\end{align}
Hence we have
\[ 2 \DD(\muf) \ge \frac{|\muf(c)|}{\scl_{\Gg,\Ng}(c)}.\]
In what follows, we will show that
\begin{equation}\label{eq=anothersup}
2\DD(\muf) \leq  \sup_{c \in \CR(\Gg,\Ng),\ \scl_{G,N}(c) \ne 0} \frac{|\muf(c)|}{\scl_{\Gg,\Ng}(c)}.
\end{equation}
Let $\varepsilon$ be an arbitrary positive number such that $D(\mu) - \varepsilon > 0$. Then by the definition of $\DD(\muf)$, there exist $\xl_1, \xl_2 \in \Ng$ such that
\[
\DD(\mu) - \varepsilon \le |\muf(\xl_1\xl_2)-\muf(\xl_1)-\muf(\xl_2)|.
\]
Set $c_{\varepsilon}\in \CR(\Gg,\Ng)$ by $c_{\varepsilon}=\xl_1\xl_2-\xl_1-\xl_2$. Note that by Lemma~\ref{lem=commutatorcal}, 
\[
0<\scl_{\Gg,\Ng}(c_{\varepsilon})\leq \frac{1}{2}.
\]
Hence, we have
\[
2 (\DD(\muf) - \varepsilon) \le \frac{|\mu(c_{\varepsilon})|}{\scl_{\Gg,\Ng}(c_{\varepsilon})} .
\]
By letting $\varepsilon\searrow 0$, we obtain \eqref{eq=anothersup}. 
By combining \eqref{second duality 1} and \eqref{eq=anothersup}, we complete the proof of \eqref{eq=another}.
\end{proof}

\begin{remark}
In fact, we have the following refinement of \eqref{eq=another}:
\begin{equation}\label{eq=another2}
2\DD(\muf) = \sup_{\xl_1,\xl_2 \in \Ng,\ \scl_{G,N}([\xl_1,\xl_2]) \ne 0} \frac{|\muf([\xl_1,\xl_2])|}{\scl_{\Gg,\Ng}([\xl_1,\xl_2])}.
\end{equation}
To see \eqref{eq=another2}, let $\varepsilon \in (0,D(\mu))$. By Proposition~\ref{prop=commutator_bavard} there exist $\xl_1, \xl_2 \in \Ng$ such that
\[ \DD(\mu) - \varepsilon \le |\muf([\xl_1, \xl_2])|.\]
Then for such a pair $(\xl_1, \xl_2)$,  $\muf([\xl_1, \xl_2])$ is non-zero, and  the mixed Bavard duality theorem implies that
\[
0 < \scl_{\Gg,\Ng}([\xl_1, \xl_2]) \le \frac{1}{2}.
\]
Hence, we have
\[ 2 (\DD(\muf) - \varepsilon) \le \frac{|\mu([\xl_1,\xl_2])|}{\scl_{\Gg,\Ng}([\xl_1,\xl_2])}.\]
By letting $\varepsilon\searrow 0$, we have
\begin{align} \label{second duality 2}
2\DD(\muf) \le \sup_{\xl_1,\xl_2 \in \Ng,\ \scl_{G,N}([\xl_1,\xl_2]) \ne 0} \frac{|\muf([\xl_1,\xl_2])|}{\scl_{\Gg,\Ng}([\xl_1,\xl_2])}.
\end{align}
Now, \eqref{second duality 1} and \eqref{second duality 2} yield \eqref{eq=another2}.
\end{remark}

\subsection{Geometric interpretation of $\scl_{\Gg,\Ng}$} \label{subsection 3.5}
A geometric interpretation of the ordinary $\scl$ of integral chains  in terms of admissible surfaces  is known (\cite[Proposition~2.74]{Calegari}). Here we provide a geometric interpretation of the mixed $\scl$ of integral chains (Proposition~\ref{proposition=interpretation_mixed_scl})  in terms of admissible $(G,N)$-simplicial surfaces (Definition~\ref{definition=admissible}).

The following result is well known.  Recall from the introduction that we assume that surfaces are orientable, unless otherwise stated.

\begin{lemma}[see {\cite[Lemma~1.12]{Calegari}}] \label{lemma Calegari cover}
Let $m,p$ be integers at least $2$ such that $m$ and $p-1$ are coprime. Let $S$ be an \textup{(}orientable\textup{)} connected compact surface with $p$ boundary components. Then there exists an $m$-fold cover $\hat{S}$ over $S$ whose number of boundary components is $p$.
\end{lemma}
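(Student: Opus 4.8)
The plan is to construct $\hat S$ via its monodromy representation into a symmetric group, arranging the monodromy around each boundary circle to be a single long cycle; this is essentially the argument behind \cite[Lemma~1.12]{Calegari}.

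First I would fix the topological model. Since $S$ is a connected compact orientable surface with $p$ boundary components, it is homeomorphic to a closed orientable surface of some genus $\genus\ge 0$ with $p$ open disks removed. Hence $\pi_1(S)$ is free, with a free generating set $a_1,b_1,\dots,a_\genus,b_\genus,c_1,\dots,c_{p-1}$ in which, for $1\le i\le p-1$, the $i$-th boundary circle represents the conjugacy class of $c_i$, while the $p$-th boundary circle represents the conjugacy class of $c_p:=\bigl(\prod_{i=1}^{\genus}[a_i,b_i]\,c_1\cdots c_{p-1}\bigr)^{-1}$. I would also recall the classical dictionary: connected $m$-fold covers of $S$ correspond (up to conjugacy in $\mathfrak{S}_m$) to homomorphisms $\rho\colon\pi_1(S)\to\mathfrak{S}_m$ with image transitive on $\{1,\dots,m\}$, and for such a $\rho$ the number of boundary circles of the cover lying over a given boundary circle $\gamma$ of $S$ equals the number of disjoint cycles of $\rho(\gamma)\in\mathfrak{S}_m$ (counting fixed points as $1$-cycles), since the components of the preimage of $\gamma$ are the orbits of $\langle\rho(\gamma)\rangle$ on the fiber.

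The key step is to choose $\rho$ so that the monodromy around \emph{every} boundary circle is an $m$-cycle. Let $\sigma=(1\,2\,\cdots\,m)\in\mathfrak{S}_m$. Since $\pi_1(S)$ is free on the generators above, there is a unique homomorphism $\rho$ with $\rho(a_i)=\rho(b_i)=\mathrm{id}$ for all $i$ and $\rho(c_j)=\sigma$ for $1\le j\le p-1$; then $\rho(c_p)=\sigma^{-(p-1)}$. A power $\sigma^k$ of an $m$-cycle is again an $m$-cycle exactly when $\gcd(k,m)=1$, so the coprimality of $m$ and $p-1$ guarantees that $\sigma^{p-1}$, hence also its inverse $\rho(c_p)$, is an $m$-cycle. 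The image of $\rho$ contains the $m$-cycle $\sigma$, so it is transitive on $\{1,\dots,m\}$; the associated $m$-fold cover $\hat S$ is therefore connected, and, being a finite cover of a compact orientable surface, is itself a compact orientable surface.

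Finally I would count boundary components: over each of the first $p-1$ boundary circles the monodromy is the single $m$-cycle $\sigma$, contributing one boundary circle of $\hat S$ apiece, and over the last boundary circle the monodromy is the single $m$-cycle $\sigma^{-(p-1)}$, contributing one more; hence $\hat S$ has $(p-1)+1=p$ boundary components, as desired. I do not anticipate a serious obstacle: the only point that deserves care is the covering-space dictionary identifying boundary components of the cover with cycles of the boundary monodromy, and this is entirely classical; everything else reduces to the elementary fact that $\sigma^{p-1}$ is a long cycle precisely under the hypothesis $\gcd(m,p-1)=1$.
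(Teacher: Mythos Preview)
Your proof is correct. The paper does not give its own proof of this lemma: it simply states the result as ``well known'' and refers to \cite[Lemma~1.12]{Calegari}. Your argument via the monodromy representation into $\mathfrak{S}_m$, sending the handle generators to the identity and the first $p-1$ boundary generators to a fixed $m$-cycle $\sigma$, is exactly the standard proof behind that reference; the hypothesis $\gcd(m,p-1)=1$ is used precisely where you use it, to ensure that the induced monodromy $\sigma^{-(p-1)}$ around the last boundary circle is again a single $m$-cycle.
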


\begin{definition}[see \cite{Calegari}]
For a compact orientable surface $S$, set
\[ \chi^{-}(S) = \sum \chi(S').\]
Here, the sum in the right-hand side of the equality is taken for every connected component $S'$ which is not homeomorphic to a sphere.
\end{definition}

\begin{proposition}[geometric interpretation of the stable mixed commutator lengths of integral chains]\label{proposition=interpretation_mixed_scl}
Let $\Gg$ be a group and $\Ng$ its normal subgroup. Let $m\in \NN$ and  $x_1, \cdots, x_m \in N$. Assume that $x_1 \cdots x_m \in [G,N]$. Then we have
\[ \scl_{G,N}(x_1 + \cdots + x_m) = \inf_S \frac{- \chi^- (S)}{2n(S)}.\]
Here, in the supremum in the right-hand side of the equality, $S$ runs through all admissible $(G,N)$-simplicial surfaces with respect to $x_1, \cdots, x_m$.
\end{proposition}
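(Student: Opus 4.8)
The plan is to prove the two inequalities
\[
\scl_{\Gg,\Ng}(\xl_1 + \cdots + \xl_m) \le \inf_S \frac{-\chim(S)}{2n(S)}
\qquad\text{and}\qquad
\scl_{\Gg,\Ng}(\xl_1 + \cdots + \xl_m) \ge \inf_S \frac{-\chim(S)}{2n(S)}
\]
separately, in each case passing between an admissible $(\Gg,\Ng)$-simplicial surface and a $(\Gg,\Ng)$-simplicial surface whose boundary components are the powers $\xl_1^{n}, \cdots, \xl_m^{n}$, and then invoking the geometric interpretation of $\cl_{\Gg,\Ng}$ for integral chains (Theorem~\ref{theorem characterization 2}) together with Definition~\ref{defn=scl_chain}.

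For the inequality ``$\le$'', I would start with an arbitrary admissible $(\Gg,\Ng)$-simplicial surface $S$ with respect to $\xl_1, \cdots, \xl_m$, with $n=n(S)$ and boundary components mapping with total degree $n$ onto the $i$-th circle. As in the earlier lemmas of Subsection~\ref{subsection 3.2}, after discarding sphere components and capping off disk/sphere components (which only increases $\chim$ toward $0$), and after possibly passing to finite covers to make each boundary component wrap the corresponding $\xl_i$ exactly $n$ times (this is where Lemma~\ref{lemma Calegari cover} enters: one arranges, component by component, that boundary circles have the right degrees), one obtains from $S$ a genuine $(\Gg,\Ng)$-simplicial surface whose boundary components are $\xl_1^n, \cdots, \xl_m^n$ and whose genus $g$ satisfies $-\chim(S)\geq 2g - 2 + m$ up to the usual $O(1)$-in-$m$ error. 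Feeding this into Theorem~\ref{theorem characterization 2}, $\cl_{\Gg,\Ng}(\xl_1^n + \cdots + \xl_m^n)\le g$, hence $\cl_{\Gg,\Ng}(\xl_1^n + \cdots + \xl_m^n)\le \tfrac12(-\chim(S)) + O(m)$; dividing by $n$ and letting $n\to\infty$ (using Definition~\ref{defn=scl_chain} and Lemma~\ref{lemma subadditive cl}) gives $\scl_{\Gg,\Ng}(\xl_1+\cdots+\xl_m)\le \tfrac{-\chim(S)}{2n(S)}$, and taking the infimum over $S$ finishes this direction.

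For the reverse inequality ``$\ge$'', I would run the construction backwards: given $n$, let $k(n) = \cl_{\Gg,\Ng}(\xl_1^n + \cdots + \xl_m^n)$ and take, via Theorem~\ref{theorem characterization 2}, a $(\Gg,\Ng)$-simplicial surface of genus $k(n)$ whose boundary components are $\xl_1^n, \cdots, \xl_m^n$. Viewing each boundary circle as a degree-$n$ map onto the loop representing $\xl_i$ produces an admissible $(\Gg,\Ng)$-simplicial surface $S_n$ with $n(S_n)=n$, and $-\chim(S_n) = 2k(n) - 2 + m$. Hence
\[
\inf_S \frac{-\chim(S)}{2n(S)} \le \frac{-\chim(S_n)}{2n(S_n)} = \frac{2k(n) - 2 + m}{2n} = \frac{k(n)}{n} + \frac{m-2}{2n},
\]
and letting $n\to\infty$ gives $\inf_S \tfrac{-\chim(S)}{2n(S)} \le \scl_{\Gg,\Ng}(\xl_1+\cdots+\xl_m)$.

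The main obstacle I anticipate is the degree-matching step in the ``$\le$'' direction: an admissible surface only constrains the \emph{total} degree of $\partial S$ over each circle, not the individual boundary-component degrees, so one must either pass to covers (Lemma~\ref{lemma Calegari cover}, which requires a coprimality hypothesis and so forces one to work along a suitable subsequence of $n$, e.g.\ $n$ with $n$ and the relevant quantities coprime, and then use multiplicativity $\scl_{\Gg,\Ng}(\sum \xl_i^n) = n\,\scl_{\Gg,\Ng}(\sum \xl_i)$ to conclude), or carefully modify the surface by attaching annuli/pairs of pants to redistribute boundary degrees while controlling the change in $\chim$. Keeping track of the $O(m)$ error terms and verifying that they wash out after dividing by $n(S)\to\infty$ is the bookkeeping core of the argument; everything else follows the template already established for $\cl_{\Gg,\Ng}$ and $\scl_{\Gg,\Ng}$ of chains in Subsections~\ref{subsection 3.1} and \ref{subsection 3.2}.
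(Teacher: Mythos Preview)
Your overall strategy matches the paper's: both inequalities go through Theorem~\ref{theorem characterization 2}, the ``$\ge$'' direction is exactly as you describe (and your Euler-characteristic computation $-\chim(S_n)=2k(n)-2+m$ is in fact cleaner than what is written in the paper), and the ``$\le$'' direction is handled by passing to covers via Lemma~\ref{lemma Calegari cover} together with $O(m)$ triangle-attachments.

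The one point that needs sharpening is in your ``$\le$'' argument: for a fixed admissible $S$ the number $n=n(S)$ is fixed, so ``dividing by $n$ and letting $n\to\infty$'' makes no sense as stated, and passing to an $M$-fold cover multiplies $\chim$ by $M$, so your claimed bound $-\chim(S)\ge 2g-2+m$ for the genus of the \emph{cover} cannot hold. The paper's fix is exactly what your obstacle paragraph gestures at but does not quite pin down: keep $S$ fixed, introduce a second parameter $M$, take a $2M$-fold cover $T$ of $S$ (a connected double cover of each component followed by an $M$-fold cover from Lemma~\ref{lemma Calegari cover}, with $M$ chosen coprime to all the quantities $\#\pi_0(\partial S_j)-1$), so that $\chim(T)=2M\,\chim(S)$; then attach $O(m)$ triangles to merge the boundary circles into exactly $m$ components labelled $x_1^{2Mn(S)},\ldots,x_m^{2Mn(S)}$, apply Theorem~\ref{theorem characterization 2}, divide by $2Mn(S)$, and let $M\to\infty$. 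The multiplicativity of $\chim$ under covers is precisely what makes $\frac{-\chim(S)}{2n(S)}$ survive in the limit while the $O(m)$ error washes out.
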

\begin{proof}
Set $\cl'_{\Gg,\Ng}(\xl_1 + \cdots + \xl_m) = \cl_{\Gg,\Ng}(\xl_1 + \cdots + \xl_m) + (m-1)$. Note that Lemma~\ref{lemma subadditive cl}, together with Fekete's lemma (Lemma~\ref{lem=Fekete}), implies that
\begin{align*}
\scl_{\Gg,\Ng}(\xl_1 + \cdots + \xl_m) &= \inf_n \frac{\cl'_{\Gg,\Ng}(\xl_1^n + \cdots + \xl_m^n)}{n} \\
&= \inf_n \frac{\cl_{\Gg,\Ng}(\xl_1^n + \cdots + \xl_m^n) + (m-1)}{n}.
\end{align*}

If $\cl_{\Gg,\Ng}(\xl_1^n + \cdots + \xl_m^n) = k$, then there exists an admissible $(\Gg,\Ng)$-simplicial surface $S_0$ with respect to $\xl_1, \cdots, \xl_m$ such that $n(S_0) = n$. Since $2k-1 = - \chi^-(S_0)$, we have
\[ \frac{\cl_{\Gg,\Ng}(\xl_1^n + \cdots + \xl_m^n)}{n} = \frac{k}{n} \ge \frac{2k-1}{2n} = \frac{- \chi^-(S_0)}{2 n(S_0)} \geq  \inf_S \frac{- \chi^-(S)}{2n(S)}.\]
Thus, by taking the limits as $n\to \infty$, we have
\[ \scl_{\Gg,\Ng}(\xl_1 + \cdots + \xl_m) \ge \inf_S \frac{- \chi^-(S)}{n(S)}.\]

Next we show the inequality
\[ \scl_{\Gg,\Ng}(\xl_1 + \cdots + \xl_m) \le \inf_S \frac{- \chi^-(S)}{n(S)}.\]
Let $S$ be an admissible $(\Gg,\Ng)$-simplicial surface with respect to $\xl_1, \cdots, \xl_m$. Let $S_1, \cdots, S_n$ be the connected components of $S$. Without loss of generality, we may assume that $S_j$ for each $j$ has a boundary and is not a sphere. Let $S'_j$ be the connected double cover of $S_j$ such that $\partial S'_j$ is homeomorphic to  $\partial S_j \sqcup \partial S_j$.
 Let $M$ be an integer at least two which is coprime to $\# \pi_0(\partial S_j) - 1$ for every $j \in \{ 1, \cdots, n\}$.
By Lemma~\ref{lemma Calegari cover}, there exists an $M$-fold connected cover $T_j$ of $S'_j$ such that the number of boundary components of $T_j$ coincides with that of $S'_j$
. Set $T = \coprod\limits_{i=1}^n T_j$. Then $T$ is a $2M$-fold cover of $S$. The triangulation of $S$ gives rise to the triangulation of $T$, and the $(\Gg,\Ng)$-labelling of $S$ gives rise to the $(G,N)$-labelling of $T$. Thus $T$ is a $(\Gg,\Ng)$-simplicial surface.

Now we would like to obtain an admissible $(\Gg,\Ng)$-surface $T'$ with respect to $\xl_1, \cdots, \xl_m$ by attaching $2$-simplices to $T$ in such a way that $n(T') = 2 M n(S)$. Let $T'$ be the $(\Gg,\Ng)$-simplicial surface obtained by taking the following two operations.

\begin{itemize}
\item Each boundary component of $T$ consists of $M$ edges labelled by $\xl_i$. Then attach $2$-simplices depicted in the left of Figure~\ref{figure Mx}.

\begin{figure}[t]
\centering
\includegraphics[width=9cm]{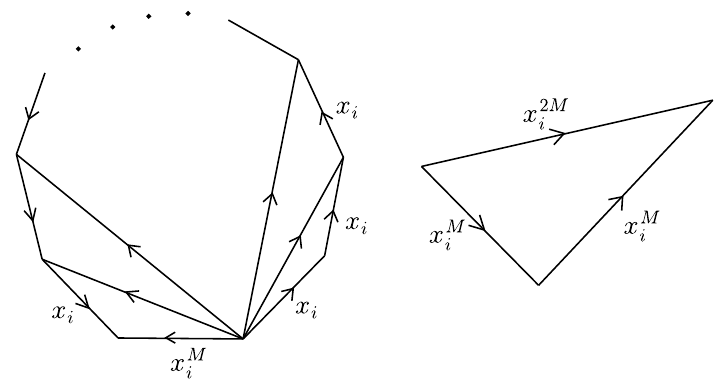}
\caption{$Mx_i$ to $x_i^M$ and $2x_i^M$ to $x_i^{2M}$} \label{figure Mx}
\end{figure}

\item After that, there exist two boundaries labelled by $\xl_i^M$. Then identify the vertices of these boundaries and attach a $2$-simplex in such a way that the boundary has one edge labelled by $\xl_i^{2M}$ (see the right of Figure~\ref{figure Mx}). Note that this operation changes $\chi^-$ at most $1$. Hence if we do this operations for all $i$, then $\chi^-$ changes at most $m$.
\end{itemize}

Then $n(T') = 2M n(S)$. Note that $\chi^-(T) = 2M \chi^-(S)$ and $\chi^-(T')$ differs from $\chi^-(T)$ at most $m$. Hence we have
\begin{align*}
\frac{\cl_{\Gg,\Ng}(\xl_1^{2M} + \cdots + \xl_m^{2M})}{2M} &\ge \frac{- \chi^-(T') - m}{2n(T)} \\
&\ge \frac{-2M \chi^-(S) - 2m}{4M n(S)} \\
&= \frac{- \chi^- (S)}{2n(S)} - \frac{m}{2 M n(S)}.
\end{align*}
Since $M$ can be taken to be arbitrary large, we have
\[ \scl_{\Gg,\Ng}(\xl_1 + \cdots + \xl_m) \ge \inf_S \frac{-\chi^-(S)}{n(S)}.\]
This completes the proof.
\end{proof}




\section{The space $\mathrm{W}(G,N)$ of non-extendable quasimorphisms}
\label{sec=W}

In this section and the next section, if we write cochain groups and group cohomology (ordinary and bounded)  in the setting of Subsection~\ref{subsec=cohomology} for $A=\RR$ (equipped with the standard absolute value $|\cdot|$), then we omit to indicate the coefficient group $A$. For instance, we abbreviate $\HHH^*(-;\RR)$ as $\HHH^*(-)$ and $\HHH^*_b(-;\RR)$ as $\HHH^*_b(-)$, respectively. As mentioned in the introduction, the space $\WGN$ is defined as follows.

\begin{definition}[{\cite{KKMMM1,KKMMMsurvey}}]\label{defn=W}
Let $\Gg$ be a group and $\Ng$ its normal subgroup. Then, we define
\[
\WGN=\Coker \Big(\overline{i^{\ast}}\colon \QG/\HG\to \QNG/\HNG \Big).
\]
In other words, by using the $\RR$-linear map $i^{\ast}\colon \QG\to \QNG$, we can write\begin{equation}\label{eq=nonext}
\WGN=\QNG /\big(\HNG +i^{\ast}\QG\big).
\end{equation}
\end{definition}

Here, we note that $i$ induces $i^{\ast}\colon \QG\to \QNG$ since $\QG=\QG^{\Gg}$ (Lemma~\ref{lem=invariant}). By \eqref{eq=nonext}, $\WGN$ may be regarded as a space of `unobvious' $\Gg$-invariant homogeneous quasimorphisms in the following sense: elements in $\HNG$ are `obvious' as quasimorphisms, for they may be algebraically understood. Also, an element $\muf$ of  $\QNG$ belongs to $i^{\ast}\QG$ if and only if $\muf$ is \emph{extendable} to a homogeneous quasimorphism on $\Gg$; in this case, $\muf$ is $\Gg$-invariant for an `obvious' reason because of  Lemma~\ref{lem=invariant}.

\subsection{$\mathrm{W}(G,N)$ and scl}\label{subsec=Wscl}


In this subsection, we discuss some relationship between $\WGN$ and the comparison problem of $\scl_{\Gg}$ and $\scl_{\Gg,\Ng}$ on $\CGN$.
The first topic of this subsection is the following theorem. Proposition~\ref{prop=bilip_criterion} below is one of the keys; this proposition may be regarded as an application of the mixed Bavard duality theorem (Theorem~\ref{theorem=mixed_Bavard}). We refer the reader to \cite[Section~8]{KKMMMsurvey} and \cite{KKMMMcg} for further directions on this comparison problem.

\begin{theorem}[{\cite[Theorem 2.1 (2)]{KKMMM1}}, comparison between $\scl_{\Gg}$ and $\scl_{\Gg,\Ng}$]\label{thm=comparison_amenable}
Let $\Gg$ be a group and $\Ng$ its normal subgroup. Assume that $\WGN=0$ and that $\Gam=\Gg/\Ng$ is amenable. Then, for every $\yl \in \CGN$ we have
\begin{equation}\label{eq=tkdk2bai}
\scl_{\Gg}(\yl)\leq \scl_{\Gg,\Ng}(\yl)\leq 2\scl_{\Gg}(\yl).
\end{equation}
\end{theorem}

Abelian groups, finite groups and groups of subexponential growth are amenable; the class of (discrete) amenable groups is closed under taking subgroups, group quotients, inductive limits, and group extensions. In particular, a virtually solvable group, meaning that a group admitting a solvable subgroup of finite index, is  amenable. See \cite{Paterson} for more details on amenability.

\begin{remark}
We make the following two remarks on the inequalities \eqref{eq=tkdk2bai}.
\begin{enumerate}[(1)]
  \item The multiplicative factor `$2$' on the right-hand side of \eqref{eq=tkdk2bai} comes from Lemma~\ref{lemma=qm_defect_seminorm} below. To the best knowledge of the authors, it might not be known whether this multiplicative factor is optimal.
  \item In \cite[Theorem~2.1~(3)]{KKMMM1}, the authors showed that  $\scl_{\Gg,\Ng}$ in fact coincides with $\scl_{\Gg}$ on $\CGN$ if $\Gam=\Gg/\Ng$ is solvable.
\end{enumerate}
\end{remark}

In what follows, we prove Theorem~\ref{thm=comparison_amenable}. In this subsection, for an extendable $\Gg$-invariant homogeneous quasimorphism $\nuf\in \QNG$, that is $\nuf$ belongs to $i^{\ast}\QG$, we take an extension $\phf\in \QG$ (in other words, $i^{\ast}\phf=\nuf$) and make a comparison between the defect of $\phf$ and that of $\nuf$. In order to distinguish these two types of defects, in this subsection we use the symbol $\DDG$ for the defect on  $\QG$ (or, more generally on $\QhG$) and the symbol $\DDN$ for that on $\QNG$.

\begin{proposition}[{\cite[Proposition 7.2]{KKMMM1}}]\label{prop=bilip_criterion}
  Let $C$ be a real positive number, and assume that for every $\muf \in \QNG$, there exists $\phf \in \QG$ such that $\phf|_{\Ng} - \muf \in \HNG$ and $\DDG(\phf) \leq C \cdot \DDN(\muf)$.
  Then for every $\yl \in \CGN$,
  \begin{align*}
    \scl_{\Gg}(\yl) \leq \scl_{\Gg, \Ng} (\yl) \leq C \scl_{\Gg}(\yl).
  \end{align*}
\end{proposition}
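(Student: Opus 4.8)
The plan is to prove the two inequalities separately. The left inequality $\scl_{\Gg}(\yl)\leq \scl_{\Gg,\Ng}(\yl)$ is immediate and requires no hypothesis: every simple mixed commutator $[\gl,\xl]$ with $\gl\in\Gg$, $\xl\in\Ng$ is in particular a simple commutator in $\Gg$, so $\clG(\yl)\leq \clGN(\yl)$ for every $\yl\in\CGN$, and passing to the stable limit gives the claim. So the content is entirely in the right inequality $\scl_{\Gg,\Ng}(\yl)\leq C\,\scl_{\Gg}(\yl)$, which I would deduce from the two Bavard dualities (Theorem~\ref{theorem=Bavardagain} and Theorem~\ref{theorem=mixedBavardagain}) together with the hypothesis on defects.

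Here is the main step. Fix $\yl\in\CGN$ (note $\CGN\subseteq\CG$). By the mixed Bavard duality (Theorem~\ref{theorem=mixedBavardagain}),
\[
\scl_{\Gg,\Ng}(\yl)=\sup_{\muf\in\QNG\setminus\HNRG}\frac{|\muf(\yl)|}{2\DDN(\muf)}.
\]
Given any $\muf\in\QNG\setminus\HNRG$, apply the hypothesis to obtain $\phf\in\QG$ with $\phf|_{\Ng}-\muf\in\HNRG$ and $\DDG(\phf)\leq C\cdot\DDN(\muf)$. Since $\yl\in\CGN\subseteq[\Ng,\Ng]\cdot(\text{stuff})$ — more precisely $\yl\in[\Gg,\Gg]$ and $\yl$ lies in the commutator subgroup so any homomorphism $\Ng\to\RR$ kills it once we know $\yl$ is a product of commutators in $\Ng$; in any case, since $\phf|_{\Ng}-\muf$ is a $\Gg$-invariant homomorphism and $\yl$ is a product of (mixed) commutators, $(\phf|_{\Ng}-\muf)(\yl)=0$, hence $\phf(\yl)=\muf(\yl)$. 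Also $\DDG(\phf)>0$: if $\DDG(\phf)=0$ then $\phf\in\HGR$, so $\muf=\phf|_{\Ng}-(\phf|_{\Ng}-\muf)\in\HNRG$, contradicting the choice of $\muf$. Thus $\phf\in\QG\setminus\HGR$, and applying the original Bavard duality (Theorem~\ref{theorem=Bavardagain}) to $\yl\in\CG$,
\[
\frac{|\muf(\yl)|}{2\DDN(\muf)}=\frac{|\phf(\yl)|}{2\DDN(\muf)}\leq\frac{C\,|\phf(\yl)|}{2\DDG(\phf)}\leq C\,\scl_{\Gg}(\yl).
\]
Taking the supremum over $\muf$ yields $\scl_{\Gg,\Ng}(\yl)\leq C\,\scl_{\Gg}(\yl)$.

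\textbf{Main obstacle.} The only delicate point is the assertion $(\phf|_{\Ng}-\muf)(\yl)=0$, i.e. that a $\Gg$-invariant homomorphism $\Ng\to\RR$ vanishes on $\CGN$. This is clear since any homomorphism to the abelian group $\RR$ factors through $\Ng^{\mathrm{ab}}$ and $\CGN\subseteq[\Ng,\Ng]$? — careful: $\CGN$ need not be contained in $[\Ng,\Ng]$ in general, but a simple mixed commutator $[\gl,\xl]$ with $\gl\in\Gg,\xl\in\Ng$ satisfies $\hf([\gl,\xl])=\hf(\gl\xl\gl^{-1})-\hf(\xl)=\hf(\xl)-\hf(\xl)=0$ for every $\Gg$-invariant $\hf\in\HNRG$ (using $\Gg$-invariance of $\hf$ precisely here), so $\hf$ vanishes on all of $\CGN$. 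This is exactly where $\Gg$-invariance is used, and it is the step to write out with care; everything else is bookkeeping with the two duality theorems. I note that the amenability of $\Gam$ and the hypothesis $\WGN=0$ do not appear in this proposition — they are what is used (in Theorem~\ref{thm=comparison_amenable}) to \emph{verify} the hypothesis of this proposition with $C=2$, via a bounded-cohomology argument comparing $\Ker c^2_{\Gg}$ and $\Ker c^2_{\Ng}$; but that is outside the scope of the present statement.
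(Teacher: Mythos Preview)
Your proof is correct and follows essentially the same route as the paper's: both invoke the mixed Bavard duality (Theorem~\ref{theorem=mixedBavardagain}) to express $\scl_{\Gg,\Ng}(\yl)$ as a supremum, replace $\muf$ by the promised $\phf$ using that $\Gg$-invariant homomorphisms vanish on $\CGN$, and bound $|\phf(\yl)|/(2\DDG(\phf))$ by $\scl_{\Gg}(\yl)$. The only cosmetic differences are that the paper runs an $\varepsilon$-argument instead of taking the supremum directly, and that for the final step the paper cites only the easy direction (Corollary~\ref{cor=sclabove}) rather than the full Bavard duality; your extra verification that $\DDG(\phf)>0$ is a nice point the paper leaves implicit.
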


\begin{proof}
  Let $\yl$ be an element of $\CGN$.
  The inequality $\scl_{\Gg}(\yl) \leq \scl_{\Gg, \Ng}(\yl)$ is clear.

  Let $\varepsilon$ be a positive real number.
  Then by Theorem \ref{theorem=mixed_Bavard}, there exists $\muf \in \QNG$ such that
  \begin{align*}
    \sclGN(y) - \varepsilon \leq \frac{|\muf(\yl)|}{2\DDN(\muf)}.
  \end{align*}
  By assumption, there exists $\phf \in \QG$ such that $\phf|_{\Ng}-\muf \in \HNG$ and $\DDG(\phf) \leq C \cdot \DDN(\muf)$.
  Since $\yl \in \CGN$, we have $(\phf|_{\Ng}-\muf)(\yl) = 0$.
  Hence we obtain
  \begin{align*}
    \sclGN(\yl) - \varepsilon \leq \frac{|\muf(\yl)|}{2\DDN(\muf)} \leq C\cdot \frac{|\phf(\yl)|}{2\DDG(\phf)} \leq C \sclG(\yl),
  \end{align*}
  where the last inequality comes from the easy part (Corollary~\ref{cor=sclabove}) of Theorem \ref{theorem=Bavard}.
\end{proof}

Recall that the $\ell^\infty$-norm $\|\cdot\|_{\infty}$ on $\CCC_b^n(\Gg)$ is defined as
\begin{align*}
  \| u \|_{\infty} = \sup_{\gl_i \in G}\{ |u(\gl_1, \cdots, \gl_n)| \}
\end{align*}
for $u\in \CCC_b^n(\Gg)$.
This norm induces a seminorm $\| \cdot \|$ on $\HHH_b^n(\Gg)$, that is,
\begin{equation}\label{eq=seminorm_bcohom}
  \| \alpha \| = \inf_{u \in \alpha} \{ \| u \|_{\infty} \}
\end{equation}
for $\alpha \in \HHH_b^n(G)$. We use the following well-known estimate.

\begin{lemma}\label{lemma=qm_defect_seminorm}
  For $\phf \in \QG$, the inequalities
  \begin{align*}
    \| [\delta \phf] \| \leq \DDG(\phf) \leq 2 \| [\delta \phf] \|
  \end{align*}
  hold, where $\| \cdot \|$ is the seminorm on $\HHH_b^2(\Gg)$ defined in \eqref{eq=seminorm_bcohom}.
\end{lemma}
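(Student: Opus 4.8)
The plan is to prove the two inequalities in Lemma~\ref{lemma=qm_defect_seminorm} separately, using the bridge between quasimorphisms and bounded $2$-cohomology established in Lemma~\ref{lem=Q/H} and Proposition~\ref{prop=commutator_bavard}.

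First I would recall that for $\phf \in \QG$, the coboundary $\delta\phf$, given by $\delta\phf(\gl_1,\gl_2) = \phf(\gl_2) - \phf(\gl_1\gl_2) + \phf(\gl_1)$, is a bounded $2$-cocycle (its $\ell^\infty$-norm is exactly $\DDG(\phf)$ by the definition of the defect), so it represents a class $[\delta\phf] \in \HHH_b^2(\Gg)$. The left inequality $\|[\delta\phf]\| \leq \DDG(\phf)$ is then immediate: by the definition \eqref{eq=seminorm_bcohom} of the seminorm as an infimum over representatives, $\|[\delta\phf]\| \leq \|\delta\phf\|_\infty = \DDG(\phf)$.

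For the right inequality $\DDG(\phf) \leq 2\|[\delta\phf]\|$, the idea is to use Proposition~\ref{prop=commutator_bavard}, which gives $\DDG(\phf) = \sup_{\gl_1,\gl_2} |\phf([\gl_1,\gl_2])|$. Given any bounded $2$-cochain $u$ cohomologous to $\delta\phf$, write $u = \delta\phf + \delta b$ for some bounded $1$-cochain $b \in \CCC_b^1(\Gg)$ (this is what being cohomologous in bounded cohomology means), so $\|u\|_\infty$ can be made arbitrarily close to $\|[\delta\phf]\|$. The key computation: for a simple commutator $[\gl_1,\gl_2]$, one evaluates $\phf([\gl_1,\gl_2])$ by summing values of $\delta\phf$ (or of $u$, using that $\delta b$ evaluated on a telescoping $1$-cycle contributes nothing because $[\gl_1,\gl_2]$ written as the product $\gl_1\gl_2\gl_1^{-1}\gl_2^{-1}$ is a boundary — more precisely one uses the standard fact that $\phf$ vanishes on boundaries up to an error controlled by $\|u\|_\infty$ with constant $2$). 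Concretely, I would express $[\gl_1,\gl_2] = \partial\tau$ for an explicit $2$-chain $\tau$ of $\ell^1$-norm close to $2$ (the standard filling of a commutator uses two triangles), and then $|\phf([\gl_1,\gl_2])| = |\phf(\partial\tau)| = |(\delta\phf)(\tau)| = |u(\tau) - (\delta b)(\tau)|$; since $b$ is bounded and $\tau$ is a $2$-chain whose boundary is a single group element (so $\delta b$ integrates to a bounded quantity — actually $(\delta b)(\tau) = b(\partial \tau) = b([\gl_1,\gl_2])$ up to the reduced convention), one gets $|\phf([\gl_1,\gl_2])| \le 2\|u\|_\infty + (\text{bounded error independent of } \gl_1,\gl_2)$. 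To kill the bounded error one passes to powers: apply the estimate to $[\gl_1,\gl_2]^n$ written as a product of $n$ commutators via Lemma~\ref{lem=commutatorcal}, divide by $n$, and let $n \to \infty$, using homogeneity of $\phf$. Then take the infimum over $u$ and the supremum over $\gl_1,\gl_2$.

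The main obstacle I anticipate is the careful bookkeeping of the $\ell^1$-norm of the filling $2$-chain for a commutator and confirming that the constant is exactly $2$ and not larger: one must choose the filling efficiently (two $2$-simplices suffice to bound a commutator, cf.\ the proof of Lemma~\ref{lem=commutatorcal} where $\gl_1^2\gl_2^2(\gl_1\gl_2)^{-2}$ is a single conjugated commutator) and verify that the perturbation term $\delta b$ genuinely drops out in the limit over powers. An alternative, cleaner route that avoids explicit fillings: use that $\phf$ on a simple commutator satisfies $\phf([\gl_1,\gl_2]) = (\delta\phf)(\gl_1,\gl_2) - (\delta\phf)(\gl_1\gl_2\gl_1^{-1}, \gl_2) + (\delta\phf)(\gl_1,\gl_2\gl_1^{-1}) - \cdots$ expanded so that only two $\delta\phf$-terms survive after using Lemma~\ref{lem=defect}~(1) in the homogeneous limit, giving the factor $2$ directly; I would present whichever of these makes the factor-$2$ transparent with the least calculation.
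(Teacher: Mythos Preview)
Your treatment of the left inequality is identical to the paper's. For the right inequality the paper takes a much shorter route: given any bounded cocycle $u$ with $[u]=[\delta\phf]$, one writes $u=\delta\phfh$ for the unique $\phfh\in\QhG$ with $\phfh-\phf\in\CCC^1_b(\Gg)$; then $\phf$ is the homogenization of $\phfh$, so Corollary~\ref{cor=defect2bai} gives $\DDG(\phf)\le 2\DDG(\phfh)=2\|u\|_\infty$, and taking the infimum over $u$ finishes.

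Your route via Proposition~\ref{prop=commutator_bavard} and explicit $2$-chain fillings is not wrong in spirit, but the sketch contains two imprecise steps which, once repaired, collapse your argument into the paper's. First, the claim that ``two $2$-simplices suffice to bound a commutator'' is false before stabilization: Lemma~\ref{lem='norm_cl} gives only $\|[\gl_1,\gl_2]\|'\le 3$, and pushing the constant down to $2$ \emph{is} the stabilization step---that is precisely the content of Proposition~\ref{prop=defect2bai} and Corollary~\ref{cor=defect2bai}. Second, Lemma~\ref{lem=commutatorcal} does not express $[\gl_1,\gl_2]^n$ as a product of $n$ commutators; it concerns $(\gl_1\gl_2)^{-2n}\gl_1^{2n}\gl_2^{2n}$, and using it to control $|\phf([\gl_1,\gl_2])|$ forces you back through the proof of Proposition~\ref{prop=defect2bai}. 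In short, once you write $u=\delta\phfh$ with $\phfh-\phf$ bounded, the detour through Proposition~\ref{prop=commutator_bavard} and fillings is unnecessary: the single citation of Corollary~\ref{cor=defect2bai} already encodes the limiting argument you are trying to reconstruct by hand.
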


\begin{proof}
  For $\phf \in \QG$, the defect $\DDG(\phf)$ is nothing but $\| \delta \phf \|_{\infty}$, and hence we have $\| [\delta \phf] \| \leq \DDG(\phf)$.

  Let $u$ be a bounded cocycle representing $[\delta \phf]$.
  Then there exists a unique $\phfh \in \QhG$ such that $u = \delta \phfh$ and $\phfh - \phf \in \CCC_b^1(\Gg)$.
  Indeed, since $[u] = [\delta \phf]$, there exists $b \in \CCC_b^1(\Gg)$ such that $u = \delta \phf + \delta b = \delta (\phf + b)$.
  Hence $\phf + b$ is a desired quasimorphism.
  Let $\phfh'$ be another quasimorphism satisfying $u = \delta \phfh'$ and $\phfh' - \phf \in \CCC_b^1(\Gg)(=\ell^{\infty}(\Gg;\RR))$.
  Then $\phfh' - \phfh$ is a homomorphism since $\delta (\phfh' - \phfh) = 0$, and is also a bounded function since $\phfh' - \phfh = (\phfh' - \phf) - (\phfh - \phf)$.
  Hence $\phfh' = \phfh$.

  Now we have
  \begin{align*}
    \| [\delta \phf] \| &= \inf \{ \| u \|_{\infty} \mid [u] = [\delta \phf] \} \\
&= \inf \{ \| \delta \phfh \|_{\infty} \mid \phfh - \phf \in \CCC_b^1(\Gg) \} \\
&= \inf \{\DDG(\phfh) \mid \phfh - \phf \in \CCC_b^1(\Gg) \}.
  \end{align*}
Take an arbitrary element $\phfh$ of $\QhG$ such that $\phfh - \phf \in \CCC_b^1(\Gg)$. Then the homogenization $\phfh_{\mathrm{h}}$ of $\phfh$ equals $\phf$. Therefore, by Corollary~\ref{cor=defect2bai} we obtain that
\[
\DDG(\phf)=\DDG(\phfh_{\mathrm{h}})\leq 2\DDG(\phfh).
\]
Thus, we conclude that  $\DDG(\phf) \leq 2 \| [\delta \phf] \|$. This completes our proof.
\end{proof}

We employ the following theorem, where we use the assumption of amenability of $\Gam$ in the proof of  Theorem \ref{thm=comparison_amenable}.

\begin{theorem}[{\cite[Proposition 8.6.6]{Monod}}]\label{theorem_isometric_isom}Let $  1 \to \Ng \xrightarrow{i} \Gg \to \Gam \to 1$ be a short exact sequence of groups, and assume that  $\Gam$ is amenable. Then, the map $\HHH_b^2(\Gg) \to \HHH_b^2(\Ng)^{\Gg}$, induced by $i$, is an isometric isomorphism.
\end{theorem}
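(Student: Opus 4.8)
The plan is to prove this as the degree-two instance of the collapse, for amenable $\Gam$, of the Hochschild--Serre spectral sequence in bounded cohomology. The conceptual engine is that amenability of $\Gam$ makes the $\Gg$-module $\ell^{\infty}(\Gg/\Ng)=\ell^{\infty}(\Gam)$ relatively injective; equivalently, the $\Gg$-action on $\Gam$ (through the quotient $q\colon\Gg\to\Gam$) is amenable, which holds because a left-invariant mean on the amenable group $\Gam$ is a norm-one $\Gg$-equivariant map $\ell^{\infty}(\Gam)\to\RR$.

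Concretely I would start from the standard homogeneous resolution $\ell^{\infty}(\Gg^{\bullet+1})$ of $\RR$, a strong resolution by relatively injective Banach $\Gg$-modules, so that $\HHH^{*}_b(\Gg)$ is the cohomology of its complex of $\Gg$-invariants. Taking first only $\Ng$-invariants produces a complex $(\ell^{\infty}(\Gg^{\bullet+1}))^{\Ng}$ of Banach $\Gam$-modules whose cohomology, with the residual $\Gam$-action, is $\HHH^{*}_b(\Ng)$ equipped with the conjugation $\Gg$-action (here one uses that $\ell^{\infty}(\Gg)$ is relatively injective as an $\Ng$-module). Its further complex of $\Gam$-invariants is precisely the complex of $\Gg$-invariants of $\ell^{\infty}(\Gg^{\bullet+1})$. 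Since $\Gam$ is amenable and the modules involved are duals of Banach spaces, $\HHH^{p}_b(\Gam;-)$ vanishes for $p\ge 1$ on these coefficients, so the spectral sequence relating $\HHH^{*}((\cdot)^{\Gam})$ to $\HHH^{*}(\cdot)^{\Gam}$ collapses and gives $\HHH^{*}_b(\Gg)\cong\HHH^{*}_b(\Ng)^{\Gam}=\HHH^{*}_b(\Ng)^{\Gg}$; unwinding the identification shows the isomorphism is the one induced by $i$. For the degree-two case there is also a more hands-on route: injectivity of $i^{\ast}$ follows from the five-term bounded-cohomology exact sequence $0\to\HHH^{2}_b(\Gam)\to\HHH^{2}_b(\Gg)\to\HHH^{2}_b(\Ng)^{\Gg}$ together with $\HHH^{2}_b(\Gam)=0$ (amenability), while surjectivity and the norm bound can be obtained by explicitly inducing a $\Gg$-invariant bounded $2$-cocycle on $\Ng$ up to $\Gg$, using a set-theoretic section of $q$ and the invariant mean on $\Gam$, and estimating the $\ell^{\infty}$-norm at each step.

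For the isometry statement I would track operator norms throughout: every contracting homotopy and comparison map supplied by relative injectivity can be taken of norm at most $1$, the averaging over $\Gam$ uses a mean of norm exactly $1$, and restriction $\CCC^{*}_b(\Gg)\to\CCC^{*}_b(\Ng)$ is visibly norm-nonincreasing; combining the two directions forces the induced map on the canonical semi-norms to be an isometry.

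I expect the main obstacle to be precisely this norm bookkeeping --- upgrading the abstract isomorphism to an honest isometry rather than a bi-Lipschitz equivalence --- since one must verify that the relatively injective resolutions can be chosen with norm-$\le 1$ homotopies and that averaging over $\Gam$ neither inflates nor deflates the semi-norm, which is where the norm-one invariant mean is indispensable. A secondary technical care point is the usual one in bounded cohomology: the semi-norms need not be norms, so ``isometric isomorphism'' is to be understood at the level of semi-normed spaces, and one must watch exactness and Hausdorffness when invoking the spectral sequence or the five-term sequence.
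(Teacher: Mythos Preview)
The paper does not prove this theorem at all: it is quoted verbatim from \cite[Proposition~8.6.6]{Monod} and used as a black box in the proof of Theorem~\ref{thm=comparison_amenable}. So there is no ``paper's own proof'' to compare against.

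That said, your sketch is the standard route and is essentially what Monod does: the amenability of $\Gam$ yields a norm-one $\Gg$-equivariant conditional expectation $\ell^{\infty}(\Gam)\to\RR$, which makes the relevant modules relatively injective and forces the collapse of the Lyndon--Hochschild--Serre spectral sequence in bounded cohomology, giving $\HHH^{n}_b(\Gg)\cong\HHH^{n}_b(\Ng)^{\Gg}$ for all $n$; the isometry comes from the norm-one bounds on all the comparison maps and homotopies furnished by relative injectivity. Your identification of the delicate point is accurate: the passage from ``isomorphism'' to ``isometric isomorphism'' is exactly where Monod's careful functorial machinery (strong resolutions with contracting homotopies of norm $\le 1$) is needed, and your alternative degree-two argument via the five-term sequence plus explicit averaging would also work but requires the same norm bookkeeping. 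Nothing is missing from your outline; it simply reproduces the cited source rather than anything in the present paper.
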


\begin{proof}[Proof of Theorem~\textup{\ref{thm=comparison_amenable}}]
Let $\muf\in \QNG$. Since $\WGN=0$, we can decompose $\muf$ as $\muf=\hf+i^{\ast}\phf$, where $\hf\in \HNG$ and $\phf\in \QG$. Then,   by Lemma \ref{lemma=qm_defect_seminorm} and Theorem \ref{theorem_isometric_isom}, we have
  \begin{align*}
    2 \DDN(\muf)&=2 \DDN(\muf-\hf)=2 \DDN(\phf|_{\Ng}) \\
&\geq 2\|[\delta\phf|_{\Ng}] \| = 2 \| [\delta \phf] \| \\
&\geq \DDG(\phf).
  \end{align*}
Therefore, by Proposition \ref{prop=bilip_criterion} we complete our proof.
\end{proof}

The second topic of this subsection is the following vanishing result (Proposition~\ref{prop=vsection}) of $\WGN$. Proposition~\ref{prop=vsection} suggests that the study of $\WGN$ should be related to that of the short exact sequence \eqref{eq=shortexact}; Corollary~\ref{cor=vsplitscl} relates virtually splitting short exact sequences to the comparison problem between $\scl_{\Gg}$ and $\scl_{\Gg,\Ng}$ on $\CGN$.

\begin{definition}[virtual splitting]\label{defn=vsection}
Let
\begin{equation}\label{eq=shortexact}
  1 \to \Ng \xrightarrow{i} \Gg \xrightarrow{p} \Gam \to 1
\end{equation}
be a short exact sequence of groups.
\begin{enumerate}[(1)]
 \item We say that $(s,\Lambda)$ is a \emph{virtual section} of $p$ if $\Lambda$ is a subgroup of $\Gam$ of finite index and $s$ is a group homomorphism from $\Lambda$ to $\Gg$ satisfying that $p\circ s=\mathrm{id}_{\Lambda}$.
 \item We say that the sequence \eqref{eq=shortexact} \emph{virtually splits} if a virtual section of $p$ exists.
\end{enumerate}
\end{definition}

\begin{proposition}[{\cite[Proposition~1.6]{KKMM1}}]\label{prop=vsection}
Assume that a short exact sequence
\[
  1 \to \Ng \xrightarrow{i} \Gg \xrightarrow{p} \Gam \to 1
\]
virtually splits. Then, for every $\muf\in \QNG$ there exists $\phf\in \QQQ(\Gg)$ such that $i^{\ast}\phf=\muf$ and $\DDG(\phf)\leq 2\DDN(\muf)$.

In particular, $\QQQ(\Ng)^{\Gg}=i^{\ast}\QQQ(\Gg)$ and $\WGN=0$.
\end{proposition}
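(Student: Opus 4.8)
The plan is to construct the desired extension $\phf$ directly from a virtual section $(s,\Lambda)$ of $p$, using an averaging-over-cosets trick. Let $(s,\Lambda)$ be a virtual section, so $\Lambda\leq\Gam$ has finite index $d$ and $s\colon\Lambda\to\Gg$ is a homomorphism with $p\circ s=\mathrm{id}_\Lambda$. Put $\Lg=s(\Lambda)\leq\Gg$; then $\Lg$ is a subgroup with $\Lg\cap\Ng=1$ and $\Lg\Ng$ of finite index $d$ in $\Gg$. The first step is to observe that every $\gl$ in the subgroup $\Lg\Ng$ factors \emph{uniquely} as $\gl=\ell\xl$ with $\ell\in\Lg$, $\xl\in\Ng$, because $\Lg\cap\Ng=1$; this is the same bookkeeping as in the proof of Proposition~\ref{prop 3.4.1}. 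Given $\muf\in\QNG$, I would first define $\phf_0\colon\Lg\Ng\to\RR$ by $\phf_0(\ell\xl)=\muf(\xl)$ (the $\Lg$-component contributes nothing). A direct check, identical in spirit to the computation in Proposition~\ref{prop 3.4.1} and using $\Gg$-invariance of $\muf$, shows $\phf_0$ is a quasimorphism on $\Lg\Ng$ with $\DD_{\Lg\Ng}(\phf_0)\leq\DDN(\muf)$, and that $\phf_0|_\Ng=\muf$.

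The second step is to promote $\phf_0$ from the finite-index subgroup $\Lg\Ng$ to all of $\Gg$. Here I would invoke the standard transfer/induction for quasimorphisms: for a finite-index subgroup $H\leq\Gg$, restriction $\QhG\to\hQQQ(H)$ is surjective onto the $\Gg$-invariant part up to bounded error, or more concretely one averages $\phf_0$ over a transversal of $\Lg\Ng$ in $\Gg$ and then homogenizes. Concretely: pick coset representatives $t_1,\dots,t_d$ for $\Gg/(\Lg\Ng)$, set $\widetilde\phf(\gl)=\frac1d\sum_i\phf_0'(t_i^{-1}\gl t_i\cdot(\text{correction to land in }\Lg\Ng))$ — the cleanest route is actually to use that $\Lg\Ng\cap\gl(\Lg\Ng)\gl^{-1}$ has finite index and define the induced quasimorphism as in Calegari's book; this yields $\widetilde\phf\in\QhG$ with defect controlled linearly by $\DD_{\Lg\Ng}(\phf_0)$. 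Then homogenize: let $\phf=\widetilde\phf_{\mathrm h}\in\QG$. By Corollary~\ref{cor=defect2bai} the homogenization at most doubles the defect.

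The third step is to verify the two conclusions. For $i^\ast\phf=\muf$: since $\muf$ is already homogeneous and the induction-then-restriction composite returns $\muf$ up to a bounded function, and $\muf$ is the homogenization of itself, uniqueness of homogenization (Lemma~\ref{lem=homoge1}) forces $\phf|_\Ng=\muf$ on the nose. For the defect bound $\DDG(\phf)\leq 2\DDN(\muf)$: one must track the constants through the induction and the homogenization and check they multiply to exactly $2$ rather than something larger. I expect \textbf{this constant-chasing to be the main obstacle} — the naive estimate from homogenization alone already gives a factor of $2$, so the induction step must be shown to be defect-nonincreasing on the relevant (invariant) part, which is where the precise choice of the averaging formula matters; using the characterization of the defect via $\Ker c^2_\Gg$ from Lemma~\ref{lem=Q/H} together with an isometry-type statement for the restriction map in bounded cohomology of a finite-index subgroup (analogous to Theorem~\ref{theorem_isometric_isom}, but for finite-index rather than amenable quotient) is the slick way to get the sharp constant. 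Finally, $\QNG=i^\ast\QG$ is immediate since $\muf\in\QNG$ was arbitrary, and $\WGN=\QNG/(\HNG+i^\ast\QG)=0$ follows directly from Definition~\ref{defn=W}.
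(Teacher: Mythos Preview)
Your approach is essentially the same as the paper's: both build the extension by an averaging-over-cosets formula coming from the virtual section, then homogenize. The paper simply merges your two steps into a single explicit formula on $\Gg$: with $t\colon\Gam\to\Gg$ a set-theoretic section satisfying $t(\lambda\gamma)=s(\lambda)t(\gamma)$ for $\lambda\in\Lambda$, it sets
\[
\phfh(\gl)=\frac{1}{\#B}\sum_{b\in B}\muf\bigl(\gl\cdot t(b\,p(\gl))^{-1}\cdot t(b)\bigr),
\]
and a direct computation (using $\Gg$-invariance of $\muf$ and the fact that $b\mapsto b_a$ is a bijection on $B$) gives $\DDG(\phfh)\le\DDN(\muf)$ on the nose.

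Your flagged ``main obstacle'' is not one. If you write the induction formula correctly---for coset representatives $t_1,\dots,t_d$ of $\Lg\Ng\backslash\Gg$, set $\tilde\phf(\gl)=\tfrac1d\sum_i\phf_0(t_i\gl t_{\sigma_{\gl}(i)}^{-1})$ where $t_i\gl\in(\Lg\Ng)t_{\sigma_{\gl}(i)}$---then the usual two-line computation shows $\DDG(\tilde\phf)\le\DD_{\Lg\Ng}(\phf_0)\le\DDN(\muf)$, and for $\xl\in\Ng$ one has $\sigma_\xl=\mathrm{id}$ and $\tilde\phf(\xl)=\tfrac1d\sum_i\muf(t_i\xl t_i^{-1})=\muf(\xl)$ by $\Gg$-invariance. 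So the only factor of $2$ is from homogenization (Corollary~\ref{cor=defect2bai}); no bounded-cohomology isometry statement is needed. Your tentative formula ``$t_i^{-1}\gl t_i\cdot(\text{correction})$'' is not the right shape---the correction is a different coset representative on the other side, not a conjugation.
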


\begin{proof}
Take a virtual section $(s,\Lambda)$ of $p$. Take a finite subset $B$ of $\Gam$ such that the map $\Lambda\times B \to \Gam\colon (\lambda,b)\mapsto \lambda b$ is bijective. Take a map $s'\colon B\to \Gg$ such that for every $b\in B$, $(p\circ s')(b)=b$ holds. Then, define a map $t\colon \Gam \to \Gg$ by setting $t(\gamma)=s(\lambda)s'(b)$ for every $\gamma \in \Gam$, where we write $\gamma=\lambda b$ for a unique $(\lambda,b)\in \Lambda\times B$. Note that $t$ is a set-theoretic section of $p$. We observe that for every $\lambda\in \Lambda$ and for every $\gamma\in \Gamma$,
\begin{equation}\label{eq=lefthom}
t(\lambda \gamma)=t(\lambda)t(\gamma)
\end{equation}
holds: this follows from the definition of $t$ and the fact that $s$ is a group homomorphism.

Define a map $\phfh\colon \Gg\to \RR$ by
\[
\phfh(\gl)=\frac{1}{\# B}\sum_{b\in B}\muf\big(\gl\cdot t(b\cdot p(\gl))^{-1}\cdot t(b) \big)
\]
for every $\gl\in \Gg$; here, note that $\gl\cdot t(b\cdot p(\gl))^{-1}\cdot t(b)\in \Ng$ because $p(\gl\cdot t(b\cdot p(\gl))^{-1}\cdot t(b))=p(\gl)\cdot ((b\cdot p(\gl))^{-1}\cdot b=1_{\Gam}$.
First, by construction, $\phfh|_{\Ng}=\muf$ holds. Secondly, we will show that $\phfh\in \QhG$. Let $\gl_1,\gl_2\in \Gg$. Then, we  have
\begin{align*}
\phfh(\gl_1\gl_2)&=\frac{1}{\# B}\sum_{b\in B}\muf\big( \gl_1\gl_2 \cdot t(b\cdot p(\gl_1\gl_2))^{-1}\cdot t(b)\big)\\
&=\frac{1}{\# B}\sum_{b\in B}\muf\big(t(b)\cdot  \gl_1\gl_2 \cdot t(b\cdot p(\gl_1\gl_2))^{-1}\big)\\
&=\frac{1}{\# B}\sum_{b\in B}\muf\big(t(b)\cdot  \gl_1\cdot t(b\cdot p(\gl_1))^{-1} \cdot  t(b\cdot p(\gl_1))\cdot \gl_2 \cdot t(b\cdot p(\gl_1\gl_2))^{-1}\big)\\
&\sim_{\DDN(\muf)} \frac{1}{\# B}\sum_{b\in B}\muf\big(t(b)\cdot  \gl_1\cdot t(b\cdot p(\gl_1))^{-1} \big) \\
&\qquad +\frac{1}{\# B}\muf\big( t(b\cdot p(\gl_1))\cdot \gl_2 \cdot t(b\cdot p(\gl_1\gl_2))^{-1}\big)\\
&=\phfh(\gl_1)+ \frac{1}{\# B}\sum_{b\in B}\muf\big( t(b\cdot p(\gl_1))\cdot \gl_2 \cdot t(b\cdot p(\gl_1\gl_2))^{-1}\big)\\
&=\phfh(\gl_1)+ \frac{1}{\# B}\sum_{b\in B}\muf\big( \gl_2 \cdot t(b\cdot p(\gl_1\gl_2))^{-1}\cdot t(b\cdot p(\gl_1))\big).
\end{align*}
In what follows, we treat the term
\begin{align*}
&\frac{1}{\# B}\sum_{b\in B}\muf\big( \gl_2 \cdot t(b\cdot p(\gl_1\gl_2))^{-1}\cdot t(b\cdot p(\gl_1))\big)\\
&=\frac{1}{\# B}\sum_{a\in B}\muf\big( \gl_2 \cdot t(a\cdot p(\gl_1\gl_2))^{-1}\cdot t(a\cdot p(\gl_1))\big).
\end{align*}
Here, recall that $\muf$ is $\Gg$-invariant. For each $a\in B$, write $a\cdot p(\gl_1)=\lambda_{a} b_a$ for (unique) $(\lambda_a,b_a)\in \Lambda\times B$. Then, by \eqref{eq=lefthom} we obtain that
\begin{align*}
&\frac{1}{\# B}\sum_{a\in B}\muf\big( \gl_2 \cdot t(a\cdot p(\gl_1\gl_2))^{-1}\cdot t(a\cdot p(\gl_1))\big)\\
&=\frac{1}{\# B}\sum_{a\in B}\muf\big( \gl_2 \cdot t(a\cdot p(\gl_1)p(\gl_2))^{-1}\cdot t(a\cdot p(\gl_1))\big)\\
&=\frac{1}{\# B}\sum_{a\in B}\muf\big( \gl_2 \cdot t(\lambda_a b_a\cdot p(\gl_2))^{-1}\cdot t(\lambda_a b_a)\big)\\
&=\frac{1}{\# B}\sum_{a\in B}\muf\big( \gl_2 \cdot t(b_a\cdot p(\gl_2))^{-1}\cdot t(b_a)\big)\\
&=\phfh(g_2).
\end{align*}
Here, observe that the map $B\to B$ sending  $a\in B$ to $b_a$ is a bijection. Therefore, we conclude that $\phfh \in \QhG$; in fact, we have $\DDG(\phfh)\leq \DDN(\muf)$ (this inequality implies that $\DDG(\phfh)= \DDN(\muf)$ since $\phfh|_{\Ng}=\muf$).

Finally, by taking the homogenization $\phfh_{\mathrm{h}}$ of $\phfh$, we have $\phfh_{\mathrm{h}}\in \QQQ(\Gg)$ and $i^{\ast}\phfh_{\mathrm{h}}=\muf$. Moreover, we obtain that $\DDG(\phf)\leq 2\DDG(\phfh)= 2\DDN(\muf)$ by Corollary~\ref{cor=defect2bai}. This completes our proof.
\end{proof}

Proposition~\ref{prop=vsection} yields the following two corollaries.

\begin{corollary}[{\cite[Theorem~6.2]{KKMM1}}]\label{cor=vsplitscl}
Assume that a short exact sequence
\[
  1 \to \Ng \to \Gg \to \Gam \to 1
\]
virtually splits. Then, for every $\yl \in \CGN$ we have
\[
\scl_{\Gg}(\yl)\leq \scl_{\Gg,\Ng}(\yl)\leq 2\scl_{\Gg}(\yl).
\]
\end{corollary}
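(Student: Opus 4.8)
The plan is to deduce Corollary~\ref{cor=vsplitscl} directly from Proposition~\ref{prop=vsection} via Proposition~\ref{prop=bilip_criterion}, exactly in the way Theorem~\ref{thm=comparison_amenable} was deduced earlier. First I would recall that the inequality $\scl_{\Gg}(\yl)\leq \scl_{\Gg,\Ng}(\yl)$ holds for every $\yl\in\CGN$ simply from the definitions (every product of mixed commutators is a product of ordinary commutators, so $\clG\leq\clGN$ on $\CGN$, hence the same for $\scl$). So the content is the upper bound $\scl_{\Gg,\Ng}(\yl)\leq 2\scl_{\Gg}(\yl)$.

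For the upper bound I would invoke Proposition~\ref{prop=vsection}: since the short exact sequence virtually splits, for every $\muf\in\QNG$ there exists $\phf\in\QG$ with $i^{\ast}\phf=\muf$ (in particular $\phf|_{\Ng}-\muf=0\in\HNG$) and $\DDG(\phf)\leq 2\DDN(\muf)$. This is precisely the hypothesis of Proposition~\ref{prop=bilip_criterion} with the constant $C=2$. Applying that proposition then yields $\scl_{\Gg,\Ng}(\yl)\leq 2\scl_{\Gg}(\yl)$ for all $\yl\in\CGN$, which combined with the trivial lower bound completes the proof.

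There is essentially no obstacle here: the corollary is a formal consequence of two already-established results, and the only thing to be careful about is matching notation — writing $\DDG$ and $\DDN$ consistently with the convention fixed in this subsection, and noting that the extension produced by Proposition~\ref{prop=vsection} literally restricts to $\muf$ (not just up to $\HNG$), which is even stronger than what Proposition~\ref{prop=bilip_criterion} requires. So the proof is short:

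\begin{proof}
The inequality $\scl_{\Gg}(\yl)\leq \scl_{\Gg,\Ng}(\yl)$ for $\yl\in\CGN$ follows directly from the definitions, since every simple mixed commutator is a simple commutator in $\Gg$ and hence $\clG(\yl)\leq \clGN(\yl)$ for $\yl\in\CGN$.

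For the remaining inequality, by Proposition~\ref{prop=vsection}, for every $\muf\in\QNG$ there exists $\phf\in\QG$ such that $i^{\ast}\phf=\muf$ and $\DDG(\phf)\leq 2\DDN(\muf)$. In particular, $\phf|_{\Ng}-\muf=0\in\HNG$. Thus the hypothesis of Proposition~\ref{prop=bilip_criterion} is satisfied with $C=2$, and we conclude that $\scl_{\Gg,\Ng}(\yl)\leq 2\scl_{\Gg}(\yl)$ for every $\yl\in\CGN$. This completes the proof.
\end{proof}
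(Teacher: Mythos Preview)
Your proof is correct and takes essentially the same approach as the paper: the paper's proof is a one-line reference to Proposition~\ref{prop=bilip_criterion} and the main assertion of Proposition~\ref{prop=vsection}, and you have simply spelled out the details of how those two results combine.
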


\begin{proof}
This corollary follows  from Proposition~\ref{prop=bilip_criterion} and the main assertion of  Proposition~\ref{prop=vsection}.
\end{proof}

\begin{corollary}[{\cite{KKMM1}}]\label{cor=vfree}
Let $\Gg$ be a group and $\Ng$ its normal subgroup. Assume that $\Gam=\Gg/\Ng$ is virtually free, meaning that $\Gam$ contains a free subgroup $($possibly the trivial subgroup$)$ of finite index. Then, $\WGN=0$ holds, and we have
\[
\scl_{\Gg}(\yl)\leq \scl_{\Gg,\Ng}(\yl)\leq 2\scl_{\Gg}(\yl)
\]
for every $\yl\in \CGN$.
\end{corollary}

\begin{proof}
Take a free subgroup $\Lambda$ of $\Gam$ of finite index. Then, we can construct $s\colon \Lambda\to \Gg$ such that $(s,\Lambda)$ is a virtual section of $p\colon  \Gg \to \Gam$. By Proposition~\ref{prop=vsection}, we have $\WGN=0$; by Corollary~\ref{cor=vsplitscl} we complete our proof.
\end{proof}

For instance, we can apply Corollary~\ref{cor=vfree} to the case where $\Gam=\mathrm{SL}(2,\ZZ)$, $\mathrm{GL}(2,\ZZ)$, or $\Gam$ is a free product of two finite groups.

\subsection{Five-term exact sequences}\label{subsec=five-term}
For a short exact sequence of groups
\[
  1 \to \Ng \xrightarrow{i} \Gg \xrightarrow{p} \Gam \to 1,
\]
cohomology groups of low degrees form the following exact sequence:
\begin{align}\label{align=five-term-grp-coh}
  0 \to \HHH^1(\Gam) \xrightarrow{p^*} \HHH^1(\Gg) \xrightarrow{i^*} \HHH^1(\Ng)^{\Gg} \to \HHH^2(\Gam) \xrightarrow{p^*} \HHH^2(\Gg).
\end{align}
This is called the \emph{five-term exact sequence of group cohomology}.

In \cite{KKMMM1}, the authors of the present article established an analogous exact sequence related to the space of homogeneous quasimorphisms:
\begin{align}\label{align=five-term-rel-coh}
  0 \to \QQQ(\Gam) \xrightarrow{p^*} \QQQ(\Gg) \xrightarrow{i^*} \QQQ(\Ng)^{\Gg} \xrightarrow{\tau_{/b}} \HHH_{/b}^2(\Gam) \xrightarrow{p^*} \HHH_{/b}^2(\Gg).
\end{align}
Here $\HHH_{/b}^*(G)$ is the cohomology group of the relative complex $\CCC_{/b}^*(G) = \CCC^*(G)/\CCC_b^*(G)$.
Moreover, sequence \eqref{align=five-term-rel-coh} is compatible with \eqref{align=five-term-grp-coh}, that is, the following diagram commutes:
\begin{align}\label{align=diagram_five_term_coh_rel}
\xymatrix{
0 \ar[r] & \HHH^1(\Gam) \ar[r] \ar[d] & \HHH^1(\Gg) \ar[r] \ar[d] & \HHH^1(\Ng)^{\Gg} \ar[r] \ar[d] & \HHH^2(\Gam) \ar[r] \ar[d] & \HHH^2(\Gg) \ar[d] \\
0 \ar[r] & \QQQ(\Gam) \ar[r] & \QQQ(\Gg) \ar[r] & \QQQ(\Ng)^{\Gg} \ar[r] & \HHH_{/b}^2(\Gam) \ar[r] & \HHH_{/b}^2(\Gg).
}
\end{align}
Since $\HHH_{/b}^*(G)$ is the relative cohomology, we have exact sequences
\begin{align*}
  \cdots \to \HHH_b^2(\Gam) \xrightarrow{c_{\Gam}^2} \HHH^2(\Gam) \xrightarrow{j^*} \HHH_{/b}^2(\Gam) \xrightarrow{\mathbf{d}} \HHH_b^3(\Gam) \to \cdots
\end{align*}
and
\begin{align*}
  \cdots \to \HHH_b^2(\Gg) \xrightarrow{c_{\Gg}^2} \HHH^2(\Gg) \xrightarrow{j^*} \HHH_{/b}^2(\Gg) \xrightarrow{\mathbf{d}} \HHH_b^3(\Gg) \to \cdots.
\end{align*}
Here $j^*$ and $\mathbf{d}$ are the maps induced by the quotient map $j \colon \CCC^2(\Gam) \to \CCC_{/b}^2(\Gam)$ and the coboundary map $\delta \colon \CCC^2(\Gam) \to \CCC^3(\Gam)$, respectively.
Hence we obtain the following commutative diagram, whose rows and columns are exact:
\begin{align*}\label{align=big_diagram}
\xymatrix{
& & & & \HHH_b^2(\Gam) \ar[r] \ar[d]^-{c_{\Gamma}^2} & \HHH_b^2(\Gg) \ar[d]^-{c_{\Gg}^2} \\
0 \ar[r] & \HHH^1(\Gam) \ar[r] \ar[d] & \HHH^1(\Gg) \ar[r] \ar[d] & \HHH^1(\Ng)^{\Gg} \ar[r] \ar[d] & \HHH^2(\Gam) \ar[r]^-{p^*} \ar[d]^-{j_*} & \HHH^2(\Gg) \ar[d] \\
0 \ar[r] & \QQQ(\Gam) \ar[r] & \QQQ(\Gg) \ar[r] & \QQQ(\Ng)^{\Gg} \ar[r]^-{\tau_{/b}} & \HHH_{/b}^2(\Gam) \ar[r] \ar[d]^-{\mathbf{d}} & \HHH_{/b}^2(\Gg) \ar[d] \\
& & & & \HHH_b^3(\Gam) \ar[r] & \HHH_b^3(\Gg).
}
\end{align*}

\subsection{Dimension of $\WGN$}\label{subsec=dimW}

The diagram in the previous subsection is a useful tool to estimate the dimension of $\WGN$.
\begin{theorem}\label{theorem=dimension_no_conditions}
  The dimension of $\WGN$ is equal to
  \begin{align*}
    \dim_{\RR} \Im \, (\mathbf{d} \circ \tau_{/b}) + \dim_{\RR} \left(\frac{\Im \, c_{\Gg}^2 \cap \Im \, p^*}{\Im \, (p^* \circ c_{\Gamma}^2)} \right).
  \end{align*}
  In particular, the inequality $\dim_{\RR} \WGN \leq \dim_{\RR} \Ker \, c_{\Gamma}^3 + \dim_{\RR} \HHH^2(G)$ holds.
\end{theorem}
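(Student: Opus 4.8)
The plan is to reduce the computation of $\dim_{\RR}\WGN$ to a statement about $\Im\tau_{/b}$, and then to run a diagram chase in the exact rows and columns assembled in Subsection~\ref{subsec=five-term}. First, by exactness of the five-term sequence \eqref{align=five-term-rel-coh} one has $i^{*}\QG=\Ker\tau_{/b}$, so $\tau_{/b}$ induces an isomorphism $\QNG/i^{*}\QG\xrightarrow{\cong}\Im\tau_{/b}$ carrying the image of $\HNG$ onto $\tau_{/b}(\HNG)$; hence $\WGN\cong\Im\tau_{/b}/\tau_{/b}(\HNG)$. Write $V=\Im\tau_{/b}\subseteq\HHH_{/b}^{2}(\Gam)$ and $W=\tau_{/b}(\HNG)$, and let $\delta\colon\HNG\to\HHH^{2}(\Gam)$ be the connecting map in \eqref{align=five-term-grp-coh} and $j^{*}\colon\HHH^{2}(\Gam)\to\HHH_{/b}^{2}(\Gam)$ the natural map. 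The commutativity of \eqref{align=diagram_five_term_coh_rel} shows that $\tau_{/b}$ restricted to $\HNG$ equals $j^{*}\circ\delta$; in particular $W\subseteq\Im j^{*}$, so that $W\subseteq V\cap\Im j^{*}\subseteq V$.

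Next I would split $V/W$ along these two inclusions. Exactness of $\HHH^{2}(\Gam)\xrightarrow{j^{*}}\HHH_{/b}^{2}(\Gam)\xrightarrow{\mathbf{d}}\HHH_{b}^{3}(\Gam)$ gives $V\cap\Im j^{*}=V\cap\Ker\mathbf{d}=\Ker(\mathbf{d}|_{V})$, so $\mathbf{d}$ induces an isomorphism $V/(V\cap\Im j^{*})\cong\mathbf{d}(V)=\Im(\mathbf{d}\circ\tau_{/b})$. Since $\dim$ is additive on short exact sequences of $\RR$-vector spaces, this yields $\dim_{\RR}\WGN=\dim_{\RR}\Im(\mathbf{d}\circ\tau_{/b})+\dim_{\RR}\big((V\cap\Im j^{*})/W\big)$, and it remains only to identify $(V\cap\Im j^{*})/W$ with $(\Im c_{\Gg}^{2}\cap\Im p^{*})/\Im(p^{*}\circ c_{\Gam}^{2})$.

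This identification is the heart of the argument. For $v\in V\cap\Im j^{*}$, choose $\alpha\in\HHH^{2}(\Gam)$ with $j^{*}\alpha=v$. Since $v\in V=\Ker\big(p^{*}\colon\HHH_{/b}^{2}(\Gam)\to\HHH_{/b}^{2}(\Gg)\big)$ (exactness of \eqref{align=five-term-rel-coh}) and $j^{*}$ commutes with $p^{*}$, we get $j^{*}(p^{*}\alpha)=0$, hence $p^{*}\alpha\in\Ker\big(j^{*}\colon\HHH^{2}(\Gg)\to\HHH_{/b}^{2}(\Gg)\big)=\Im c_{\Gg}^{2}$ by exactness of the pair sequence for $\Gg$; thus $p^{*}\alpha\in\Im c_{\Gg}^{2}\cap\Im p^{*}$. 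Replacing $\alpha$ by another $j^{*}$-preimage of $v$ changes it by $\Ker j^{*}=\Im c_{\Gam}^{2}$, hence changes $p^{*}\alpha$ by $\Im(p^{*}\circ c_{\Gam}^{2})$, so $v\mapsto[p^{*}\alpha]$ is a well-defined linear map $\Phi\colon V\cap\Im j^{*}\to(\Im c_{\Gg}^{2}\cap\Im p^{*})/\Im(p^{*}\circ c_{\Gam}^{2})$. It is surjective: if $\beta=p^{*}\alpha$ lies in $\Im c_{\Gg}^{2}=\Ker j^{*}$, then $p^{*}(j^{*}\alpha)=j^{*}(p^{*}\alpha)=0$, so $j^{*}\alpha\in\Ker p^{*}=V$, whence $j^{*}\alpha\in V\cap\Im j^{*}$ and $\Phi(j^{*}\alpha)=[\beta]$. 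Its kernel is exactly $W$: if $\Phi(v)=0$ then $p^{*}\alpha=p^{*}(c_{\Gam}^{2}\gamma)$ for some $\gamma\in\HHH_{b}^{2}(\Gam)$, so $p^{*}(\alpha-c_{\Gam}^{2}\gamma)=0$, and exactness of \eqref{align=five-term-grp-coh} gives $\alpha-c_{\Gam}^{2}\gamma=\delta h$ for some $h\in\HNG$, whence $v=j^{*}\alpha=j^{*}(c_{\Gam}^{2}\gamma)+j^{*}(\delta h)=\tau_{/b}(h)\in W$; conversely, for $v=\tau_{/b}(h)=j^{*}(\delta h)$ one may take $\alpha=\delta h$, and $p^{*}(\delta h)=0$ again by \eqref{align=five-term-grp-coh}, so $\Phi(v)=0$. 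Hence $\Phi$ descends to the desired isomorphism, and combined with the previous paragraph this proves the displayed formula.

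For the ``in particular'' part, I would bound each summand: $\Im(\mathbf{d}\circ\tau_{/b})\subseteq\Im\big(\mathbf{d}\colon\HHH_{/b}^{2}(\Gam)\to\HHH_{b}^{3}(\Gam)\big)=\Ker c_{\Gam}^{3}$ by exactness of the long exact sequence of the pair for $\Gam$, and $\dim_{\RR}\big((\Im c_{\Gg}^{2}\cap\Im p^{*})/\Im(p^{*}\circ c_{\Gam}^{2})\big)\le\dim_{\RR}\Im p^{*}\le\dim_{\RR}\HHH^{2}(\Gg)$. I expect the main obstacle to be the bookkeeping in the chase defining $\Phi$ — in particular verifying that $p^{*}\alpha$ lands in $\Im c_{\Gg}^{2}$ and that $\Ker\Phi=W$ in both directions; everything else is a formal consequence of the exact rows and columns already recorded in Subsection~\ref{subsec=five-term}.
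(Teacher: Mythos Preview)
Your proof is correct and follows essentially the same approach as the paper: both arguments split $\WGN$ via the map $\mathbf{d}\circ\tau_{/b}$, identify the cokernel piece with $\Im(\mathbf{d}\circ\tau_{/b})$, and show the kernel piece is isomorphic to $(\Im c_{\Gg}^{2}\cap\Im p^{*})/\Im(p^{*}\circ c_{\Gam}^{2})$ by the map $[\mu]\mapsto[p^{*}\alpha]$ with $j^{*}\alpha=\tau_{/b}(\mu)$. You have simply written out in full the diagram chase that the paper abbreviates as ``it is straightforward to see,'' including the verifications of well-definedness, surjectivity, and $\Ker\Phi=W$.
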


\begin{proof}
  The composite $\mathbf{d} \circ \tau_{/b}$ induces a map $\mathbf{d}_{W} \colon \WGN \to \Im \, \mathbf{d} \subset \HHH_b^3(\Gamma)$.
  Since $\Im \, \mathbf{d}_W = \Im \, (\mathbf{d} \circ \tau_{/b})$, it suffices to show that the kernel of $\mathbf{d}_W$ is isomorphic to $(\Im \, c_{\Gg}^2 \cap \Im \, p^*)/\Im \, (p^* \circ c_{\Gamma}^2)$.
  For an element $[\mu]$  of $\Ker \, \mathbf{d}_W$, we take an element $c$ of $\HHH^2(\Gamma)$ with $j_*(c) = \tau_{/b}(\mu)$.
  Then it is straightforward to see  that the class $p^*(c)$ gives rise to a well-defined map $f \colon \Ker \, \mathbf{d}_W \to (\Im \, c_{\Gg}^2 \cap \Im \, p^*)/\Im \, (p^* \circ c_{\Gamma}^2); [\mu] \mapsto [p^*(c)]$, which is an isomorphism.
\end{proof}

To estimate the dimension of $\Im \, (\mathbf{d \circ \tau_{/b}})$, we need some information on the third bounded cohomology group $\HHH_b^3(\Gam)$. We use the following definition of bounded $n$-acyclicity; the case of $n=3$ is of our main concern in this subsection.

\begin{definition}[bounded $n$-acyclicity]\label{defn=bdd_acyc}
Let $n\in \NN$. A group $\Gg$ is said to be \emph{boundedly $n$-acyclic} if $\HHH^i_b(\Gg) = 0$ holds for every $i\in \NN$ with $i \leq n$. We say that $\Gg$ is \emph{boundedly acyclic} if for every $n\in \NN$, $\Gg$ is boundedly $n$-acyclic.
\end{definition}

Boundedly $3$-acyclicity of $\Gam$ makes the situation a bit simpler. In fact, Theorem \ref{theorem=dimension_no_conditions} and its proof gives the following.
\begin{theorem}[\cite{KKMMM1}]\label{theorem=WGN_isom}
  If $\Gam$ is boundedly $3$-acyclic, then the map $p^* \circ j_*^{-1} \circ \tau_{/b}$ induces an isomorphism $\WGN \cong \Im \, p^* \cap \Im \, c_{\Gg}^2$.
  In particular, we have
\[
\dim_{\RR} \WGN \leq \min\{\dim_{\RR} \HHH^2(\Gg),\dim_{\RR} \HHH^2(\Gam)\}.
\]
\end{theorem}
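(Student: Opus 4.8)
The plan is to derive Theorem~\ref{theorem=WGN_isom} directly from Theorem~\ref{theorem=dimension_no_conditions} together with the hypothesis that $\Gam$ is boundedly $3$-acyclic. First I would record what this hypothesis does to the big commutative diagram at the end of Subsection~\ref{subsec=five-term}: since $\HHH_b^3(\Gam)=0$, the map $\mathbf{d}\colon \HHH_{/b}^2(\Gam)\to \HHH_b^3(\Gam)$ is the zero map, and since $\HHH_b^2(\Gam)=0$, the comparison map $c_{\Gam}^2\colon \HHH_b^2(\Gam)\to \HHH^2(\Gam)$ is the zero map (its domain is $0$); moreover the long exact sequence $\HHH_b^2(\Gam)\to \HHH^2(\Gam)\xrightarrow{j_*}\HHH_{/b}^2(\Gam)\xrightarrow{\mathbf{d}}\HHH_b^3(\Gam)$ then shows that $j_*\colon \HHH^2(\Gam)\to \HHH_{/b}^2(\Gam)$ is an isomorphism. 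These three facts are the whole input.

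Next I would feed this into the dimension formula of Theorem~\ref{theorem=dimension_no_conditions}. Because $\mathbf{d}=0$ on $\HHH_{/b}^2(\Gam)$, the term $\dim_{\RR}\Im(\mathbf{d}\circ\tau_{/b})$ vanishes. Because $c_{\Gam}^2=0$, the subspace $\Im(p^*\circ c_{\Gam}^2)$ of $\HHH^2(\Gg)$ is zero, so the second term collapses to $\dim_{\RR}(\Im c_{\Gg}^2\cap \Im p^*)$. Hence $\dim_{\RR}\WGN=\dim_{\RR}(\Im c_{\Gg}^2\cap \Im p^*)$. To upgrade this equality of dimensions to the asserted natural isomorphism, I would reuse the map constructed in the proof of Theorem~\ref{theorem=dimension_no_conditions}: for $[\muf]\in \WGN$ one has $\tau_{/b}(\muf)\in \HHH_{/b}^2(\Gam)$, and since $j_*$ is now an isomorphism one may set $c=j_*^{-1}(\tau_{/b}(\muf))\in \HHH^2(\Gam)$ unambiguously; then $[\muf]\mapsto p^*(c)=p^*(j_*^{-1}(\tau_{/b}(\muf)))$ is a well-defined linear map $\WGN\to \HHH^2(\Gg)$. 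I would check (i) it is well-defined on the quotient $\WGN=\QNG/(\HNG+i^*\QG)$ — elements of $\HNG$ come from $\HHH^1(\Ng)^{\Gg}$ whose image under $\tau_{/b}$, by exactness of \eqref{align=five-term-rel-coh}, lies in $\Im(\HHH^1(\Gg)\to\HHH^1(\Ng)^{\Gg})$, hence maps to $0$ under $\tau_{/b}$, while $i^*\QG$ is exactly $\Ker\tau_{/b}$; (ii) its image is $\Im p^*\cap \Im c_{\Gg}^2$ — that $c=j_*^{-1}\tau_{/b}(\muf)$ lies in $\Im c_{\Gg}^2$ follows by a diagram chase in the bottom-left square using $c_{\Gam}^2=0$ and exactness of the column over $\HHH^2(\Gam)$, combined with compatibility \eqref{align=diagram_five_term_coh_rel}, and the commutativity $j_*\circ c_{\Gg}^2 = $ (the relevant composite) forces $p^*(c)\in \Im c_{\Gg}^2$; (iii) injectivity — if $p^*(c)=0$ then $c\in\Ker p^*$, and since $\Ker(p^*\colon\HHH^2(\Gam)\to\HHH^2(\Gg))=\Im(\HHH^1(\Ng)^{\Gg}\xrightarrow{}\HHH^2(\Gam))$ by \eqref{align=five-term-grp-coh}, a chase shows $\tau_{/b}(\muf)\in j_*(\Im(\HHH^1(\Ng)^{\Gg}\to\HHH^2(\Gam)))$, which by compatibility equals the image under $\tau_{/b}$ of something in $\HHH^1(\Ng)^{\Gg}$, so $\muf$ differs by an element of $\HNG$ from something in $\Ker\tau_{/b}=i^*\QG$, i.e. $[\muf]=0$. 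Surjectivity onto $\Im p^*\cap \Im c_{\Gg}^2$ is the reverse chase. Finally the inequality $\dim_{\RR}\WGN\le \min\{\dim_{\RR}\HHH^2(\Gg),\dim_{\RR}\HHH^2(\Gam)\}$ is immediate: $\Im p^*\cap \Im c_{\Gg}^2$ is a subspace of $\Im p^*$, which is a quotient of $\HHH^2(\Gam)$, and also a subspace of $\HHH^2(\Gg)$.

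The main obstacle I anticipate is not any single hard idea but the bookkeeping of the several diagram chases — in particular verifying that the isomorphism from Theorem~\ref{theorem=dimension_no_conditions}'s proof restricts/descends correctly once $\mathbf{d}$ and $c_{\Gam}^2$ vanish, and making precise the identification of $\Im p^*\cap\Im c_{\Gg}^2$ as the image. A clean way to organize this is to phrase everything as: under bounded $3$-acyclicity of $\Gam$, the map $\mathbf{d}_W\colon\WGN\to\HHH_b^3(\Gam)$ of that proof is zero, so $\WGN=\Ker\mathbf{d}_W\cong (\Im c_{\Gg}^2\cap\Im p^*)/\Im(p^*\circ c_{\Gam}^2)=\Im c_{\Gg}^2\cap\Im p^*$, which is exactly the stated isomorphism induced by $p^*\circ j_*^{-1}\circ\tau_{/b}$; then the min-inequality is a one-line consequence. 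I would present it in essentially that order, citing Theorem~\ref{theorem=dimension_no_conditions} for the structural isomorphism and only spelling out the three vanishing facts about $\Gam$ in detail.
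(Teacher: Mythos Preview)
Your proposal is correct and matches the paper's own argument: the paper simply states that Theorem~\ref{theorem=WGN_isom} follows from Theorem~\ref{theorem=dimension_no_conditions} and its proof, and your final paragraph records precisely this deduction (bounded $3$-acyclicity kills $\mathbf{d}_W$ and $\Im(p^*\circ c_{\Gam}^2)$, so the isomorphism $f$ from that proof becomes $\WGN\cong \Im c_{\Gg}^2\cap \Im p^*$, realized by $p^*\circ j_*^{-1}\circ\tau_{/b}$). Your intermediate checks (i)--(iii) are not wrong but are redundant once you invoke the isomorphism $f$ already established in the proof of Theorem~\ref{theorem=dimension_no_conditions}; the clean version you sketch at the end is all that is needed.
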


\begin{corollary}[\cite{KKMMM1}]\label{cor=dim_est_comp_surj}
  Assume that $\Ng$ is contained in the commutator subgroup $[\Gg, \Gg]$ of $\Gg$, and $\Gam$ is boundedly $3$-acyclic. Then the following inequality holds:
  \begin{align*}
    \dim_{\RR} \WGN \leq \dim_{\RR} \HHH^2(\Gam) - \dim_{\RR} \HHH^1(\Ng)^{\Gg}.
  \end{align*}
  Moreover, if the comparison map $c^2_{\Gg}\colon \HHH_b^2(\Gg) \to \HHH^2(\Gg)$ is surjective, then
  \begin{align*}
    \dim_{\RR} \WGN = \dim_{\RR} \HHH^2(\Gam) - \dim_{\RR} \HHH^1(\Ng)^{\Gg}.
  \end{align*}
\end{corollary}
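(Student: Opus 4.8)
The plan is to derive Corollary~\ref{cor=dim_est_comp_surj} from Theorem~\ref{theorem=WGN_isom} together with the five-term exact sequence \eqref{align=five-term-grp-coh}, the extra ingredient being the hypothesis $\Ng\subseteq\CG$.

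First I would extract the consequence of $\Ng\subseteq\CG$ in degree one. Under the identification $\HHH^1(\Gg)\cong\mathrm{Hom}(\Gg,\RR)$, every homomorphism $\Gg\to\RR$ kills $\CG$, hence kills $\Ng$; thus the restriction map $i^{\ast}\colon\HHH^1(\Gg)\to\HHH^1(\Ng)^{\Gg}$ is zero. Feeding this into \eqref{align=five-term-grp-coh}: exactness at $\HHH^1(\Ng)^{\Gg}$ shows that the transgression map $\HHH^1(\Ng)^{\Gg}\to\HHH^2(\Gam)$ is injective, and exactness at $\HHH^2(\Gam)$ identifies its image with $\Ker\bigl(p^{\ast}\colon\HHH^2(\Gam)\to\HHH^2(\Gg)\bigr)$. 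Hence there is a short exact sequence of $\RR$-vector spaces
\[
0\longrightarrow\HHH^1(\Ng)^{\Gg}\longrightarrow\HHH^2(\Gam)\xrightarrow{p^{\ast}}\Im\bigl(p^{\ast}\colon\HHH^2(\Gam)\to\HHH^2(\Gg)\bigr)\longrightarrow 0,
\]
which splits, so that $\dim_{\RR}\Im\bigl(p^{\ast}\colon\HHH^2(\Gam)\to\HHH^2(\Gg)\bigr)=\dim_{\RR}\HHH^2(\Gam)-\dim_{\RR}\HHH^1(\Ng)^{\Gg}$.

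Next I would apply Theorem~\ref{theorem=WGN_isom}: since $\Gam$ is boundedly $3$-acyclic, $\WGN$ is isomorphic to the subspace $\Im\,p^{\ast}\cap\Im\,c^2_{\Gg}$ of $\HHH^2(\Gg)$. As a subspace of $\Im\,p^{\ast}$ it satisfies $\dim_{\RR}\WGN\le\dim_{\RR}\Im\,p^{\ast}=\dim_{\RR}\HHH^2(\Gam)-\dim_{\RR}\HHH^1(\Ng)^{\Gg}$, which is the claimed inequality. If in addition $c^2_{\Gg}$ is surjective, then $\Im\,c^2_{\Gg}=\HHH^2(\Gg)$, so $\Im\,p^{\ast}\cap\Im\,c^2_{\Gg}=\Im\,p^{\ast}$ and Theorem~\ref{theorem=WGN_isom} gives $\WGN\cong\Im\,p^{\ast}$, upgrading the inequality to the asserted equality.

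I do not expect a real obstacle: the argument is a short diagram chase followed by a dimension count, and the only genuinely new input beyond the two cited results is the vanishing of $i^{\ast}$ in degree one forced by $\Ng\subseteq\CG$. The one point deserving a word of care is the meaning of the difference $\dim_{\RR}\HHH^2(\Gam)-\dim_{\RR}\HHH^1(\Ng)^{\Gg}$ when these spaces are infinite dimensional; it should be understood via the split short exact sequence above rather than as literal cardinal subtraction, and in the situations of interest (for instance Corollary~\ref{cor=nilp}) the spaces involved are finite dimensional, so no ambiguity arises.
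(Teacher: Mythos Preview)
Your proof is correct and follows the natural route implicit in the paper's presentation of this result as a corollary to Theorem~\ref{theorem=WGN_isom}: the hypothesis $\Ng\subseteq\CG$ forces $i^{\ast}=0$ on $\HHH^1$, so the five-term exact sequence~\eqref{align=five-term-grp-coh} collapses to give $\dim_{\RR}\Im\,p^{\ast}=\dim_{\RR}\HHH^2(\Gam)-\dim_{\RR}\HHH^1(\Ng)^{\Gg}$, and then Theorem~\ref{theorem=WGN_isom} yields both the inequality and, under surjectivity of $c^2_{\Gg}$, the equality. The paper does not spell out a proof of this corollary, but your argument is exactly what is intended.
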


If $\HHH^2(\Gg) = 0$ (for example, free groups, the fundamental groups of non-orientable connected surfaces, and braid groups) and $\Ng$ contains a commutator subgroup of $\Gg$, then we have $\WGN = 0$ by Theorem \ref{theorem=WGN_isom}.

The following theorem  due to Mineyev (see also \cite{Gr}) supplies several examples for the latter statement of Corollary \ref{cor=dim_est_comp_surj}.
\begin{theorem}[\cite{Mineyev}]\label{thm=Mineyev}
  If $\Gg$ is non-elementary Gromov-hyperbolic, then the comparison map $c_{\Gg}^n \colon \HHH_b^n(\Gg) \to \HHH^n(\Gg)$ is surjective for $n \geq 2$.
\end{theorem}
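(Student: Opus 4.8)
The plan is to show that every class in $\HHH^n(\Gg)$ with $n\ge 2$ admits a bounded cocycle representative; this is precisely surjectivity of $c_{\Gg}^n$, and the construction will be natural enough to produce an $\RR$-linear section of $c_{\Gg}^n$. Fix a finite generating set of $\Gg$, let $X$ be the Cayley graph (which is $\delta$-hyperbolic for some $\delta$), and work in a $\Gg$-equivariant model of $\mathrm{E}\Gg$ built on $X$ --- equivalently, in the homogeneous bar resolution --- so that $\HHH^*(\Gg)$ is computed by the $\Gg$-invariant cochains, with $c_{\Gg}^*$ induced by the inclusion of the bounded ones. Following Mineyev, the idea is to build a $\Gg$-equivariant chain-level \emph{straightening} operator $\mathrm{str}_*$ on $\CCC_*(\mathrm{E}\Gg)$ together with a $\Gg$-equivariant chain homotopy $\mathrm{str}_*\simeq\mathrm{id}$, whose defining feature is that its dual sends arbitrary cochains to bounded cochains in degrees $\ge 2$. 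Then for a cocycle $z$ the cochain $\mathrm{str}^*(z)$ is a $\Gg$-invariant cocycle, it is cohomologous to $z$ (since $\mathrm{str}_*\simeq\mathrm{id}$ gives $\mathrm{str}^*(z)=z+\delta w$ for some cochain $w$), and it is bounded in degree $\ge 2$; the assignment $[z]\mapsto[\mathrm{str}^*(z)]$ is the desired section.

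The technical heart --- and the step I expect to be the main obstacle --- is the construction of a suitable $\Gg$-equivariant \emph{quasigeodesic homological bicombing} on $X$: an assignment $(a,b)\mapsto q(a,b)\in\CCC_1(X;\RR)$ with $\del q(a,b)=b-a$ and $q(ga,gb)=g\cdot q(a,b)$, with $q(a,b)$ supported near a geodesic $[a,b]$ and $\|q(a,b)\|_1\le C\,d(a,b)+C$, and --- crucially --- satisfying the sharp ``second-order'' estimates forced by $\delta$-hyperbolicity: $\ell^1$-Lipschitz dependence on the endpoints, and uniformly controlled equivariant filling behaviour for the triangle cycles $q(a,b)+q(b,c)+q(c,a)$. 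This is Mineyev's construction, carried out by means of a metric device he calls the \emph{hyperbolic cone} of $X$, and it is here that the hyperbolicity of $\Gg$ is fully consumed; essentially all of the delicate estimates live in this step.

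Granting the bicombing, higher filling operators $q_k$ on $k$-simplices are obtained by a coning construction and induction on dimension, and they assemble into $\mathrm{str}_*$ together with an explicit equivariant homotopy $H$ with $\del H+H\del=\mathrm{str}_*-\mathrm{id}$. To see that $\mathrm{str}^k(z)$ is bounded for $k\ge 2$, one normalizes a $k$-simplex to have first vertex $1_{\Gg}$ using invariance of the lifted cocycle, so that $\mathrm{str}^k(z)$ on that simplex equals the value of $z$ on the straightened chain $q_k(1_{\Gg},a_1,\dots,a_k)$, which spans a $\delta$-thin geodesic $k$-simplex; evaluating the \emph{closed} cochain $z$ on this chain, the contributions coming from the fellow-travelling strips cancel in pairs (this is exactly where $\delta z=0$ is used), leaving only a bounded contribution from a bounded ``core'' of the configuration. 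The same cancellation is unavailable for $k\le 1$ --- a $1$-simplex is just a possibly very long geodesic with nothing to cancel against --- which is precisely why the theorem is confined to $n\ge 2$. Assembling these facts yields a well-defined $\RR$-linear $\sigma^n\colon\HHH^n(\Gg)\to\HHH_b^n(\Gg)$ with $c_{\Gg}^n\circ\sigma^n=\mathrm{id}$, hence surjectivity of $c_{\Gg}^n$ for every $n\ge 2$. An alternative route, essentially Gromov's, bypasses the global straightening and bounds a given cocycle directly using the linear homological isoperimetric inequality of hyperbolic groups, filling the relevant cycles by chains whose pairing with the closed cocycle telescopes to a bounded quantity; the organizing difficulty, and the role of the hypothesis $n\ge 2$, are the same, and a fully rigorous version still passes through a Mineyev-type bicombing (cf.\ Frigerio's book).
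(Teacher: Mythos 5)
The paper does not prove this theorem; it is cited from Mineyev's work as an external result, with no argument given. Your sketch is a faithful high-level summary of Mineyev's strategy: the $\Gg$-equivariant homological bicombing on the Cayley graph with controlled $\ell^1$-size and second-order (fellow-travelling/filling) estimates coming from $\delta$-hyperbolicity, the induced straightening operator $\mathrm{str}_*$ with an equivariant chain homotopy to the identity, and the conclusion that for a cocycle $z$ the straightened cochain $\mathrm{str}^*(z)$ is bounded in degree $\ge 2$ (using $\delta z=0$ to cancel the long fellow-travelling contributions). You also correctly identify why the argument cannot produce boundedness in degree $1$, and you note the alternative isoperimetric route.

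That said, what you have written is an outline rather than a proof, and you say so yourself. The construction of the bicombing with the required sharp estimates (Mineyev's hyperbolic cone, the $\ell^1$-Lipschitz dependence on endpoints, the uniform triangle filling) is the entire technical content of Mineyev's paper and is not reproduced here; likewise the passage to the higher filling operators $q_k$, the chain homotopy, and the actual boundedness estimate for $\mathrm{str}^k(z)$ are only gestured at. Since the paper supplies no proof to compare against, there is no discrepancy to flag, but if the goal were a self-contained argument you would need to either carry out the bicombing construction or explicitly black-box a precisely stated lemma (e.g.\ Mineyev's Theorem~10 or Frigerio's treatment of the straightening) and then verify the boundedness calculation in detail. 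One small point: Mineyev's surjectivity theorem does not actually require non-elementarity; the hypothesis appears in the paper's statement but is superfluous (for elementary hyperbolic groups $\HHH^n(\Gg;\RR)=0$ for $n\ge 2$, so surjectivity is vacuous), and your sketch correctly makes no use of it.
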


 We exhibit the following dimension computation of $\WGN$: this is the first example of $(\Gg,\Ng)$  such that  $\WGN\ne 0$  and that  $\dim_{\RR} \WGN$ is  determined.  Other examples of estimates of $\dim_{\RR} \WGN$ can be also found in \cite{KKMMM1}.

\begin{corollary}[\cite{KKMMM1}]
  Let $\Gg$ be the fundamental group of an \textup{(}orientable\textup{)} connected closed surface of genus $g \geq 2$ and $\Ng$ the commutator subgroup of $\Gg$.
  Then $\dim_{\RR} \WGN = 1$.
\end{corollary}

\begin{proof}
  Since $\Gg$ is non-elementary Gromov-hyperbolic, the comparison map $c_{\Gg}^2 \colon \HHH_b^2(\Gg) \to \HHH^2(\Gg)$ is surjective by Theorem \ref{thm=Mineyev}.
  Moreover, the map $p^* \colon \HHH^2(\Gam) \to \HHH^2(\Gg)$ is also surjective.
  Indeed, $\dim_{\RR} \HHH^1(N)^G = g(2g-1)-1$ (see for instance \cite[Proposition 4.7]{KKMMM1}), $\dim_{\RR} \HHH^2(\Gam) = \dim_{\RR} \HHH^2(\ZZ^{2g}) = g(2g-1)$ and $\dim_{\RR}\HHH^2(\Gg) = 1$.
  Further, the map $p^* \colon \HHH^1(\Gam) \to \HHH^1(\Gg)$ is isomorphic since $p \colon \Gg \to \Gam$ is the abelianization.
  Hence the five-term exact sequence \eqref{align=five-term-grp-coh} implies the surjectivity of $p^* \colon \HHH^2(\Gam) \to \HHH^2(\Gg)$.
  By Theorem \ref{theorem=WGN_isom}, we obtain $\WGN \cong \Im \, p^* \cap \Im \, c_{\Gg}^2 = \HHH^2(\Gg) \cong \RR$.
\end{proof}


For the reader's convenience, we collect known facts on the study of bounded acyclicity by various researchers.

\begin{theorem}[known results for boundedly $n$-acyclic groups]\label{thm=bdd_acyc}
The following hold.
\begin{enumerate}[$(1)$]
\item \textup{(\cite{Gr})} Every amenable group is boundedly acyclic.
\item \textup{(}see \textup{\cite{MR21})} Let $n\in \NN$. Let $1\to N \to G \to \Gamma \to 1$ be a short exact sequence of groups. Assume that $N$ is boundedly $n$-acyclic. Then $G$ is boundedly $n$-acyclic if and only if $\Gamma$ is.
\item \textup{(\cite{MatsumotoMorita})}  Let $m\in \NN$. Then, the group $\Homeo_{c}(\RR^m)$ of homeomorphisms on $\RR^m$ with compact support is boundedly acyclic.
\item \textup{(}combination of \textup{\cite{Mon04}} and \textup{\cite{MS04})}
For $m \in \NN_{\geq 3}$, every lattice in $\SL(m,\RR)$ is boundedly $3$-acyclic.
\item \textup{(\cite{BucherMonod})} Burger--Mozes groups  are boundedly $3$-acyclic.
\item \textup{(\cite{Monod2021})} Richard Thompson's group $F$ is boundedly acyclic.
\item \textup{(\cite{Monod2021})} Let $L$ be an arbitrary group. Let $\Gamma$ be an infinite amenable group. Then the wreath product $L\wr \Gamma =\left( \bigoplus\limits_{\Gamma}L\right)\rtimes \Gamma$ is boundedly acyclic.
\item \textup{(\cite{MN21})} For $m\in \NN_{\geq 2}$, the identity component $\Homeo_0(S^m)$ of the group of homeomorphisms of $S^m$ is boundedly $3$-acyclic. The group $\Homeo_0(S^3)$ is boundedly $4$-acyclic.
\item \textup{(\cite{CFLM})} For $m\in \NN_{\geq 2}$, the group ${\rm Diff}_{c,{\rm vol}}(\RR^m)$ of volume-preserving diffeomorphisms on $\RR^m$ with compact support is boundedly acyclic.
\item \textup{(\cite{CFLM})} The stable mapping class group $\Gamma_{\infty}=\bigcup\limits_{g=1}^\infty \Gamma_g^1$ and the stable braid group $B_{\infty}=\bigcup\limits_{n=1}^\infty B_n$ \textup{(}see \textup{\cite[1.1.1 and 1.1.2]{CFLM}} for these definitions, respectively\textup{)} are boundedly acyclic.
\end{enumerate}
\end{theorem}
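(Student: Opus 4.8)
Since this theorem merely collects results established by other authors, the plan is not to prove anything new: for each item one cites the reference and, where useful, recalls the underlying mechanism; the only real work is checking that the quoted statements hold in the generality claimed.

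Items (1) and (2) form the structural backbone. For (1) I would invoke the classical fact, due to Gromov (with earlier roots in work of Trauber and Johnson), that the bounded cohomology of an amenable group with trivial real coefficients vanishes in every positive degree (see \cite{Gr}); that is exactly the assertion. For (2) the tool is the bounded Hochschild--Serre machinery: when $\Ng$ is boundedly $n$-acyclic the inflation maps $\HHH^i_b(\Gam)\to\HHH^i_b(\Gg)$ are isomorphisms for all $i\leq n$, so $\Gg$ is boundedly $n$-acyclic precisely when $\Gam$ is (see \cite{MR21}). Several later items then reduce to earlier ones through (2); for instance, once one knows that $\bigoplus_\Gamma L$ is boundedly acyclic whenever $\Gamma$ is infinite amenable, applying (1) and (2) to the extension $1\to\bigoplus_\Gamma L\to L\wr\Gamma\to\Gamma\to 1$ yields (7).

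For (3), (6), (8), (9), and (10) the common engine is a ``commuting conjugates''/displacement argument: when a group has enough room to push any finite configuration off itself, an Eilenberg-swindle-type computation forces the bounded cohomology to vanish in the relevant range. I would cite Matsumoto--Morita \cite{MatsumotoMorita} for $\Homeo_{c}(\RR^m)$, Monod \cite{Monod2021} for Thompson's group $F$ (and also for (7)), Monod--Nariman \cite{MN21} for $\Homeo_0(S^m)$, and Campagnolo--Fournier-Facio--Lodha--Moraschini \cite{CFLM} for the volume-preserving compactly supported diffeomorphism groups and for the stable mapping class and braid groups $\Gamma_\infty$ and $B_\infty$, noting in a sentence that the latter two references refine the Matsumoto--Morita mechanism using mitotic/binate-type properties. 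Items (4) and (5) are of a different flavour: for lattices in $\SL(m,\RR)$ with $m\geq 3$ one uses the Burger--Monod resolution by $L^\infty$-functions on Poisson boundaries together with the low-degree computations of \cite{Mon04} and \cite{MS04}, while \cite{BucherMonod} computes the low-degree bounded cohomology of Burger--Mozes groups directly; here I would simply quote the cited theorems.

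The main ``obstacle'' is therefore expository rather than mathematical: one must make sure that each quoted result is recorded with the correct hypotheses --- in particular the degree bounds in items (4), (5), and (8), the amenability assumption on $\Gamma$ in (7), and the precise definitions of $\Gamma_\infty$ and $B_\infty$ in (10) --- and that the attributions are given in the form in which the statements are actually available. No new computation needs to be carried out in the body of this article.
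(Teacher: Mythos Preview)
Your proposal is correct and matches the paper's treatment: the theorem is stated as a compendium of results from the literature with no proof given in the paper itself, only citations. Your plan to cite each item and briefly indicate the underlying mechanism is entirely appropriate (and in fact goes slightly beyond what the paper does, since the paper provides no commentary at all).
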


The following corollary to Theorem~\ref{theorem=WGN_isom} supplies a wide class of pairs $(\Gg,\Ng)$ for which $\WGN$ is finite dimensional, in contrast to Proposition~\ref{prop=AH}.

\begin{corollary}\label{cor=nilp}
Let $\Gg$ be a group and $\Ng$ its normal subgroup. Set $\Gam=\Gg/\Ng$. If $\Gam$ is a finitely presented amenable group, then $\WGN$ is finite dimensional. In particular, if $\Gg$ is finitely generated and $\Gam$ is nilpotent, then $\WGN$ is finite dimensional.
\end{corollary}

\begin{proof}
In what follows, we show the first assertion. By Theorem~\ref{thm=bdd_acyc}~(1), $\Gam$ is boundedly $3$-acyclic. Hence Theorem~\ref{theorem=WGN_isom} applies to the present case, and we have
\[
\dim_{\RR} \WGN\leq \dim_{\RR} \HHH^2(\Gam);
\]
the right-hand side of the inequality above is finite because $\Gam$ is finitely presented, as desired.

Finally, we see the latter assertion. Observe that  $\Gam$ is a finitely generated nilpotent group, and  hence that $\Gam$ is finitely presented (see for instance \cite[2.3 and 2.4]{Mann}). Now, we can apply the first assertion.
\end{proof}

 From here to the end of this subsection, let $\Gg_{\genus}$ be the fundamental group of  an (orientable) connected closed  surface of genus $g \geq 2$ and $\Ng_{\genus}$ the commutator subgroup of $\Gg_{\genus}$.
As explained above, we have $\dim_{\RR} \WW(\Gg_{\genus}, \Ng_{\genus}) = 1$.
By using circle actions of $\Gg_{\genus}$, we can obtain an explicit invariant homogeneous quasimorphism representing a non-zero element of $\WW(\Gg_{\genus},\Ng_{\genus})$ as follows (see \cite{MMM} for details). 

Let $\Homeo_+(S^1)$ be the group of orientation preserving homeomorphisms of the circle $S^1$.
Let $T \colon \RR \to \RR$ be the translation by one and set
\begin{align*}
  \tHomeo_+(S^1) = \{ h \in \Homeo(\RR) \mid hT = Th \}.
\end{align*}
Then the \emph{Poincar\'{e} translation number} $\trot \colon \tHomeo_+(S^1) \to \RR$ is defined by $\trot(h) = \lim\limits_{n \to \infty} \dfrac{h^n(0)}{n}$.
It is known that $\trot$ is a homogeneous quasimorphism on $\tHomeo_+(S^1)$ with $\DD(\trot)=1$.

Let $\rho \colon \Gg_{\genus} \to \Homeo_+(S^1)$ be a homomorphism.
Since $\Ng_{\genus}$ is a free group (of infinite rank), there exists a homomorphism $\widetilde{\rho} \colon \Ng_{\genus} \to \tHomeo_+(S^1)$ such that the diagram
\begin{align*}
\xymatrix{
\Ng_{\genus} \ar[r]^-{\widetilde{\rho}} \ar[d] & \tHomeo_+(S^1) \ar[d]^-{\pi} \\
\Gg_{\genus} \ar[r]^-{\rho} & \Homeo_+(S^1)
}
\end{align*}
commutes.

Let $\chi$ be Matsumoto's canonical cocycle on $\Homeo_+(S^1)$, which is defined by
\begin{align*}
  \chi(\pi(\gl_1), \pi(\gl_2)) = \trot(\gl_1\gl_2) - \trot (\gl_1) - \trot(\gl_2)
\end{align*}
for $\gl_1,\gl_2 \in \tHomeo_+(S^1)$ (\cite{Matsumoto}).
Since $p^* \colon \HHH^2(\Gam_{\genus}) \to \HHH^2(\Gg_{\genus})$ is surjective, there exists a normalized $2$-cocycle $A$ on $\Gam_{\genus}$ such that $p^*[A] = \rho^*[\chi]$.
Hence there exists a $1$-cochain $u$ on $\Gg_{\genus}$ such that $\delta u = p^* A - \rho^* \chi$.
\begin{theorem}[\cite{MMM}]
  The restriction $u|_{\Ng_{\genus}}$ is contained in $\QQQ(\Ng_{\genus})^{\Gg_{\genus}}$.
  Moreover, if $\rho^*[\chi]$ is non-zero, then $[u|_{\Ng_{\genus}}]$ is non-zero in $\WW(\Gg_{\genus},\Ng_{\genus})$.
\end{theorem}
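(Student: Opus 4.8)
The plan is to treat the two assertions separately, using the $\Ng$-quasimorphism technology of Section~\ref{sec=gmBavard} for the first and the isomorphism of Theorem~\ref{theorem=WGN_isom} for the second.

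For the membership $u|_{\Ng_{\genus}}\in\QQQ(\Ng_{\genus})^{\Gg_{\genus}}$, I would restrict $\delta u = p^{*}A-\rho^{*}\chi$ to $\Ng_{\genus}$. Since $A$ is normalized and $\Ng_{\genus}=\ker p$, the cochain $p^{*}A$ vanishes on $\Ng_{\genus}\times\Ng_{\genus}$, while $\rho^{*}\chi$ is bounded because $\|\chi\|_{\infty}\le\DD(\trot)=1$; hence $\delta(u|_{\Ng_{\genus}})=-\rho^{*}\chi|_{\Ng_{\genus}}$ is bounded, so $u|_{\Ng_{\genus}}$ is a quasimorphism on $\Ng_{\genus}$ of defect at most $1$. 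To get homogeneity I would compare it with $\trot\circ\widetilde{\rho}$: since $\widetilde{\rho}$ is a homomorphism one checks $\delta(\trot\circ\widetilde{\rho})=-\rho^{*}\chi|_{\Ng_{\genus}}$ as well, so $h:=u|_{\Ng_{\genus}}-\trot\circ\widetilde{\rho}$ is a $1$-cocycle on $\Ng_{\genus}$, i.e.\ a homomorphism $\Ng_{\genus}\to\RR$; as $\trot$ is a homogeneous quasimorphism on $\tHomeo_{+}(S^{1})$, its pullback $\trot\circ\widetilde{\rho}$ along the homomorphism $\widetilde{\rho}$ is a homogeneous quasimorphism, and therefore $u|_{\Ng_{\genus}}=\trot\circ\widetilde{\rho}+h\in\QQQ(\Ng_{\genus})$. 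For $\Gg_{\genus}$-invariance I would note that the same vanishing of $p^{*}A$ on pairs with an entry in $\Ng_{\genus}$ (plus boundedness of $\rho^{*}\chi$) shows that $u$ itself is an $\Ng_{\genus}$-quasimorphism on $\Gg_{\genus}$; by Lemma~\ref{lemma=restriction} the restriction $u|_{\Ng_{\genus}}$ is then $\Gg_{\genus}$-quasi-invariant, and being also homogeneous it is $\Gg_{\genus}$-invariant by Lemma~\ref{lem=qinv_inv}. This gives $u|_{\Ng_{\genus}}\in\QQQ(\Ng_{\genus})^{\Gg_{\genus}}$.

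For the non-triviality I would invoke Theorem~\ref{theorem=WGN_isom}. The quotient $\Gam_{\genus}=\Gg_{\genus}/\Ng_{\genus}$ is the abelianization of $\Gg_{\genus}$, hence free abelian of rank $2\genus$; it is amenable, so boundedly acyclic by Theorem~\ref{thm=bdd_acyc}~(1), and in particular boundedly $3$-acyclic with $\HHH^{2}_{b}(\Gam_{\genus})=0$. Theorem~\ref{theorem=WGN_isom} then supplies an isomorphism $\WW(\Gg_{\genus},\Ng_{\genus})\xrightarrow{\ \cong\}\Im p^{*}\cap\Im c^{2}_{\Gg_{\genus}}\subseteq\HHH^{2}(\Gg_{\genus})$ induced by $p^{*}\circ j_{*}^{-1}\circ\tau_{/b}$. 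It remains to compute the image of $[u|_{\Ng_{\genus}}]$: using $u$ as an $\Ng_{\genus}$-quasimorphism extending $u|_{\Ng_{\genus}}$ and unwinding the construction of the transgression $\tau_{/b}$ in the relative five-term sequence \eqref{align=five-term-rel-coh}, the identity $\delta u=p^{*}A-\rho^{*}\chi$ (with $\rho^{*}\chi$ bounded) shows that $\tau_{/b}(u|_{\Ng_{\genus}})=j_{*}[A]$ in $\HHH^{2}_{/b}(\Gam_{\genus})$; since $\HHH^{2}_{b}(\Gam_{\genus})=0$ forces $j_{*}$ to be injective, $j_{*}^{-1}(\tau_{/b}(u|_{\Ng_{\genus}}))=[A]$, and the image of $[u|_{\Ng_{\genus}}]$ under the isomorphism is $p^{*}[A]=\rho^{*}[\chi]$ (which indeed lies in $\Im p^{*}\cap\Im c^{2}_{\Gg_{\genus}}$, the latter because $\chi$ is a bounded cocycle). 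Hence $[u|_{\Ng_{\genus}}]\ne 0$ in $\WW(\Gg_{\genus},\Ng_{\genus})$ exactly when $\rho^{*}[\chi]\ne 0$; in particular the hypothesis $\rho^{*}[\chi]\ne 0$ yields $[u|_{\Ng_{\genus}}]\ne 0$.

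The main obstacle I anticipate is the identification $\tau_{/b}(u|_{\Ng_{\genus}})=j_{*}[A]$: this requires recalling precisely how $\tau_{/b}$ is built (extend the invariant homogeneous quasimorphism to an $\Ng$-quasimorphism on $\Gg$ via Proposition~\ref{prop 3.4.1}, take its coboundary, and use $\Gg$-invariance to descend the coboundary, modulo bounded cochains, to the $p^{*}$-pullback of a cocycle on $\Gam$ representing $\tau_{/b}$), and then verifying that $u$ is already such an extension and that $\delta u=p^{*}A-\rho^{*}\chi$ exhibits that pullback cocycle as $A$ up to the bounded error $\rho^{*}\chi$. A minor secondary point is well-definedness of $[u|_{\Ng_{\genus}}]$ under the choices of $A$ and $u$: replacing $u$ by $u+\psi$ with $\delta\psi=0$ changes $u|_{\Ng_{\genus}}$ by $\psi|_{\Ng_{\genus}}\in i^{*}\QQQ(\Gg_{\genus})$, and modifying $A$ by a (normalized) coboundary changes $u|_{\Ng_{\genus}}$ by a cochain pulled back via $p$, which vanishes on $\Ng_{\genus}$; so the class in $\WW(\Gg_{\genus},\Ng_{\genus})$, and with it its image $\rho^{*}[\chi]$, is independent of these choices.
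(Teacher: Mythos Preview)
The paper does not supply a proof of this theorem; it is quoted from \cite{MMM} with the pointer ``see \cite{MMM} for details,'' so there is no in-paper argument to compare against. Your approach is correct and is precisely the one the paper's machinery is designed to support.

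For the first assertion, your use of the $N$-quasimorphism framework is clean: the normalization of $A$ kills $p^{*}A$ on any pair with an entry in $\Ng_{\genus}$, so $\delta u$ is bounded on such pairs, giving $u\in\hQQQ_{\Ng_{\genus}}(\Gg_{\genus})$; then Lemmas~\ref{lemma=restriction} and~\ref{lem=qinv_inv} finish. The homogeneity step via the identity $\delta(\trot\circ\widetilde{\rho})=-\rho^{*}\chi|_{\Ng_{\genus}}$ is correct and efficient.

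For the second assertion, the computation you flag as the main obstacle goes through using the explicit description of $\tau_{/b}$ recorded just before the proof of Proposition~\ref{proposition=naturality_five-term}: with $\mu=u|_{\Ng_{\genus}}$ and the canonical extension $\mu_{s}(\gl)=\mu(\gl\cdot sp(\gl)^{-1})$, the $\Ng_{\genus}$-quasimorphism property of $u$ gives $u-\mu_{s}-p^{*}v$ bounded for $v(\gamma)=u(s(\gamma))$, whence $s^{*}\delta\mu_{s}\equiv A-\delta v$ modulo bounded cochains and therefore $\tau_{/b}(\mu)=j_{*}[A]$ in $\HHH^{2}_{/b}(\Gam_{\genus})$. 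Since $\Gam_{\genus}\cong\ZZ^{2\genus}$ is amenable, $j_{*}$ is an isomorphism, and the image of $[\mu]$ under the isomorphism of Theorem~\ref{theorem=WGN_isom} is $p^{*}[A]=\rho^{*}[\chi]$, as you claim. Your well-definedness remarks are also accurate: changing $u$ by a cocycle alters $u|_{\Ng_{\genus}}$ by an element of $i^{*}\HHH^{1}(\Gg_{\genus})\subset i^{*}\QQQ(\Gg_{\genus})$, and changing $A$ within its cohomology class can be absorbed into such a change of $u$.
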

We note that the invariant homogeneous quasimorphism $u|_{\Ng_{\genus}}$ itself \emph{does} depend on the choices of $A$ and $u$.
However, the class of $u|_{\Ng_{\genus}}$ in $\QQQ(\Ng_{\genus})^{\Gg_{\genus}}/\HHH^1(\Ng_{\genus})^{\Gg_{\genus}}$ only depends on $\rho$.
According to this, the value of $u|_{\Ng_{\genus}}$ on $[\Gg_{\genus}, \Ng_{\genus}]$ is determined by $\rho$, and in fact, the following formula holds.
\begin{theorem}[\cite{MMM}]
  For $\gl_1, \cdots, \gl_n \in \Gg_{\genus}$ and $\xl_1, \cdots, \xl_n \in \Ng_{\genus}$, the equality
  \begin{align*}
    (u|_{\Ng_{\genus}})\big([\gl_1, \xl_1]\cdots [\gl_n,\xl_n]\big) = \trot\big([\widetilde{\rho(\gl_1)},\widetilde{\rho(\xl_1)}]\cdots [\widetilde{\rho(\gl_n)},\widetilde{\rho(\xl_n)}]\big)
  \end{align*}
  holds, where $\widetilde{\rho(\gl_i)}, \widetilde{\rho(\xl_i)} \in \tHomeo_+(S^1)$ are lifts of $\rho(\gl_i), \rho(\xl_i) \in \Homeo_+(S^1)$.
\end{theorem}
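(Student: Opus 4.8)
The plan is to show that on $\Ng_{\genus}$ the restriction $u|_{\Ng_{\genus}}$ and the homogeneous quasimorphism $\trot\circ\widetilde{\rho}$ differ by a genuine homomorphism $\Ng_{\genus}\to\RR$, and then to pin down the value of that homomorphism on mixed commutators. First I would restrict the defining relation $\delta u=p^{\ast}A-\rho^{\ast}\chi$ to $\Ng_{\genus}$. Since $A$ is a normalized cocycle on $\Gam_{\genus}$ and $p(\Ng_{\genus})=\{1_{\Gam_{\genus}}\}$, the cochain $p^{\ast}A$ vanishes on $\Ng_{\genus}\times\Ng_{\genus}$, hence $\delta(u|_{\Ng_{\genus}})=-\rho^{\ast}\chi|_{\Ng_{\genus}}$. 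On the other hand the defining formula for Matsumoto's cocycle reads exactly $\pi^{\ast}\chi=-\delta\trot$ on $\tHomeo_+(S^1)$; composing with the homomorphism $\widetilde{\rho}$ and using $\pi\circ\widetilde{\rho}=\rho|_{\Ng_{\genus}}$ gives $\rho^{\ast}\chi|_{\Ng_{\genus}}=-\delta(\trot\circ\widetilde{\rho})$. Therefore $h:=u|_{\Ng_{\genus}}-\trot\circ\widetilde{\rho}$ satisfies $\delta h=0$, i.e.\ $h\colon\Ng_{\genus}\to\RR$ is a homomorphism.

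Next I would compute $h$ on a single mixed commutator $[\gl,\xl]$ with $\gl\in\Gg_{\genus}$ and $\xl\in\Ng_{\genus}$. Both $\widetilde{\rho}(\gl\xl\gl^{-1})$ and $\widetilde{\rho(\gl)}\,\widetilde{\rho}(\xl)\,\widetilde{\rho(\gl)}^{-1}$ are lifts to $\tHomeo_+(S^1)$ of $\rho(\gl\xl\gl^{-1})\in\Homeo_+(S^1)$, so they differ by a central translation $T^{k(\gl,\xl)}$ with $k(\gl,\xl)\in\ZZ$. Using the elementary identity $\trot(T^{k}\eta)=\trot(\eta)+k$ for $\eta\in\tHomeo_+(S^1)$, the conjugation invariance of the homogeneous quasimorphism $\trot$ (Lemma~\ref{lem=defect}), and the $\Gg_{\genus}$-invariance of $u|_{\Ng_{\genus}}$ provided by the preceding theorem, I get $h(\gl\xl\gl^{-1})=h(\xl)-k(\gl,\xl)$, and hence, since $h$ is a homomorphism,
\[
h([\gl,\xl])=h(\gl\xl\gl^{-1})-h(\xl)=-k(\gl,\xl).
\]

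Finally I would put $\yl=[\gl_1,\xl_1]\cdots[\gl_n,\xl_n]\in\Ng_{\genus}$ and assemble the two sides. Applying the relation above to each factor and using that $\widetilde{\rho}$ is a homomorphism and $T$ is central yields $\widetilde{\rho}(\yl)=T^{m}\cdot[\widetilde{\rho(\gl_1)},\widetilde{\rho(\xl_1)}]\cdots[\widetilde{\rho(\gl_n)},\widetilde{\rho(\xl_n)}]$ with $m=\sum_{i}k(\gl_i,\xl_i)$, so the right-hand side of the asserted equality is $\trot(\widetilde{\rho}(\yl))-\sum_{i}k(\gl_i,\xl_i)$; the left-hand side is $u|_{\Ng_{\genus}}(\yl)=h(\yl)+\trot(\widetilde{\rho}(\yl))=\sum_{i}h([\gl_i,\xl_i])+\trot(\widetilde{\rho}(\yl))=-\sum_{i}k(\gl_i,\xl_i)+\trot(\widetilde{\rho}(\yl))$, and the two agree. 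The main obstacle is the bookkeeping of the integer ambiguities $k(\gl,\xl)$ through the lifts, and making sure they occur with matching sign on both sides: a priori $h$ could be any of the enormous space of homomorphisms on the infinite-rank free group $\Ng_{\genus}$, and it is precisely the $\Gg_{\genus}$-invariance of $u|_{\Ng_{\genus}}$ together with the conjugation invariance of $\trot$ that force its restriction to $[\Gg_{\genus},\Ng_{\genus}]$ to equal these lifting discrepancies. Fixing sign conventions for $\delta$, $\chi$ and $\trot$ at the outset keeps the cancellations transparent.
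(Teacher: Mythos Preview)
The paper does not actually prove this theorem; it is cited from \cite{MMM} and stated without argument, so there is no ``paper's own proof'' to compare against. That said, your argument is correct and self-contained within the framework the paper sets up: restricting $\delta u=p^{\ast}A-\rho^{\ast}\chi$ to $\Ng_{\genus}$ kills $p^{\ast}A$ (normalized cocycle, trivial image), the identity $\pi^{\ast}\chi=-\delta\trot$ then forces $h:=u|_{\Ng_{\genus}}-\trot\circ\widetilde{\rho}$ to be a genuine homomorphism, and the $\Gg_{\genus}$-invariance of $u|_{\Ng_{\genus}}$ from the preceding cited theorem together with conjugation invariance of $\trot$ pins down $h$ on mixed commutators as exactly the central lifting discrepancy. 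The final assembly via $\widetilde{\rho}(\yl)=T^{m}\prod_i[\widetilde{\rho(\gl_i)},\widetilde{\rho}(\xl_i)]$ is clean, and your implicit use of the fact that the commutator $[\widetilde{\rho(\gl_i)},\widetilde{\rho(\xl_i)}]$ is independent of the choice of lifts (central ambiguity cancels) justifies replacing $\widetilde{\rho}(\xl_i)$ by an arbitrary lift $\widetilde{\rho(\xl_i)}$ in the final formula. One small presentational point: you might state explicitly at the outset that the right-hand side is well defined (independent of lifts) so the reader sees immediately why arbitrary lifts are permitted in the statement.
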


 Here, despite the fact that the choices $\widetilde{\rho(\gl_i)}$ and $\widetilde{\rho(\xl_i)}$ are not unique, the value $\trot\big([\widetilde{\rho(\gl_1)},\widetilde{\rho(\xl_1)}]\cdots [\widetilde{\rho(\gl_n)},\widetilde{\rho(\xl_n)}]\big)$ does not depend on these choices. Indeed, the choices $\widetilde{\rho(\gl_i)}$ and $\widetilde{\rho(\xl_i)}$ may differ only by elements in the center of $\tHomeo_+(S^1)$, respectively, so that the commutator $[\widetilde{\rho(\gl_i)}, \widetilde{\rho(\xl_i)}]$ is independent of their choices.


\subsection{Naturality}
In this subsection, we prove the naturalities of diagram \eqref{align=diagram_five_term_coh_rel} and the isomorphism in Theorem \ref{theorem=WGN_isom}.
In \cite{KKMMM1}, the authors of the present article proved a certain result \cite[Lemma 9.2]{KKMMM1} actually by using  a special case of  these naturalities.
For completeness, here we explicitly state these naturalities in full generality as Proposition~\ref{proposition=naturality_five-term}, and present the proof.

\begin{proposition}\label{proposition=naturality_five-term}
  Let
  \begin{align*}
  \xymatrix{
  1 \ar[r] & \Ng_1 \ar[r] \ar[d]^-{F_{\Ng}} & \Gg_1 \ar[r] \ar[d]^-{F_{\Gg}} & \Gam_1 \ar[r] \ar[d]^-{F_{\Gam}} & 1 \\
  1 \ar[r] & \Ng_2 \ar[r] & \Gg_2 \ar[r] & \Gam_2 \ar[r] & 1
  }
  \end{align*}
  be a commutative diagram of groups whose rows are exact.
  Then the diagram
  \begin{align*}
  \xymatrix{
  \HHH^1(\Gam_2) \ar[r] \ar[d] \ar[rdd] & \HHH^1(\Gg_2) \ar[r] \ar[d] \ar[rdd] & \HHH^1(\Ng_2)^{\Gg_2} \ar[r] \ar[d] \ar[rdd] & \HHH^2(\Gam_2) \ar[r] \ar[d] \ar[rdd] & \HHH^2(\Gg_2) \ar[d] \ar[rdd] & \\
  \QQQ(\Gam_2) \ar[r]|\hole \ar[rdd] & \QQQ(\Gg_2) \ar[r]|\hole \ar[rdd]|\hole & \QQQ(\Ng_2)^{\Gg_2} \ar[r]|\hole \ar[rdd]|\hole & \HHH_{/b}^2(\Gam_2) \ar[r]|\hole \ar[rdd]|\hole & \HHH_{/b}^2(\Gg_2) \ar[rdd]|\hole & \\
  & \HHH^1(\Gam_1) \ar[r] \ar[d] & \HHH^1(\Gg_1) \ar[r] \ar[d] & \HHH^1(\Ng_1)^{\Gg_1} \ar[r] \ar[d] & \HHH^2(\Gam_1) \ar[r] \ar[d] & \HHH^2(\Gg_1) \ar[d] \\
  & \QQQ(\Gam_1) \ar[r] & \QQQ(\Gg_1) \ar[r] & \QQQ(\Ng_1)^{\Gg_1} \ar[r] & \HHH_{/b}^2(\Gam_1) \ar[r] & \HHH_{/b}^2(\Gg_1)
  }
  \end{align*}
  induced by the maps $F_{\Ng}, F_{\Gg}$ and $F_{\Gam}$ is commutative.
\end{proposition}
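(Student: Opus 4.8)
### Proof Proposal

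The plan is to verify commutativity of the large prism diagram cube-face by cube-face, reducing everything to the functoriality of the standard (co)chain-level constructions that underlie the two five-term exact sequences \eqref{align=five-term-grp-coh} and \eqref{align=five-term-rel-coh}. The back face and the front face of the prism are simply the five-term exact sequences of the bottom group extension $1\to\Ng_2\to\Gg_2\to\Gam_2\to 1$ and the top extension $1\to\Ng_1\to\Gg_1\to\Gam_1\to 1$ respectively, which are already known to commute as squares by \eqref{align=diagram_five_term_coh_rel}. So what remains is the commutativity of the ``vertical'' squares connecting the two extensions: for each of the five columns, the square formed by the two horizontal $\HHH^\ast$-or-$\QQQ$ arrows from the back face, the two from the front face, and the two slanted maps induced by $F_\Ng, F_\Gg, F_\Gam$.

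First I would fix, once and for all, the explicit cochain-level description of every map appearing in the diagram. A morphism of extensions $(F_\Ng, F_\Gg, F_\Gam)$ induces pullback maps $F_\Gg^\ast\colon \CCC^\ast(\Gg_2)\to\CCC^\ast(\Gg_1)$, and similarly on $\Ng$ and $\Gam$; these are chain maps commuting strictly with $\delta$, they send bounded cochains to bounded cochains, hence they descend to $\CCC^\ast_{/b}$, and on $\CCC^\ast(\Ng)$ they are equivariant for the conjugation actions along $F_\Gg$, so they restrict to $\QQQ(\Ng_2)^{\Gg_2}\to\QQQ(\Ng_1)^{\Gg_1}$. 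Then each of the six maps in a column-square is literally one of these pullbacks (or the quotient map to $\CCC^\ast_{/b}$, or a connecting homomorphism built from a set-theoretic section), so commutativity of the square is a diagram chase at the cochain level. The one point requiring care is the connecting map $\tau_{/b}\colon \QQQ(\Ng_i)^{\Gg_i}\to\HHH^2_{/b}(\Gam_i)$ and its ordinary-cohomology counterpart $\HHH^1(\Ng_i)^{\Gg_i}\to\HHH^2(\Gam_i)$: these are defined via a choice of set-theoretic section $s_i\colon\Gam_i\to\Gg_i$ of $p_i$, and a priori $F_\Gg\circ s_1$ need not equal $s_2\circ F_\Gam$. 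The standard remedy is that the value of the connecting homomorphism is independent of the choice of section (two sections differ by a map into $\Ng_i$, changing the representing cocycle by a coboundary of a bounded cochain, which is killed in $\HHH^\ast_{/b}$), so one may compute both composites using compatible sections, or argue that the discrepancy is a coboundary.

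The key steps, in order: (1) record that $F_\Gg^\ast$, $F_\Ng^\ast$, $F_\Gam^\ast$ are chain maps preserving the bounded subcomplex and the $\Gg$-invariance, hence induce all the slanted arrows; (2) observe that the two ``purely $\HHH^\ast$'' slanted squares — namely $\HHH^1(\Gam_2)\to\HHH^1(\Gg_2)\to\HHH^1(\Ng_2)^{\Gg_2}$ over $\HHH^1(\Gam_1)\to\HHH^1(\Gg_1)\to\HHH^1(\Ng_1)^{\Gg_1}$, and likewise for $\HHH^2(\Gam)\to\HHH^2(\Gg)$ — commute by naturality of pullback, and similarly the two ``purely $\QQQ$/relative'' slanted squares commute; (3) handle the two slanted squares containing a connecting map ($\HHH^1(\Ng)^{\Gg}\to\HHH^2(\Gam)$ and $\QQQ(\Ng)^{\Gg}\xrightarrow{\tau_{/b}}\HHH^2_{/b}(\Gam)$) by the section-independence argument above; (4) handle the two slanted squares of the form $\HHH^\ast(-)\to\QQQ(-)$ (the vertical comparison-type maps $j$ in \eqref{align=diagram_five_term_coh_rel}), which commute because the inclusion $\CCC^\ast_b\hookrightarrow\CCC^\ast$ and the quotient $\CCC^\ast\to\CCC^\ast_{/b}$ are natural with respect to pullback; (5) conclude that every $2$-dimensional face of the prism commutes, hence the whole diagram commutes.

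I expect step (3) — the compatibility of the connecting homomorphisms $\tau_{/b}$ with the morphism of extensions — to be the only genuine obstacle, precisely because $\tau_{/b}$ is the one map in the diagram not given by an obviously natural construction but rather by an auxiliary choice of section. The cleanest way I would present it is: choose \emph{any} set-theoretic sections $s_1\colon\Gam_1\to\Gg_1$ and $s_2\colon\Gam_2\to\Gg_2$; then $F_\Gg\circ s_1$ and $s_2\circ F_\Gam$ are two set-theoretic sections of $p_1$ composed with... more precisely, both $F_\Gg s_1$ and $s_2 F_\Gam$ map $\Gam_1$ into $\Gg_2$ lifting $F_\Gam$, so they differ by a function $\Gam_1\to\Ng_2$; tracing through the definition of $\tau_{/b}$ (which builds a $2$-cochain on $\Gam_i$ out of $\muf$ and $s_i$, well-defined modulo $\CCC^2_b$), this discrepancy alters the representing $\Gam_1$-cochain of $F_\Ng^\ast\tau_{/b}(\muf)$ versus $\tau_{/b}(F_\Ng^\ast\muf)$ by a coboundary plus a bounded term, hence they agree in $\HHH^2_{/b}(\Gam_1)$. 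Once this is in place, all remaining faces are routine naturality, and the proposition follows.
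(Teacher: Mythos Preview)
Your proposal is correct and follows essentially the same strategy as the paper: both single out the square
\[
\xymatrix{
\QQQ(\Ng_2)^{\Gg_2} \ar[r]^-{\tau_{/b}} \ar[d]^-{F_{\Ng}^*} & \HHH_{/b}^2(\Gam_2) \ar[d]^-{F_{\Gam}^*} \\
\QQQ(\Ng_1)^{\Gg_1} \ar[r]^-{\tau_{/b}} & \HHH_{/b}^2(\Gam_1)
}
\]
as the only non-routine face, and both resolve it via the discrepancy between $F_{\Gg}\circ s_1$ and $s_2\circ F_{\Gam}$. The paper carries out explicitly what you sketch: it writes down the $1$-cochain $a(\gamma)=\muf\bigl(F_{\Gg}(s_1(\gamma))^{-1}\,s_2(F_{\Gam}(\gamma))\bigr)$ on $\Gam_1$ (this is exactly $\muf$ evaluated on your ``function $\Gam_1\to\Ng_2$'') and checks directly that $F_{\Gam}^*\bigl(j(s_2^*\delta\muf_{s_2})\bigr)-j\bigl(s_1^*\delta(F_{\Ng}^*\muf)_{s_1}\bigr)=\delta(ja)$ in $\CCC^2_{/b}(\Gam_1)$, using the $\Gg_2$-invariance and homogeneity of $\muf$. (Minor slip in your last paragraph: the pullback of $\tau_{/b}(\muf)$ is along $F_{\Gam}^*$, not $F_{\Ng}^*$.)
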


We will only prove the commutativity of the following diagram:
\begin{align*}
\xymatrix{
\QQQ(\Ng_2)^{\Gg_2} \ar[r]^-{\tau_{/b}} \ar[d]^-{F_{\Ng}^*} & \HHH_{/b}^2(\Gam_2) \ar[d]^-{F_{\Gam}^*} \\
\QQQ(\Ng_1)^{\Gg_1} \ar[r]^-{\tau_{/b}} & \HHH_{/b}^2(\Gam_1).
}
\end{align*}
Indeed, for the other parts, the commutativity of the top face is a well-known property of the five-term exact sequence of group cohomology; those of the side faces come from that of \eqref{align=diagram_five_term_coh_rel}; and that of the bottom face other than the above diagram is straightforward.

For a short exact sequence $1 \to \Ng \xrightarrow{i} \Gg \xrightarrow{p} \Gam \to 1$ of groups, the map $\tau_{/b} \colon \QQQ(\Ng)^{\Gg} \to \HHH_{/b}^2(\Gam)$ is defined as follows (see  \cite[Section 6]{KKMMM1} for details).
Take a set-theoretic section $s \colon \Gam \to \Gg$ with $s(1_{\Gam}) = 1_{\Gg}$.
For $\mu \in \QQQ(\Ng)^{\Gg}$, we define $\mu_{s} \colon \Gg \to \RR$ by
\begin{align*}
  \mu_{s}(\gl) = \mu(\gl \cdot (sp(\gl))^{-1}).
\end{align*}
Recall that $j \colon \CCC^2(\Gam) \to \CCC_{/b}^2(\Gam)$ is the quotient map.
Then, $j(s^*\delta \mu_s) \in \CCC_{/b}^2(\Gam)$ is a $2$-cocycle. 
We note that the cocycle $j(s^*\delta \mu_s)$ does not depend on the choice of $s$.
The map $\tau_{/b} \colon \QQQ(\Ng)^{\Gg} \to \HHH_{/b}^2(\Gam)$ is given by
\begin{align*}
  \tau_{/b}(\mu) = [j(s^*\delta \mu_s)].
\end{align*}

\begin{proof}[Proof of Proposition \textup{\ref{proposition=naturality_five-term}}]
  We only prove the commutativity of
  \begin{align*}
  \xymatrix{
  \QQQ(\Ng_2)^{\Gg_2} \ar[r]^-{\tau_{/b}} \ar[d]^-{F_{\Ng}^*} & \HHH_{/b}^2(\Gam_2) \ar[d]^-{F_{\Gam}^*} \\
  \QQQ(\Ng_1)^{\Gg_1} \ar[r]^-{\tau_{/b}} & \HHH_{/b}^2(\Gam_1).
  }
  \end{align*}
  The other parts are straightforward.

  For $i \in \{ 1, 2 \}$, let $s_i \colon \Gam_i \to \Gg_i$ be a set-theoretical section satisfying $s_i(1_{\Gam_i}) = 1_{\Gg_i}$.
  Let $\mu$ be an element of $\QQQ(\Ng_2)^{\Gg_2}$.
  By the definition of $\tau_{/b}$, representatives of $\tau_{/b}(\mu)$ and $\tau_{/b}(F_{\Ng}^*\mu)$ are given by $j(s_2^*\delta \mu_{s_2})$ and $j(s_1^*\delta (F_{\Ng}^* \mu)_{s_1})$, respectively.
  By straightforward calculations, we have
  \begin{align*}
    F_{\Gam}^*(s_2^*\delta \mu_{s_2})(\gamma_1, \gamma_2) = -\mu(s_2(F_{\Gam}(\gamma_1))s_2(F_{\Gam}(\gamma_2))s_2(F_{\Gam}(\gamma_1\gamma_2))^{-1})
  \end{align*}
  and
  \begin{align*}
    s_1^*\delta (F_{\Ng}^* \mu)_{s_1}(\gamma_1, \gamma_2) = -\mu(F_{\Gg}(s_1(\gamma_1))F_{\Gg}(s_1(\gamma_2))F_{\Gg}(s_1(\gamma_1\gamma_2))^{-1}).
  \end{align*}
  We define a $1$-cochain $a \in \CCC^1(\Gam_1)$ by
  \begin{align*}
    a(\gamma) = \mu(F_{\Gg}(s_1(\gamma))^{-1}s_2(F_{\Gam}(\gamma))).
  \end{align*}
  Then, since $\mu$ is a $\Gg_2$-invariant homogeneous quasimorphism, we have
  \begin{align*}
    F_{\Gam}^*(j(s_2^*\delta \mu_{s_2})) - j(s_1^*\delta (F_{\Ng}^* \mu)_{s_1}) = \delta (j a) \in \delta\CCC_{/b}^1(\Gam_1).
  \end{align*}
  Hence $F_{\Gam}^* \tau_{/b}(\mu) = \tau_{/b}(F_{\Ng}^*\mu)$.
\end{proof}

Proposition \ref{proposition=naturality_five-term} and the diagram chasing arguments show the following.

\begin{corollary}
  Let
  \begin{align*}
  \xymatrix{
  1 \ar[r] & \Ng_1 \ar[r] \ar[d]^-{F_{\Ng}} & \Gg_1 \ar[r] \ar[d]^-{F_{\Gg}} & \Gam_1 \ar[r] \ar[d]^-{F_{\Gam}} & 1 \\
  1 \ar[r] & \Ng_2 \ar[r] & \Gg_2 \ar[r] & \Gam_2 \ar[r] & 1
  }
  \end{align*}
  be a commutative diagram of groups whose rows are exact.
  Assume that $\Gam_1$ and $\Gam_2$ are boundedly $3$-acyclic.
  Then the induced diagram
  \begin{align*}
  \xymatrix{
  \WW(\Gg_2, \Ng_2) \ar[r]^-{\cong} \ar[d] & \Im \, (p^*\colon \HHH^2(\Gam_2) \to \HHH^2(\Gg_2)) \cap \Im \, (c_{\Gg_2} \colon \HHH_{b}^2(\Gg_2) \to \HHH^2(\Gg_2)) \ar[d] \\
  \WW(\Gg_1, \Ng_1) \ar[r]^-{\cong} & \Im \, (p^*\colon \HHH^2(\Gam_1) \to \HHH^2(\Gg_1)) \cap \Im \, (c_{\Gg_1} \colon \HHH_{b}^2(\Gg_1) \to \HHH^2(\Gg_1))
  }
  \end{align*}
  commutes, where the horizontal isomorphisms are the ones in Theorem \textup{\ref{theorem=WGN_isom}}.
\end{corollary}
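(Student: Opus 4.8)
The plan is to unwind the isomorphism of Theorem~\ref{theorem=WGN_isom} into a composite of natural maps and then paste commuting squares. First recall that, for a boundedly $3$-acyclic group $\Gam$, the exact sequences of Subsection~\ref{subsec=five-term} together with $\HHH_b^2(\Gam) = \HHH_b^3(\Gam) = 0$ force $j_*\colon \HHH^2(\Gam) \to \HHH_{/b}^2(\Gam)$ to be an isomorphism (injectivity from $\Ker j_* = \Im\, c^2_{\Gam} = 0$, surjectivity from $\mathbf{d}$ landing in $\HHH_b^3(\Gam)=0$). Hence for $i\in\{1,2\}$ the isomorphism $\Phi_i\colon \WW(\Gg_i,\Ng_i) \xrightarrow{\cong} \Im\, p^* \cap \Im\, c_{\Gg_i}^2$ of Theorem~\ref{theorem=WGN_isom} is exactly the map descended from the composite
\[
\QQQ(\Ng_i)^{\Gg_i} \xrightarrow{\ \tau_{/b}\ } \HHH_{/b}^2(\Gam_i)
\xrightarrow{\ (j_*)^{-1}\ } \HHH^2(\Gam_i) \xrightarrow{\ p^*\ } \HHH^2(\Gg_i).
\]

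Next I would check that the two vertical maps in the square are well defined. The left one is induced by $F_{\Ng}^*\colon \QQQ(\Ng_2)^{\Gg_2} \to \QQQ(\Ng_1)^{\Gg_1}$: it carries $\HHH^1(\Ng_2)^{\Gg_2}$ into $\HHH^1(\Ng_1)^{\Gg_1}$ by equivariance of $F_{\Ng}$, and it carries $i_2^*\QQQ(\Gg_2)$ into $i_1^*\QQQ(\Gg_1)$ because $F_{\Ng}^*\circ i_2^* = i_1^*\circ F_{\Gg}^*$ (from the commuting square $i_2\circ F_{\Ng} = F_{\Gg}\circ i_1$), so it descends to $\WW(\Gg_2,\Ng_2)\to \WW(\Gg_1,\Ng_1)$. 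The right one is induced by $F_{\Gg}^*$ on $\HHH^2$: it preserves $\Im\, p^*$ since $F_{\Gg}^*\circ p^* = p^*\circ F_{\Gam}^*$, and preserves $\Im\, c^2_{\Gg}$ by the (obvious) naturality of the comparison map, $F_{\Gg}^*\circ c^2_{\Gg_2} = c^2_{\Gg_1}\circ F_{\Gg}^*$.

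With this in place, the commutativity is obtained by pasting three commuting squares. The square relating $\tau_{/b}$ for the two extensions to $F_{\Ng}^*$ and to $F_{\Gam}^*$ on relative cohomology is precisely the one verified in the proof of Proposition~\ref{proposition=naturality_five-term}. The square relating $j_*$ for the two extensions to $F_{\Gam}^*$ on ordinary and on relative cohomology is plain functoriality of $\CCC^*(-)\to \CCC_{/b}^*(-)$; since both $j_*$'s are isomorphisms it may be rewritten as $(j_*)^{-1}\circ F_{\Gam}^* = F_{\Gam}^*\circ (j_*)^{-1}$. Finally, $p^*\circ F_{\Gam}^* = F_{\Gg}^*\circ p^*$ on ordinary cohomology is functoriality of group cohomology applied to the given morphism of short exact sequences. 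Composing these, for $[\mu]\in \WW(\Gg_2,\Ng_2)$ we obtain
\[
\Phi_1\bigl(F_{\Ng}^*[\mu]\bigr)
= p^*(j_*)^{-1}\tau_{/b}F_{\Ng}^*[\mu]
= p^*(j_*)^{-1}F_{\Gam}^*\tau_{/b}[\mu]
= p^*F_{\Gam}^*(j_*)^{-1}\tau_{/b}[\mu]
= F_{\Gg}^*p^*(j_*)^{-1}\tau_{/b}[\mu]
= F_{\Gg}^*\Phi_2[\mu],
\]
which is the asserted commutativity.

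The only genuinely delicate points are identifying the maps in the corollary with the specific isomorphisms of Theorem~\ref{theorem=WGN_isom} — which is what forces one to treat $(j_*)^{-1}$ as a two-sided inverse and hence to use bounded $3$-acyclicity of \emph{both} quotients — and the well-definedness bookkeeping for the vertical arrows. Once Proposition~\ref{proposition=naturality_five-term} is available, the remainder is routine diagram chasing, so I do not expect a substantial obstacle.
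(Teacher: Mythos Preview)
Your proposal is correct and follows exactly the approach the paper intends: the paper's own proof consists of the single sentence ``Proposition~\ref{proposition=naturality_five-term} and the diagram chasing arguments show the following,'' and you have simply spelled out that diagram chase in full, including the identification of the Theorem~\ref{theorem=WGN_isom} isomorphism as $p^*\circ (j_*)^{-1}\circ \tau_{/b}$ and the three naturality squares needed.
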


\section{Exact sequences related to $\WGN$}
\label{sec=exactW}

Recall that Theorem \ref{theorem=WGN_isom} connects the space $\WGN$ and $\Im p^* \cap \Im c_{\Gg}^2$.
In this section, we show some exact sequences (Theorem~\ref{theorem=ex_seq_rel_to_WGN}) which are related to $\WGN$ and to the space $\Ker i^* \cap \Im c_{\Gg}^2$; the latter space contains $\Im p^* \cap \Im c_{\Gg}^2$ as a subspace.
The contents of this section existed in an early draft of \cite{KKMMM1} as an appendix.
Recall from the beginning of Section~\ref{sec=W} that we continue to adopt the following convention: if we write cochain groups and group cohomology (ordinary and bounded)  in the setting of Subsection~\ref{subsec=cohomology} for $A=\RR$ (equipped with the standard absolute value $|\cdot|$), then we omit to indicate the coefficient group $A$.

Let $\EH_b^2$ denote the kernel of the comparison map $\HHH_b^2 \to \HHH^2$ of degree $2$. For a group $\Gg$, we recall from Lemma~\ref{lem=Q/H} that $\delta\colon \CCC^1(\Gg)\to \CCC^2(\Gg)$ induces an isomorphism between   $\QG/\HG$ and $\EH_b^2(\Gg)$.


\begin{theorem} \label{theorem=ex_seq_rel_to_WGN}
Let $1 \to \Ng \xrightarrow{i} \Gg \xrightarrow{p} \Gam \to 1$ be a short exact sequence of groups.
Then the following hold true:
\begin{itemize}
\item[\textup{(1)}] There exists the following exact sequence
\[0 \to \WGN \to \EH^2_b(\Ng)^{\Gg} / i^* \EH^2_b(\Gg) \xrightarrow{\alpha} \HHH^1(\Gg ; \HHH^1(\Ng)).\]

\item[\textup{(2)}] There exists the following exact sequence
\[\HHH^2_b(\Gam) \to \Ker(i^*) \cap \Im(c_{\Gg}^2) \xrightarrow{\beta} \EH^2_b(\Ng)^{\Gg} / i^* \EH^2_b(\Gg) \to \HHH^3_b(\Gam).\]
Here $i^*$ is the map $\HHH^2(\Gg) \to \HHH^2(\Ng)$ induced by the inclusion $\Ng \hookrightarrow \Gg$, and $c_{\Gg}^2 \colon \HHH^2_b(\Gg) \to \HHH^2(\Gg)$ is the comparison map of degree $2$.
\end{itemize}
\end{theorem}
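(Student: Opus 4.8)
\textbf{Proof strategy for Theorem~\ref{theorem=ex_seq_rel_to_WGN}.}
The plan is to build both exact sequences by exploiting the Hochschild--Serre spectral sequence (HSSS) for the extension $1\to\Ng\to\Gg\to\Gam\to 1$, in both the ordinary and the bounded settings, and then to compare them via the comparison map $c^{\bullet}$. The key identification to keep in mind throughout is $\QG/\HG\cong\EH^2_b(\Gg)$ (Lemma~\ref{lem=Q/H}), so that $\WGN$ can be rewritten as $\EH^2_b(\Ng)^{\Gg}/i^{*}\EH^2_b(\Gg)$ --- wait, that is precisely the target of the injection in item (1), so the first task is really to prove $\WGN\hookrightarrow \EH^2_b(\Ng)^{\Gg}/i^{*}\EH^2_b(\Gg)$ with an explicit description of the cokernel.

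For item (1), I would first observe that $\EH^2_b(\Ng)\subset\HHH^2_b(\Ng)$ carries a $\Gg$-action (the conjugation action, which factors through $\Gam$ since $\Inn(\Ng)$ acts trivially on cohomology), and that $i^{*}\colon\HHH^2_b(\Gg)\to\HHH^2_b(\Ng)$ lands in the $\Gg$-invariants. Under the isomorphism of Lemma~\ref{lem=Q/H}, the natural map $\QG/\HG\to\QNG/\HNRG$ corresponds to $\EH^2_b(\Gg)\to\EH^2_b(\Ng)^{\Gg}$, but the latter need not be surjective; the cokernel of $i^{*}\colon\EH^2_b(\Gg)\to\EH^2_b(\Ng)^{\Gg}$ is exactly $\WGN$ by Definition~\ref{defn=W} (after matching the two descriptions of the relevant quotients). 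So the content of (1) is to produce the map $\alpha$ into $\HHH^1(\Gg;\HHH^1(\Ng))=\HHH^1(\Gam;\HHH^1(\Ng))$ whose kernel is (the image of) $\WGN$. This $\alpha$ is the transgression/$d_2$-type differential: given a $\Gg$-invariant class in $\EH^2_b(\Ng)$, represent it by a bounded cocycle on $\Ng$, extend it to a (not necessarily bounded) cochain on $\Gg$ using a set-theoretic section, and take the resulting obstruction in the HSSS $E_2$-term $\HHH^1(\Gam;\HHH^1(\Ng))$. One checks that this obstruction vanishes precisely when the invariant quasimorphism on $\Ng$ extends to a quasimorphism on $\Gg$ modulo $\HNRG$, i.e.\ precisely on $i^{*}\EH^2_b(\Gg)+\,(\text{classes already hit})$; the exactness at the middle term is then the statement that $\ker\alpha$ equals the image of $\WGN$. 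The injectivity of $\WGN\to\EH^2_b(\Ng)^{\Gg}/i^{*}\EH^2_b(\Gg)$ is formal once both sides are identified as cokernels of the same map.

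For item (2), the plan is to run the five-term-type sequence in \emph{bounded} cohomology alongside the one in ordinary cohomology and compare. The relevant input is the long exact sequence of the pair $(\CCC^{*},\CCC^{*}_b)$, i.e.\ the relative complex $\CCC^{*}_{/b}$ already used in Subsection~\ref{subsec=five-term}, together with the fact (Monod, Moriakov--Raghunathan, etc., as cited) that the HSSS edge maps relate $\HHH^{*}_b(\Gam)$, $\HHH^{*}_b(\Gg)$ and $\HHH^{*}_b(\Ng)^{\Gg}$. Concretely: the space $\Ker(i^{*})\cap\Im(c^2_{\Gg})\subset\HHH^2(\Gg)$ consists of ordinary classes that are bounded and die on $\Ng$; by the ordinary five-term sequence \eqref{align=five-term-grp-coh} such a class comes from $\HHH^2(\Gam)$, and the question of whether the corresponding bounded class on $\Gg$ restricts boundedly-trivially to $\Ng$ is governed by $\HHH^2_b(\Gam)$ on one side and by $\HHH^3_b(\Gam)$ on the other. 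I would define $\beta$ by: take $\xi\in\Ker(i^{*})\cap\Im(c^2_{\Gg})$, lift to $\tilde\xi\in\HHH^2_b(\Gg)$; since $i^{*}\xi=0$ in $\HHH^2(\Ng)$, the restriction $i^{*}\tilde\xi\in\HHH^2_b(\Ng)^{\Gg}$ actually lies in $\EH^2_b(\Ng)^{\Gg}$; send $\xi$ to the class of $i^{*}\tilde\xi$ in $\EH^2_b(\Ng)^{\Gg}/i^{*}\EH^2_b(\Gg)$. Well-definedness modulo $i^{*}\EH^2_b(\Gg)$ is immediate from the ambiguity in the lift $\tilde\xi$. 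Exactness at $\Ker(i^{*})\cap\Im(c^2_{\Gg})$ (image of $\HHH^2_b(\Gam)\to\cdots$) and at the relative term (kernel into $\HHH^3_b(\Gam)$) is then a diagram chase in the two-row commutative ladder of five-term sequences plus the transgression $\EH^2_b(\Ng)^{\Gg}\to\HHH^3_b(\Gam)$.

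\textbf{Main obstacle.} The technical heart --- and the step I expect to be the main obstacle --- is making the transgression maps $\alpha$ and the connecting map into $\HHH^3_b(\Gam)$ genuinely well-defined and checking exactness \emph{at the cochain level}, rather than invoking a black-box spectral sequence: bounded cohomology has no honest Hochschild--Serre spectral sequence in the naive sense, so one must either use Monod's functorial framework or argue by hand with explicit (bounded) cochains, set-theoretic sections, and the relative complex $\CCC^{*}_{/b}$. In particular one must verify that the obstruction cocycles are bounded where they are claimed to be (for $\alpha$) and that changing the section or the representative only changes them by the appropriate coboundary; this is where the $\Gg$-invariance and homogeneity hypotheses on the quasimorphisms enter crucially (compare the computation with $\mu_s$ and $j(s^{*}\delta\mu_s)$ in Subsection~\ref{subsec=five-term} and the proof of Proposition~\ref{proposition=naturality_five-term}, which I would follow closely). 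Once those cochain-level checks are in place, the two exact sequences fall out of routine diagram chasing.
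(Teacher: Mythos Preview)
Your plan for item~(2) is essentially the paper's: define $\beta$ by lifting to $\HHH^2_b(\Gg)$ and restricting to $\Ng$, then feed the kernel/cokernel of $i^{*}\colon\HHH^2_b(\Gg)\to\HHH^2_b(\Ng)^{\Gg}$ into the bounded five-term sequence (Theorem~\ref{theorem=five_term_bdd}). The paper packages this as a snake lemma applied to the two-row diagram with rows $0\to\EH^2_b(\Gg)\to\HHH^2_b(\Gg)\to\Im(c_{\Gg}^2)\to 0$ and $0\to\EH^2_b(\Ng)^{\Gg}\to\HHH^2_b(\Ng)^{\Gg}\to\HHH^2(\Ng)^{\Gg}$, but the content is the same.

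For item~(1), however, there is a genuine gap. You write that under Lemma~\ref{lem=Q/H} the map $\QG/\HG\to\QNG/\HNG$ ``corresponds to $\EH^2_b(\Gg)\to\EH^2_b(\Ng)^{\Gg}$'' and hence that ``the cokernel of $i^{*}\colon\EH^2_b(\Gg)\to\EH^2_b(\Ng)^{\Gg}$ is exactly $\WGN$.'' This is false: Lemma~\ref{lem=Q/H} identifies $\QQQ(\Ng)/\HHH^1(\Ng)$ with $\EH^2_b(\Ng)$, but taking $\Gg$-invariants does not commute with this quotient. The natural map $\QNG/\HNG\to\EH^2_b(\Ng)^{\Gg}$ is injective but in general not surjective, and measuring that failure is precisely what the map $\alpha$ does. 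So $\WGN$ is a \emph{subspace} of $\EH^2_b(\Ng)^{\Gg}/i^{*}\EH^2_b(\Gg)$, not equal to it, and you cannot get the exact sequence of (1) until you produce that injection with its cokernel controlled by $\HHH^1(\Gg;\HHH^1(\Ng))$. (Incidentally, your parenthetical identification $\HHH^1(\Gg;\HHH^1(\Ng))=\HHH^1(\Gam;\HHH^1(\Ng))$ is also not correct in general.)

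The paper sidesteps all of the cochain-level transgression work you anticipate. It simply observes that $0\to\HHH^1(\Ng)\to\QQQ(\Ng)\to\EH^2_b(\Ng)\to 0$ is a short exact sequence of $\Gg$-modules, applies the left-exact functor $(-)^{\Gg}$ together with its right derived functor $\HHH^{\bullet}(\Gg;-)$ to obtain the four-term exact sequence
\[
0\to\HHH^1(\Ng)^{\Gg}\to\QQQ(\Ng)^{\Gg}\to\EH^2_b(\Ng)^{\Gg}\to\HHH^1(\Gg;\HHH^1(\Ng)),
\]
places it under the corresponding three-term sequence for $\Gg$, and reads off (1) by the snake lemma plus a short diagram chase. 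No explicit sections, no bounded spectral sequence, no cochain verification: the ``main obstacle'' you identify does not arise.
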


By Theorem \ref{theorem=ex_seq_rel_to_WGN}~(1) and (2), we obtain the following:
\begin{corollary}\label{corollary=ex_seq_rel_to_WGN}
Let $1 \to \Ng \xrightarrow{i} \Gg \xrightarrow{p} \Gam \to 1$ be a short exact sequence of groups.
If $\Gam$ is boundedly $3$-acyclic, then there exists the following exact sequence
\[0 \to \WGN \to \Ker(i^*) \cap \Im(c_{\Gg}^2) \to \HHH^1(\Gg ; \HHH^1(\Ng)).\]
\end{corollary}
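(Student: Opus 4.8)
The plan is to obtain the corollary purely by splicing together the two exact sequences provided by Theorem~\ref{theorem=ex_seq_rel_to_WGN}, using the bounded $3$-acyclicity of $\Gam$ to annihilate the bounded cohomology groups $\HHH^2_b(\Gam)$ and $\HHH^3_b(\Gam)$.

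First I would record that, since $\Gam$ is boundedly $3$-acyclic, Definition~\ref{defn=bdd_acyc} gives $\HHH^2_b(\Gam)=0$ and $\HHH^3_b(\Gam)=0$. Substituting this into the four-term exact sequence of Theorem~\ref{theorem=ex_seq_rel_to_WGN}~(2),
\[
\HHH^2_b(\Gam)\to \Ker(i^*)\cap\Im(c_{\Gg}^2)\xrightarrow{\ \beta\ }\EH^2_b(\Ng)^{\Gg}/i^*\EH^2_b(\Gg)\to \HHH^3_b(\Gam),
\]
the outer two terms vanish, so exactness forces $\beta$ to be an isomorphism $\Ker(i^*)\cap\Im(c_{\Gg}^2)\xrightarrow{\ \cong\ }\EH^2_b(\Ng)^{\Gg}/i^*\EH^2_b(\Gg)$.

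Next I would take the three-term exact sequence of Theorem~\ref{theorem=ex_seq_rel_to_WGN}~(1),
\[
0\to \WGN\to \EH^2_b(\Ng)^{\Gg}/i^*\EH^2_b(\Gg)\xrightarrow{\ \alpha\ }\HHH^1(\Gg;\HHH^1(\Ng)),
\]
and transport the middle term along $\beta^{-1}$. Because $\beta$ is an isomorphism, the composite $\WGN\to \EH^2_b(\Ng)^{\Gg}/i^*\EH^2_b(\Gg)\xrightarrow{\beta^{-1}}\Ker(i^*)\cap\Im(c_{\Gg}^2)$ is still injective, and its image equals $\beta^{-1}(\Ker\alpha)=\Ker(\alpha\circ\beta)$. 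Hence
\[
0\to \WGN\to \Ker(i^*)\cap\Im(c_{\Gg}^2)\xrightarrow{\ \alpha\circ\beta\ }\HHH^1(\Gg;\HHH^1(\Ng))
\]
is exact, which is precisely the asserted sequence.

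I expect no genuine obstacle here: the entire mathematical content is contained in Theorem~\ref{theorem=ex_seq_rel_to_WGN}, and this argument is merely the bookkeeping that merges the two sequences once the $\Gam$-terms have been killed. The one step deserving a line of care is the verification that exactness is inherited under the substitution, which is immediate since replacing the middle object of an exact sequence by an isomorphic copy and composing the outgoing map with that isomorphism preserves both injectivity on the left and the kernel/image match at the middle spot.
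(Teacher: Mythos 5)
Your proof is correct and is exactly the bookkeeping the paper leaves implicit: the paper's "proof" consists solely of the sentence "By Theorem~\ref{theorem=ex_seq_rel_to_WGN}~(1) and (2), we obtain the following." You correctly identify that bounded $3$-acyclicity kills $\HHH^2_b(\Gam)$ and $\HHH^3_b(\Gam)$, turning $\beta$ into an isomorphism, and then splice with sequence~(1) so that the resulting maps are $\beta^{-1}\circ(\text{inclusion})$ and $\alpha\circ\beta$.
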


To prove Theorem \ref{theorem=ex_seq_rel_to_WGN}~(2), we use the following.

\begin{theorem}[{\cite[Example~12.4.3]{Monod}}] \label{theorem=five_term_bdd}
Let $1 \to \Ng \xrightarrow{i} \Gg \xrightarrow{p} \Gam \to 1$ be a short exact sequence of groups.
Then there exists an exact sequence
\[0 \to \HHH^2_b(\Gam) \xrightarrow{p^*} \HHH^2_b(\Gg) \xrightarrow{i^*} \HHH^2_b(\Ng)^{\Gg} \to \HHH^3_b(\Gam) \xrightarrow{p^*} \HHH_b^3(\Gg).\]
\end{theorem}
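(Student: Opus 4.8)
The plan is to deduce the five-term sequence from the bounded-cohomology analogue of the Lyndon--Hochschild--Serre spectral sequence. First I would invoke the spectral sequence of the extension $1 \to \Ng \to \Gg \to \Gam \to 1$ (Monod): a first-quadrant cohomological spectral sequence with
\[
E_2^{p,q} = \HHH^p_b\big(\Gam; \HHH^q_b(\Ng)\big) \Longrightarrow \HHH^{p+q}_b(\Gg),
\]
where the coefficients are trivial real throughout and $\HHH^q_b(\Ng)$ carries its natural $\Gam$-module structure (the $\Gg$-action on $\HHH^*_b(\Ng)$ factors through $\Gam$ since inner automorphisms act trivially on cohomology; in particular $E_2^{0,q} = \HHH^q_b(\Ng)^{\Gam} = \HHH^q_b(\Ng)^{\Gg}$). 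In this language $p^*$ is the edge homomorphism coming from the bottom row, $i^*$ is the edge homomorphism onto $E_\infty^{0,2} \hookrightarrow E_2^{0,2}$, and the third arrow is the transgression $d_3 \colon E_2^{0,2} \to E_2^{3,0}$.

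The crucial input is that the $q=1$ row vanishes identically: $\HHH^1_b(\Ng) = 0$ because a bounded homomorphism $\Ng \to \RR$ is trivial, so $E_2^{p,1} = \HHH^p_b(\Gam; 0) = 0$ for every $p$. I would then read off the low-degree exact sequence in the ``shifted corner'' (total degrees $2$ and $3$) from the vanishing of this row. Since no nonzero differential enters or leaves $E_2^{2,0}$, we get $E_\infty^{2,0} = E_2^{2,0} = \HHH^2_b(\Gam)$, hence injectivity of $p^* \colon \HHH^2_b(\Gam) \to \HHH^2_b(\Gg)$; since $E_2^{1,1} = 0$, the filtration of $\HHH^2_b(\Gg)$ has only the two pieces $E_\infty^{2,0}$ and $E_\infty^{0,2}$, so $\Coker(p^*) \cong E_\infty^{0,2}$, realized by $i^*$; next $E_3^{0,2} = E_2^{0,2} = \HHH^2_b(\Ng)^{\Gg}$ and $E_\infty^{0,2} = \Ker(d_3 \colon E_3^{0,2} \to E_3^{3,0} = \HHH^3_b(\Gam))$; finally $E_\infty^{3,0} = \Coker(d_3)$ is the bottom filtration piece of $\HHH^3_b(\Gg)$, namely $\Im(p^* \colon \HHH^3_b(\Gam) \to \HHH^3_b(\Gg))$, so $\Ker(p^*) = \Im(d_3)$. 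Splicing these four identifications produces exactly the claimed sequence.

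The main obstacle is the spectral sequence itself: in the bounded setting one must work within the framework of relatively injective seminormed Banach $\Gam$-modules and strong resolutions, and some care is needed because the coefficient modules $\HHH^q_b(\Ng) = \ZZZ^q_b(\Ng)/\BBB^q_b(\Ng)$ need not be Hausdorff (the bounded coboundaries need not be closed), so the terms $\HHH^p_b(\Gam; \HHH^q_b(\Ng))$ must be understood in the appropriate seminormed sense. If one prefers to bypass this machinery, the same sequence can be built by hand at the cochain level: $p^*$ and $i^*$ are inflation and restriction of bounded cochains, and the transgression $\tau \colon \HHH^2_b(\Ng)^{\Gg} \to \HHH^3_b(\Gam)$ is constructed by fixing a set-theoretic section $s \colon \Gam \to \Gg$ and, for a $\Gg$-invariant bounded $2$-cocycle $\mu$ on $\Ng$ and each $\gl \in \Gg$, the unique bounded $1$-cochain $\beta_{\gl}$ on $\Ng$ with $\delta\beta_{\gl} = \mu^{\gl} - \mu$ --- uniqueness being precisely $\HHH^1_b(\Ng) = 0$ --- and then assembling the family $(\beta_{\gl})$ into a bounded $3$-cocycle on $\Gam$. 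With the five maps in hand, exactness at each spot is a routine (if slightly lengthy) diagram and cochain chase, invoking $\HHH^1_b(\Ng) = 0$ at the steps where a bounded $1$-cochain on $\Ng$ must be uniquely pinned down. Either route makes transparent that it is the vanishing of $\HHH^1_b$ on the subgroup that forces the sequence to live in degrees $2$ and $3$ rather than $1$ and $2$.
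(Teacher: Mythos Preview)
The paper does not prove this statement at all; it is quoted as a known result from Monod's book (the citation \cite[Example~12.4.3]{Monod}) and then used as a black box in the proof of Theorem~\ref{theorem=ex_seq_rel_to_WGN}. Your spectral-sequence argument, with the key input $\HHH^1_b(\Ng)=0$ killing the $q=1$ row so that the usual five-term sequence shifts up to degrees $2$ and $3$, is precisely the argument behind Monod's example, so your proposal is correct and matches the cited source. One small wording issue: when you describe the cochain-level transgression you write ``$\Gg$-invariant bounded $2$-cocycle $\mu$,'' but you mean a bounded $2$-cocycle whose \emph{class} is $\Gg$-invariant (otherwise $\mu^{\gl}-\mu=0$ and there is nothing to do); the rest of that sketch is fine.
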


\begin{proof}[Proof of Theorem \textup{\ref{theorem=ex_seq_rel_to_WGN}}]
First we prove item~(1).
Recall that $\EH^2_b(\Gg)$ is the kernel of the comparison map $c_{\Gg}^2 \colon \HHH^2_b(\Gg) \to \HHH^2(\Gg)$ of degree $2$. By Lemma \ref{lem=Q/H}, $\EH^2_b(\Gg)$ coincides with the image of $\delta \colon \QQQ(\Gg) \to \HHH^2_b(\Gg)$. Hence we have a short exact sequence
\begin{align*} 
0 \to \HHH^1(\Gg) \to \QQQ(\Gg) \to \EH^2_b(\Gg) \to 0.
\end{align*}
For a $\Gg$-module $V$, we write $V^{\Gg}$ the subspace consisting of the elements of $V$ which are fixed by every element of $G$. Since this functor $(-)^G$ is a left exact and its right derived functor is $V \mapsto \HHH^\bullet(\Gg ; V)$, we have an exact sequence
\begin{align*} 
0 \to \HHH^1(\Ng)^{\Gg} \to \QQQ(\Ng)^{\Gg} \to \EH^2_b(\Ng)^{\Gg} \to \HHH^1(\Gg ; \HHH^1(\Ng)).
\end{align*}
Thus we have the following commutative diagram
\begin{align} \label{align=3.3}
\xymatrix{
0 \ar[r] & \HHH^1(\Gg) \ar[r] \ar[d]_{i^*} & \QQQ(\Gg) \ar[r] \ar[d]^{i^*} & \EH^2_b(\Gg) \ar[r] \ar[d]^{i^*} & 0 \ar[d] \\
0 \ar[r] & \HHH^1(\Ng)^{\Gg} \ar[r] & \QQQ(\Ng)^{\Gg} \ar[r] & \EH^2_b(\Ng)^{\Gg} \ar[r] & \HHH^1(\Gg ; \HHH^1(\Ng)).
}
\end{align}
Taking the cokernels of the vertical maps, we have a sequence
\begin{align*} 
\HHH^1(\Ng)^{\Gg} /i^* \HHH^1(\Gg) \to \QQQ(\Ng)^{\Gg} / i^* \QQQ(\Gg) \to \EH^2_b(\Ng)^{\Gg} / i^* \EH^2_b(\Gg) \to \HHH^1(\Gg ; \HHH^1(\Ng)).
\end{align*}
The exactness of the first three terms of this sequence follows from the snake lemma. The exactness of the last three terms
can be checked by the diagram chasing. Since the cokernel of $\HHH^1(N)^G / i^* \HHH^1(G) \to \QQQ(N)^G / i^* \QQQ(G)$ is $\WGN$, we have an exact sequence
\begin{align*} 
0 \to \WGN \to \EH^2_b(\Ng)^{\Gg} / i^* \EH^2_b(\Gg) \to \HHH^1(\Gg ; \HHH^1(\Ng)).
\end{align*}
This completes the proof of Theorem \ref{theorem=ex_seq_rel_to_WGN}~(1).

Next we prove item~(2).
By Lemma~\ref{lem=Q/H}, we have the following commutative diagram
\begin{align} \label{align=3.6}
\xymatrix{
0 \ar[r] & \EH^2_b(\Gg) \ar[r] \ar[d] & \HHH^2_b(\Gg) \ar[r] \ar[d]^{i^*} & \Im(c_{\Gg}^2) \ar[r] \ar[d] & 0 \\
0 \ar[r] & \EH^2_b(\Ng)^{\Gg} \ar[r] & \HHH^2_b(\Ng)^{\Gg} \ar[r] & \HHH^2(\Ng)^{\Gg} &
\;\;\;\;\; ,}
\end{align}
where each row is exact. The exactness of the second row follows from Lemma \ref{lem=Q/H} and the left exactness of the functor $(-)^{\Gg}$. Let $K$ and $W$ denote the kernel and cokernel of the map $i^* \colon \HHH^2_b(\Gg) \to \HHH^2_b(\Ng)^{\Gg}$.
Note that the kernel of $\Im(c_{\Gg}^2) \to \HHH^2(\Ng)^{\Gg}$ is $\Im(c_{\Gg}^2) \cap \Ker(i^* \colon \HHH^2(\Gg) \to \HHH^2(\Ng)^{\Gg})$.
Applying the snake lemma, we have the following exact sequence
\begin{align*} 
K \to \Ker(i^*) \cap \Im(c_{\Gg}^2) \to \EH^2_b(\Ng)^{\Gg} / i^* \EH^2_b(\Gg) \to W.
\end{align*}

By Theorem \ref{theorem=five_term_bdd}, $K$ is isomorphic to $\HHH^2_b(\Gam)$, and there exists a monomorphism from $W$ to $\HHH^3_b(\Gg)$.
Hence we have an exact sequence
\begin{align*} 
\HHH^2(\Gam) \to \Ker(i^*) \cap \Im(c_{\Gg}^2) \to \EH^2_b(\Ng) / i^* \EH^2_b(\Gg) \to \HHH_b^3(\Gg).
\end{align*}
Here the last map $\EH^2_b(\Ng)^{\Gg} / i^* \EH^2_b(\Gg) \to \HHH^3_b(\Gg)$ is the composite of the map $\EH^2_b(\Ng)^{\Gg} / i^* \EH^2_b(\Gg) \to W$ and the monomorphism $W \to \HHH^3_b(\Gg)$.
This completes the proof of Theorem \ref{theorem=ex_seq_rel_to_WGN}~(2).
\end{proof}



Finally, we discuss a relation between the maps $\alpha$ and $\beta$ in the exact sequences in Theorem \ref{theorem=ex_seq_rel_to_WGN} and a map in the seven-term exact sequence.
Let us recall the seven-term exact sequence with trivial real coefficients.
\begin{theorem}[seven-term exact sequence] \label{theorem=seven-term}
  Let $1 \to \Ng \xrightarrow{i} \Gg \xrightarrow{p} \Gam \to 1$ be a short exact sequence of groups. Then there exists the following exact sequence:
\begin{align*}
  0 \to &\HHH^1(\Gam) \xrightarrow{p^*} \HHH^1(\Gg) \xrightarrow{i^*} \HHH^1(\Ng)^{\Gg} \to \HHH^2(\Gam) \\
  &\to {\rm Ker}(i^* \colon \HHH^2(\Gg) \to \HHH^2(\Ng)) \xrightarrow{\rho} \HHH^1(\Gam ; \HHH^1(\Ng)) \to \HHH^3(\Gam).
\end{align*}
\end{theorem}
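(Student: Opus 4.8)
The plan is to derive Theorem~\ref{theorem=seven-term} from the Lyndon--Hochschild--Serre (LHS) spectral sequence of the extension $1\to\Ng\to\Gg\to\Gam\to1$ with trivial real coefficients. Recall that this is a first-quadrant cohomological spectral sequence with $E_2^{p,q}=\HHH^p(\Gam;\HHH^q(\Ng;\RR))$ converging to $\HHH^{p+q}(\Gg;\RR)$, where $\HHH^q(\Ng;\RR)$ carries the $\Gam$-action induced by conjugation. Since the coefficients are trivial, $\HHH^0(\Ng;\RR)=\RR$ with trivial $\Gam$-action, so $E_2^{p,0}=\HHH^p(\Gam;\RR)$; and $\HHH^1(\Ng;\RR)=\mathrm{Hom}(\Ng,\RR)$ is precisely the module occurring in $E_2^{p,1}=\HHH^p(\Gam;\HHH^1(\Ng))$.

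First I would record the low-degree $E_\infty$-terms. For $p+q\le 2$ the only relevant differentials are $d_2$ and $d_3\colon E_3^{0,2}\to E_3^{3,0}$; all others vanish for degree reasons. One gets $E_\infty^{1,0}=\HHH^1(\Gam)$, $E_\infty^{0,1}=\Ker\bigl(d_2\colon\HHH^1(\Ng)^{\Gam}\to\HHH^2(\Gam)\bigr)$, $E_\infty^{2,0}=\HHH^2(\Gam)/\mathrm{Im}(d_2)$, and $E_\infty^{1,1}=\Ker\bigl(d_2\colon\HHH^1(\Gam;\HHH^1(\Ng))\to\HHH^3(\Gam)\bigr)$ (using $E_2^{-1,2}=0$). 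The abutment $\HHH^2(\Gg)$ carries a filtration $F^0\supseteq F^1\supseteq F^2\supseteq 0$ with $F^p/F^{p+1}\cong E_\infty^{p,2-p}$.

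The key identification is $\Ker\bigl(i^*\colon\HHH^2(\Gg)\to\HHH^2(\Ng)\bigr)=F^1\HHH^2(\Gg)$: the restriction $i^*$ factors as the edge map $\HHH^2(\Gg)\twoheadrightarrow E_\infty^{0,2}$ followed by $E_\infty^{0,2}\subseteq E_3^{0,2}\subseteq E_2^{0,2}=\HHH^2(\Ng)^{\Gam}\hookrightarrow\HHH^2(\Ng)$, so its kernel is the kernel of the edge map, namely $F^1\HHH^2(\Gg)$. Now $F^2\HHH^2(\Gg)=E_\infty^{2,0}$ is the image of inflation $\HHH^2(\Gam)\to\HHH^2(\Gg)$, and $F^1/F^2\cong E_\infty^{1,1}$. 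Defining $\rho\colon\Ker(i^*)=F^1\HHH^2(\Gg)\to\HHH^1(\Gam;\HHH^1(\Ng))$ as the projection onto $F^1/F^2\cong E_\infty^{1,1}$ followed by the inclusion $E_\infty^{1,1}\hookrightarrow E_2^{1,1}$, the displayed descriptions of $E_\infty^{2,0}$, $E_\infty^{0,1}$ and $E_\infty^{1,1}$ yield exactness at $\HHH^1(\Ng)^{\Gam}$, $\HHH^2(\Gam)$, $\Ker(i^*)$ and $\HHH^1(\Gam;\HHH^1(\Ng))$. Splicing these with the classical five-term exact sequence \eqref{align=five-term-grp-coh} produces the seven-term sequence.

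The subtlety, rather than a genuine obstacle, is purely bookkeeping: one must check that the maps called inflation, restriction, transgression and $\rho$ in the statement really coincide with the spectral-sequence edge maps and $d_2$'s, and that the final arrow is $d_2\colon E_2^{1,1}\to E_2^{3,0}$. An alternative, spectral-sequence-free derivation would splice together several five-term sequences and run a diagram chase in the style of the proof of Theorem~\ref{theorem=ex_seq_rel_to_WGN}; but since the LHS derivation is entirely standard, in the final write-up I would simply invoke it (cf.\ Hochschild--Serre).
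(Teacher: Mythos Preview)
Your derivation via the Lyndon--Hochschild--Serre spectral sequence is correct and is the standard argument. In the paper, Theorem~\ref{theorem=seven-term} is stated without proof as a known result; no argument is supplied. The only additional input the paper uses is the explicit cocycle description of $\rho$ from \cite[Section~10.3]{DHW} (recorded as Theorem~\ref{theorem=DHW}), which is needed later in Theorem~\ref{theorem=alpha_beta_rho}; your spectral-sequence definition of $\rho$ (projection $F^1\twoheadrightarrow F^1/F^2\cong E_\infty^{1,1}\hookrightarrow E_2^{1,1}$) is consistent with that description, though verifying this identification is exactly the ``bookkeeping'' you flag.
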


A cocycle description of the map $\rho$ is given as follows.

\begin{theorem}[{\cite[Section 10.3]{DHW}}] \label{theorem=DHW}
Let $\Gg$ be a group and $\Ng$ its normal subgroup.
Let $c \in \Ker (i^* \colon \HHH^2(\Gg) \to \HHH^2(\Ng))$, and let $f$ be a 2-cocycle on $\Gg$ satisfying $f|_{\Ng \times \Ng} = 0$ and $[f] = c$. Then
\[\big( \rho (c) (p(\gl))\big) (\xl) =  f(\gl, g^{-1} \xl\gl) - f(\xl,\gl),\]
where $\gl \in \Gg$ and $\xl \in \Ng$.
\end{theorem}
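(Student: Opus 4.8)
The plan is to produce, out of the given cocycle $f$, an explicit inhomogeneous $1$-cocycle on $\Gam$ with values in $\HHH^1(\Ng)$, to check by hand that it is a cocycle and that its class depends only on $c$, and then to match that class with the spectral-sequence definition of $\rho$. First I would verify that a representative $f$ as in the statement exists, so that the statement is not vacuous: starting from any $2$-cocycle $f_0$ on $\Gg$ with $[f_0]=c$, the hypothesis $i^*c=0$ gives $u\in\CCC^1(\Ng)$ with $f_0|_{\Ng\times\Ng}=\delta u$; choosing any set-theoretic extension $\tilde u\colon\Gg\to\RR$ of $u$ and replacing $f_0$ by $f=f_0-\delta\tilde u$ works, since $(\delta\tilde u)(\xl_1,\xl_2)=(\delta u)(\xl_1,\xl_2)$ whenever $\xl_1,\xl_2\in\Ng$. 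One may also take $f$ normalized.

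Next, for $\gl\in\Gg$ and $\xl\in\Ng$ I would set $\psf_{\gl}(\xl)=f(\gl,\gl^{-1}\xl\gl)-f(\xl,\gl)$ and show first that for fixed $\gl$ the map $\xl\mapsto\psf_{\gl}(\xl)$ is a homomorphism $\Ng\to\RR$, hence an element of $\HHH^1(\Ng)=\mathrm{Hom}(\Ng,\RR)$. This is a direct manipulation of the cocycle identity $\delta f\equiv 0$ together with $f|_{\Ng\times\Ng}=0$: one expands $f(\gl,(\gl^{-1}\xl_1\gl)(\gl^{-1}\xl_2\gl))$ and $f(\xl_1\xl_2,\gl)$ along suitable triples, and all the purely $\Ng$-valued terms drop out, leaving $\psf_{\gl}(\xl_1\xl_2)=\psf_{\gl}(\xl_1)+\psf_{\gl}(\xl_2)$.

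Then I would check that $\psf_{\gl}$ depends only on $p(\gl)$: replacing $\gl$ by $\gl m$ with $m\in\Ng$, the cocycle identity for $f$ and the vanishing $f|_{\Ng\times\Ng}=0$ give $f(\gl m,(\gl m)^{-1}\xl(\gl m))=f(\xl\gl,m)+f(\gl,\gl^{-1}\xl\gl)-f(\gl,m)$ and $f(\xl,\gl m)=f(\xl,\gl)+f(\xl\gl,m)-f(\gl,m)$, whose difference yields $\psf_{\gl m}=\psf_{\gl}$. So $\psf$ descends to a map $\Gam\to\HHH^1(\Ng)$, which I would then show satisfies the inhomogeneous cocycle identity $\psf_{\gamma_1\gamma_2}=\psf_{\gamma_1}+\gamma_1\cdot\psf_{\gamma_2}$ for the conjugation action of $\Gam$ on $\HHH^1(\Ng)$, again by expanding $f$ along an appropriate triple. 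Finally, two admissible choices of $f$ differ by $\delta\tilde v$, so the corresponding homomorphisms on $\Ng$ differ by a $1$-coboundary on $\Ng$; as all of these vanish, the class $[\psf]\in\HHH^1(\Gam;\HHH^1(\Ng))$ depends only on $c$.

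The hard part will be identifying $[\psf]$ with $\rho(c)$, which forces one to unwind the definition of $\rho$ through the low-degree terms of the Lyndon--Hochschild--Serre spectral sequence of $1\to\Ng\to\Gg\to\Gam\to1$ with $\RR$-coefficients. I would fix a set-theoretic section $s\colon\Gam\to\Gg$ with $s(1)=1$, use the decomposition $\Gg=\coprod_{\gamma}s(\gamma)\Ng$ to write down the filtration $F^\bullet\CCC^\bullet(\Gg)$, and recall that $\Ker(i^*\colon\HHH^2(\Gg)\to\HHH^2(\Ng))$ equals $F^1\HHH^2(\Gg)$, that $\rho$ is the composite $F^1\HHH^2(\Gg)\twoheadrightarrow F^1/F^2=E_\infty^{1,1}\hookrightarrow E_2^{1,1}=\HHH^1(\Gam;\HHH^1(\Ng))$, and that the subsequent map $E_2^{1,1}\to E_2^{3,0}=\HHH^3(\Gam)$ is the transgression $d_2$ (this is precisely what produces the tail $\to\HHH^3(\Gam)$ of the seven-term sequence). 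The condition $f|_{\Ng\times\Ng}=0$ says exactly that $f$ already lies in $F^1\CCC^2(\Gg)$, so its image on the $E_1$-page $E_1^{1,1}=\HHH^1(\Ng;\CCC^1(\Gam))$ is represented by $(\xl,\gamma)\mapsto f(s(\gamma),s(\gamma)^{-1}\xl s(\gamma))-f(\xl,s(\gamma))$; passing to $E_2^{1,1}$ and using the step above to drop the section, this is literally $\psf$. The genuine work is the bookkeeping: pinning down the conventions for the LHS filtration and its differentials so that the $E_1$-representative is exactly the displayed formula, after which the identification collapses to the cocycle identity for $f$. (Alternatively, one may simply appeal to \cite[Section~10.3]{DHW}, where this formula is recorded.)
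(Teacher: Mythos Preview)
The paper does not prove this theorem: it is stated as a citation from \cite[Section~10.3]{DHW} and used as a black box in the proof of Theorem~\ref{theorem=alpha_beta_rho}. So there is no ``paper's own proof'' to compare your proposal against; indeed, you yourself note at the end that one may simply appeal to \cite{DHW}.

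That said, your outline is the standard reconstruction of the map $\rho$ at the cochain level, and it is essentially correct. Two small remarks. First, the paper does carry out one piece of your computation: in the proof of Theorem~\ref{theorem=alpha_beta_rho}, the Claim shows (for a bounded $f$ with $f|_{\Ng\times\Ng}=\delta\qm$) that the function $a_{\gl}(\xl)=f(\gl,\gl^{-1}\xl\gl)-f(\xl,\gl)$ satisfies $\delta a_{\gl}={}^{\gl}(\delta\qm)-\delta\qm$, via exactly the cocycle-identity expansion you describe; this is the same calculation that in your setting (with $\qm=0$) yields that $\psf_{\gl}$ is a homomorphism on $\Ng$. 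Second, in your independence-of-representative step you say the two admissible choices of $f$ differ by $\delta\tilde v$, so the corresponding $\psf$'s differ by a $1$-coboundary on $\Ng$ which ``vanishes''. That is not quite right: the $\psf$'s differ by $\gamma\mapsto(\gamma\cdot h-h)$ for $h=\tilde v|_{\Ng}\in\HHH^1(\Ng)$, which is a $1$-coboundary in $\CCC^1(\Gam;\HHH^1(\Ng))$ and hence vanishes in $\HHH^1(\Gam;\HHH^1(\Ng))$, not pointwise. With that correction the argument goes through, and the spectral-sequence identification you sketch is the expected one.
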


The maps $\alpha$, $\beta$ and $\rho$ are related as follows.

\begin{theorem}\label{theorem=alpha_beta_rho}
  Let $1 \to \Ng \xrightarrow{i} \Gg \xrightarrow{p} \Gam \to 1$ be a short exact sequence of groups.
  Then the following diagram commutes:
  \[\xymatrix{
  \Ker(i^*) \cap \Im(c_{\Gg}^2) \ar[r]^-{j} \ar[d]_{\beta} & \Ker(i^*) \ar[r]^-{\rho} & \HHH^1(\Gam ; \HHH^1(\Ng)) \ar[d] \\
  \EH^2_b(\Ng)^{\Gg} / i^* \EH^2_b(\Gg) \ar[rr]^-{\alpha} & {} & \HHH^1(\Gg ; \HHH^1(\Ng)).
  }\]
  Here $j$ is the inclusion, and $\alpha$, $\beta$, and $\rho$ are the maps appearing in Theorem \ref{theorem=ex_seq_rel_to_WGN}, and the seven-term exact sequence, respectively.
\end{theorem}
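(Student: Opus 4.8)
The plan is to verify the commutativity by tracing a single element around the diagram at the level of explicit cocycles, using the cocycle descriptions of $\beta$, $\rho$, and $\alpha$ that are available from the proof of Theorem~\ref{theorem=ex_seq_rel_to_WGN} and from Theorem~\ref{theorem=DHW}. Fix a class $c \in \Ker(i^*) \cap \Im(c_{\Gg}^2)$. By definition of $\Im(c_{\Gg}^2)$, choose a bounded $2$-cocycle $\beta_b$ on $\Gg$ with $c_{\Gg}^2[\beta_b] = c$; unwinding the snake-lemma construction in the proof of Theorem~\ref{theorem=ex_seq_rel_to_WGN}~(2), $\beta(c)$ is represented as follows: since $i^*c = 0$ in $\HHH^2(\Ng)$, the restriction $i^*\beta_b$ is a bounded coboundary plus something; more precisely one finds $\psi \in \CCC^1(\Ng)$ with $i^*\beta_b - \delta\psi$ bounded, and then $\psi$ can be arranged to be $\Gg$-quasi-invariant so that its homogenization $\mu_\psi \in \QNG$ has $[\delta\mu_\psi] = i^*[\beta_b] \in \EH^2_b(\Ng)^{\Gg}$; then $\beta(c) = [\mu_\psi] \in \EH^2_b(\Ng)^{\Gg}/i^*\EH^2_b(\Gg)$ (here we identify $\QNG/\HNRG$ with $\EH^2_b(\Ng)^{\Gg}$ via Lemma~\ref{lem=Q/H}). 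On the other hand, $\alpha$ is exactly the connecting-type map coming from diagram~\eqref{align=3.3}: $\alpha([\mu_\psi])$ is obtained by lifting $\mu_\psi$ (or rather $\delta\mu_\psi$-representative) to $\QhG$-level data and measuring the failure of $\Gg$-invariance as a class in $\HHH^1(\Gg;\HHH^1(\Ng))$.

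The key computation is to show that both composites send $c$ to the cohomology class of the same $1$-cocycle $\Gg \to \HHH^1(\Ng)$. Going the top route: pick a (not necessarily bounded) $2$-cocycle $f$ on $\Gg$ with $[f] = c$ and $f|_{\Ng\times\Ng} = 0$ (possible since $i^*c=0$; this is the hypothesis of Theorem~\ref{theorem=DHW}). Then Theorem~\ref{theorem=DHW} gives $\big(\rho(c)(p(\gl))\big)(\xl) = f(\gl, \gl^{-1}\xl\gl) - f(\xl,\gl)$, and the right-hand vertical map (inflation $\HHH^1(\Gam;\HHH^1(\Ng)) \to \HHH^1(\Gg;\HHH^1(\Ng))$ along $p$) simply precomposes with $p$, i.e. the resulting $1$-cocycle on $\Gg$ is $\gl \mapsto \big[\xl \mapsto f(\gl,\gl^{-1}\xl\gl) - f(\xl,\gl)\big]$. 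Going the bottom route: since $c_{\Gg}^2[\beta_b] = c = [f]$, we have $\beta_b - f = \delta u$ for some $u \in \CCC^1(\Gg)$ with $\beta_b$ bounded; restricting, $i^*\beta_b - i^*f = i^*\beta_b = \delta(i^*u)$ since $f|_{\Ng\times\Ng}=0$, so $i^*u$ is itself a $\Gg$-quasi-invariant quasimorphism on $\Ng$ (its coboundary $i^*\beta_b$ is bounded) and we may take $\psi = i^*u$, $\mu_\psi = (i^*u)_{\mathrm{h}}$. Now $\alpha([\mu_\psi])$ is computed by choosing the lift $u \in \CCC^1(\Gg)$ of $i^*u$ and forming the $1$-cocycle $\gl \mapsto \big[\xl \mapsto u(\gl\xl\gl^{-1}) + u(\gl) - u(\gl\xl) \text{ evaluated appropriately}\big]$ — concretely, the class in $\HHH^1(\Gg;\HHH^1(\Ng))$ measuring how $u|_{\Ng}$ fails to be $\Gg$-invariant, which from $\delta u = \beta_b - f$ and boundedness of $\beta_b$ reduces on homology to $\gl \mapsto [\xl \mapsto -f(\gl,\gl^{-1}\xl\gl) \cdots]$. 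The heart of the proof is a direct algebraic identity: expanding $\delta u(\gl,\gl^{-1}\xl\gl)$, $\delta u(\xl,\gl)$, $\delta u(\gl,\gl^{-1}\xl)$ and using $\delta u = \beta_b - f$ together with the $2$-cocycle identity for $f$ and the fact that $\beta_b$ is bounded (hence contributes nothing to a cohomology class valued in the torsion-free... rather, to a class that is detected after passing to $\EH^2_b$), one gets that the two $1$-cocycles differ by a coboundary $\Gg \to \HHH^1(\Ng)$.

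The main obstacle I anticipate is bookkeeping: getting the signs and the direction of all the connecting homomorphisms consistent, and in particular making precise the sense in which ``$\beta_b$ bounded contributes nothing.'' The correct framework is that $\alpha$ and $\beta$ are genuinely defined at the level of the short exact sequences of $\Gg$-modules in \eqref{align=3.3} and \eqref{align=3.6}, so the right move is not to compute with $f$ directly but to use the functoriality of the snake lemma: the inclusion $\Ker(i^*)\cap\Im(c_{\Gg}^2) \hookrightarrow \Ker(i^*)$ together with the comparison map $\HHH^\bullet_b \to \HHH^\bullet$ gives a morphism of short exact sequences relating \eqref{align=3.6} to the analogous unbounded sequence $0 \to \HHH^1(\Gg) \to \QhG/\ell^\infty \to \ldots$, hence a morphism between the associated long exact sequences in $\HHH^\bullet(\Gg;-)$; and Theorem~\ref{theorem=DHW}'s formula identifies $\rho$ as the connecting map of the sequence $0 \to \HHH^1(\Ng) \to (\text{$1$-cochains on }\Gg\text{ vanishing suitably}) \to \ldots$ which maps to \eqref{align=3.3}. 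Naturality of connecting homomorphisms then forces the square to commute without any explicit cocycle juggling. I would therefore present the proof in this structural way, reserving an explicit cocycle check only as a remark or as the verification of the one nontrivial compatibility of the module maps involved. The delicate point to get right is that the relevant map out of $\Ker(i^*)$ in the seven-term sequence lands in $\HHH^1(\Gam;\HHH^1(\Ng))$ whereas $\alpha$ lands in $\HHH^1(\Gg;\HHH^1(\Ng))$, so one must insert the inflation $p^*$ on the $\HHH^1(-;\HHH^1(\Ng))$ column (which is exactly the unlabelled right vertical arrow in the displayed diagram) and check it intertwines the two snake constructions — this is where the hypothesis that $f$ (equivalently the class $c$) comes from $\Gam$-level data via $\Im c_{\Gg}^2$ and $\Ker i^*$ is used.
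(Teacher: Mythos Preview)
Your overall strategy — trace a class $c$ around the square by explicit cocycles — is exactly the paper's approach, and your choice of data (a bounded representative $\beta_b$, an unbounded representative $f$ with $f|_{\Ng\times\Ng}=0$, and $u\in\CCC^1(\Gg)$ with $\delta u=\beta_b-f$) is equivalent to the paper's. But the proposal has two genuine gaps.

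First, your description of $\alpha$ is imprecise and not quite correct. The map $\alpha$ is induced by the connecting homomorphism $\varphi\colon \EH^2_b(\Ng)^{\Gg}\to \HHH^1(\Gg;\HHH^1(\Ng))$ for the short exact sequence $0\to\HHH^1(\Ng)\to\QQQ(\Ng)\to\EH^2_b(\Ng)\to 0$ of $\Gg$-modules. Concretely, if $[\delta\mu]\in\EH^2_b(\Ng)^{\Gg}$ with $\mu\in\QQQ(\Ng)$, then for each $\gl\in\Gg$ there is a \emph{unique} bounded $b_{\gl}\in\CCC^1_b(\Ng)$ with ${}^{\gl}(\delta\mu)-\delta\mu=\delta b_{\gl}$, and $\varphi_\mu(\gl)=\mu-{}^{\gl}\mu+b_{\gl}\in\HHH^1(\Ng)$. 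Your formula ``$\xl\mapsto u(\gl\xl\gl^{-1})+u(\gl)-u(\gl\xl)$ evaluated appropriately'' is not this; you need the homogenized restriction $\mu=(u|_{\Ng})_{\mathrm{h}}$ and the associated $b_{\gl}$, not $u$ itself.

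Second, the ``heart of the proof'' is precisely the algebraic identity you assert but do not prove: that $b_{\gl}(\xl)=f(\gl,\gl^{-1}\xl\gl)-f(\xl,\gl)$ (up to the bounded correction from $\beta_b$, which vanishes after passing to $\HHH^1(\Ng)$ since the bounded piece contributes a \emph{homomorphism} that is bounded, hence zero). This requires expanding the cocycle condition $\delta f=0$ on suitable triples to verify $\delta\big(\xl\mapsto f(\gl,\gl^{-1}\xl\gl)-f(\xl,\gl)\big)={}^{\gl}(\delta\mu)-\delta\mu$, then invoking uniqueness of $b_{\gl}$. The paper carries this out explicitly; without it you have not shown the square commutes. (Minor point: your phrase ``$i^*\beta_b-\delta\psi$ bounded'' is confused, since $i^*\beta_b$ is already bounded; you mean $i^*\beta_b=\delta\psi$ in $\CCC^2(\Ng)$, whence $\psi$ is a quasimorphism.)

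Your proposed ``structural'' alternative via naturality of connecting maps is appealing but not obviously simpler: $\rho$ arises from the Hochschild--Serre spectral sequence rather than from a short exact sequence of $\Gg$-modules parallel to \eqref{align=3.3}, so to set up a morphism of snake diagrams you would still need to identify $\rho$ with such a connecting map on the nose — and the standard way to do that is exactly the cocycle formula of Theorem~\ref{theorem=DHW}, which brings you back to the explicit computation anyway.
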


\begin{proof}
  Recall that the map $\alpha \colon \EH^2_b(\Ng)^{\Gg} / i^* \EH^2_b(\Gg) \to \HHH^1(\Gg ; \HHH^1(\Ng))$ in Theorem \ref{theorem=ex_seq_rel_to_WGN}~(1) is induced by the last map $\siph$ of the exact sequence
  \[0 \to \HHH^1(\Ng)^{\Gg} \to \QQQ(\Ng)^{\Gg} \xrightarrow{\delta} \EH^2_b(\Ng)^{\Gg} \xrightarrow{\siph} \HHH^1(\Gg; \HHH^1(\Ng)).\]

  In this proof, for $\gl \in \Gg$ and a function $a \colon \Ng^{n} \to \RR$, we use the symbol ${}^{\gl}a$ to denote the left-conjugation of $a$ by $\gl$.

  We first describe the map $\siph$. Let $c \in \EH^2_b(\Ng)^{\Gg}$. Since $\delta \colon \QQQ(\Ng) \to \EH^2_b(\Ng)$ is surjective, there exists a homogeneous quasimorphism $\qm$ on $\Ng$ such that $c = [\delta \qm]$. Since $c$ is $\Gg$-invariant, we have ${}^{\gl} c = c$ for every $\gl \in \Gg$.
  Namely, for each $\gl \in \Gg$, there exists a bounded $1$-cochain $b_{\gl} \in \CCC^1_b(\Ng)$ such that
  \begin{align} \label{align=3.12}
  ^{\gl} (\delta \qm) = \delta \qm + \delta b_{\gl}.
  \end{align}
  Note that this $b_{\gl}$ is unique. Indeed, if $\delta b_{\gl} = \delta b_{\gl}'$, then $b_{\gl} - b'_{\gl}$ is a homomorphism $\Gg \to \RR$ which is bounded, and is $0$.

  Define a cochain $\siph_\qm \in \CCC^1(\Gg ; \HHH^1(\Ng))$ by
  \[\siph_\qm(\gl) = \qm - {}^{\gl} \qm - b_{\gl}.\]
  It follows from \eqref{align=3.12} that $\siph_\qm \in \HHH^1(\Ng)$.
  Now we show that this correspondence induces a map from $\EH^2_b(\Ng) / i^* \EH^2_b(\Gg)$ to $\HHH^1(\Gg ; \HHH^1(\Ng))$.
  Suppose that $c = [\delta \qm] = [\delta \qm']$ for $\qm, \qm' \in \QQQ(\Ng)$. Then $h = \qm - \qm' \in \HHH^1(\Ng)$.
  Therefore we have $\delta \qm = \delta \qm'$, and hence we have
  \[^{\gl}(\delta \qm') = \delta \qm' + \delta b_{\gl}.\]
  Hence we have
  \[(\siph_{\qm'} - \siph_\qm)(\gl) = (\qm' - {}^{\gl}\qm' + b_{\gl}) - (\qm - {}^{\gl} \qm + b_{\gl}) = {}^{\gl} h - h = \delta h (\gl).\]
  Therefore $\siph_{\qm'}$ and $\siph_\qm$ represent the same cohomology class of $\HHH^1(\Gg ; \HHH^1(\Ng))$.
  This correspondence $c \mapsto [\varphi_{\mu}]$ is the precise description of $\varphi \colon \EH^2_b(\Ng)^{\Gg} \to \HHH^1(\Gg ; \HHH^1(\Ng))$.

  Next, we see the precise description of the composite of
  \[\Ker (i^*) \cap \Im (c_{\Gg}^2) \xrightarrow{\beta} \EH^2_b(\Gg) / i^* \EH^2_b(\Gg) \xrightarrow{\alpha} \HHH^1(\Gg ; \HHH^1(\Ng)).\]
  Let $c \in \Ker (i^*) \cap \Im (c_{\Gg}^2)$.
  Since $c \in \Im (c_{\Gg}^2)$, there exists a bounded cocycle $f \colon \Gg \times \Gg \to \RR$ with $c = [f]$ in $\HHH^2(\Gg)$.
  Since $i^* c = 0$, there exists $\mufh \in \CCC^1(\Ng)$ such that $f|_{\Ng \times \Ng} = \delta \mufh$ in $\CCC^2(\Ng)$. Since $f$ is bounded, $\mufh$ is a quasimorphism on $\Ng$. Define $\qm$ to be the homogenization of $\mufh$.
  Then $b_{\Ng} = \qm - \mufh \colon \Ng \to \RR$ is a bounded 1-cochain on $\Ng$.
  Next define a function $b \colon \Gg \to \RR$ by
  \[b(\gl) = \begin{cases}
  b_{\Ng}(\gl), & \textrm{if}\ \gl \in \Ng, \\
  0, & {\rm otherwise.}
  \end{cases}\]
  Since $b \in \CCC^1_b(\Gg)$, $f + \delta b$ is a bounded cocycle representing $c$ in $\HHH^2(\Gg)$. Replacing $f + \delta b$  by  $f$, we can assume that $f|_{\Ng \times \Ng} = \delta \qm$.
  Then by the definition of the connecting homomorphism in snake lemma, we have $\beta(c) = [\delta \qm]$.

  Recall that there exists a unique bounded function $b_{\gl} \colon \Ng \to \RR$ such that
  \[\siph([\delta \qm])(\gl) = \qm - {}^{\gl} \qm + b_{\gl}.\]

  \vspace{2mm}
  \noindent
  {\bf Claim.} $b_{\gl}(\xl) = f(\gl, \gl^{-1} \xl \gl) - f(\xl, \gl)$.

  \vspace{2mm}
  Define a function $a_{\gl} \colon N \to \RR$ by $a_{\gl}(\xl) = f(\gl, \gl^{-1}\xl\gl) - f(\xl, \gl)$. Let $\xl$ and $\yl$ be elements of $\Ng$. Since $\delta f = 0$, we have
  \begin{align*}
  \delta a_{\gl}(\xl,\yl) & = \delta a_{\gl}(\xl,\yl) + \delta f (\gl, \gl^{-1} \xl\gl, \gl^{-1} \yl \gl) + \delta f(\xl,\yl,\gl) - \delta f(\xl,\gl,\gl^{-1} \yl\gl) \\
  & = f(\gl^{-1} \xl\gl, \gl^{-1} \yl \gl) - f(\xl,\yl) \\
  & = ({}^{\gl} \delta \qm - \delta \qm)(\xl,\yl).\\
  & = \delta b_{\gl}(\xl,\yl).
  \end{align*}
  By the uniqueness of $b_{\gl}$, we have $a_{\gl} = b_{\gl}$.
  This completes the proof of Claim.
  Hence we have $\siph_\qm(\gl) = \qm - {}^{\gl} \qm + a_{\gl}$, and thus we obtain a precise description of $\alpha \circ \beta$.

  Now we complete the proof of Theorem \ref{theorem=alpha_beta_rho}.
  For $c \in \Ker (i^*) \cap \Im (c_{\Gg}^2)$, there exist a bounded $2$-cocycle $f$ of $\Gg$ and $\qm \in \QQQ(\Ng)$ such that $c = [f]$ and $f |_{\Ng \times \Ng} = \delta \qm$.
  Define $\coch \colon \Gg \to \RR$ by
  \[\coch(\gl) = \begin{cases}
  \qm (\gl), & \textrm{if}\ \gl \in \Ng, \\
  0, & {\rm otherwise.}
  \end{cases}\]
  Then $f - \delta \coch$ is a (possibly unbounded) cocycle such that $(f - \delta \coch) |_{\Ng \times \Ng} = 0$. Hence Theorem \ref{theorem=DHW} implies that
  \begin{align*}
  ((p^* \rho (c))(\gl))(\xl) & = (\rho(c) (p(\gl))) (\xl) \\
  & = (f - \delta \coch)(\gl , \gl^{-1} \xl \gl) - (f - \delta \coch)(\xl, \gl) \\
  & = f(\gl, \gl^{-1} \xl \gl) - f(\xl,\gl) + \coch(\xl\gl) - \coch (\gl) - \coch(\gl^{-1} \xl \gl) \\
   & \quad + \coch(\gl) - \coch (\xl\gl) + \coch(\xl) \\
  & = \qm(\xl) - {}^{\gl} \qm(\xl) + b_{\gl}(\xl) \\
  & = (\qm - {}^{\gl} \qm + b_{\gl})(\xl) \\
  & = \siph_\qm(\gl)(\xl).
  \end{align*}
  Here the second equality follows from Theorem \ref{theorem=DHW} and the fourth equality follows from Claim. Hence we have
  \begin{align} \label{align=B}
  ((p^* \rho (c))(\gl))(\xl) = \varphi_\qm(\gl) (\xl),
  \end{align}
  and $\alpha \circ \beta (c) = p^* \circ \rho (c)$ follows from the description of $\alpha \circ \beta$ and \eqref{align=B}. This completes the proof.
\end{proof}

\section*{Acknowledgement}
The third-named author and the fifth-named author are grateful to Professor Sang-hyun Kim for his kind invitations to the KIAS in  November 2023 and May-June 2023, respectively. The plan of writing this article was initiated from this occasion of the fifth-named author's visit.  The authors thank the anonymous referee for  helpful  comments. 
The first-named author, the second-named author, the fourth-named author and the fifth-named author are partially supported by JSPS KAKENHI Grant Number JP21K13790, JP24K16921, JP23K12975, and JP21K03241, respectively.
The third-named author is partially supported by JSPS KAKENHI Grant Number JP23KJ1938 and JP23K12971.

\end{document}